\newtheorem{lemma}{Lemma}
\newtheorem{prop}[lemma]{Proposition}
\newtheorem{theorem}[lemma]{Theorem}
\newtheorem*{varlemma}{Lemma \ref{lem:7-4}}
\newtheorem*{varprop}{Proposition \ref{prop:A3}}
\newtheorem{definition}[lemma]{Definition}
\newtheorem{cor}[lemma]{Corollary}
\newcommand{\wilde}{\widetilde}
\newcommand{\btau}{\bar{\tau}}
\newcommand{\pd}{ {\partial}}
\newcommand{\R}{\mathbf R}
\renewcommand{\S}{\mathbf S}
\newcommand{\lin}{\mathcal{L}}
\newcommand{\vcal}{\mathcal{V}}
\newcommand{\ical}{\mathcal{I}}
\newcommand{\hcal}{\mathcal{H}}
\newcommand{\scal}{\mathcal{S}}
\newcommand{\rcal}{\mathcal{R}}
\newcommand{\ecal}{\mathcal{E}}
\newcommand{\ncal}{\mathcal{N}}
\newcommand{\F}{\mathcal{F}}
\newcommand{\uvarphi}{\underline{\varphi}}
\newcommand{\utheta}{\underline{\theta}}
\newcommand{\mcal}{\mathcal{M}}
\newcommand{\fcal}{\mathcal{F}}
\newcommand{\eps}{\varepsilon}
\newcommand{\ubar} {\underline}
\begin{document}

\title[Self-Translating Surfaces under MCF]{Complete Embedded Self-Translating Surfaces under Mean Curvature Flow.  }
\author{Xuan Hien Nguyen}
\thanks{This work is partially supported by the National Science Foundation, Grant No. DMS-0908835.}
\address{Department of Mathematics, Kansas State University, Manhattan, KS 66506} 
\email{xhnguyen@math.ksu.edu}
\subjclass[2000]{Primary 53C44}
\keywords{mean curvature flow, self-translating, solitons}

\begin{abstract}
We describe a construction of complete embedded self-translating surfaces under mean curvature flow by desingularizing the intersection of a finite family of grim reapers in general position.
\end{abstract}

\maketitle

\section{Introduction}

Let  $X(\cdot,t):M^d \to \mathbf{R}^{d+1}$ be a one-parameter family of immersions of smooth hypersurfaces into $\mathbf{R}^{d+1}$. The family of hypersurfaces $M_t =X(M^d, t)$ is a \emph{solution to the mean curvature flow} if
\begin{equation}
\label{eq:sol-mcf}
\begin{split}
\frac{\partial}{\partial t}X(p,t)=\mathbf{H}(p,t),&\quad p \in M, t>0,\\
X(p, 0) = X_0(p), &
\end{split}
\end{equation}
where $\mathbf{H}(p,t)$ is the mean curvature vector of the hypersurface $M_t$ at the point $X(p,t)$ for some initial data given by the immersion $X_0$. By the local existence theorem for parabolic equations, the flow can be continued past any time $t$ as long as the norm of the second fundamental form  $|A(p,t)|$ stays bounded on $M_t$. A singularity at time $T$ is classified according to the rate at which $\max_{p \in M_t} |A(p,t)|$ blows up: if $\max_{p \in M_t} |A(p,t)| \sqrt{T-t} \leq C$, we say the singularity is fast forming, otherwise we say the singularity is slow forming. The behavior of the flow near fast forming singularities is modeled by self-similar surfaces, which are surfaces that are rescaled by the flow, while their shape is left unchanged \cite{huisken;asymptotic-behavior}. The study of slow forming singularities is more complex because of the lack of control on the geometry of the solution.

In the present article,  we work in dimension $d=2$ and focus on surfaces that are translated by the mean curvature flow at constant speed. These surfaces are called self-translating surfaces (STS) and can give some insight in  slow forming singularities. For example, if the initial hypersurface has nonnegative mean curvature and the blow up is slow, then the surfaces $\{M_t\}$ tend asymptotically to a strictly convex STS or $\R^{d-k} \times S^d$, where $S^d$ is a lower dimensional strictly convex STS \cite{huisken-sinestrari}. Detailed examples of asymptotic convergence are given in Angenent  \cite{angenent;formation-singularities} and Angenent-Vel\'azquez \cite{angenent-velazquez;cusp-singularities}\cite{angenent-velazquez;degenerate-neckpinches}.  Although the study of STS and singularities of the mean curvature flow are linked, few examples are available. Besides the classic examples of a plane, a grim reaper cylinder, and a rotationally symmetric soliton, Altschuler and Wu  \cite{altschuler-wu;translating-surfaces}  showed the existence of  paraboloid type self-translating surfaces that are graphs over convex domains in $\R^2$ having a prescribed angle of contact with the boundary cylinder.  In  \cite{mine;tridents}, we constructed STS by desingularizing the intersection of a grim reaper and a plane. We present here a more general result with a family of grim reapers as the initial configuration. 

To find an equation for STS, we set  $\frac{\partial}{\partial t}X= \mathbf H= \mathbf a + \mathbf V$, where $\mathbf a$ is the constant velocity of the translation and $\mathbf V$ is a vector field tangent to the surface $M_t$ to account for possible reparametrizations of the surface. Without loss of generality, we can fix $\mathbf a = \vec e_y$ to be the second coordinate unit vector in $\R^3$, and taking the inner product with the normal vector, we get
	\begin{equation}
	\label{eq:self-trans-wilde}
	H - \vec e_y \cdot \nu = 0,
	\end{equation}
where $H$ is the mean curvature of the surface considered and $\nu$ is the unit normal vector such that $\mathbf H = H \nu$. It is well known that a grim reaper  is a self-translating curve, so the cylinder over a grim reaper shifted by $(\tilde b, \tilde c)$,  
	\[
	\wilde\Gamma= \{ (x,y,z) \in (-\frac{\pi}{2} + \tilde b, \frac{\pi}{2} + \tilde b) \times \R^2 \mid (y - \tilde c) = - \log \cos(x -\tilde b)\} 
	\]
is an example in dimension $d=2$. With a slight abuse of language, we will also call $\wilde \Gamma$ a grim reaper throughout the article.


\subsection{Main result} 

Let us consider a finite family of grim reapers, $\{\wilde \Gamma_n\}_{n=1}^{N_{\Gamma}}$. Such a family is said to be in general position if no three $\wilde \Gamma$'s intersect on the same line, and no two $\wilde \Gamma$'s have the same asymptotic plane. For a family in general position,  let us denote 
by $\delta$ the minimum distance between two intersection lines and by $ \delta_{\Gamma}$ the minimum of the measure of the angles formed by the  $\wilde \Gamma$'s with the $yz$-plane at the intersection lines. Note that both $ \delta$ and $\delta_{\Gamma}$ are positive. 

To each intersection line $l_k$, we associate a positive integer $m_k$. The $m_k$'s allow us to take different scales at different  intersection lines, making the construction as general as possible. 

\begin{figure} [ht]
\includegraphics[height=2.5in]{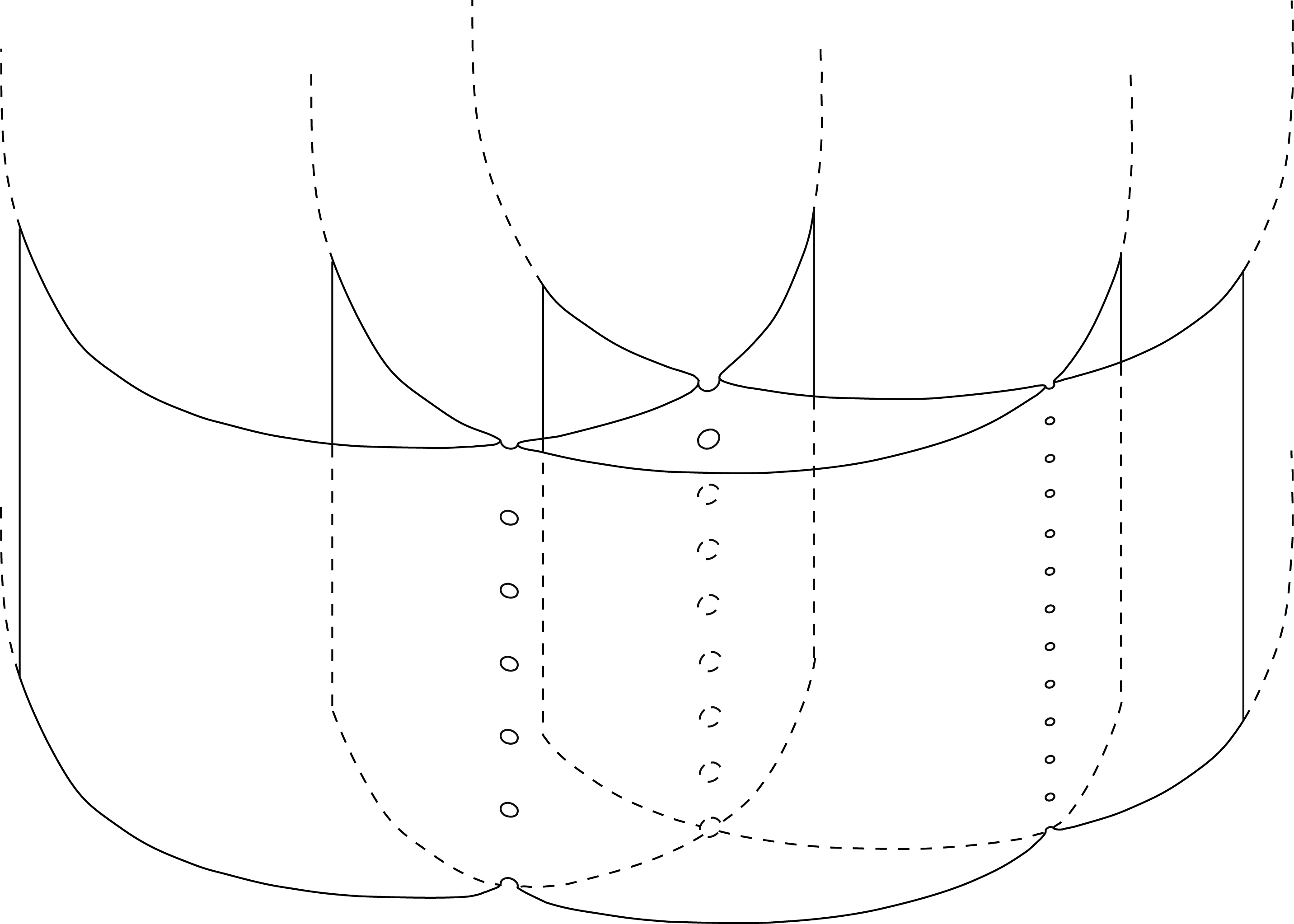}
\caption{A self-translating surface $\wilde{\mcal}_{\bar \tau}$} 
\end{figure}

\begin{theorem}
\label{thm:main}
Suppose $\{\wilde \Gamma_n\}_{n=1}^{N_{\Gamma}}$ is a finite family of grim reapers in general position. There is  a one parameter family of surfaces $\{ \wilde \mcal_{\bar \tau} \}_{\bar \tau \in (0,\delta_{\bar \tau})}$, with $\delta_{\bar \tau}$ depending on $\max_k(m_k)$, $N_{\Gamma}$, $\delta$ and $ \delta_{\Gamma}$ only, satisfying the following properties:
\begin{enumerate}
\item  $\wilde \mcal_{\bar \tau}$ is a complete embedded surface satisfying \eqref{eq:self-trans-wilde}.
\item  $\wilde \mcal_{\bar \tau}$ is invariant under reflection with respect to the $xy$-plane. 
\item  $\wilde \mcal_{\bar \tau}$ is singly periodic of period $2 \pi \bar \tau$ in the $z$-direction. 
\item If $U$ is a neighborhood in $\R^2$ such that $U \times \R$ contains no intersection line, then $\wilde \mcal_{\bar \tau} \cap (U \times \R)$ converges uniformly in $C^{j}$ norm, for any $j <\infty$, to $(\bigcup_{n=1}^{N_{ \Gamma}} \wilde \Gamma_n ) \cap (U \times \R)$ as $\bar \tau \to 0$.  
\item If $T_k$ is the translation that moves the $k$th intersection line to the $z$-axis, then $\bar \tau^{-1} T_k(\wilde \mcal_{\bar \tau})$ converges uniformly in $C^{j}$ norm, for any $j < \infty$, on any compact set of $\R^3$ to a Scherk surface with $m_k$ periods between $z=0$ and $z=2\pi$ as $\bar \tau \to 0$.
\end{enumerate}

\end{theorem}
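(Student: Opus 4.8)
The plan is to carry out a gluing (singular perturbation) argument in the spirit of Kapouleas' desingularization technique and of the author's previous constructions of self-translating surfaces: first build an approximate solution by inserting rescaled Scherk pieces near the intersection lines, then correct it to an exact solution of \eqref{eq:self-trans-wilde} by a fixed-point scheme.

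\emph{Step 1: the approximate surface.} Fix $\btau>0$ small. Away from the intersection lines $l_k$ I keep $\bigcup_{n=1}^{N_\Gamma}\wilde\Gamma_n$ unchanged. By the general position hypothesis each $l_k$ is parallel to the $z$-axis, exactly two grim reapers cross along it at an angle controlled from below by $\delta_\Gamma$, and distinct lines are $\delta$-separated. In a tube of radius $\rho=\rho(\btau)$ about $l_k$, with $\btau\ll\rho\ll1$ (e.g.\ $\rho=\btau^{1/2}$), I remove the two sheets and insert a copy of the singly periodic Scherk minimal surface that desingularizes two planes meeting at the angle of those sheets, rescaled by $\btau/m_k$ so that its $z$-period is $2\pi\btau/m_k$; thus exactly $m_k$ of its periods fit inside one $z$-period of length $2\pi\btau$, and the copy is centered so that the reflection $z\mapsto -z$ is preserved. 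On the transition annulus $\rho\le\operatorname{dist}(\cdot,l_k)\le 2\rho$ I interpolate, using cutoff functions, between the two grim reaper sheets and the two wings of the rescaled Scherk piece; since a grim reaper agrees with its tangent plane to second order and the Scherk wings converge to planes exponentially fast, the resulting smooth surface $\mcal=\mcal_\btau$ is embedded and complete, is invariant under $z\mapsto z+2\pi\btau$ and under reflection in the $xy$-plane, and the error $E:=-(H-\vec e_y\cdot\nu)$ on $\mcal$ is supported in the transition annuli with $\|E\|=O(\btau^{\beta})$ for some $\beta>0$ in a suitable weighted Hölder norm.

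\emph{Step 2: the linear theory.} Writing a perturbation of $\mcal$ as a normal graph $u\nu$, equation \eqref{eq:self-trans-wilde} becomes $\lin u+Q(u)=E$, where $Q$ collects the quadratically small terms and $\lin$ is the drift-Jacobi operator of the translator equation,
\[
\lin u=\Delta_{\mcal}u+|A|^2u-\langle(\vec e_y)^{\top},\nabla_{\mcal}u\rangle,
\]
self-adjoint with respect to the weighted measure $e^{y}\,dA$ (translators being $f$-minimal for $f=y$). On the grim reaper parts $\mcal$ is intrinsically close to a flat strip, $|A|^2$ decays exponentially, and $\lin$ is a small perturbation of a flat drift-Laplacian; on a Scherk part, after dilating by $m_k/\btau$, $\lin$ becomes $(\btau/m_k)^2$ times the genuine Jacobi operator of the Scherk surface plus an $O(\btau)$ lower-order term. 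I introduce weighted Hölder spaces adapted to the flat ends of the grim reapers — with a growth weight absorbing the $e^{y}$ blow-up at $x\to\pm\tfrac{\pi}{2}$ — and to the planar ends of the Scherk wings, restrict to functions of period $2\pi\btau$ in $z$ with the reflection symmetry, and prove the basic estimate: there are finitely many bounded functions $w_1,\dots,w_J$ on $\mcal$ (a substitute cokernel) such that for every $g$ in the weighted space there are a unique $u$ and unique $c_1,\dots,c_J$ with $\lin u=g+\sum_j c_jw_j$ and $\|u\|+\sum_j|c_j|\le C\|g\|$, with $C$ independent of $\btau$. The $w_j$ encode the bounded, non-decaying Jacobi fields present at the various scales: the translations $\partial_x,\partial_y$ of each grim reaper and the translation, angle, and dislocation parameters of each inserted Scherk piece.

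\emph{Step 3: finite-dimensional reduction, and the main obstacle.} The obstruction space spanned by the $w_j$ is removed by the matching geometric degrees of freedom — small translations of the grim reapers, small translations of each Scherk piece perpendicular to $l_k$, its dislocation, and the period parameter; general position is exactly what makes the number of such parameters equal $J$ and the associated linear map nondegenerate. For each choice of parameters $\xi$ in a small ball one gets a modified approximate surface $\mcal_\btau(\xi)$ with error still $O(\btau^{\beta})$; by Step 2 and the contraction mapping principle applied to $u\mapsto\lin^{-1}\!\bigl(E(\xi)-Q(u)\bigr)$ on a ball of radius $O(\btau^{\beta})$ — where $Q$ is quadratically small thanks to the scale separation — one obtains a unique small $u(\xi)$ solving $\lin u(\xi)+Q(u(\xi))=E(\xi)+\sum_j c_j(\xi)w_j$. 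The map $\xi\mapsto(c_1(\xi),\dots,c_J(\xi))$ is continuous with nondegenerate leading part, so a degree or fixed-point argument produces, for $\btau<\delta_\btau$, a zero $\xi(\btau)$, with $\delta_\btau$ determined by the constants above and hence depending only on $\max_k m_k$, $N_\Gamma$, $\delta$, $\delta_\Gamma$; the surface $\wilde\mcal_\btau$ obtained from $\mcal_\btau(\xi(\btau))$ via the normal graph $u(\xi(\btau))\nu$ then satisfies \eqref{eq:self-trans-wilde} exactly. Finally the listed properties follow: (1) holds because $\wilde\mcal_\btau$ is a $C^{2,\alpha}$-small normal graph over the complete, properly embedded $\mcal_\btau(\xi(\btau))$, which preserves embeddedness and completeness; (2) and (3) hold because the approximate surfaces, function spaces, $\lin$, the parameter space and the fixed-point equations are all equivariant under $z\mapsto -z$ and $z\mapsto z+2\pi\btau$ and the solution at each stage is unique; (4) holds because on any $U\times\R$ free of intersection lines $\mcal_\btau(\xi(\btau))$ equals $\bigcup_n\wilde\Gamma_n$ up to a perturbation tending to $0$ in every $C^j$ as $\btau\to0$ (since $\xi(\btau)\to0$ and $u(\xi(\btau))\to0$ in the weighted norm, upgraded by interior Schauder estimates); and (5) holds because, translating $l_k$ to the $z$-axis and dilating by $\btau^{-1}$, the rescaled approximate surface converges in $C^j_{\mathrm{loc}}$ to the Scherk surface with $m_k$ periods between $z=0$ and $z=2\pi$, while the rescaled perturbation tends to $0$ by the $\btau$-gain of $u$ in the Scherk-adapted norm. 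The hardest point is Step 2: obtaining $\btau$-uniform invertibility of $\lin$ across the two very different scales, choosing weighted norms that simultaneously tame the $e^{y}$-growth at the non-compact grim reaper ends and the planar Scherk ends, and pinning down the substitute cokernel precisely enough that it is matched exactly by the available geometric deformations.
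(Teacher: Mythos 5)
Your proposal follows the same overall strategy as the paper (build an approximate surface by gluing in rescaled Scherk pieces, invert a linearized operator modulo a substitute cokernel, match the cokernel with geometric parameters, and close with a fixed-point argument). Three points, however, need to be flagged.

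\textbf{The substitute cokernel is not what you say it is, and the missing piece is exactly what makes Step 2 work.} You enumerate the obstruction as coming from ``the translations $\partial_x,\partial_y$ of each grim reaper and the translation, angle, and dislocation parameters of each inserted Scherk piece.'' In the paper the grim-reaper ends contribute nothing to the obstruction (they are handled directly by the infinite-cylinder linear theory of Proposition~\ref{prop:A4}), and the Scherk pieces contribute \emph{two separate} kinds of functions: two dislocation functions $w_1,w_2$ per intersection (coming from the approximate kernel $\mathrm{span}\{\vec e_x\cdot\nu,\vec e_y\cdot\nu\}$ of $L=\Delta+|A|^2$ on the Scherk core), and \emph{four} bending functions $\bar w_1,\dots,\bar w_4$ per intersection, one for each wing. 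The bending is essential and is the subtlety your sketch elides. Even after killing the approximate kernel, a solution on a Scherk core with exponentially decaying inhomogeneous term and zero boundary data generically has a component that grows \emph{linearly} along the four wings (the model $v(s)=\int\!\!\int E + v'(s_0)(s-s_0)$ in the paper's introduction); cutting off such a $u$ and iterating would not produce a contraction. The paper therefore introduces, at each intersection $l_k$, four bending parameters $\{\varphi_{k,i}\}_{i=1}^4$ that tilt each wing independently, and the matching functions $\bar w_i$ are defined precisely so that the $\lin$-image of $\sum_i\varphi_{E,i}\bar u_i$ cancels that linear growth and yields exponential decay (Lemma~\ref{lem:4-17} and the end of Proposition~\ref{prop:7-1}). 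So the obstruction space has dimension $6N_I$, not the $4N_I$-ish count your enumeration (translation, angle, dislocation, period) suggests; in particular ``period'' is never a parameter (it is fixed to $2\pi/m_k$ by $m_k$).

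\textbf{The global solvability estimate you state at the end of Step 2 is not a perturbation argument; the paper proves it by iteration, and the iteration only converges because of the bending.} You assert the existence of $J$ functions $w_j$ and a $\btau$-uniform inverse $\lin^{-1}$ modulo them as though this followed from the local analysis of the pieces plus elliptic theory. In the paper, Theorem~\ref{thm:linear-invertible} is proved by cutting the inhomogeneous term to the desingularizing pieces, solving there (Proposition~\ref{prop:7-1}), cutting off that solution, solving the residual Dirichlet problem on the edges and ends, and iterating; the geometric series is summable only because the truncation error carries the factor $e^{-\gamma s}$ evaluated at $s\sim 5\delta_s/\tau$, i.e.\ exactly the exponential decay produced by bending. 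Without the $\bar w_i$'s there is no such gain and the iteration fails. This is the precise content of the paper's ``hardest point'' and cannot be waved through.

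\textbf{Minor but worth noting.} Your framing of $\lin$ as self-adjoint with respect to the translator weight $e^{y}dA$ is a genuine alternative viewpoint, but the paper instead treats $\btau\vec e_y\cdot\nabla$ as a small perturbation of $\Delta+|A|^2$, uses the ordinary $L^2$ inner product, and controls the drift explicitly (it appears with a coefficient $\mathbf V$ of size $O(\btau)$ in $N(\lin)$). The weighted self-adjoint formulation would have to contend with the fact that $e^{y}$ blows up along the grim-reaper ends, which conflicts with your simultaneous requirement of exponential decay along those ends in the Hölder weight; the paper sidesteps this by not relying on the weighted measure at all. Similarly, the choice $\rho=\btau^{1/2}$ is never used: the wings are cut off at coordinate $s\sim\delta_s/\tau$ (a fixed small multiple of $1/\tau$), and all the decay estimates are tuned to that specific scale.
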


\FloatBarrier

We call a  \emph{Scherk surface} any singly periodic minimal surface with four ends asymptotic to planes.     
\begin{figure} [h]
\includegraphics[height=2in]{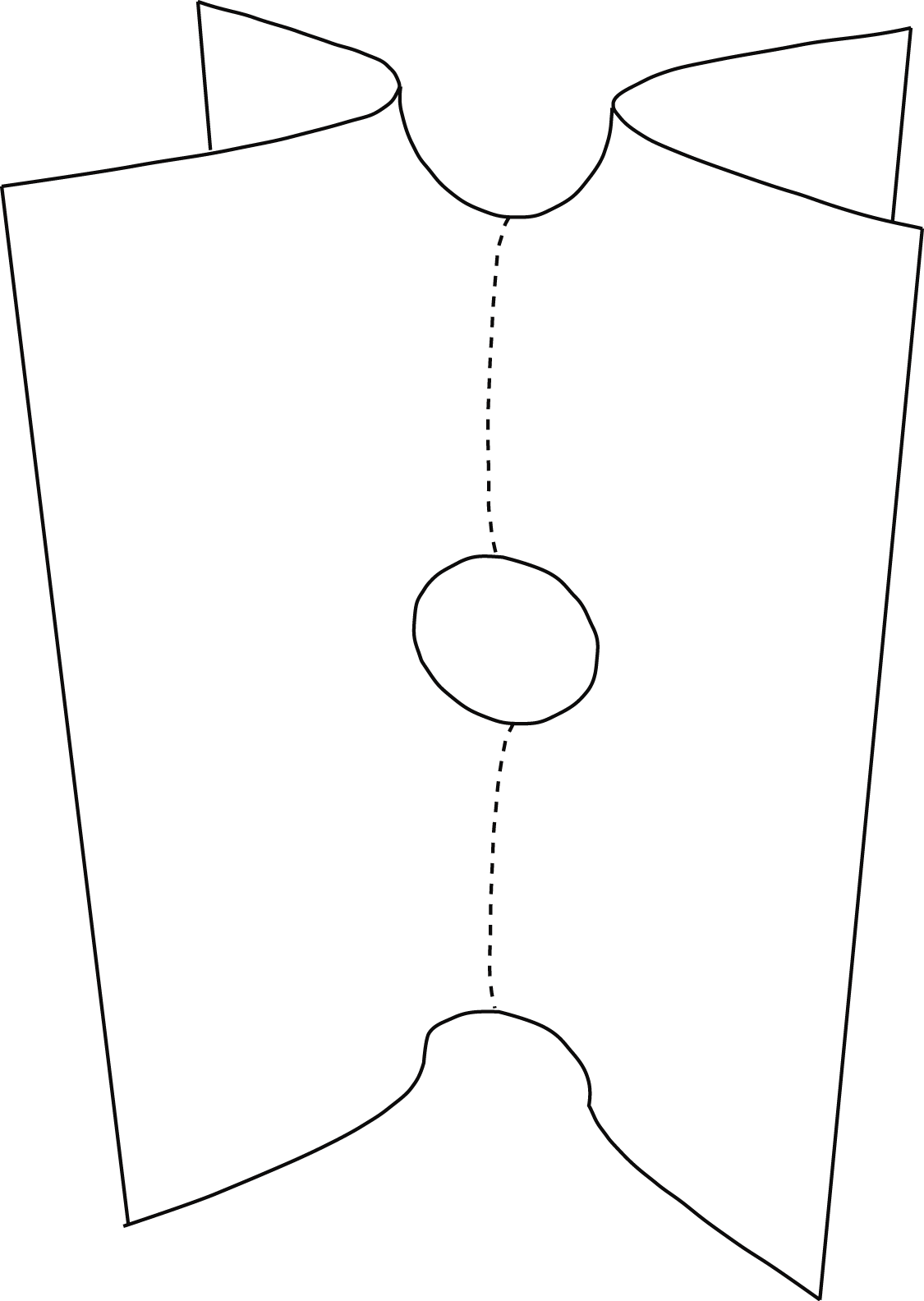}
\hspace*{3cm}
\includegraphics[height=2in]{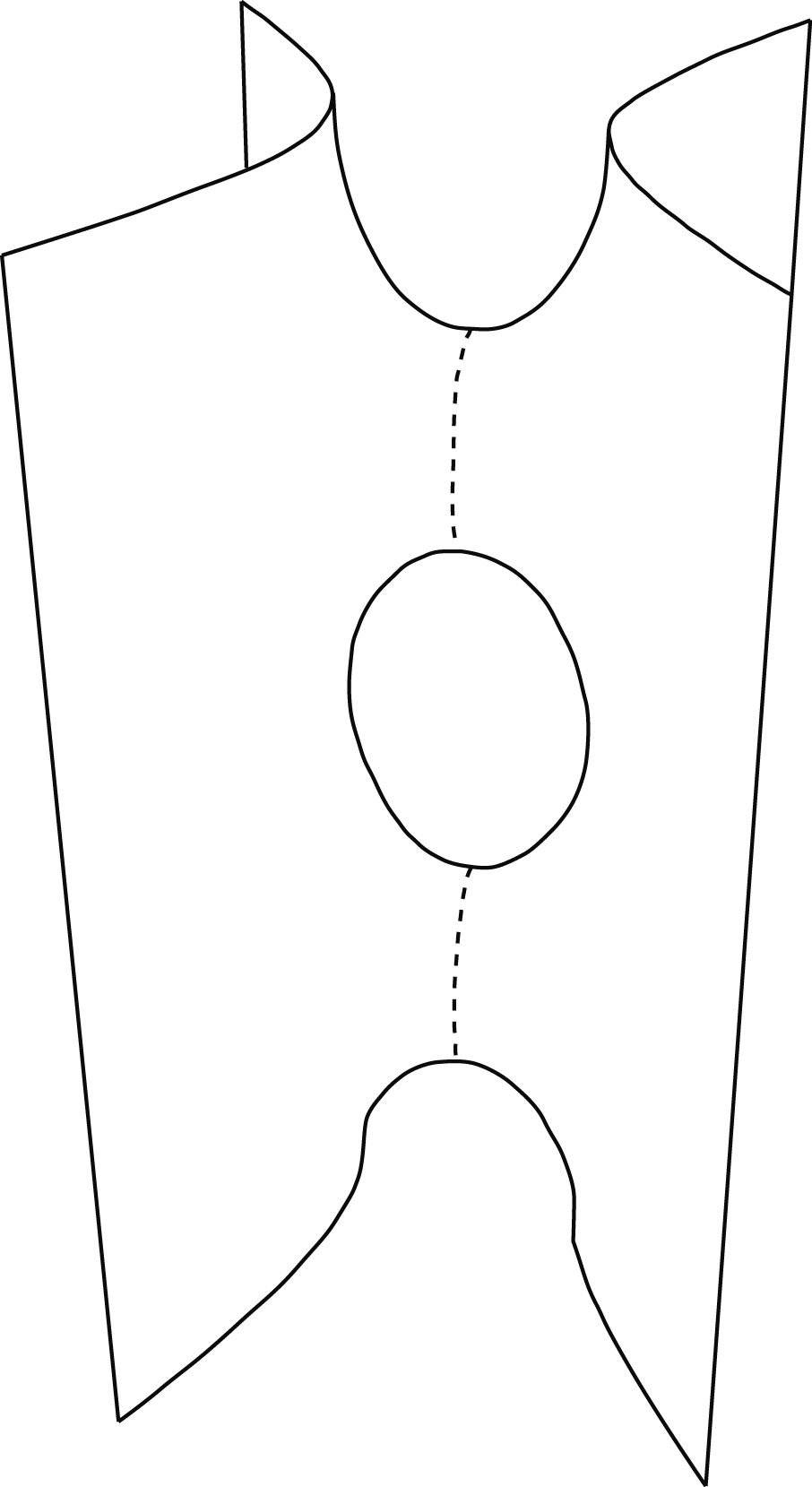}
\caption{Two Scherk surfaces.} 
\end{figure}
%
%
%
%
%

\FloatBarrier

\subsection{Sketch of the proof}

We use a version of \eqref{eq:self-trans-wilde} rescaled by a factor  $1/\bar \tau$, where $\bar \tau$ is a small constant to be determined:
	\begin{equation}
	\label{eq:self-translating}
	H-\bar \tau \vec e_y \cdot \nu = 0.
	\end{equation}

Our result and its proof are inspired by Kapouleas' construction of minimal surfaces by desingularizing a family of coaxial catenoids  \cite{kapouleas;embedded-minimal-surfaces}. Throughout the article, we treat equation \eqref{eq:self-translating} as a perturbation of $H=0$ and show that the term $-\bar \tau \vec e_y \cdot \nu$ can be controlled at every step. Since the intersections we desingularize are lines, as opposed to circles, the construction of the initial approximate solution is simplified. We can therefore  give explicit computations and more details in the proof overall.  Kapouleas mapped the catenoids conformally to cylinders to estimate and control the asymptotic behavior of the minimal surfaces. This step is not necessary in our case since our surfaces are singly periodic and asymptotic to planes. The study of the linear operator is not simpler however (but not more difficult either). As in \cite{kapouleas;embedded-minimal-surfaces}, we have to handle the presence of small eigenvalues for the linear operator and ensure exponential decay asymptotically.

The construction presented here can be easily adapted to desingularize the intersection of vertical planes to obtain minimal surfaces, provided no three planes intersect on the same line, and no two planes are parallel. Borrowing the idea from Kapouleas  \cite{kapouleas;embedded-minimal-surfaces}, we allow different scales at different intersections, so our result is a generalization of the construction of singly periodic minimal surfaces by Traizet \cite{traizet;surfaces-minimales}. 

Let us fix $\bar \tau>0$ and denote the rescaled grim reapers by 
	$
	\Gamma_n = {\bar \tau} ^{-1}\wilde \Gamma_n.
	$

In a first attempt to construct an initial surface, we replace the intersection lines with Scherk surfaces with asymptotic planes parallel to the tangent planes at the intersection, then use cut-off functions to obtain a smooth surface. The surface obtained is a crude first approximation, and the discussion below explains why the actual construction has to be more subtle. 

The initial surface is denoted by $M$, its position vector by $X$ and the unit normal vector with positive $\vec e_y$ component by $\nu$. We are looking for a solution to \eqref{eq:self-translating} among graphs of small functions $v$ with exponential decay over $M$, so we define $X_v = X + v \nu$, and denote by $M_v$ the graph of $v$ over $M$, by $H_v$ its mean curvature and by $\nu_v$ its unit normal vector. We have
	\[
	H_v - \tau \vec e_y \cdot \nu_v = H -\tau \vec e_y \cdot \nu + \Delta v + |A|^2 v + \tau \vec e_y \cdot \nabla v + Q_v,
	\]
where $Q_v$ is at least quadratic in $v$, $\nabla v$ and $\nabla^2 v$, and $A$ is the second fundamental form on $M$.  The surface $M_v$ is a STS if 
	\begin{equation}
	\label{eq:interm}
	\lin v = -H +\tau \vec e_y \cdot \nu - Q_v,
	\end{equation}
where $\lin v =  \Delta v + |A|^2 v + \tau \vec e_y \cdot \nabla v$. An important part of the proof is dedicated to solving the differential equation $\lin v =E$ on $M$. Once we can invert the linear operator $\lin$, we expect the quadratic term to be small  so the solution $v$ to \eqref{eq:interm} could be obtained by iteration. Since $M$ is a complicated surface, we study $\lin$ on different pieces first.

On an edge joining two desingularizing  surfaces, the Dirichlet problem $\lin v =E$ with vanishing boundary conditions has a unique solution by the standard elliptic theory.

On a desingularizing piece of Scherk surface $(\Sigma, g_{\Sigma})$, where $g_{\Sigma}$ is the metric induced by the embedding into $\R^3$,  $\lin$ is a perturbation of the linear operator $L = \Delta +|A|^2$ associated to normal perturbations of the mean curvature.  The mean curvature is invariant under translations, so the kernel of $L$ contains the functions $\vec e_x \cdot \nu$, $\vec e_y \cdot \nu$ and $\vec e_z \cdot \nu$, where $\vec e_x$, $\vec e_y$ and $\vec e_z$ are the three coordinate vectors. By imposing the symmetry with respect to the $xy$-plane, we can eliminate  $\vec e_z \cdot \nu$. The remaining functions $\vec e_x \cdot \nu$ and $\vec e_y \cdot \nu$ do not have the required exponential decay, nonetheless, they indicate the possible presence of small eigenvalues of $L$.  The \emph{approximate kernel} of $L$ is defined to be the span of all the eigenfunctions of $L$ corresponding to eigenvalues in $[-1,1]$. We introduce two linearly independent  functions $w_1$ and $w_2$ that have the important property of not being perpendicular to the approximate kernel (with respect to the inner product of $L^2(\Sigma, g_{\Sigma})$). Given a function $E$, one can  find constants $\theta_1$ and $\theta_2$ for which $E + \theta_1 w_1 + \theta_2 w_2$ is perpendicular to the approximate kernel and a function  $v$ satisfying $L v = E + \theta_1 w_1 + \theta_2 w_2$. Since $\lin$ is a perturbation of $L$, a similar result is true for $\lin$. 
For an exact solution, we must cancel any linear combination of $w_1$ and $w_2$ within the construction. This process is called \emph{unbalancing} and consists in dislocating the original Scherk surface so that the angles formed by its asymptotic planes are changed.

On a grim reaper end, we consider functions with exponential decay so that the asymptotic behavior of our solution matches our initial grim reaper closely. The difficulties do not arise from finding a solution to $\lin v =E $ on the end per se, but from the fact that, if we hope to find a global solution with exponential decay, the solutions on the desingularizing surfaces $\Sigma$ need to have exponential decay as well. 

Let us call {\em wings} of $\Sigma$ the four connected components obtained after removing a large enough cylinder from $\Sigma$. We can illustrate the behavior of a solution to the Dirichlet problem $\lin v =E$ in $\Sigma$, $v=0$ on $\pd \Sigma$ on a wing by considering the case of the Laplace operator on a cylinder. The surface is periodic in the $z$ variable, so the behavior can be reduced to one coordinate, say $s$. Suppose we have a function $E$ with exponential decay and a solution $v$ to $v''(s) = E(s)$ that vanishes at the boundary $s=s_0$. Explicitly, the function $v$ is given by 
	\[
	v(s)  = \int_{s_0}^s \int_{s_0}^t E(r) dr dt + v'(s_0) (s-s_0), \quad s,t \leq s_0.
	\]
The first term on the right hand side has the right decay, but the second term is linear. From the point of view of the construction, we can cancel the term $v'(s_0) (s-s_0)$ by modifying the slope of $\Sigma$ at the boundary of each wing. This involves bending each wing separately by an amount $\{\varphi_i\}_{i=1}^4$ to generate new functions $\{\bar w_i\}_{i=1}^4$ to achieve exponential decay along the wings. 

We construct a global solution for $\lin v=E$ on $M$ by partitioning the support of the inhomogeneous term and solving on each piece inductively. The error generated is small thanks to the exponential decay, and a contraction principle gives us convergence. Finally, we use a fixed point theorem to find  a self-translating surface.

\subsection{Outline of the article}

In Section \ref{sec:initial-conf}, we study families of grim reapers and how the unbalancing at the intersection affects the initial configuration. 

In Section \ref{sec:desingularizing-surfaces}, we turn our attention to Scherk surfaces and construct the desingularizing surfaces. We carefully study the interaction between the bending of the wings and the unbalancing, then  introduce the functions $w$'s and $\bar w$'s and establish key estimates in Section \ref{sec:estim}.

In Section \ref{sec:initial-surfaces}, we describe how to replace the lines of intersection by the desingularizing surfaces and construct  smooth initial surfaces. 

Section \ref{sec:lin-op} is dedicated to the study of the linear operator, on each piece separately, then globally. In Sections \ref{sec:quadratic} and \ref{sec:fpt}, we finish the proof by estimating the quadratic term and applying the Schauder Fixed Point Theorem.

Some sections of this article follow the exposition of Kapouleas in \cite{kapouleas;embedded-minimal-surfaces}, and we try to use the same notations, whenever possible. 

\subsection{Notations} 
\label{ssec:notations}
	\begin{itemize}
	\item $E^3$ is the Euclidean three space equipped with the usual metric.
	\item $\vec e_x$, $\vec e_y$ and $\vec e_z$ are the three coordinate vectors of $E^3$.
	\item $\S^2$ refers to the standard unit sphere of dimension $2$. 
	\item Throughout this article, a surface with a tilde $\wilde{S}$ is a surface in the original scale, with curvature comparable to $1$. We use a notation without tilde for its rescaling $S = \frac{1}{\tau} \wilde S=\{ (x,y,z) \in E^3 \mid (\tau x, \tau y, \tau z) \in \wilde S\}$, with curvature comparable to $\tau$. 

	\item We fix once and for all a smooth cut-off function $\psi$ which is increasing, vanishes on $(-\infty, 1/3)$ and is equal to $1$ on $(2/3, \infty)$. We define the functions $\psi[a,b]: \R \to [0,1]$ which transition from $0$ at $a$ to $1$ at $b$ by 
	\[
	\psi[a,b] (s) = \psi \left(\frac{s-a}{b-a}\right).
	\]
	
	\item We often have a function $s$ defined on the surfaces with values in $\R \cup \{ \infty\}$. If $V$ is a subset of such a surface, we use the notations 
	\begin{equation}
	\label{notation-s}
	V_{\leq a} := \{ p \in V: s(p) \leq a \}, \quad V_{\geq a}:= \{ p \in V :s(p) \geq a\}.
	\end{equation}
	\item $\nu, g, A$, and $H$ denote respectively the oriented unit normal vector, the induced metric, the second fundamental form, and the mean curvature of an immersed surface in the Euclidean space $E^3$. When we want to emphasize the surface $S$, we write these quantities with a subscript, for example $g_S$ denotes the metric of $S$. 
	\item Given a surface $S$ in $E^3$, which is immersed by $X: S \to E^3$, and a $C^1$ function ${\sigma}: S \to \R$, we call the graph of ${\sigma}$ over $S$ the surface given by the immersion $X+{\sigma} \nu$ and denote it by $S_{\sigma}$. We often use $X+{\sigma} \nu$ and its inverse to define projections from $S$ to $S_{\sigma}$, or from $S_{\sigma}$ to $S$ respectively. When we refer to projections from $S$ to $S_{\sigma}$ or from $S_{\sigma}$ to $S$, we always mean these projections. 
	 \item	We work with the following weighted H\"older norms:
	\begin{equation}
	\label{eq:def-weighted-holder}
	\| \phi: C^{k, \alpha}(\Omega, g, f)\| := \sup_{x\in \Omega} f^{-1}(x) \| \phi:C^{k, \alpha}(\Omega\cap B(x), g)\|,
	\end{equation}
where $\Omega$ is a domain, $g$ is the metric with respect to which we take the $C^{k, \alpha}$ norm, $f$ is the weight function, and $B(x)$ is the geodesic ball centered at $x$ of radius $1$.
\end{itemize}

\subsection{Thanks} The author would like to thank Sigurd Angenent for his encouragement. 


\section{Grim Reapers and Initial Configuration}
\label{sec:initial-conf}


We discuss dislocations at the intersection of the grim reapers, and how these affect the tangent vectors and the position of the grim reapers.

\subsection{Grim reapers}
\label{ssec:grimreapers}

A grim reaper  $\wilde \Gamma_n$ is a self-translating solution to the mean curvature flow given by the equation
	\[
	 y = - \log(\cos  (x - \tilde b_n)) +   \tilde c_n, \qquad   |x-\tilde b_n| < \pi/2,
	\]
or, in arc length parametrization, 
	\begin{equation}
	\label{eq:arclength}
	(\gamma_1(s) + \tilde b_n, \gamma_2(s) + \tilde c_n) :=(\arctan(\sinh s)+\tilde b_n, \ln(\cosh s)+\tilde c_n, ), \quad s \in \R.
	\end{equation}
Since we will be using the word vertex in another context below, we call the point $(\tilde b_n, \tilde c_n)$ the \emph{center} of the grim reaper to avoid confusion. With a slight abuse of language, we call any surface that is a rescaling of $\wilde \Gamma$, or a cylinder over $\wilde \Gamma$, a grim reaper also. The cylinders $\wilde \Gamma_n \times \R$ ($\Gamma_n \times \R$) will be denoted $\wilde \Gamma_n$ ($\Gamma_n$ resp.) as well in this article. In this section however, we work in the $xy$-plane exclusively since the $z$-coordinate does not play any role. 

	\begin{definition} 
	\label{def:conditions}
We say that a finite family of grim reapers  $\{\wilde  \Gamma_n\}_{n = 1}^{N_{\Gamma}}$ is in general position if  it satisfies the following conditions
	\begin{enumerate}
	\item For some $\eps>0$, $ |\tilde b_n - \tilde b_m - k \pi | > \eps$ for $n\neq m$ and for all integers $k$.
	\item No three grim reapers intersect in one point. 
	\end{enumerate}
	\end{definition}
The first condition ensures that two grim reapers do not share the same asymptote, therefore any two grim reapers that intersect do so transversally. Moreover, we have a lower and upper bound on the angle of intersection. 

Since the result would be trivial otherwise, we assume that at least two grim reapers intersect. The two conditions (i) and (ii) above imply the following properties:

\begin{lemma}
\label{lem:cor-cond}
Suppose $\{\wilde  \Gamma_n\}_{n = 1}^{N_{\Gamma}}$ is a family of grim reapers in general position. There are two real numbers $\delta_{\Gamma}>0$ and  $\delta>0$ such that
	\begin{enumerate}
	\item  the measure of the smallest of the two angles at the intersection of any two grim reapers is between $30 \delta_{\Gamma}$ and $ \pi/2 -30\delta_{\Gamma}$. 
	\item Any tangent vector to a grim reaper at an intersection forms an angle greater than $30 \delta_{\Gamma}$ with  $\vec e_y$. 
	\item the arc length or distance on the grim reapers between any two intersection points is greater than $2\delta$. 
	\end{enumerate}
\end{lemma}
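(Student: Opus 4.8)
The plan is to derive the three properties from the two defining conditions of general position (Definition \ref{def:conditions}) together with basic facts about the geometry of a single grim reaper. The key point underlying everything is that condition (i), the uniform separation of the shift parameters $\tilde b_n$ modulo $\pi$, forces any two intersecting grim reapers to meet transversally with an angle bounded away from both $0$ and $\pi/2$; and condition (ii), that no three meet at a point, together with the fact that there are finitely many grim reapers, yields a positive lower bound for the relevant distances. Since everything in sight is finite (finitely many pairs $(n,m)$, finitely many intersection points on each curve), once I establish a \emph{positive} lower bound on each individual quantity I can take the minimum over the finite index set.

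For part (i): at a point $p$ where $\wilde\Gamma_n$ and $\wilde\Gamma_m$ cross, I would compute the tangent directions using the arc length parametrization \eqref{eq:arclength}. The tangent to $\wilde\Gamma_n$ at the point with parameter $s$ is $(\cos(\gamma_1(s)), \sin(\gamma_1(s)))$ up to sign, where $\gamma_1(s) = \arctan(\sinh s)$ ranges over $(-\pi/2,\pi/2)$; equivalently, the tangent line makes angle $\arctan(\sinh s) = x - \tilde b_n$ with the $x$-axis, i.e.\ the slope of $\wilde\Gamma_n$ at abscissa $x$ is $\tan(x - \tilde b_n)$. Hence at the common abscissa $x$ of the intersection point $p$, the angle between the two curves is $|(x-\tilde b_n) - (x - \tilde b_m)| = |\tilde b_n - \tilde b_m|$ modulo $\pi$, which by condition (i) lies in $[\eps, \pi - \eps]$ modulo $\pi$; the smaller of the two angles at the crossing is therefore in $[\eps', \pi/2 - \eps']$ for some $\eps'>0$ depending only on $\eps$ (and I then choose $\delta_\Gamma$ so that $30\delta_\Gamma \le \eps'$). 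Note this is actually independent of which intersection point we pick, which is a pleasant simplification.

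For part (ii): a tangent vector to $\wilde\Gamma_n$ at abscissa $x$ makes angle $x - \tilde b_n \in (-\pi/2, \pi/2)$ with $\vec e_x$, hence angle $\pi/2 - |x - \tilde b_n|$ with $\vec e_y$. So I need a lower bound on $\pi/2 - |x - \tilde b_n|$, equivalently an upper bound on $|x-\tilde b_n|$, at intersection points. Such a bound follows because if $|x - \tilde b_n|$ were close to $\pi/2$ then $\wilde\Gamma_n$ would be nearly vertical there, so a second grim reaper $\wilde\Gamma_m$ meeting it transversally must have $x-\tilde b_m$ close to $-\pi/2$ (or the tangents would be nearly parallel, contradicting part (i)); but that forces $|\tilde b_n - \tilde b_m|$ close to $\pi$ modulo $\pi$, i.e.\ close to $0$, contradicting condition (i). Making this quantitative, the angle of intersection being $\le \pi/2 - 30\delta_\Gamma$ from part (i) already forces $|x - \tilde b_n| \le \pi/2 - 30\delta_\Gamma$ at any intersection (since the two tangent-line angles $x-\tilde b_n$ and $x-\tilde b_m$ differ by the intersection angle $\le \pi/2 - 30\delta_\Gamma$ and lie in $(-\pi/2,\pi/2)$, at least one has absolute value $\le$ something controlled — I will pin down the exact constant so that the bound $> 30\delta_\Gamma$ holds, shrinking $\delta_\Gamma$ if necessary). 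For part (iii): this is the place condition (ii) genuinely enters. The set of intersection points of the family is finite (transversality from (i) plus finitely many pairs), and by (ii) they are pairwise distinct; moreover on each fixed $\wilde\Gamma_n$ there are finitely many intersection points, all distinct, so the minimum of the arc length between consecutive ones along $\wilde\Gamma_n$ is a positive number. Taking the minimum over the finitely many $n$ and halving gives $\delta > 0$. The main obstacle — really the only nontrivial step — is getting the quantitative version of part (ii) right, i.e.\ extracting an explicit positive lower bound on the angle with $\vec e_y$ from the geometric picture; parts (i) and (iii) are essentially bookkeeping once the parametrization is in hand. I would present (i) first, then deduce the abscissa bound and hence (ii), and finally dispatch (iii) by the finiteness argument.
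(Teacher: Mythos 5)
Your approach for (i) and (iii) matches the paper's: the slope $y'=\tan(x-\tilde b_n)$ shows the angle between the two tangent lines at any crossing of $\wilde\Gamma_n$ and $\wilde\Gamma_m$ equals $|\tilde b_n-\tilde b_m|$ modulo $\pi$, which by condition (i) of Definition~\ref{def:conditions} lies at distance $>\eps$ from $0$ and $\pi$ (giving the lower bound in (i)); and (iii) follows by the finiteness of the family and of the intersection set. Be aware, though, that the asserted upper bound $\pi/2-\eps'$ in (i) with $\eps'$ a function of $\eps$ alone is not actually implied: condition (i) does not exclude $\tilde b_n-\tilde b_m \equiv \pi/2 \pmod{\pi}$, i.e.\ an exactly orthogonal crossing, so the distance of the smaller crossing angle from $\pi/2$ can only come from finiteness, not from $\eps$.

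The genuine gap is your claim that part (ii) follows from part (i) with an effective constant. It does not. Your dichotomy ``$x-\tilde b_m$ near $-\pi/2$, or the tangents are nearly parallel'' is wrong: if $x-\tilde b_n\approx \pi/2$ and $x-\tilde b_m=0$, the tangents are nearly \emph{perpendicular}, not parallel, so transversality alone does not force $x-\tilde b_m$ anywhere near $-\pi/2$. More substantively, your ``quantitative'' version at best shows that \emph{one} of $|x-\tilde b_n|$, $|x-\tilde b_m|$ is bounded below $\pi/2$ in terms of $\eps$, whereas (ii) needs this for \emph{every} tangent at the intersection, i.e.\ for both, and the other one genuinely cannot be controlled by $\eps$ alone. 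Concretely, take $\tilde b_n=0$, $\tilde b_m=\pi-2\eps$ (allowed by condition (i)) and let $\tilde c_m\to -\infty$; the intersection equation $\cos(x-\tilde b_m)=e^{\tilde c_n-\tilde c_m}\cos(x-\tilde b_n)$ then has a solution with $x-\tilde b_m\to -\pi/2$, so the tangent to $\wilde\Gamma_m$ at that intersection can be made arbitrarily close to $\vec e_y$. Hence $\delta_\Gamma$ in (ii) depends on the specific family, not just on $\eps$, and the paper's proof correctly invokes finiteness for (ii) exactly as for (iii): there are finitely many intersection points, at each of which no tangent is vertical, so the angles with $\vec e_y$ have a positive minimum. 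You should argue (ii) the same way; it is not, as you wrote, ``essentially bookkeeping once the parametrization is in hand.''
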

\begin{proof}
The first result is immediate from the explicit formula for the grim reaper and Definition \ref{def:conditions}. We have the second  and third properties since the number of grim reapers and therefore the number of intersection points is finite. 
\end{proof}	

We now work on a larger scale: we fix $\tau$ a small positive constant and consider $\Gamma_n := \frac{1}{\tau} \wilde \Gamma_n$ with center $(b_n, c_n):= \frac{1}{\tau}(\tilde b_n, \tilde c_n)$. Let  $G$ be the graph of $\bigcup_{n=1}^{N_{\Gamma}} \Gamma_n$ and  
	\begin{itemize}
	\item $V(G)$ the set of vertices of $ G$ (intersections of two grim reapers),
	\item $E( G)$ the set of edges of $ G$ (pieces of grim reapers connecting two intersections),
	\item $R( G)$ the set of rays of $ G$ (ends of grim reapers starting at an intersection),
	\item $R_l( G)$ the set of ``left" rays of $G$ (rays that start at an intersection and move toward the negative $x$-coordinate).
	\end{itemize}

Unless otherwise specified, $p$ denotes a vertex, $e$ an edge and $r$ a ray. For each vertex $p$, there are exactly four unit tangent vectors emanating from $p$, denoted by $v_{p1}, v_{p2}, v_{p3}, v_{p4}$, where the number refers to the order in which they appear as we rotate from $\vec e_y$ counterclockwise. We say a graph is {\em balanced} if the four directing vectors cancel, $\sum_{i=1}^4 v_{pi} =0$, at every vertex $p$. Note that the graph  $G$  is balanced.

\subsection{Construction of the initial configuration $\overline G$}
\label{ssec:construction-G-bar}

Our goal is to perturb $G$ into a graph $\overline G$ for which the sum of the directing vectors at each vertex $p$  is given by a small vector $\vec \zeta(p)$,
	\[
	\sum_{i=1}^4 v_{pi} =\vec \zeta(p), \quad p \in V(\overline G).
	\] 
The process is called {\em unbalancing} and is necessary for tackling the approximate kernel of the linear operator $L=\Delta +|A|^2$ on each desingularizing surface.

The equation \eqref{eq:self-translating} we set out to solve  is a perturbation of $H=0$. The mean curvature is invariant under translations, therefore the functions $\vec e_x \cdot \nu$, $\vec e_y \cdot \nu$ and $\vec e_z \cdot \nu$ are in the kernel of the linear operator $L$ associated to normal perturbations of $H$.  Imposing  a symmetry (invariance of our surfaces with respect to reflection across the $xy$-plane), we can rule out $\vec e_z \cdot \nu$. The remaining functions $\vec e_x \cdot \nu$ and $\vec e_y \cdot \nu$ do not have the required exponential decay, however, they indicate  that $L$ has small eigenvalues.  We call the span of eigenfunctions of $L$ corresponding to these small eigenvalues the {\em approximate kernel} of $L$.  One can only solve the differential equation $L v =E $ with a reasonable estimate on $v$ if $E$ is perpendicular to the approximate kernel. We do not have such control over the inhomogeneous term, so we  introduce two functions $w_1$ and $w_2$ to cancel any component  parallel to the approximate kernel. Roughly speaking, $w_1$ has to be in the direction of $\vec e_x \cdot \nu$, in the sense that $\int w_1 (\vec e_x \cdot \nu) \neq 0$, and similarly, $\int w_2 (\vec e_y \cdot \nu) \neq 0$.

Let $S$ be one period of the desingularizing surface $\Sigma$. According to the balancing formula from \cite{korevaar-kusner-solomon} (see also Lemma \ref{lem:balancing}), the mean curvature of $S$ satisfies
	\[
	\int_{S}  H  \vec e_x \cdot \nu dg_{S} = 2 \pi \sum_{i=1}^4 v_i \cdot \vec e_x, \quad \int_{S}  H  \vec e_y  \cdot \nu dg_{S} =  2\pi \sum_{i=1}^4 v_i \cdot \vec e_y, 
	\]
where each vector $v_i$ is the direction of the  half-plane asymptotic to the $i$th end of $\Sigma$. The idea is to define $w_1$ and $w_2$ as derivatives of $H$ and use dislocations that move  $v_1$ and $v_3$, or $v_2$ and $v_4$, away from being parallel to generate linear combinations of $w_1$ and $w_2$. The role of the new graph $\overline G$ is to determine the position of the vertices of the initial surface depending on an imposed unbalancing.

We define the notion of a tetrad to keep track of the different vectors and angles at the vertices. 
We use the notations 
	\begin{equation}
	\label{def:e-e-prime}
	\vec e[\theta] = \cos \theta \vec e_x + \sin \theta \vec e_y, \quad 
	\vec e\ ' [\theta] =-\sin \theta \vec e_x + \cos \theta \vec e_y.
	\end{equation}

	\begin{definition}
	\label{def:tetrad}	
An acceptable tetrad of vectors, or  ``tetrad" for short, is defined to be a tetrad of vectors $T = (v_1, v_2, v_3, v_4)$ such that $v_i = \vec e[\beta_i]$ for some $\beta_i \in \R$ satisfying
	\[
	0 < \beta_2 - \beta_1 < \beta_3 - \beta_1< \beta_4 - \beta_1<2 \pi.
	\]
Moreover, for such a tetrad $T$ we define 
	\begin{align*}
	\theta(T)&: = \frac{ \beta_1-\beta_2 + \beta_3 - \beta_4+2\pi}{4}, &\theta_1(T):= \frac{\beta_3 - \beta_1-\pi}{2},\\
	\theta_r(T)&:=\frac{\beta_1 + \beta_2 + \beta_3 +\beta_4-4\pi}{4}, &\theta_2(T):= \frac{\beta_4 - \beta_2-\pi}{2},
	\end{align*}
and require 
	\begin{equation}
	\label{eq:require-T}
	\theta(T) \in [20 \delta_{\theta}, \frac{\pi}{2} - 20 \delta_{\theta}], \quad \theta_1(T), \theta_2(T) \in [-2 \delta_{\theta}, 2 \delta_{\theta}],
	\end{equation}
with $\delta_{\theta}$ to be determined at the end of the section.
		\end{definition}
\begin{figure} [h]
\includegraphics[height=1.8in]{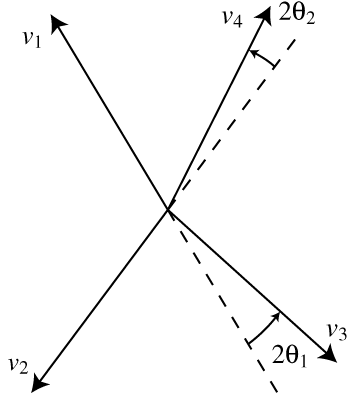}
\caption{The angles $\theta_1$ and $\theta_2$.} 
\end{figure}
We characterize the unbalancing of a tetrad $(v_1, v_2, v_3, v_4)$ with the angles $\theta_{j}$, $j=1,2$, which measure how much the vectors $v_j$ and $v_{j+2}$ fail to point in opposite directions, instead of using the vector $\vec \zeta = \sum_{i=1}^4 v_i$. The angles $\theta(T)$ and $\theta_r(T)$ will not be used here but will come into play in Section \ref{sec:desingularizing-surfaces}.

\FloatBarrier

Let $N_I$ be the number of intersection points, and recall that $N_{\Gamma}$ is the number of grim reapers. The unbalancing restrictions do not determine $\overline G$ completely, and we have enough degrees of freedom to impose a slight shift of each left ray.  The graph $\overline G$ constructed below is therefore a perturbation of $G$ depending on $\{(\theta_{k,1}, \theta_{k,2})\}_{k=1}^{N_I}$ for the unbalancing and on $\{(b_n', c_n')\}_{n=1}^{N_{\Gamma}}$ for the position of the left rays.

The general construction does not depend on the first simpler two cases, but we describe them so the reader can understand the general case more easily. 

1) We first consider the case of the intersection of two grim reapers $\Gamma_1$ and  $\Gamma_2$, where $\Gamma_1$ is the grim reaper that has a tangent at $p$ in the $v_{p1}$ direction.  We translate $\Gamma_1$ to a grim reaper whose center is at $(b_1+b'_1, c_1 +c'_1)$ and similarly, we translate $\Gamma_2$ to one centered at $(b_2+ b'_2, c_2+ c'_2)$. We denote by $\bar p$ the intersection of the two perturbed grim reapers, and  the four directing vectors $\{ v_{\bar p i} \}$ by $v_1, v_2, v_3$ and $v_4$, where the number refers to the order in which they appear as we rotate from $\vec e_y$ counterclockwise. We fix $\bar v_1=v_1$ and $\bar v_2=v_2$, and we find $\bar v_3$ and $\bar v_4$ so that 
 	\[
	\theta_1(\overline T) = \theta_{1,1}, \textrm{ and } \theta_2(\overline T)  = \theta_{1,2}, \textrm{ for } \overline T=(\bar v_1, \bar v_2, \bar v_3, \bar v_4)
	\]
We now join at $\bar p$ two pieces of grim reapers with tangent vectors $\bar v_3$ and  $\bar v_4$ respectively. Note that this joining is possible since by equation \eqref{eq:arclength}, the tangent unit vectors to a grim reaper map onto the set of unit vectors in $\R^2$ with positive $x$-coordinate.  This completes the construction of an initial configuration for the simplest case.
\begin{figure} [ht]
\includegraphics[height=2.15in]{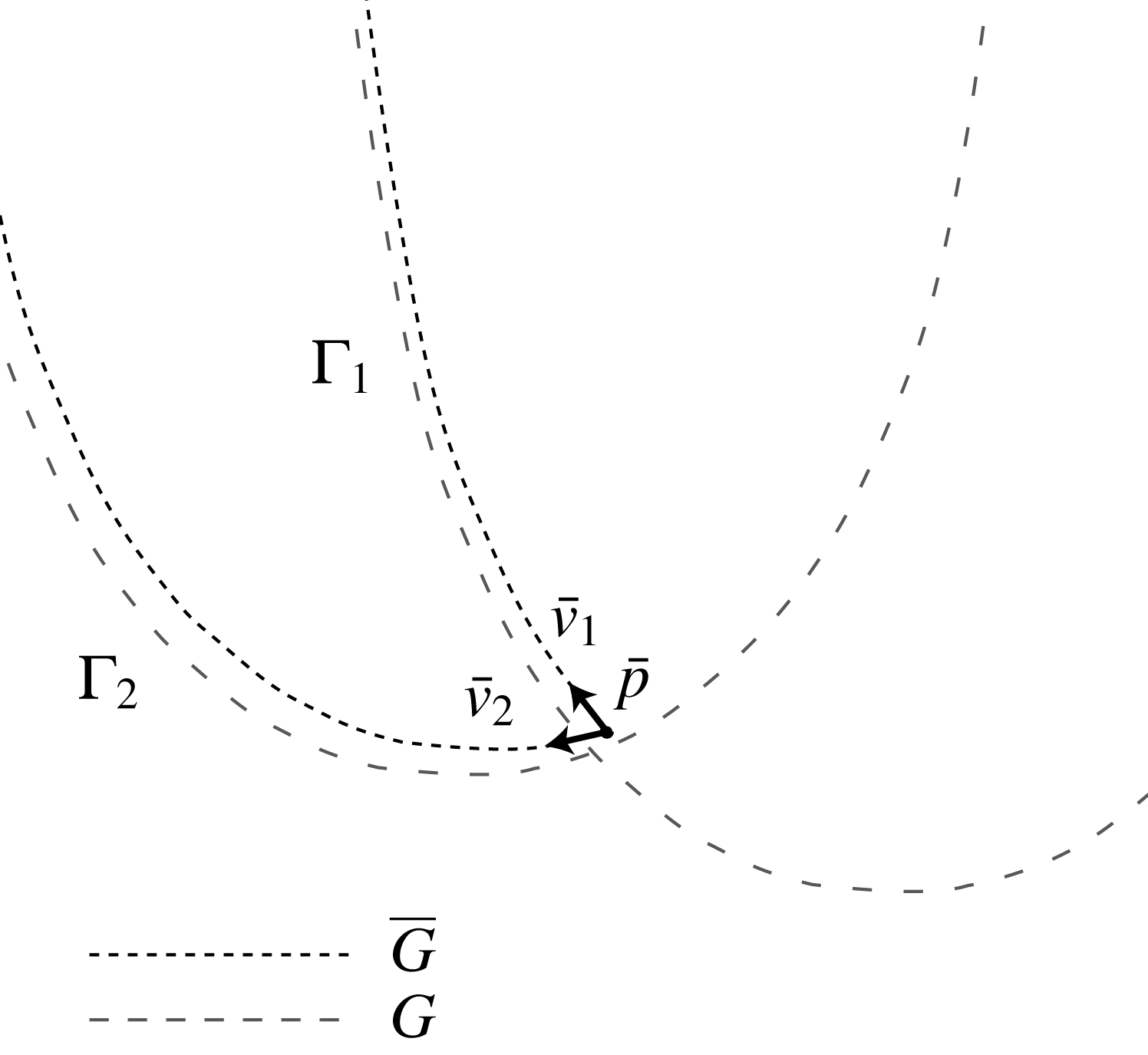}
\hspace*{0cm}
\includegraphics[height=2.15in]{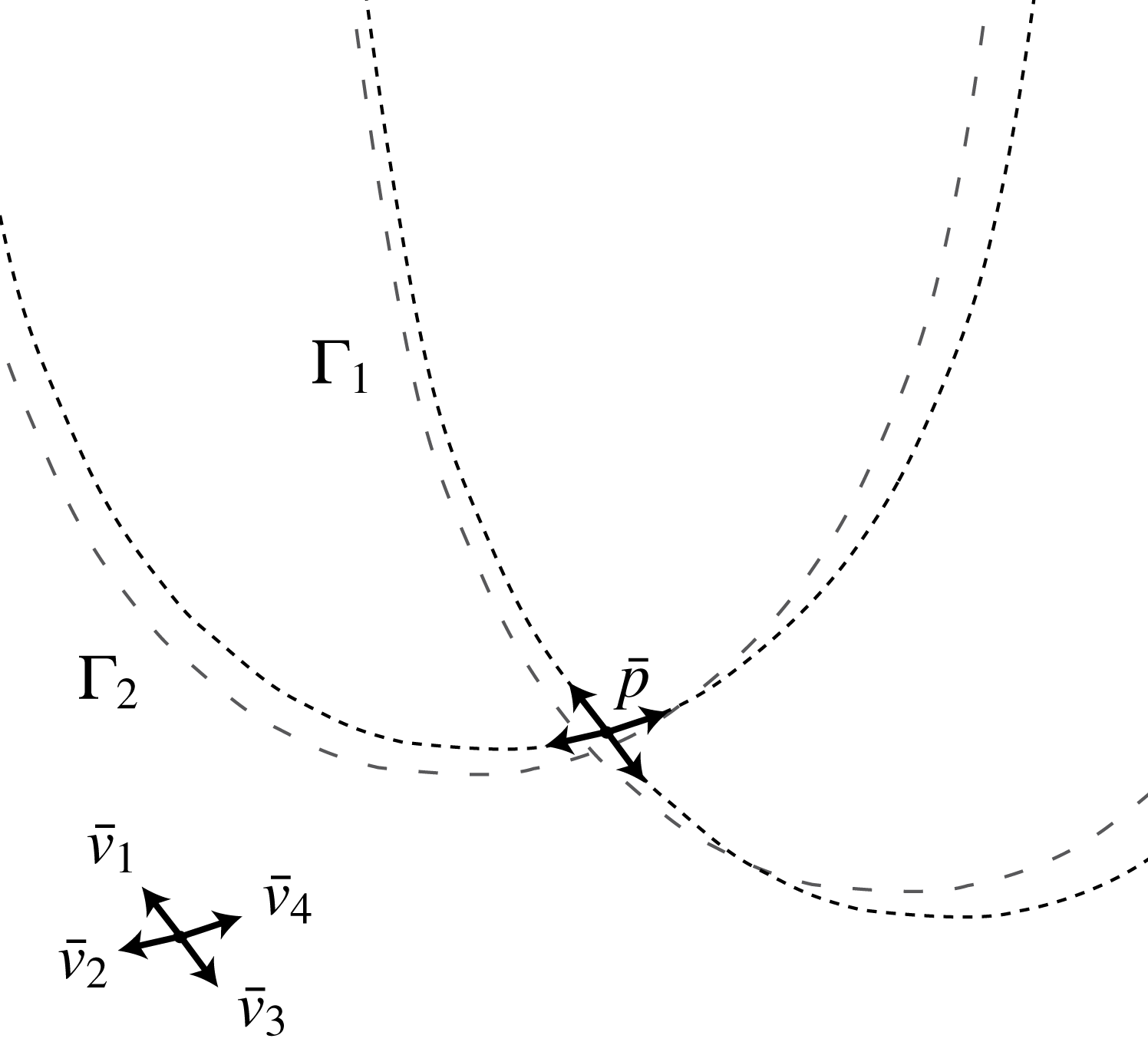}
\caption{Construction of $\overline G$ for two intersecting grim reapers.} 
\end{figure}

2) The second case consists of three grim reapers intersecting in three points $p_k=(x_k, y_k)$, $k=1,2,3$. We number the points so that $x_j < x_k $ for $j<k$, and we number the grim reapers so that $\Gamma_1$ has tangent at $p_1$ in the $v_{p_11}$ direction, $\Gamma_2$ is the other grim reaper through $p_1$, and $\Gamma_3$ is the last grim reaper.

As in the previous case, we translate $\Gamma_1$ and $\Gamma_2$ by $(b'_1, c'_1)$ and $(b'_2, c'_2)$ respectively.  We denote the intersection of the perturbed curves by $\bar p_1$ and the four directing vectors  by $v_{\bar p_11}, v_{\bar p_12}, v_{\bar p_13}$ and $v_{\bar p_14}$ in the same order as before. We impose $\bar v_{11} = v_{\bar p_11}$ and $\bar v_{12} = v_{\bar p_12}$, and find $\bar v_{13}$ and $\bar v_{14}$ such that $\theta_1(\overline T) = \theta_{1,1}, \textrm{ and } \theta_2(\overline T)  = \theta_{1,2},$ for $\overline T=(\bar v_{11}, \bar v_{12}, \bar v_{13}, \bar v_{14})$.

At the point $\bar p_1$, we consider the two rays (which are pieces of  grim reapers) emanating from $\bar p_1$ with tangent directions $\bar v_{13}$ and $\bar v_{14}$ respectively. Both of these rays intersect the translation of the third grim reaper by $(b'_3, c'_3)$; $\bar p_2$ is the leftmost intersection point. We impose $\bar v_{21} = v_{\bar p_2 1}$ and $\bar v_{22} = v_{\bar p_2 2}$. The two other directions $\bar v_{23}$ and $\bar v_{24}$ are determined by $(\theta_{2,1},\theta_{2,2})$.

We now take  the two rays emanating from $\bar p_2$ with tangent directions $\bar v_{23}$ and $\bar v_{24}$ respectively. One of these intersects one of the rays from $\bar p_1$ and we denote the intersection by $\bar p_3$. 
 
We fix the two left directions $\bar v_{31}=v_{\bar p_3 1}$ and $\bar v_{32} = v_{\bar p_3 2}$ and the two other directions $\bar v_{33}$ and $\bar v_{34}$ are determined by $(\theta_{3,1}, \theta_{3,2})$. We now take the two rays emanating from $\bar p_3$ with tangent directions $\bar v_{33}$ and $\bar v_{34}$ to complete the construction.
\FloatBarrier

3) For the general case, let us denote the intersection points $p_k = (x_k, y_k)$, $k=1, \ldots, N_I$ where $x_j \leq x_k$ for $j\leq k$ and if $x_j = x_k$, $y_j>y_k$ for $j<k$. In other words, we number our intersection points from left to right, and if two points have the same abscissa, we take the one with higher ordinate first. 

We now number the grim reapers. $\Gamma_1$ is the grim reaper through $p_1$ with tangent direction $v_{p_11}$, and $\Gamma_2$ is the grim reaper through $p_1$ with tangent in the $v_{p_12}$ direction. Note that the abscissa of the center of $\Gamma_1$ is greater than the abscissa of the center of $\Gamma_2$, in other words, $b_1 >b_2$. We then proceed with the $p_k$'s by increasing $k$. If $p_k$ is on an as yet unnumbered grim reaper, we just give the grim reaper the next available number. In the case $p_k$ is on two unnumbered grim reapers, we number the one with the rightmost center first.

We translate $\Gamma_1$ by $(b'_1, c'_1)$ and $\Gamma_2$ by $(b'_2, c'_2)$ respectively, and denote the intersection by $\bar p_1$. We impose $\bar v_{11} =v_{\bar p_1 1}$ and $\bar v_{12} = v_{\bar p_1 2}$ and determine the directions $\bar v_{13}$ and $\bar v_{14}$ using  $(\theta_{1,1}, \theta_{1,2})$. We now modify the edges or rays on the right of $\bar p_1$ so that they have unit tangent vectors $\bar v_{13}$ and $\bar v_{14}$ respectively.  

We suppose the point $\bar p_{k-1}$ and the vectors $\{\bar v_{(k-1)i}\}_{i=1}^4$ are constructed and give a procedure for the $k$th intersection. Consider the two edges or rays on the left of $p_{k}$ and intersecting at $p_{k}$. If both of the pieces on the left of $p_{k}$ are edges, they have been modified already and their intersection gives us $\bar p_{k}$. If we have one edge and one ray, or two rays, then we modify the ray(s) in the following way. Each ray is supported on a grim reaper $\Gamma_n$. We translate each $\Gamma_n$ by $(b_n',c_n')$ to obtain a modified ray. The intersection of the edge and the modified ray, or of the two modified rays, gives us $\bar p_{k}$. We impose the two left vectors $\bar v_{k1} =v_{\bar p_k 1}$ and $\bar v_{k2} = v_{\bar p_k 2}$ and determine $\bar v_{k3}$ and $\bar v_{k4}$ from $(\theta_{k,1}, \theta_{k,2})$. We now modify the edges or rays on the right of $\bar p_{k}$ so that their unit tangent vectors are $\bar v_{k3}$ and $\bar v_{k4}$ respectively. 
\begin{figure} [h]
\includegraphics[height=3in]{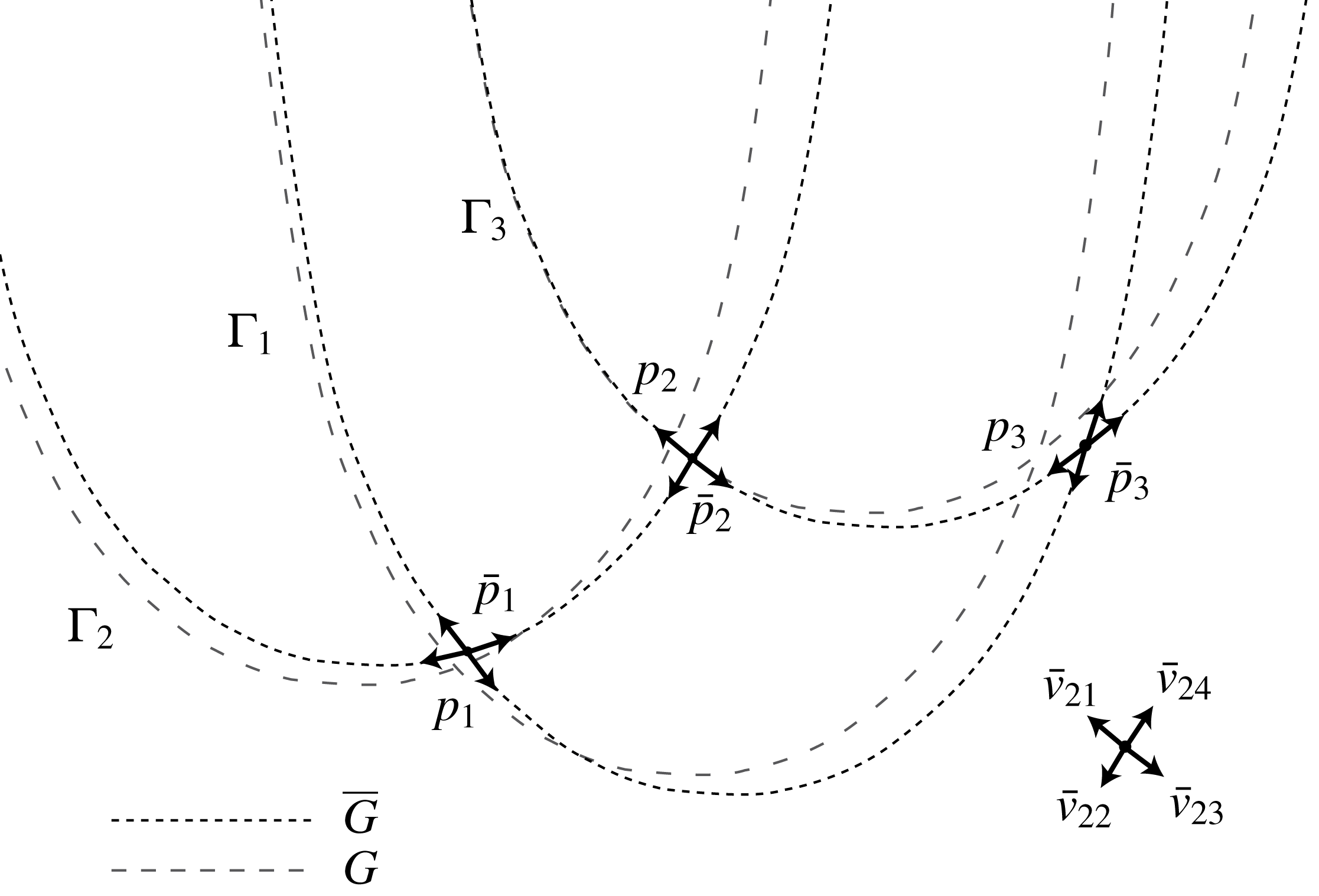}
\caption{Construction of $\overline G$ for three grim reapers, with $(b'_3, c'_3)=(0,0)$.} 
\end{figure}
\FloatBarrier

Let us fix some notations for the new graph $\overline G$. To each vertex $\bar p_k$ of  $\overline G$ corresponds the tetrad $(\bar v_{k1}, \bar v_{k2}, \bar v_{k3}, \bar v_{k4})$ of unit tangent vectors of rays or edges emanating from $\bar p_k$. Similarly, we have the tetrad $(v_{k1}, v_{k2}, v_{k3}, v_{k4})$ at each vertex $p_k$ of the original graph $G$. When we want to emphasize the dependence of $\overline G$ on its parameters, we write $\overline G (\ubar b',\ubar c', \ubar \theta, \tau)$, where $\ubar b' = \{b'_n\}_{n=1}^{N_{\Gamma}}$, $\ubar c' = \{c'_n\}_{n=1}^{N_{\Gamma}}$, $\ubar \theta=\{\theta_{k,1}, \theta_{k,2}\}_{k=1}^{N_I}$, and $\tau$ is related to the scaling.

\begin{prop}
\label{prop:init-conf} There exist a $\delta'_{\theta}>0$ and a constant $C$ depending only on  $\delta_{\Gamma}, \delta$ and $N_{\Gamma}$ with the following property: if  $\max(\tau |(\ubar b',\ubar c')| , | \ubar \theta|) \leq  \delta'_{\theta}$, the vertices $\bar p_k$ and the directing vectors $\bar v_{ki}$ of the graph $\overline G (\ubar b',\ubar c', \ubar \theta, \tau)$ satisfy
	\[
	\tau |p_k-\bar p_k| \leq C\tau |(\ubar b',\ubar c')| + C | \ubar \theta|, \quad |\angle (v_{ki}, \bar v_{ki})| \leq C \tau |(\ubar b',\ubar c')| + C |\ubar \theta|,
	\]
for $i=1,\ldots, 4$ and $ k=1,\ldots, N_I$. 

\end{prop}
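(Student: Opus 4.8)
The plan is to establish both estimates simultaneously by strong induction on the index $k$ of the intersection points, following the order $\bar p_1,\dots,\bar p_{N_I}$ in which $\overline G$ is built. Recall that each $\bar p_k$ is obtained as the intersection of two curves, each of which is either a translated grim reaper $\Gamma_n+(b_n',c_n')$ or an edge already produced at some earlier step $j<k$; and that once $\bar p_k$ and the two ``left'' tangents $\bar v_{k1},\bar v_{k2}$ are known, the remaining tangents $\bar v_{k3},\bar v_{k4}$ are recovered from $\bar v_{k1},\bar v_{k2}$ and $(\theta_{k,1},\theta_{k,2})$ through the affine formulas of Definition \ref{def:tetrad}. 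Note also that in the unperturbed graph $G$ every vertex carries a tetrad with $\theta_1=\theta_2=0$, since each of the two grim reapers through the vertex contributes a pair of opposite tangent directions; so the $\theta_{k,j}$ are genuinely small perturbations of $0$.

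The one analytic ingredient I would isolate first is a quantitative transversal-intersection estimate: if two planar $C^2$ curves meet at a point $q$ at an angle bounded below by $c_0>0$ with curvature bounded by $\kappa_0$ on the relevant compact arcs, and each is perturbed within $C^1$-distance $\eta$ on those arcs, then for $\eta$ small (in terms of $c_0,\kappa_0$ and the arc lengths) the perturbed curves still meet at a unique nearby point $q'$ with $|q-q'|\le C\eta$ and with the tangent directions at $q'$ differing from those at $q$ by at most $C\eta$, $C$ depending only on $c_0,\kappa_0$ and the arc lengths. This is a routine implicit-function-theorem argument: the angle bound $c_0$ controls the inverse of the Jacobian of the intersection equation, while $\kappa_0$ and the arc length control how much the unit tangent rotates between $q$ and $q'$. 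Here $c_0$ and $\kappa_0$ are supplied by Lemma \ref{lem:cor-cond} (intersection angles in $[30\delta_{\Gamma},\pi/2-30\delta_{\Gamma}]$, tangents at angle $>30\delta_{\Gamma}$ from $\vec e_y$, so grim-reaper slopes stay away from vertical) and by the parametrization \eqref{eq:arclength} (bounded curvature on any fixed compact arc), while arc lengths between consecutive intersections are controlled by $\delta$ and the family; I would also record that ``the grim reaper through a given point with a given tangent'' depends smoothly, with uniform bounds, on that point and tangent.

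For the base case, $\bar p_1$ is the intersection of the translates of $\Gamma_1,\Gamma_2$ by $(b_1',c_1'),(b_2',c_2')$, which in the original scale are translations by $\tau(b_n',c_n')$, so the perturbed curves are $C^\infty$-close to $\Gamma_1,\Gamma_2$ by at most $\tau|(\ubar b',\ubar c')|$. The intersection estimate gives $\tau|p_1-\bar p_1|\le C\tau|(\ubar b',\ubar c')|$ and $|\angle(v_{1i},\bar v_{1i})|\le C\tau|(\ubar b',\ubar c')|$ for $i=1,2$, once $\delta'_{\theta}$ is small enough for the estimate to apply; then $\bar v_{13},\bar v_{14}$ come from $\bar v_{11},\bar v_{12}$ and $(\theta_{1,1},\theta_{1,2})$ by Definition \ref{def:tetrad}, extending the angle bound to $i=3,4$ with an extra $C|\ubar\theta|$, and one checks the resulting tetrad is still acceptable for $\delta'_{\theta}$ small. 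Since the unit tangents of a grim reaper fill out all directions with positive $x$-component by \eqref{eq:arclength}, the new right-hand edges/rays are genuine grim-reaper arcs, $C^1$-close to those of $G$ by the amount just estimated. The inductive step repeats this at $\bar p_k$: each left piece at $p_k$ is either a translated ray, within $\tau|(\ubar b',\ubar c')|$ of the original ray, or an edge built at a step $j<k$ — the grim-reaper arc through $\bar p_j$ with initial tangent $\bar v_{j3}$ or $\bar v_{j4}$ — hence within $C\bigl(\tau|p_j-\bar p_j|+\max_{1\le i\le4}|\angle(v_{ji},\bar v_{ji})|\bigr)$ of the corresponding edge of $G$ by the smooth dependence noted above, which by the inductive hypothesis is $\le C(\tau|(\ubar b',\ubar c')|+|\ubar\theta|)$; applying the intersection estimate to the two left pieces locates $\bar p_k,\bar v_{k1},\bar v_{k2}$ with the desired bound, and Definition \ref{def:tetrad} then handles $\bar v_{k3},\bar v_{k4}$ and the new right-hand pieces.

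Since each of the $N_I\le\binom{N_{\Gamma}}{2}$ steps multiplies the constant by a factor depending only on $\delta_{\Gamma}$ and the family, the final $C$ depends only on $\delta_{\Gamma},\delta,N_{\Gamma}$, and $\delta'_{\theta}$ is chosen small enough that every local application above is valid. \textbf{The main obstacle} is precisely the uniformity of the intersection estimate along the whole induction: one must guarantee that, under all admissible perturbations with $\max(\tau|(\ubar b',\ubar c')|,|\ubar\theta|)\le\delta'_{\theta}$, every relevant pair of curves still meets, still meets transversally with angle bounded below, and meets at the \emph{combinatorially correct} point — so that $\overline G$ is well defined and has exactly the same incidence structure (vertices, edges, rays) as $G$ — and that the constant accumulating through the induction does not grow uncontrollably with $N_{\Gamma}$. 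These are all open-condition statements and should go through once $\delta'_{\theta}$ is taken small, but making the dependence of $\delta'_{\theta}$ and $C$ on $(\delta_{\Gamma},\delta,N_{\Gamma})$ uniform over the finitely many combinatorial types is the delicate part of the bookkeeping.
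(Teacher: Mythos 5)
Your proposal is correct and follows essentially the same approach as the paper: control the local intersection by an implicit/inverse function theorem argument (exploiting the lower bound on the transversal angle), then propagate the estimates through the finite combinatorics of the graph. The one place you state a result abstractly that the paper establishes by explicit computation is the transversal-intersection estimate: the paper does not invoke a generic $C^2$-perturbation lemma, but instead uses the explicit arc-length parametrization of the grim reaper to write the map $F(s_0,s_1)=(b,c)$ sending intersection parameters to centers, computes its Jacobian determinant explicitly, and shows it is bounded below by $\eps/(2C^2)$ using the separation $|\tau b|>\eps/2$ from Definition~\ref{def:conditions}; similarly, the relation $\tan\alpha=\sinh(\tau s_1)$ gives the two-sided bound $C^{-1}\tau\le|d\alpha/ds_1|\le C\tau$ tying angle perturbations to arc-length perturbations. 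Your generic $C^1$-stability lemma would also work, but the explicit formulas let the paper identify precisely what controls the constant (namely $\eps$ and the bound on $|\tau s_0|+|\tau s_1|$, hence $\delta_\Gamma$), which is exactly the uniformity issue you flag as the delicate part. For the propagation, the paper is somewhat terser than your induction, simply noting that the perturbations propagate and that for small enough $\tau|(\ubar b',\ubar c')|$ and $|\ubar\theta|$ the intermediate configurations retain the properties of Lemma~\ref{lem:cor-cond} with slightly degraded constants ($20\delta_\Gamma$ in place of $30\delta_\Gamma$, $\delta$ in place of $2\delta$), which is the same open-condition bookkeeping you describe. So there is no gap; your version makes the induction more explicit while being less explicit about the local quantitative estimate, and either level of detail is acceptable.
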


\begin{proof} We start by studying how the location of the intersection changes as two grim reapers are translated by different vectors. Without loss of generality, we can assume that one of them stays centered at the origin. 

Let $\Gamma_0$ and $\Gamma_1$ be two grim reapers given by the position vectors
	\begin{gather*} 
	\mathbf r_0 (s) =  \frac{1}{\tau}(\gamma_1(\tau s) , \gamma_2(\tau s)),\\
	\mathbf r_1 (t) =  \frac{1}{\tau}(\gamma_1(\tau t) + \gamma_1(\tau s_0) - \gamma_1(\tau s_1), \gamma_2(\tau t) + \gamma_2(\tau s_0) - \gamma_2(\tau s_1),
	\end{gather*}
where 
	\begin{equation}
	\label{eq:def-gamma}
	\gamma_1 (s) = \arctan(\sinh s), \quad \gamma_2(s) = \ln (\cosh s),
	\end{equation} 
as in \eqref{eq:arclength}. The two grim reapers intersect at $\mathbf r_0 (s_0) = \mathbf r_1(s_1)$ and the tangent vectors at the intersection are 
	\begin{equation}
	\label{eq:tangent}
	\mathbf r_i'(s_i) = \left(\frac{1}{\cosh(\tau s_i)}, \tanh(\tau s_i) \right), \quad i=0,1.
	\end{equation}
The coordinates of the center of  $\Gamma_1$ are 
	\begin{align}
	\label{eq:F-center}
	b=\tau^{-1}(\gamma_1(\tau s_0) - \gamma_1(\tau s_1)), \quad c=\tau^{-1} (\gamma_2(\tau s_0) - \gamma_2(\tau s_1)).
	\end{align}
If the angles between the tangent vectors $\mathbf r_i'(s_i)$  and $\vec e_y$ are larger than $20 \delta_{\Gamma}$,  then  $|\tau s_0| +|\tau s_1|<C$, with $C$ depending on $\delta_{\Gamma}$. Therefore $\cosh(\tau s_0)$ and $\cosh (\tau s_1)$ are bounded above by a constant. In addition, if $|\tau b| =|\gamma_1(\tau s_0) - \gamma_1(\tau s_1)| >\eps/2$, where $\eps$ is given in Definition \ref{def:conditions}, we have
	\[
	 \frac{\eps}{2}<\left|\frac{1}{\cosh (\tau \bar s)} (\tau s_0-\tau s_1) \right| \leq \tau |s_0-s_1|, 
	 \]
where $\bar s$ is the number in $(s_0,s_1)$ given by the Mean Value Theorem.

We define the function $F: \R^2 \to \R^2$, $F(s_0, s_1)= (b,c)$ using \eqref{eq:F-center}.  According to the Inverse Function Theorem, $F$ has an inverse if the determinant of its Jacobian does not vanish. Indeed,
	\[
	|\det [DF]| = \frac{|\sinh (\tau s_1)- \sinh (\tau  s_0) |}{\cosh (\tau s_0)  \cosh (\tau s_1)} \geq  \frac{\tau |s_1-s_0|}{\cosh (\tau s_0) \cosh (\tau s_1)} 
	\geq   \frac{\eps}{2 C^2}>0.
	\]
$F^{-1}$ has  bounded derivatives and the quantities $\tau s_0$ and $\tau s_1$ are bounded, therefore, if $p$ and $\bar p$ are the intersections of $\Gamma_0$ and  grim reapers centered at $(b,c)$ and $(\bar b, \bar c)$ respectively, we have 
	\begin{equation}
	\label{eq:delta-p}
	|p-\bar p| \leq C |(b-\bar b, c-\bar c)|
	\end{equation}
for $(b,c)$ close enough to $(\bar b,\bar c)$.

Let us now fix the intersection point $\tau^{-1}(\gamma_1(\tau s_0), \gamma_2(\tau s_0))$ and study how a change in the tangent vector $ \mathbf r_1'(s_1)$ at the intersection moves the center $(b,c)$. The angle $\alpha$ between  $ \mathbf r_1'(s_1)$ and $\vec e_y$ satisfies
	\begin{equation}
	\label{eq:angle-arclength}
	\tan \alpha=  \sinh (\tau s_1).
	\end{equation}
From an earlier discussion, $|\tau s_0|+|\tau s_1|$ is bounded, so $|\sinh (\tau s_1)|$ is bounded. Hence, 	\begin{equation}
	\label{eq:delta-angle}
	C^{-1} \tau \leq  \left| \frac{d\alpha}{ds_1}\right| = \left| \frac{\tau}{\cosh (\tau s_1)}\right| \leq C \tau 
	\end{equation}
for some constant $C$. 

Starting with our initial configuration where all the angles are bounded below by $30 \delta_{\Gamma}$, the change of position of an intersection point and the changes in the tangent vectors are propagated to the next intersection points, but for  $\tau |(\ubar b', \ubar c')|$ and $|\ubar \theta|$ small enough depending on $N_{\Gamma}$, $\delta$ and $\delta_{\Gamma}$, the perturbed configuration still has  the properties of Lemma \ref{lem:cor-cond}, with $20 \delta_{\Gamma}$ instead of $30\delta_{\Gamma}$ in (i) and (ii), and $\delta$ instead on of $2 \delta$ in (iii). The result follows from \eqref{eq:delta-p} and \eqref{eq:delta-angle} for $\tau |(\ubar b',\ubar c')| $ and $  | \ubar \theta|$ small enough.
\end{proof}

\begin{definition}
\label{def:delta-theta}
We fix $\delta_{\theta} =\min(C\delta'_{\theta}, \delta_{\Gamma})$ for the rest of the article, where $C$ and $\delta'_{\theta}$ are as in the previous proposition. 
\end{definition}


\section{Desingularizing Surfaces}
\label{sec:desingularizing-surfaces}


We now construct the surfaces that will replace the lines of intersection of  the grim reapers. As mentioned in the introduction, we have to allow some flexibility so that opposite wings can fail to have opposite directions  (unbalancing) and for each wing to be bent independently further along (bending).


\subsection{Scherk surfaces}


The one parameter family of Scherk surfaces $\Sigma (\theta)$ is a family of singly periodic minimal surfaces. Only the most symmetric of them, $\Sigma(\pi/4)$, is due to Scherk  and the rest of the family was discovered by Karcher \cite{karcher;minimal-surfaces}. They are often called Scherk's fifth surfaces or Scherk's (saddle) towers but we will refer to them as Scherk surfaces for simplicity.  $\Sigma(\theta)$ is given by the equation
	\[
	\cos^2\theta \cosh \frac{x}{\cos \theta} - \sin^2 \theta \cosh \frac{y}{\sin \theta} = \cos z.
	\]
The surfaces $\Sigma(\theta)$ become degenerate as $\theta \to 0$ or $\theta \to \pi/2$ so we will restrict ourselves to $\theta \in [10 \delta_{\theta}, \frac{\pi}{2} - 10 \delta_{\theta}]$ for $\delta_{\theta}$ as in Definition \ref{def:delta-theta}. 
\begin{figure} [h]
\includegraphics[height=3in]{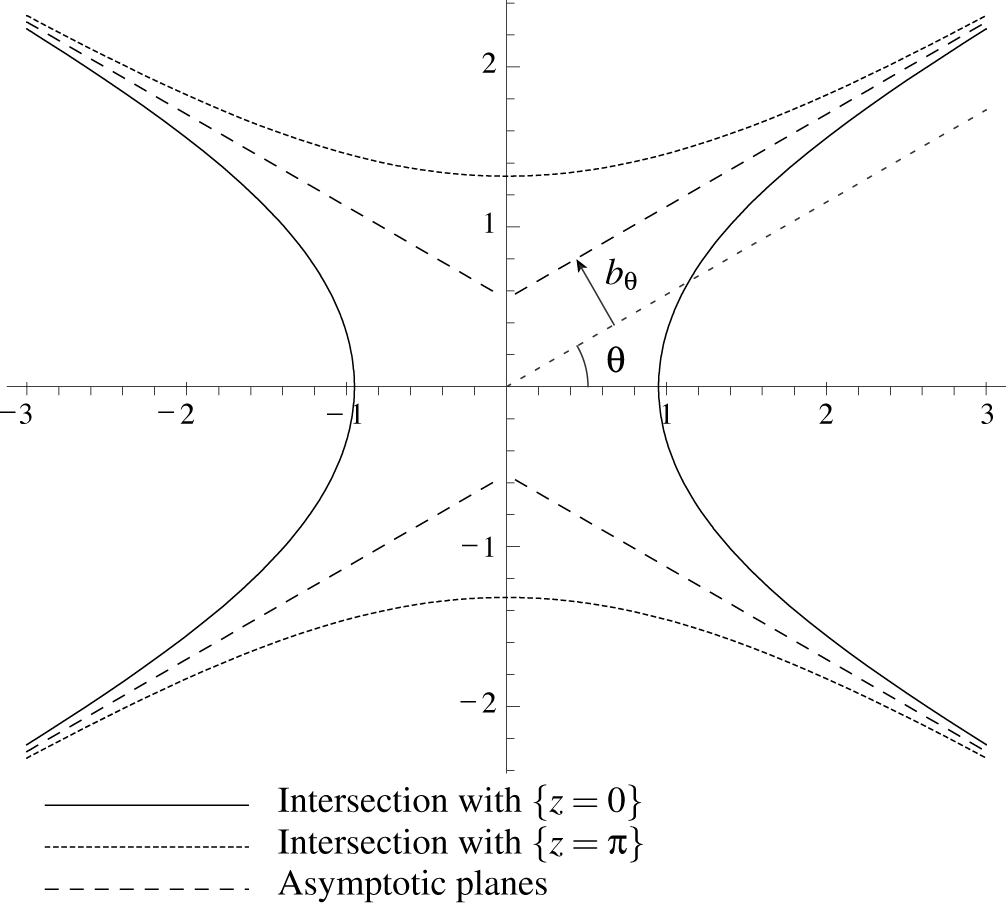}
\caption{Sections of the Scherk surface $\Sigma(\theta)$.} 
\end{figure}
\subsection*{Notations} 
The Scherk surface enjoys many symmetries. In order to refer to them easily in the future, we define the following isometries of the Euclidean space:
	\begin{itemize}
	\item $\rcal_1$ is the identity
	\item $\rcal_2$ is the reflection with respect to the $yz$-plane.
	\item $\rcal_3$ is the reflection with respect to the $z$-axis.
	\item $\rcal_4$ is the reflection with respect to the $xz$-plane.
	\end{itemize}

We denote by $H^+$ the closed half-plane $H^+ = \{ (s,z) \in \R^2 \mid s \geq 0\}$. The vectors $\vec e[\theta]$ and $\vec e\ ' [\theta]$ are defined in equation \eqref{def:e-e-prime}.

We quote Proposition 2.4 from \cite{kapouleas;embedded-minimal-surfaces} for some properties of the Scherk surfaces. 
	\begin{prop}
	\label{prop:scherk}
	$\Sigma(\theta)$ is a singly periodic embedded complete minimal surface which depends smoothly on $\theta$ and has the following properties:
	\begin{enumerate}
	\item $\Sigma(\theta)$ is invariant under the $\mathcal R_i$'s above and also under reflection with respect to the planes $\{ z = k\pi\}$ ($k\in \mathbf Z$).
	\item For given $\eps \in (0, 10^{-3})$, there is a constant $a = a(\delta_{\theta}, \eps)>0$ and smooth functions $f_{\theta}:H^+ \to \R, A_{\theta}:H^+ \to E^3$, and $F_{\theta}:H^+ \to E^3$, such that $W_{\theta}: = F_{\theta}(H^+) \subset \Sigma(\theta)$ and 
		\begin{gather*}
		A_{\theta}(s,z) = (a+s) \vec e[\theta] + z \vec e_z + b_{\theta} \vec e\ '[\theta],\\
		F_{\theta} (s,z) = A_{\theta}(s,z) + f_{\theta} (s,z) \vec e\ '[\theta],
		\end{gather*}
	where $b_{\theta} =\sin (2 \theta) \log(\cot \theta)$. Moreover $f_{\theta}$ and $F_{\theta}$ depend smoothly on $\theta \in [10\delta_{\theta}, \pi/2 - 10\delta_{\theta}]$ and (iii)-(vi) are satisfied. 
	\item $\Sigma(\theta) \setminus \bigcup_{j=1}^4 \mathcal R_j(W_{\theta})$ is connected and lies within distance $a+1$ from the $z$-axis.
	\item $W_{\theta} \subset \{ (r \cos \phi, r \sin \phi, z): r>a, \phi \in [9 \delta_{\theta}, \pi/2 -9\delta_{\theta}]\}.$
	\item $\| f_{\theta}: C^5(H^+, e^{-s})\| \leq \eps $ and $\| df_{\theta}/d\theta:C^5(H^+, e^{-s})\| \leq \eps.$
	\item $|b_{\theta}| + |db_{\theta}/d\theta| <\eps a$. 
	\end{enumerate}
	\end{prop}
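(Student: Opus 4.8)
The statement to prove is Proposition~\ref{prop:scherk}, a list of structural properties of the Scherk surfaces $\Sigma(\theta)$. Since the proposition is quoted verbatim from Kapouleas \cite{kapouleas;embedded-minimal-surfaces} (Proposition 2.4 there), the plan is essentially to reconstruct that argument in our setting, exploiting the explicit defining equation $\cos^2\theta\cosh(x/\cos\theta) - \sin^2\theta\cosh(y/\sin\theta) = \cos z$. First I would establish (i): the symmetries $\rcal_2,\rcal_3,\rcal_4$ and the reflections across $\{z=k\pi\}$ are all visible directly from the defining equation, since it is even in $x$, even in $y$, and invariant under $z\mapsto 2k\pi - z$ and $z\mapsto z+2\pi$. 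Completeness, embeddedness, minimality, and singly-periodicity are classical facts about Karcher's saddle towers; I would cite Karcher \cite{karcher;minimal-surfaces} (and the conjugate-surface / Weierstrass-representation description) rather than reprove them. Smooth dependence on $\theta$ follows because the surface is cut out transversally by a function depending analytically on $\theta$ on the relevant range $\theta\in[10\delta_\theta,\pi/2-10\delta_\theta]$, where the coefficients $\cos^2\theta$, $\sin^2\theta$, $1/\cos\theta$, $1/\sin\theta$ are all bounded and bounded away from their degenerate values.

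The heart of the proof is (ii)--(vi), which amount to a precise asymptotic analysis of one wing $W_\theta$. The idea is that far out along one of the four ends, $\Sigma(\theta)$ is a graph over a half-plane whose direction $\vec e[\theta]$ is determined by $\theta$, with a correction $f_\theta$ that decays like $e^{-s}$ in the distance $s$ from the core region. To see this I would fix the end in the quadrant where $x,y\to+\infty$ and rewrite the defining equation as $\cos^2\theta\, e^{x/\cos\theta} - \sin^2\theta\, e^{y/\sin\theta} = 2\cos z + (\text{exponentially small in }x,y)$; solving for $y$ as a function of $x$ and $z$ shows that the level set is asymptotic to the plane $y/\sin\theta = x/\cos\theta - \log(\tan^2\theta)$, i.e. to a half-plane through $A_\theta(s,z)$ with normal-to-$z$ direction $\vec e[\theta]$ and offset $b_\theta = \sin(2\theta)\log(\cot\theta)$ in the $\vec e\,'[\theta]$ direction. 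The residual graph function $f_\theta$ then satisfies an equation whose right-hand side is $O(e^{-s})$, and one gets the weighted $C^5$ bounds $\|f_\theta:C^5(H^+,e^{-s})\| \le \eps$ by choosing the cutoff parameter $a=a(\delta_\theta,\eps)$ large enough; differentiating the implicit relation in $\theta$ gives the companion bound on $df_\theta/d\theta$. The geometric containment statements (iii) and (iv) --- that the complement of the four wings is connected and stays within distance $a+1$ of the $z$-axis, and that $W_\theta$ lies in the angular sector $\phi\in[9\delta_\theta,\pi/2-9\delta_\theta]$ for $r>a$ --- follow from the same asymptotics together with a compactness argument on the bounded core, again choosing $a$ large; (vi) is the elementary estimate $|b_\theta|+|db_\theta/d\theta| < \eps a$, which holds once $a$ dominates the bounded quantity $\sup_\theta(|\sin 2\theta\log\cot\theta| + |d/d\theta(\sin 2\theta\log\cot\theta)|)$ on the compact $\theta$-range.

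The main obstacle I anticipate is making the $e^{-s}$ weight in (v) genuinely uniform in $\theta$ over the whole range $[10\delta_\theta,\pi/2-10\delta_\theta]$ while simultaneously choosing $a$, and carrying the derivative estimates in $\theta$ through the implicit-function step --- the exponential rates $1/\cos\theta$ and $1/\sin\theta$ appearing in the defining equation are uniformly bounded on this range but one must check that the decay rate of $f_\theta$ (which is governed by the linearized operator on the plane, essentially $\Delta - 1$ in the right coordinates) does not degenerate, so that the single weight $e^{-s}$ works for all admissible $\theta$. Since everything is explicit, this is a matter of careful bookkeeping rather than a conceptual difficulty, and in any case the cleanest route is to invoke Proposition~2.4 of \cite{kapouleas;embedded-minimal-surfaces} directly, as the excerpt does, and simply note that our restricted parameter range $\theta\in[10\delta_\theta,\pi/2-10\delta_\theta]$ is contained in the range treated there.
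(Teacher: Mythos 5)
Your proposal is correct and takes the same route as the paper: the paper does not prove this proposition but quotes it verbatim from Kapouleas \cite{kapouleas;embedded-minimal-surfaces} (Proposition 2.4 there), which is also what you ultimately recommend. Your reconstruction sketch is sound as well --- in particular the derivation of the asymptotic plane $y/\sin\theta = x/\cos\theta + 2\log\cot\theta$ and the offset $b_\theta=\sin(2\theta)\log\cot\theta$ is correct --- but it is supplementary, since the intended argument is simply the citation.
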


For the rest of the article,  $\eps$ is a fixed  small constant so that $a$ is fixed also.

$W_{\theta}$ is called the {\em first wing} of the Scherk surface, and the image of $W_{\theta}$ under $\rcal_i$ is called the {\em $i$th wing}.

We consider as standard coordinates on the $i$th wing the coordinates $(s,z)$ defined by $(s,z) = (\rcal_i \circ F_{\theta})^{-1} (s,z)$ and extend the function $s$ to be zero on the rest of $\Sigma(\theta)$. Using the notation \eqref{notation-s}, we call $\Sigma_{\leq 0} (\theta)$ the \emph{core} of the Scherk surface. Note that the boundary of the core has four connected components, each of which is the boundary of a wing. The numbering of the wings can be reconciled with the numbering of the vectors $v_1, v_2, v_3$ and $v_4$ in Section \ref{ssec:construction-G-bar} by taking $\beta_1$ from Definition \ref{def:tetrad} to be in the second quadrant. 


\FloatBarrier


\subsection{Construction of the core}


The goal of this section is to unbalance a Scherk surface so that its wings are tangent to asymptotic planes determined by a possibly unbalanced tetrad $T$. The dislocations are necessary for dealing with the approximate kernel, as discussed in Section \ref{ssec:construction-G-bar}.

Let us  examine more closely the angles in Definition \ref{def:tetrad} when  the tetrad is formed of the directing vectors of the planes asymptotic to $\Sigma(\theta)$, rotated by an angle $\beta_r$ around the $z$-axis. We have $\beta_1 = \theta+\beta_r$, $\beta_2 = \pi-\theta+\beta_r$, $\beta_3 = \pi+\theta+\beta_r$, $\beta_4 = 2 \pi-\theta+\beta_r$, $\theta(T) = \theta$, $\theta_1 =0$, $\theta_2=0$, and $\theta_r=\beta_r$ is the angle of rotation. 

Given a tetrad $T$ for which $\theta_1=0$ and $\theta_2=0$, we can find a Scherk surface that has the vectors of $T$ as directing vectors: it suffices to take  $\Sigma(\theta(T))$ rotated around the $z$-axis by an angle $\theta_r$. In general however, it is not enough to rotate one of the original Scherk surfaces $\Sigma(\theta)$, we need transformations $Z_1$ and $Z_2$ to change the respective directions of the vectors. 

\begin{definition} We define a family of diffeomorphisms $Z_{1}(\phi):E^3 \to E^3$ parametrized by $\phi \in [-2 \delta_{\theta}, 2 \delta_{\theta}]$ such that:
	\begin{enumerate}
	\item $Z_1$ is the identity on the second and fourth quadrants $\{(x,y,z) \mid xy \leq 0\}$ and in the unit ball. 
	\item on $\{(r \cos \theta', r\sin \theta', z) : r >2, \theta' \in [9 \delta_{\theta}, \pi/2 - 9\delta_{\theta}]\}$, $Z_1$ is a rotation of angle $\phi$ clockwise around the $z$-axis.
	\item on $\{(r \cos \theta', r\sin \theta', z) : r >2, \theta' \in [\pi + 9 \delta_{\theta}, 3\pi/2 - 9\delta_{\theta}]\}$, $Z_1$ is a rotation of angle $\phi$ counterclockwise around the $z$-axis.	
	\end{enumerate}
\end{definition}

\begin{definition} The family of diffeomorphisms $Z_{2}(\phi):E^3 \to E^3$ parametrized by $\phi \in [-2 \delta_{\theta}, 2 \delta_{\theta}]$  is defined by $Z_2 (\phi) = \rcal_2 \circ Z_1(-\phi) \circ \rcal_2$. 
\end{definition}

The transformation $Z_1$ rotates points in the first and third quadrants by $\phi$ toward the fourth quadrant, and $Z_2$ rotates points in the second and fourth quadrants by $\phi$ toward the first quadrant.

For a tetrad $T$,  we define 
	\[
	Z[T] := \mathcal R \circ Z_{1}(\theta_1) \circ Z_{2}(\theta_2),
	\]
where $\mathcal R$ denotes the rotation around the $z$-axis with angle $\theta_r(T)$ counterclockwise. We define the surface 	
	\[
	\Sigma[T]:= Z[T](\Sigma(\theta(T)).
	\]
 By construction, each plane asymptotic to the surface $\Sigma[T]$ is parallel to a vector in $T$. We will not touch the core $\Sigma_{\leq 0}[T]$ in the rest of the construction.


\subsection{Construction of the wings} 


With dislocations, we can solve the linear operator on the desingularizing surface, but we do not control the asymptotic behavior of the solutions as       we move away from the $z$-axis. The additional bending of each wing independently will help us achieve exponential decay. In addition, we use the bending to fit the wings smoothly in the construction of the initial surface in Section \ref{sec:initial-surfaces}.

Let $\tau>0$ be a small constant, $\{\varphi_i\}_{i=1}^4$ four real numbers such that $|\varphi_i| \leq \delta_{\theta}$, and $T$ a tetrad as in the previous section. We describe the construction of the $i$th wing, with a bending of angle $\varphi_i$ below. 

\begin{definition}
	\label{def:pivot}
We call the line $ Z[T] \circ \mathcal R_i \circ A_{\theta(T)} (\pd H^+)$ the \emph{$i$th pivot}  and denote its intersection with the $xy$-plane by $(x_i, y_i)$. 	The $i$th pivot is the boundary of the core projected perpendicularly onto the $i$th asymptotic plane.
\end{definition}

We define the map $\kappa[\tau, x_{i}, y_{i}, s_i]: H^+ \to E^3$ by
	\begin{multline}
	\label{eq:kappa}
	\kappa[\tau, x_{i}, y_{i}, s_i] (s,z)\\
		= \frac{1}{\tau} (\gamma_1(\tau (s +s_i))-\gamma_1(\tau s_i)+ \tau x_i, \gamma_2(\tau (s +s_i))-\gamma_2(\tau s_i)+\tau y_i, \tau z).
	\end{multline}
Note that the line $\kappa[\tau, x_i, y_i,s_i] (\pd H^+)$ is the $i$th pivot and the graph of $\kappa$ is a piece of grim reaper. The constant $s_i$ is chosen so that the conormal tangent vector to $\kappa[\tau, x_i, y_i, s_i]$ at $s=0$ is $v_i=\vec e[\beta_i]$ rotated by an angle $\varphi_i$ counterclockwise around the $z$-axis, i. e.  $\tan(\beta_i + \varphi_i) = \sinh (\tau s_i)$ by \eqref{eq:tangent}. We define the immersion of the asymptotic grim reaper to the $i$th wing by 
	\[
	A_i[T, \varphi_i, \tau] = \kappa[\tau, x_{i}, y_{i}, s_i]
	\]
and define $\nu_i[T,\varphi_i, \tau](s,z)$ to be the normal unit vector to $A_i[T, \varphi_i, \tau](H^+)$ at the point $A_i[T, \varphi_i, \tau] (s,z)$ oriented such that $\nu_i[T,\varphi_i,\tau] (0,0) = (-1)^{i-1} \vec e\ '[\beta_i+\varphi_i]$.

Roughly speaking, the bent wing is the graph of $f_{\theta(T)}$ over the asymptotic grim reaper. For a smooth transition, we need to cut off  $f_{\theta(T)}$ far enough so that the error generated is not too big while keeping the desingularizing surface small enough in the scale of the grim reapers. For this reason, we introduce a small constant $\delta_s$ that will be determined later. The function $F_i[T,\varphi_i, \tau]$ defined below is the immersion of the $i$th wing.

	\begin{definition}
	For given $T$ and $\varphi_i \in [-\delta_{\theta}, \delta_{\theta}]$, we define $F_i[T,\varphi_i, \tau]: H^+ \to E^3$ by 
	\begin{multline*}
	F_i[T,\varphi_i, \tau] (s,z)= \psi[1,0](s) Z[T]\circ \rcal_i \circ F_{\theta(T)} (s, z) \\
				+ \big(1 - \psi[1,0](s)\big) \big( A_i[T,\varphi_i, \tau](s,z) + \psi_s(s)f_{\theta(T)}(s,z) \nu_i[T,\varphi_i, \tau] (s,z) \big) 
	\end{multline*}
where $\psi_s$ is defined by $\psi_s(s) = \psi[4 \delta_s /\tau, 3 \delta_s /\tau](s)$.
	\end{definition}

\begin{figure} [H]
\includegraphics[height=3in]{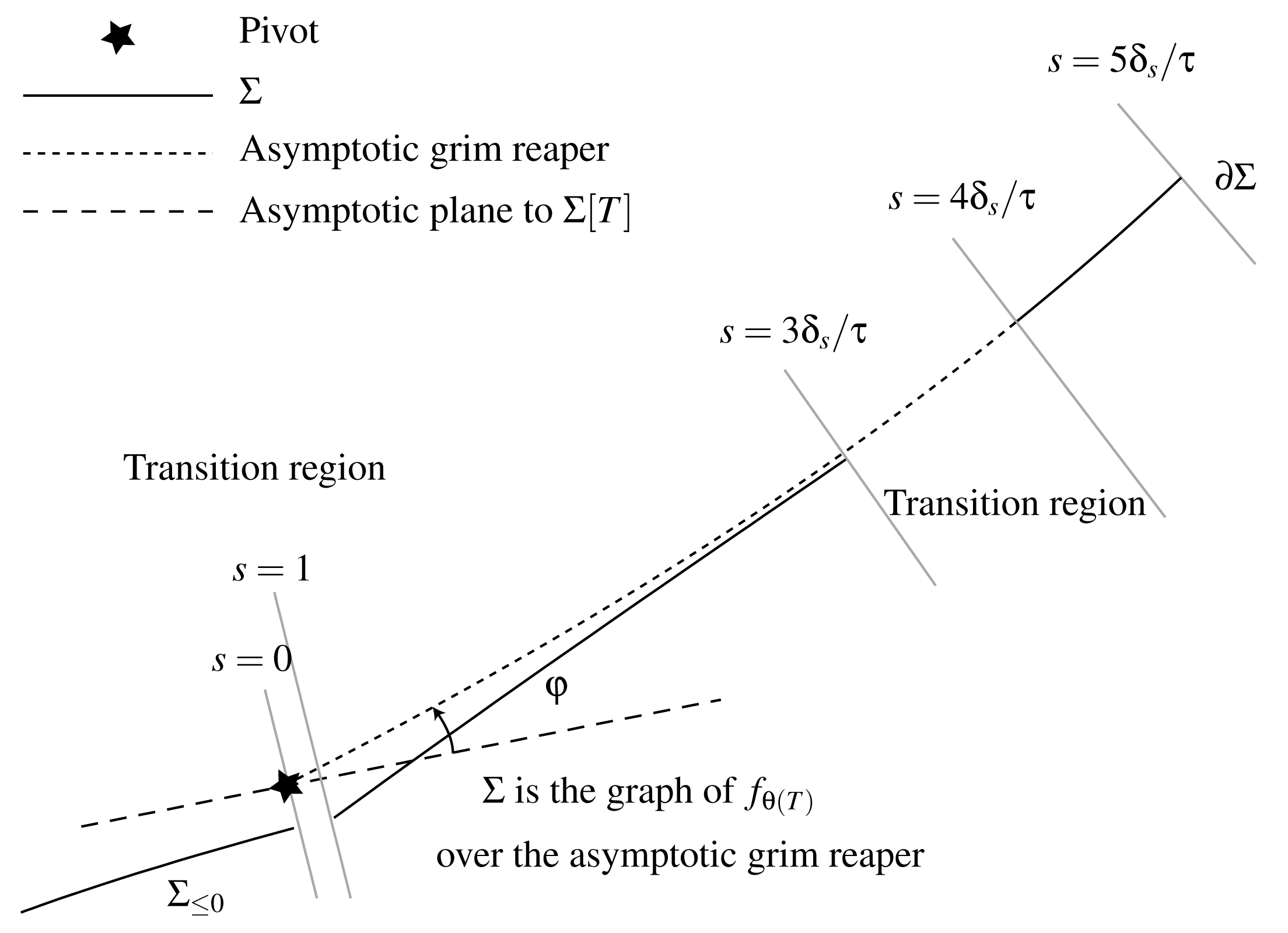}
\caption{Construction of a wing.} 
\end{figure}

\FloatBarrier


\subsection{The desingularizing surfaces $\Sigma[T,\uvarphi, \tau]$}
\label{ssec:desing-surf}


\begin{definition}
For a tetrad $T$ and $\uvarphi = \{ \varphi_i\}_{i=1}^4$ such that $|\uvarphi|\leq \delta_{\theta}$, we define a map $Z[T, \ubar \varphi, \tau]: \Sigma(\theta(T)) \to E^3$  to be $Z[T]$ on the core, and $F_i [T,\varphi_i, \tau] \circ F_{\theta}^{-1} \circ \mathcal R_i^{-1}$ on the $i$th wing of $\Sigma(\theta(T))$. The desingularizing surface $\Sigma[T,\uvarphi, \tau]$ is given by
	\[
	\Sigma=\Sigma[T,\uvarphi, \tau] := Z[T,\uvarphi, \tau](\Sigma_{\leq 5\delta_s/\tau}(\theta(T))).
	\]
\end{definition}

The coordinates $(s,z)$ on $\Sigma(\theta(T))$ are pushed forward by $Z[T,\uvarphi, \tau]$ to coordinates on $\Sigma$. The desingularizing surface is divided in five regions:
	\begin{itemize}
	\item When $s \leq 0$, we are on the core of $\Sigma$. The surface is dislocated here but the bending related to $\uvarphi$ does not affect this region.
	\item  $s \in [0,1]$ is a transition region.
	\item For $s \in [1, 3\delta_s/\tau]$, the wings are  graphs of $f_{\theta(T)}$ on the asymptotic grim reapers; the $i$th asymptotic grim reaper makes an angle $\varphi_i$ with the plane asymptotic to the $i$th wing of $\Sigma[T]$. 
	\item $s \in [3 \delta_s/\tau, 4 \delta_s/\tau]$ is a second transition region where  the function $f_{\theta(T)}$ is cut off. 
	\item For $s \in [4 \delta_s/\tau, 5 \delta_s/\tau]$, the wings are just asymptotic grim reapers.
	\end{itemize}

Note that the desingularizing surfaces are truncated at $s=5 \delta_s /\tau$. The next proposition collects the properties of $\Sigma[T,\uvarphi, \tau]$.

	\begin{prop}
	\label{prop:smooth-dependence}
	There is a constant $\delta'_{\tau} = \delta'_{\tau}(\delta_{\theta})>0$ such that for $T$ satisfying \eqref{eq:require-T},  $|\uvarphi| \leq \delta_{\theta}$, and $\tau \in (0,\delta'(\tau))$, the function $Z[T, \ubar \varphi, \tau]$ satisfies the following properties:
	\begin{enumerate}
	\item $Z[T, \ubar \varphi, \tau]$ is a smooth embedding depending smoothly on its parameters. 
	\item For each $n \in \mathbf Z$, the map $Z[T, \ubar \varphi, \tau]$ is invariant under reflections of the domain and  range  with respect to the plane $\{ z = n \pi\}$.
	\end{enumerate}
	\end{prop}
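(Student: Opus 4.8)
\textbf{Proof plan for Proposition \ref{prop:smooth-dependence}.}

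The plan is to verify the two claimed properties by tracking how $Z[T,\uvarphi,\tau]$ is assembled from its constituent pieces, and showing that the gluing is genuinely smooth and injective once $\tau$ is small. First I would record the elementary building blocks: the isometries $\rcal_i$ and the rotation $\mathcal R$ are rigid motions depending smoothly (indeed real-analytically) on $\theta_r(T)$; the diffeomorphisms $Z_1(\phi)$, $Z_2(\phi)$ depend smoothly on $\phi$ by construction, hence $Z[T]$ depends smoothly on $T$ through $(\theta_1,\theta_2,\theta_r,\theta(T))$; and the map $\kappa[\tau,x_i,y_i,s_i]$ of \eqref{eq:kappa}, being built from the entire functions $\gamma_1,\gamma_2$, depends smoothly on all of its arguments for $\tau>0$. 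The pivot coordinates $(x_i,y_i)$ depend smoothly on $T$ by Definition \ref{def:pivot}, and $s_i$ is determined smoothly by the implicit relation $\tan(\beta_i+\varphi_i)=\sinh(\tau s_i)$ (solvable with smooth dependence because $|\beta_i+\varphi_i|$ stays bounded away from $\pi/2$ thanks to \eqref{eq:require-T} and $|\varphi_i|\le\delta_\theta\le\delta_\Gamma$, so $\cosh(\tau s_i)$ is bounded and the derivative in $s_i$ is nonzero). Then $A_i[T,\varphi_i,\tau]$ and its unit normal $\nu_i[T,\varphi_i,\tau]$ inherit smooth dependence, and finally $F_i[T,\varphi_i,\tau]$ is a fixed-cutoff combination of these with $f_{\theta(T)}$, which by Proposition \ref{prop:scherk}(v) depends smoothly on $\theta$; composing with $F_\theta^{-1}\circ\rcal_i^{-1}$ to transfer onto $\Sigma(\theta(T))$ preserves smoothness. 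So part (ii) of smoothness — the ``depends smoothly on its parameters'' clause — is essentially a bookkeeping chain once each block is in hand.

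The substantive point in (i) is that the resulting map is a smooth \emph{embedding}, not merely a smooth immersion, and that the core-piece and wing-pieces fit together $C^\infty$. For smoothness of the assembly: on the overlap region $s\in[0,1]$ the core description $Z[T]\circ\rcal_i\circ F_{\theta(T)}$ and the wing description agree at $s\le 0$ (where $\psi[1,0]\equiv 1$... wait, I should check the cutoff convention) — more precisely, $F_i$ is defined as a $\psi[1,0]$-convex combination of the two descriptions, each of which is individually smooth on a neighborhood of the strip, so $F_i$ is smooth, and it matches $Z[T]\circ\rcal_i\circ F_{\theta(T)}$ exactly where $\psi[1,0]=1$; thus $Z[T,\uvarphi,\tau]$ is well-defined and $C^\infty$ across the core/wing interface. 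For the immersion property I would estimate $|dF_i - d(Z[T]\circ\rcal_i\circ A_{\theta(T)})|$ using Proposition \ref{prop:scherk}(v),(vi) to see that the Scherk-wing map is a small perturbation of the asymptotic (grim-reaper or flat) map, which is an immersion with derivative bounded below; choosing $\delta'_\tau$ small forces the perturbation small enough that $dF_i$ has full rank everywhere. For injectivity (embeddedness), I would argue regionwise: within a single period, the core $\Sigma_{\le 0}[T]$ lies within distance $a+1$ of the $z$-axis (Proposition \ref{prop:scherk}(iii)) and each wing $W_{\theta(T)}$ lies in a sector $\{\phi\in[9\delta_\theta,\pi/2-9\delta_\theta]\}$ at radius $>a$ (Proposition \ref{prop:scherk}(iv)); the dislocations $Z_1,Z_2$ and $\mathcal R$ move these sectors by angles $O(\delta_\theta)$, which by the choice $\delta_\theta\le\delta_\Gamma$ keeps the four (rotated, bent) wing-sectors pairwise disjoint outside a bounded ball, and keeps each wing injective on itself since $F_i$ is a small graph over the embedded grim reaper $A_i(H^+)$. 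Injectivity near the core follows because $Z[T]$ is a diffeomorphism there and the transition region $s\in[0,1]$ is a bounded perturbation. Globally (across periods in $z$) there is no issue since every piece is $2\pi$-periodic or better in $z$ and confined to a bounded $z$-slab per period.

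Part (ii) — invariance under reflection across $\{z=n\pi\}$ — I would get by checking each building block: $\gamma_1,\gamma_2$ do not involve $z$, so $\kappa$ (hence $A_i$) commutes with $z\mapsto -z$ and with $z\mapsto 2\pi-z$; the normal $\nu_i$ transforms correctly because it is defined from $A_i$; $f_{\theta}$ satisfies the needed symmetry by Proposition \ref{prop:scherk}(i) (invariance of $\Sigma(\theta)$ under reflection across $\{z=k\pi\}$, transferred to $f_\theta$ on $H^+$); and $\rcal_i$, $Z_1$, $Z_2$, $\mathcal R$ are all rotations/reflections fixing the $z$-axis and commuting with reflections in horizontal planes. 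Since $\psi[1,0]$ and $\psi_s$ depend on $s$ only, the convex combination defining $F_i$ inherits the symmetry, and so does $Z[T,\uvarphi,\tau]$. I expect the main obstacle to be the embeddedness claim in (i): verifying that the four dislocated-and-bent wings, which are no longer contained in clean Euclidean half-planes, still occupy disjoint regions and meet the core cleanly requires combining the sector bounds of Proposition \ref{prop:scherk}(iii),(iv) with the explicit $O(\delta_\theta)$-size of the dislocations $Z_1,Z_2$ and the $O(\delta_\theta)$ bending, and then choosing $\delta'_\tau(\delta_\theta)$ small enough that all the perturbative estimates — immersion, graph-over-grim-reaper, sector disjointness — hold simultaneously; everything else is routine once the blocks are catalogued.
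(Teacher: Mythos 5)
The paper does not give a separate proof of this proposition; it is presented as a direct consequence of the construction of $Z[T,\uvarphi,\tau]$, so there is no argument in the text to compare yours against. Your verification is a sound filling-in of that implicit argument: you correctly trace the smooth dependence through each building block (the rigid motions $\rcal_i$ and $\mathcal R$, the diffeomorphisms $Z_1,Z_2$, the implicit determination of $s_i$ from $\tan(\beta_i+\varphi_i)=\sinh(\tau s_i)$, the map $\kappa$, and $f_{\theta}$ via Proposition \ref{prop:scherk}(v)); you observe that the core description and wing description of the map agree on the strip $\{0\le s<1/3\}$ where $\psi[1,0]\equiv 1$, so the gluing is $C^\infty$; and you obtain the reflection symmetry from the $z$-independence of $\gamma_1,\gamma_2$, the $s$-only cutoffs, and the symmetries in Proposition \ref{prop:scherk}(i).

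The one point worth sharpening is the embeddedness at large radius. The asymptotic grim reaper's tangent direction swings by roughly $5\delta_s$ over the range $s\in[0,5\delta_s/\tau]$ (the Gudermannian of the arclength parameter changes by at most the change in arclength), so keeping the four bent wings in pairwise disjoint sectors requires $\delta_s$ to be small relative to $\delta_\theta$ in addition to $\tau$ being small. You attribute the needed smallness to choosing $\delta'_\tau$, but the curving of the wings is governed by $\delta_s$, not $\tau$; the paper handles this by reserving the right to shrink $\delta_s$ (``a small constant $\delta_s$ that will be determined later''). Since $\delta_\theta$ is already fixed in terms of $\delta_\Gamma$, one can indeed choose $\delta_s$ afterwards, so the conclusion stands, but the bookkeeping of which constant controls the wing geometry should be corrected.
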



\section{Estimates on the desingularizing surfaces}
\label{sec:estim}


In the previous section, the dislocations and the bending are constructed independently. It is in our interest to keep the dislocations and the bending small  to control the error generated in the core and the first transition region. If a dislocation is moving a wing in one direction, and the bending moves it back by some amount, we can let the bending cancel part of the dislocation by changing our angles from the onset. This process is called {\em straightening} and is studied in Lemma \ref{lem:4-1}. 

The main part of this section is dedicated to estimating $H-\tau \vec e_y \cdot \nu$ on the desingularizing surface and studying the impact of the dislocations, bending and straightening. For this, we define functions $\{ w_j\}_{j=1}^2$ generated by the dislocations, and functions $\{\bar w_i\}_{i=1}^4$ generated by the straightening. In Lemma \ref{lem:w}, we describe how the functions $\{ w_j\}_{j=1}^2$  can be used to cancel any linear combination of eigenfunctions $\vec e_x \cdot \nu$ and $\vec e_y \cdot \nu$. Finally, we approximate $H-\tau \vec e_y \cdot \nu$ by a linear combination of the $w$'s and the $\bar w$'s with an error of  second order in $\theta_1$, $\theta_2$ and $\uvarphi$ in Proposition \ref{prop:4-20}. We follow the exposition of \cite{kapouleas;embedded-minimal-surfaces} and adapt all the proofs to our case, in particular, we show that the term $-\tau \vec e_y \cdot \nu$ can be controlled throughout. 

	
\subsection{Straightening}


\begin{lemma}
\label{lem:4-1}
There are constants $\delta_{\varphi} = \delta_{\varphi}(\delta_{\theta}) \in (0, \delta_{\theta})$ and $\delta_{\tau}=\delta_{\tau}(\delta_{\theta}) \in (0,\delta'_{\tau})$ such that for a given tetrad $T$ as in Definition \ref{def:tetrad}, $\uvarphi\in \R^4$, and $\tau \in (0, \delta_{\tau}]$ satisfying
	\[
	\theta(T) \in [30 \delta_{\theta}, \frac{\pi}{2}-30\delta_{\theta}], \quad 
	\theta_1(T), \theta_2(T) \in[-\delta_{\theta}, \delta_{\theta}], \quad
	|\ubar \varphi|\leq \delta_{\varphi},
	\]
we have for each $\ubar \varphi' = \{ \varphi'_i\}_{i=1}^4$ with $|\ubar \varphi'| \leq \delta_{\varphi}$, a tetrad $T'$ which depends smoothly on $T, \ubar \varphi, \tau$ and $\ubar \varphi'$, and is characterized by the following properties: 

\begin{enumerate}
\item $(T', \uvarphi-\uvarphi',\tau)$ satisfies the conditions of Proposition \ref{prop:smooth-dependence}. 
\item $T'=T$ when $\uvarphi'=0$.
\item $T'=\{ \vec e[\beta_i']\}_{i=1}^4$ where each $\beta_i'$ depends smoothly on $\ubar \varphi'$ and
	\[
	\left|\frac{\pd \beta_i'}{\pd{\varphi_j'}} -\delta_{ij}\right| \leq C\tau.
	\]
\item There is a smooth function $f_{\ubar \varphi'}$ on $\Sigma[T,\ubar \varphi,\tau]$ which depends smoothly on $T,\ubar \varphi, \tau$ and $\ubar \varphi'$, satisfies $f_{\ubar \varphi'}\equiv 0$ on $\pd\Sigma[T,\ubar \varphi,\tau]$, and whose graph over $\Sigma[T,\ubar \varphi, \tau]$ is contained in the image of $Z[T',\ubar \varphi-\ubar \varphi',\tau]$.
\end{enumerate}

\end{lemma}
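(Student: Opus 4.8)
The plan is to view the desired tetrad $T'$ as the fixed point (or more precisely, the implicitly-defined value) of a map built from the geometry of the wings. The point of straightening is this: changing $\uvarphi$ to $\uvarphi - \uvarphi'$ changes the angle that each asymptotic grim reaper makes with its wing's asymptotic plane; to keep the resulting desingularizing surface essentially unchanged near the core, we must simultaneously rotate the asymptotic planes themselves, i.e.\ pass from $T$ to a new tetrad $T'$. So I would first write down, for fixed $T$, $\uvarphi$, $\tau$, the map that sends a candidate tetrad $\tilde T$ (close to $T$) and the parameter $\uvarphi'$ to the four conormal directions of the wings of $Z[T, \uvarphi, \tau]$ expressed relative to the asymptotic planes of $\tilde T$. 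The defining equation for $T'$ is that these four conormal directions, when the bending is taken to be $\uvarphi - \uvarphi'$ with asymptotic planes given by $T'$, reproduce exactly the wings of the original surface $Z[T,\uvarphi,\tau]$ in a neighborhood of $\partial\Sigma_{\le 0}$. Because $A_i[T,\varphi_i,\tau]$ is determined by the pivot line and the conormal angle via $\tan(\beta_i+\varphi_i)=\sinh(\tau s_i)$ (equation \eqref{eq:tangent}), and because $Z[T]$ acts on the core only through $Z_1(\theta_1)\circ Z_2(\theta_2)$ composed with a rotation, the whole thing reduces to a finite-dimensional system in the angles $\beta_i'$ (equivalently in $\theta_1',\theta_2',\theta_r'$) parametrized smoothly by $\uvarphi'$.

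Concretely, I would carry out the steps as follows. \emph{Step 1:} Fix $T$ with $\theta(T)\in[30\delta_\theta,\pi/2-30\delta_\theta]$, $\theta_1(T),\theta_2(T)\in[-\delta_\theta,\delta_\theta]$, $|\uvarphi|\le\delta_\varphi$, and set up the angle relation: the $i$th wing of $Z[T,\uvarphi,\tau]$ has, at its inner boundary, conormal direction $\vec e[\beta_i]$ rotated by $\varphi_i$ about the $z$-axis (by the construction of $F_i[T,\varphi_i,\tau]$ and the choice of $s_i$). \emph{Step 2:} For a prospective $T'=\{\vec e[\beta_i']\}$, require that the $i$th wing, now viewed as a bending by $\varphi_i-\varphi_i'$ of the asymptotic plane with direction $\vec e[\beta_i']$, has the \emph{same} conormal direction at the inner boundary; this gives $\beta_i'+(\varphi_i-\varphi_i') = \beta_i + \varphi_i$ modulo the correction coming from the fact that the pivot lines themselves move when $T$ changes. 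So to leading order $\beta_i' = \beta_i + \varphi_i'$, which is exactly property (iii) with the $\delta_{ij}$; the $O(\tau)$ error is precisely the pivot-displacement effect, controlled because $|\tau s_i|$ is bounded (as in the proof of Proposition \ref{prop:init-conf}) and $\kappa[\tau,x_i,y_i,s_i]$ depends smoothly on its arguments with derivatives bounded in terms of $\delta_\theta$. \emph{Step 3:} Apply the Implicit Function Theorem (or a small contraction) to solve the exact system for $\{\beta_i'\}$ as a smooth function of $\uvarphi'$ (and of $T,\uvarphi,\tau$), with $\beta_i'=\beta_i$ when $\uvarphi'=0$ — this gives properties (ii) and the smooth dependence. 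The Jacobian is $I + O(\tau)$ by Step 2, invertible for $\tau\le\delta_\tau$ small. \emph{Step 4:} Check that $T'$ so obtained is a genuine acceptable tetrad satisfying the hypotheses of Proposition \ref{prop:smooth-dependence} with bending $\uvarphi-\uvarphi'$: since $|\beta_i'-\beta_i|\le C|\uvarphi'|\le C\delta_\varphi$ and $|\uvarphi-\uvarphi'|\le 2\delta_\varphi$, choosing $\delta_\varphi=\delta_\varphi(\delta_\theta)$ small enough keeps $\theta(T')\in[20\delta_\theta,\pi/2-20\delta_\theta]$, $\theta_1(T'),\theta_2(T')\in[-2\delta_\theta,2\delta_\theta]$, and $|\uvarphi-\uvarphi'|\le\delta_\theta$, as \eqref{eq:require-T} and Proposition \ref{prop:smooth-dependence} demand; this is property (i). \emph{Step 5:} Define $f_{\uvarphi'}$ to be the function whose graph over $\Sigma[T,\uvarphi,\tau]$ is $Z[T',\uvarphi-\uvarphi',\tau](\Sigma_{\le 5\delta_s/\tau}(\theta(T')))$ — well-defined as a graph because the two surfaces agree on the core and on the outermost band $s\in[4\delta_s/\tau,5\delta_s/\tau]$ (both are the same asymptotic grim reaper by construction once the conormal angles match), and differ only by a small normal graph on $s\in[1,4\delta_s/\tau]$, controlled by Proposition \ref{prop:scherk}(v) and the smooth dependence of $F_{\theta}$ on $\theta$. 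The vanishing of $f_{\uvarphi'}$ on $\partial\Sigma[T,\uvarphi,\tau]$ comes from the fact that both surfaces are truncated along the same pivot-translated line at $s=5\delta_s/\tau$ and (after matching the conormal angles) the asymptotic grim reapers coincide there.

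The main obstacle I anticipate is Step 5 — verifying that $Z[T',\uvarphi-\uvarphi',\tau]$ really does produce a surface that is a \emph{graph} over $\Sigma[T,\uvarphi,\tau]$ (so that $f_{\uvarphi'}$ is a genuine function, not a multivalued correspondence) and that it vanishes cleanly on the boundary. This requires tracking how the core, the two transition regions $s\in[0,1]$ and $s\in[3\delta_s/\tau,4\delta_s/\tau]$, and the pure-grim-reaper band $s\in[4\delta_s/\tau,5\delta_s/\tau]$ each transform, and showing the displacement between corresponding points is small in the $C^1$ sense uniformly — the core pieces must literally coincide (since $Z[T]$ and $Z[T']$ act on the core only by a rigid motion determined by $\theta_1,\theta_2,\theta_r$, and the straightening is designed so the relevant composition is the identity there), while on the wings the difference is an honest small normal graph. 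Getting the bookkeeping right so that the two surfaces share the same boundary curve — which forces a compatibility between the truncation at $5\delta_s/\tau$ and the matched conormal angle $s_i$ — is the delicate point; everything else is a smooth-dependence-plus-Implicit-Function-Theorem argument of the type already used in Proposition \ref{prop:init-conf}.
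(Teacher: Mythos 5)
There is a genuine gap: you impose the wrong defining condition for $T'$. You propose to choose $T'$ so that the wings of $Z[T',\uvarphi-\uvarphi',\tau]$ ``reproduce exactly the wings of the original surface $Z[T,\uvarphi,\tau]$ in a neighborhood of $\partial\Sigma_{\le 0}$'' (the \emph{inner} boundary), and in Step 5 you assert that ``the core pieces must literally coincide'' and that after matching conormal angles ``the two surfaces agree on the core and on the outermost band.'' Neither claim can hold: whenever $T'\neq T$ the core of $\Sigma[T',\uvarphi-\uvarphi',\tau]$ is $Z[T']\bigl(\Sigma_{\le 0}(\theta(T'))\bigr)$, a genuinely different surface from $Z[T]\bigl(\Sigma_{\le 0}(\theta(T))\bigr)$ (different $\theta$, different $Z_1,Z_2$ rotations), and the pivots $(x_i',y_i')$ are displaced from $(x_i,y_i)$ by $O(a|\beta_i'-\beta_i|)$, so the asymptotic grim reapers cannot coincide anywhere, in particular not on $[4\delta_s/\tau,5\delta_s/\tau]$. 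If one nevertheless imposes conormal-matching at $s=0$, the relation $\beta_i'=\beta_i+\varphi_i'$ holds exactly, the Jacobian is exactly the identity (no $O(\tau)$ term, so the IFT is trivial and superfluous), and the resulting graph function is of size $O(a|\uvarphi'|)$ at the outer boundary $s=5\delta_s/\tau$ rather than zero — property (iv), $f_{\uvarphi'}\equiv 0$ on $\partial\Sigma[T,\uvarphi,\tau]$, fails.

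The paper's constraint is precisely the opposite one. One first observes that for $T'$ and $\uvarphi-\uvarphi'$ close to $T$ and $\uvarphi$, the image of $Z[T',\uvarphi-\uvarphi',\tau]$ is a graph of a function $f$ over $\Sigma[T,\uvarphi,\tau]$ everywhere (including the core and transition regions, where $f$ is small but not zero); then one lets $f_i(\uvarphi',T')$ be the constant value of $f$ on the component of $\partial\Sigma[T,\uvarphi,\tau]$ on the $i$th wing, and solves the system $(f_1,f_2,f_3,f_4)(\uvarphi',T')=0$ for $T'$ by the Implicit Function Theorem. The invertibility of $[\partial f_i/\partial\beta_j']$ comes from the lever-arm effect: the constraint lives at distance $\sim 5\delta_s/\tau$ from the pivot, so $\partial f_i/\partial\beta_i'\sim \delta_s/\tau$, while the pivot-displacement contribution is only $O(1)$ — and it is exactly this ratio that produces the $O(\tau)$ deviation of $\partial\beta_i'/\partial\varphi_j'$ from $\delta_{ij}$ in property (iii), via $\frac{\partial f}{\partial\beta'}\bigl(\frac{\partial\beta'}{\partial\uvarphi'}-I\bigr)=-\frac{\partial f}{\partial\uvarphi'}-\frac{\partial f}{\partial\beta'}$ together with $\partial s_1'/\partial\beta_i'+\partial s_1'/\partial\varphi_i'=0$. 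Your approach, built on the inner-boundary/core-matching idea, cannot produce this structure and does not establish (iv); you should instead set up the IFT on the outer-boundary values of the graph function.
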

\begin{figure} [h]
\includegraphics[height=2.8in]{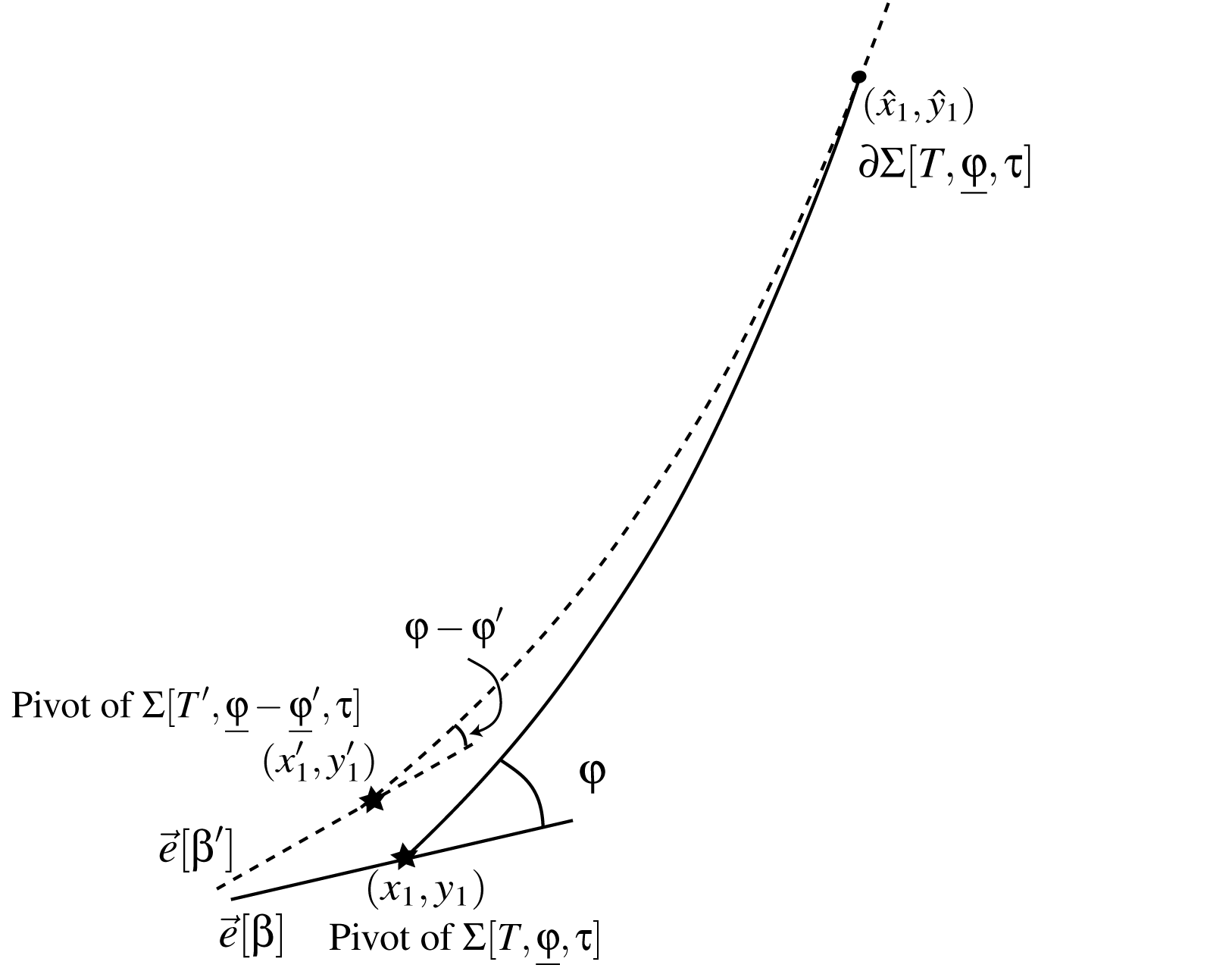}
\caption{The image of $Z[T', \uvarphi-\uvarphi',\tau]$ passes through $\pd \Sigma[T, \uvarphi, \tau]$.} 
\end{figure}
\begin{proof}
We fix $T$, $\ubar \varphi$ and $\tau$. Without loss of generality, we can assume that $\theta_r(T)=0$, otherwise we rotate the whole configuration by $-\theta_r(T)$. For small variations $|\beta_i-\beta_i'|\leq \delta_{\theta}$ and $|\ubar \varphi'|\leq \delta_{\theta}$, the image of $Z[T', \ubar \varphi-\ubar \varphi', \tau]$ is the graph of a function $f$ over $\Sigma[T, \ubar \varphi, \tau]$. On the component of  $\pd \Sigma[T, \ubar \varphi, \tau]$ on the $i$th wing, the function $f$ is a constant, which we denote by $f_i(\ubar \varphi', T')$. Clearly, if $T'=T$ and $\ubar \varphi'=0$, $ (f_1, f_2, f_3, f_4)=0$. The Implicit Function Theorem on $  (f_1, f_2, f_3, f_4)(\ubar \varphi', T')=0$ will give us $T'$ as a function of $\uvarphi'$ for $\uvarphi'$ small enough, provided we show the matrix $\left[{\pd f_i}/{\pd \beta'_j}\right]_{i,j=1, \ldots, 4}$ is invertible.

We study how  changes in  the tetrad and  in the bending affect the first wing in detail. The variation of the three other wings can be obtained similarly. Given $T'=\{ \vec e[\beta_i']\}_{i=1}^4$ and $\uvarphi'$ with $|\beta_i' - \beta_i|$ and $|\uvarphi'|$ small, the asymptotic grim reapers to the first wings of $\Sigma[T, \ubar \varphi, \tau]$ and $\Sigma[T',\ubar \varphi-\ubar \varphi', \tau]$ are parametrized  by $\kappa[\tau, x_1, y_1, s_1] (s,z)$ and $\kappa[\tau, x'_1, y'_1, s'_1] (s,z)$ respectively, where $\kappa[\tau, x_1, y_1, s_1] (s,z)$ is as in \eqref{eq:kappa}, which is the version of the equation below without the primes, 
	\begin{multline*}
	\kappa'[\tau, x'_1, y'_1, s'_1] (s,z)\\ = \frac{1}{\tau} (\gamma_1(\tau (s +s'_1))-\gamma_1(\tau s'_1)+\tau x'_1, \gamma_2(\tau (s +s'_1))-\gamma_2(\tau s'_1)+\tau y'_1, \tau z)
	\end{multline*}
with  $\gamma_1$ and $\gamma_2$ as in \eqref{eq:def-gamma}, and
	\begin{gather}
	\notag (x_1, y_1) =  (a \cos \beta_1, a \sin \beta_1) +  b_{\theta(T)}(-\sin \beta_1, \cos \beta_1),\\
	\notag (x_1', y_1') = (a \cos \beta_1', a \sin \beta_1') + b_{\theta(T')}(-\sin \beta_1', \cos \beta_1'),\\
	\label{eq:def-s1}
	 \sinh (\tau s_1)=\tan (\beta_1 + \varphi_1), \quad \sinh(\tau s_1') =\tan(\beta_1' + \varphi_1 - \varphi'_1).
	\end{gather}

As in the proof of Proposition \ref{prop:init-conf}, the angles $(\beta_1 + \varphi_1)$ and $(\beta_1 +\varphi_1-\varphi'_1)$ stay away from  $\pi/2 +k\pi$, $k\in \mathbf{Z}$, by a fixed amount, so $|\tau s_1|$ and $|\tau s_1'|$ are bounded. Using Proposition \ref{prop:scherk}, we bound the derivatives of $x_1', y_1'$, and $s_1'$ with respect to $\beta_i'$  by
	\begin{gather}
	\label{eq:estim-dxdb}
	\left|\frac{\pd x_1'}{\pd \beta_i'}\right|  \leq C, \qquad \left|\frac{\pd y_1'}{\pd \beta_i'} \right| \leq C, \\
	\label{eq:estim-ds'db}
	\frac{1}{C\tau}  \delta_{1i} \leq \left|\frac{\pd s_1'}{\pd \beta_i'} \right|= \left|-\frac{\pd s_1'}{\pd \varphi'_i}\right|= \left|\frac{1}{\tau \cosh (\tau s_1')} \delta_{1i}\right| \leq \frac {C} {\tau} \delta_{1i} .
	\end{gather}

To simplify notations, let us work in the $xy$-plane. The boundary  $\pd\Sigma[T, \ubar \varphi, \tau]$ is at the point $(\hat x_1, \hat y_1) := \kappa[\tau, x_1, y_1, s_1](5 \delta_s /\tau)$.
The line orthogonal to $\Sigma[T,\ubar \varphi, \tau]$ at the point $(\hat x_1, \hat y_1)$ intersects the surface  $\Sigma[T', \ubar \varphi-\ubar \varphi', \tau]$ at a distance 	
	\begin{equation}
	\label{eq:t=}
	f_1(\uvarphi', T')= t	=  \frac{ \gamma_2(\tau (s+s_1')) - \gamma_2(\tau s_1') + \tau y_1'-\tau\hat y_1}{\tau\gamma_1' (5 \delta_s +\tau s_1) },
	\end{equation}
where $\gamma_1'$ and $\gamma_2'$ are the derivatives of  $\gamma_1$ and $\gamma_2$ respectively, and $s$ is the coordinate on $\Sigma[T', \ubar \varphi-\ubar \varphi', \tau]$ of the intersection point. Note that since $|\tau s_1|$ is  bounded, $\gamma'_1( 5 \delta_s +\tau s_1)$ is bounded away from $0$. The value of $s$ is given implicitly by the equation 
	\begin{multline}
	\label{eq:s}
	\left(\gamma_1(\tau s_1') - \tau  x_1' + \tau \hat x_1\right) \gamma_1'(5 \delta_s +\tau s_1) + \left( \gamma_2(\tau s_1') -  \tau y_1' + \tau \hat y_1\right) \gamma_2'(5 \delta_s +\tau s_1) \\
	= \gamma_1(\tau (s+s_1')) \gamma_1'(5 \delta_s +\tau s_1) + \gamma_2(\tau (s+s_1')) \gamma_2'(5 \delta_s +\tau s_1).
	\end{multline}

Hence, given $\uvarphi'$ and $T'$, we can find $x_1, y_1, s_1$ and $x'_1, y'_1, s'_1$. Solving the system of equations \eqref{eq:t=} \eqref{eq:s} above, we get $s$ and $t$. We first study the dependence of $s$ on $x'_1, y'_1, s'_1$ and estimate its derivatives with respect to each of the $x'_1, y'_1$ and $s'_1$. 

Consider $x_1', y_1', s_1'$ as independent variables and define the function $F$ below related to \eqref{eq:s}, 
	\begin{multline*}
	F(x_1', y_1', s_1', \sigma) \\= \left(\gamma_1(\tau s_1') - \tau x_1' + \tau\hat x_1\right) \gamma_1'(5 \delta_s +\tau s_1) + \left( \gamma_2(\tau s_1') - \tau y_1' + \tau\hat y_1\right) \gamma_2'(5 \delta_s +\tau s_1) \\
		-\gamma_1(\tau \sigma) \gamma_1'(5 \delta_s +\tau s_1) - \gamma_2(\tau \sigma) \gamma_2'(5 \delta_s +\tau s_1).
	\end{multline*}
Note that $F(x_1, y_1, s_1, 5\delta_s/\tau+s_1) = 0$. Its  derivative with respect to $\sigma$ satisfies
	\[
	\frac{\pd F}{\pd \sigma} (x_1, y_1, s_1, 5\delta_s/\tau+s_1)=- \tau \gamma_1'^2(5 \delta_s +\tau s_1)-\tau \gamma_2'^2(5 \delta_s +\tau s_1)=-\tau.
	\]
Let us denote by $B_r(x_1, y_1,s_1)$ the ball of radius $r$ centered at $(x_1, y_1,s_1)$.
By the Implicit Function Theorem, there are constants $r>0$, $\tilde r>0$ and a function $h: B_r (x_1, y_1, s_1) \to  (5\delta_s+\tau s_1-\tilde r, 5\delta_s+\tau s_1+\tilde r)$ such that $F(x_1', y'_1, s_1', h(x_1', y'_1, s_1')) =0$. Since $s= h-s_1'$,
	\begin{gather}
	\label{eq:estim-dsdx-dsdy}\left|\frac{\pd s}{\pd x_1'} (x_1', y_1', s_1')\right| \leq C, \qquad \left|\frac{\pd s}{\pd y_1'} (x_1', y_1', s_1') \right| \leq  C,\\
	\label{eq:dsds1}
	\frac{\pd s}{\pd s_1'} =  \frac{\gamma_1'(\tau s_1') \gamma_1'(5 \delta_s+\tau s_1) +\gamma_2'(\tau s_1') \gamma_2'(5 \delta_s+\tau s_1)}{\gamma_1'(\tau (s+s_1')) \gamma_1'(5 \delta_s+\tau s_1) +\gamma_2'(\tau (s+s_1')) \gamma_2'(5 \delta_s+\tau s_1)} -1.
	\end{gather}

We now show that the determinant of $\left[\frac{\pd f_i}{\pd \beta'_j}\right]_{i,j=1, \ldots, 4}$ does not vanish. Since the  functions $\{f_i\}_{i=1}^4$ play a similar role, it suffices to study one of them, say $f_1$, in detail.   We have 
 	\[
	\frac{\pd s}{\pd \beta_i'}=\frac{\pd s}{\pd x_1'} \frac{\pd x_1'}{\pd \beta_i'} + \frac{\pd s}{\pd y_1'} \frac{\pd y_1'}{\pd \beta_i'}+\frac{\pd s}{\pd s_1'} \frac{\pd s_1'}{\pd \beta_i'},
	\] 
and, from \eqref{eq:t=}, 
	\begin{equation}
	\label{eq:pdf/pdbeta}
	\gamma_1'(5 \delta_s + \tau s_1) \frac{\pd f_1}{\pd \beta'_i} = I + I\!I,
	\end{equation}
where 
	\begin{gather*}
	I:= \left(\gamma_2'(\tau (s+ s_1')) \left(\frac{\pd s}{\pd s_1'} +1\right)-  \gamma_2'(\tau s_1')\right) \frac{\pd s_1'}{\pd \beta_i'}, \\
	I\!I :=  \gamma_2'(\tau (s+ s_1')) \left( \frac{\pd s}{\pd x_1'} \frac{\pd x_1'}{\pd \beta_i'} + \frac{\pd s}{\pd y_1'} \frac{\pd y_1'}{\pd \beta_i'} \right)  +  \frac{\pd y_1'}{\pd \beta_i'}.
	\end{gather*}
	
 From a previous discussion, $\gamma_1'(5 \delta_s + \tau s_1) $ is bounded away from $0$, so it is enough to estimate $I$ and $I\!I$. The bounds  \eqref{eq:estim-dxdb}  and \eqref{eq:estim-dsdx-dsdy}  imply  $|I\!I| \leq C$. For  $I$, we use the explicit formula  \eqref{eq:dsds1}, 
	\begin{equation}
	\label{eq:I}
	I =    \frac{\gamma_1'(5 \delta_s+\tau s_1)\big(\gamma_1'(\tau s_1') \gamma_2'(\tau (s +  s_1'))-\gamma_1'(\tau (s+ s_1'))  \gamma_2'(\tau s_1')\big)}{\gamma_1'(\tau (s+ s_1')) \gamma_1'(5 \delta_s+\tau s_1) +\gamma_2'(\tau (s+ s_1')) \gamma_2'(5 \delta_s+\tau s_1)} \frac{\pd s_1'}{\pd \beta_i'}.
	\end{equation}
We can assume without loss of generality that $\tilde r \leq \delta_s$ so the denominator is close to $1$ since $\tau (s+ s_1') = \tau \sigma \in (4 \delta_s + \tau s_1, 6 \delta_s +\tau s_1)$. The second factor of the numerator is equal to 
	\begin{equation}
	\label{eq:I-num}
	\frac{\sinh(\tau (s +  s_1')) - \sinh (\tau s_1')}{\cosh (\tau (s +  s_1')) \cosh (\tau s_1')} = \frac{\cosh (\tau \bar s)}{\cosh (\tau (s +  s_1')) \cosh (\tau s_1')} \tau s
	\end{equation}
for some $\bar s \in (s_1', s_1' + s_1)$ given by the Mean Value Theorem. Combining \eqref{eq:estim-ds'db}, \eqref{eq:I} and \eqref{eq:I-num}, we have
	\begin{equation}
	\label{eq:estim-I}
	\frac{1}{C\tau}  (4 \delta_s + \tau (s_1 -s_1')) \delta_{i1}\leq |I|\leq \frac{C}{\tau}  (6 \delta_s +\tau (s_1 - s_1')) \delta_{i1}.
	\end{equation}
Choosing  $\delta_{\varphi}$  small enough,  we can ensure that $4 \delta_s - \tau |s_1 -s_1'| >0$ by \eqref{eq:def-s1}. From \eqref{eq:pdf/pdbeta}, \eqref{eq:estim-I},  the fact that $|I\!I|\leq C$, and similar estimates for $f_2, f_3$ and $f_4$, we have for $\tau$ small enough,
	\begin{equation}
	\label{eq:est-dfdb}
	\det \left(\left[\frac{\pd f}{\pd \beta'}\right]\right) \geq \frac{1}{ C \tau},
	\end{equation}
 therefore ${\pd f}/{\pd \beta'}$ has an inverse with norm bounded by $C \tau$. By the Implicit Function Theorem, for every $\uvarphi'$ small enough, there is a tetrad $T'$ such that $(f_1, f_2, f_3, f_4)(T', \uvarphi') =0$. To get the estimate (iii), we write 
	$
	\frac{\pd f}{\pd \beta'}\frac{\pd \beta'}{\pd \ubar \varphi'}+\frac{\pd f}{\pd \ubar \varphi'} =0,
	$
or equivalently,
	\begin{equation}
	\label{eq:dbdp}
	\frac{\pd f}{\pd \beta'}\left(\frac{\pd \beta'}{\pd \ubar \varphi'} -Id\right) = -\frac{\pd f}{\pd \ubar \varphi'}-\frac{\pd f}{\pd \beta'}.
	\end{equation}
To estimate ${\pd f}/{\pd \beta'}+ {\pd f}/{\pd \ubar \varphi'}$, note that the contribution of order $\tau^{-1}$ in ${\pd f}/{\pd \beta'}$ comes from the derivative ${\pd s_1'}/{\pd \beta_i'}$. Adding the derivative with respect to $\varphi'$, we get
	$
	{\pd s_1'}/{\pd \beta_i'}+ {\pd s_1'}/{\pd  \varphi_i'}=0,
	$
 by \eqref{eq:def-s1}.
Therefore $
	\left| \frac{\pd f}{\pd \beta'}+ \frac{\pd f}{\pd \ubar \varphi'} \right| \leq C, 
	$
  and the result (iii) follows from \eqref{eq:est-dfdb} and \eqref{eq:dbdp}.
\end{proof}

We may need the values of $\delta_{\varphi}$ and $\delta_{\tau}$ to be smaller than the ones given in Lemma \ref{lem:4-1} for later estimates. When we write ``for $\tau$ small enough" in the rest of the article, we mean that the value of $\delta_{\tau}$ has to be adjusted accordingly, and similarly for $\delta_{\varphi}$.  


\subsection{Graphs of functions on a surface}
\label{ssec:perturbation}


The wings are the graphs of small functions over the asymptotic grim reapers. In order to estimate the mean curvature and the second fundamental form, we take a brief detour and discuss some standard facts about normal perturbations of surfaces (see Appendix C of \cite{kapouleas;surfaces-euclidean}, or Appendix B of \cite{kapouleas;embedded-minimal-surfaces}).

Suppose we have a surface $M$ in $E^3$, immersed by a $C^2$ map $X: M \to E^3$. We write $g, A, H$ and $\nu$ for the first and second fundamental forms, the mean curvature and the Gauss map of $M$ respectively. For a $C^2$ function $\sigma$ on $M$, we define $X_{\sigma}:M \to \R^3$ by $X_{\sigma} \equiv X+{\sigma} \nu$. When $X_{\sigma}$ is an immersion, we denote by $M_{\sigma}$ the graph of ${\sigma}$ over $M$ and by  $g_{\sigma}$, $A_{\sigma}$, $H_{\sigma}$, and $\nu_{\sigma}$ the first and second fundamental forms, the mean curvature, and the Gauss map of $X_{\sigma}(M)$ pulled back to $M$. 

We use $\Phi$ to denote a term which can be either ${\sigma} A$ or $\nabla {\sigma}$. We use an $\ast$ to denote a contraction with respect to $g$. Also, all the $G$'s below stand for linear combinations with universal coefficients of terms which are contractions with respect to $g$ of at least two $\Phi$'s. The $\hat G$'s  denote linear combinations (with universal coefficients) of terms which are contractions of a number of - possibly none - $\Phi$'s with one of the following: 
	\begin{enumerate}
	\item $A\ast \Phi \ast \Phi$,
	\item ${\sigma} \nabla A \ast \Phi$,
	\item $ {\sigma} A \ast \nabla^2 {\sigma}$,
	\item $\nabla^2 {\sigma} \ast \Phi \ast\Phi$.
	\end{enumerate}

 Let $e_1, e_2, \nu$ be a local orthonormal frame of $E^3$ whose restriction to $M$ has $e_1$ and $e_2$ tangent to $M$. If $|{\sigma} A|<1$, then $X_{\sigma}$ is an immersion and we have 
	\begin{gather}
	\label{eq:perturb-g} g_{{\sigma}ij} = g_{ij} - 2 {\sigma} A_{ij} + G_{ij},\\
	\nu_{\sigma}	 = \nu - \nabla {\sigma} + \mathbf Q^{\nu}_{\sigma} \label{eq:perturb-nu},
	\end{gather}
where $\mathbf Q^{\nu}_{\sigma}=G_1 e_1 + G_2 e_2 +G_3 \nu + \frac{G_ 5 e_1 + G_6 e_2 + G_7 \nu}{1+G_4+\sqrt{1+G_4}} $.
	\begin{align}
	H_{\sigma}			 &= H+(\Delta {\sigma} + |A|^2 {\sigma})  + Q_{\sigma} \label{eq:perturb-H},
	\end{align}
where  $Q_{\sigma} = \frac{ \hat G_1}{\sqrt{1+G_8}} + \frac{ \hat G_2}{1+G_8 +\sqrt{1+G_8}}$. 
Therefore 
	\begin{equation}
	\label{eq:perturb-st}
	H_{\sigma} - \tau \vec e_y \cdot \nu_{\sigma}= H-\tau \vec e_y \cdot \nu+\Delta_g \sigma + |A|^2 \sigma    + \tau \vec e_y \cdot \nabla \sigma + Q_{\sigma}+ \tau\vec  e_y \cdot \mathbf{Q}^{\nu}_{\sigma}.
	\end{equation}


\subsection{Notations} 
\label{ssec:notations2}


We will use the same notation for functions, tensors, and operators on the asymptotic grim reaper and their pushforwards by $F_i \circ A_i^{-1}$ to $\Sigma_{\geq 1}[T,\uvarphi, \tau]$, and vice versa. 
To avoid confusion, we use symbols without subscripts for the geometric quantities considered on the asymptotic grim reapers; we use symbols with subscripts $\Sigma$ for their counterparts on  $\Sigma_{\geq 1}$.  
For example, $g$ denotes the metric on the asymptotic grim reaper (induced by its immersion in $E^3$) and it also denotes the pushforward of this metric  to $\Sigma_{\geq 1}$, while $g_{\Sigma}$ denotes the metric on $\Sigma_{\geq 1}$ induced by the metric in $E^3$  or its pullback to the asymptotic grim reaper.  

For $\uvarphi'$ as in Lemma \ref{lem:4-1} and a fixed $i \in \{1, \ldots, 4\}$, we use a dot $\dot{}$ to denote the differentiation $\pd/\pd \varphi_i'|_{\uvarphi'=0}$.


\subsection{Estimates on the desingularizing surface $\Sigma[T,\ubar \varphi, \tau]$}
\label{ssec:estimates}


\begin{lemma}
\label{lem:4-3} $|(\theta(T'))^{\cdot}|\leq C$ and the following are valid on $\Sigma_{\geq1}[T, \uvarphi, \tau]$:
	\begin{enumerate}
		\item $\| \dot \kappa: C^k(g)\| \leq C$,
		\item $\| A: C^k(g) \| \leq C\tau$,
		\item $\| \dot A: C^k(g) \| \leq C\tau^2$,
		\item $\| \nu: C^k(g) \| \leq C$,
		\item $\| \dot \nu : C^k (g) \| \leq C\tau$,
	\end{enumerate}
where $\kappa$ is as in \eqref{eq:kappa}, $A$  and $\nu$ are as in  Section \ref{ssec:notations} and Notations \ref{ssec:notations2}, and all the constants $C$ depend only on $k$.
\end{lemma}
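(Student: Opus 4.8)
The plan is to work directly from the explicit parametrization $\kappa = \kappa[\tau, x_i, y_i, s_i]$ in \eqref{eq:kappa} and the defining relation $\sinh(\tau s_i) = \tan(\beta_i + \varphi_i)$, and to use the scaling structure systematically. Since the tetrad $T'$ is constructed from $T, \uvarphi, \tau, \uvarphi'$ via the Implicit Function Theorem in Lemma \ref{lem:4-1}, the estimate $|(\theta(T'))^{\cdot}| \le C$ is immediate from part (iii) of that lemma: $\theta(T') = (\beta_1' - \beta_2' + \beta_3' - \beta_4' + 2\pi)/4$, and each $\pd \beta_j'/\pd\varphi_i'$ differs from $\delta_{ij}$ by $O(\tau)$, so $|(\theta(T'))^{\cdot}| \le 1 + C\tau \le C$. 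The key point that makes the rest work is that $A_i[T,\varphi_i,\tau](H^+)$ is a piece of a grim reaper that has curvature comparable to $\tau$: from \eqref{eq:kappa} the second derivatives of $\kappa$ in $(s,z)$ carry a factor of $\tau$ (the $\frac1\tau$ prefactor is killed by one derivative, and a second derivative of $\gamma_1(\tau(s+s_i)), \gamma_2(\tau(s+s_i))$ brings down a $\tau$), while the induced metric $g = \kappa^*(\text{euclidean})$ is comparable to the flat metric because $|\tau s_i|$ is bounded (by the argument in Proposition \ref{prop:init-conf}, using \eqref{eq:require-T} to keep $\beta_i + \varphi_i$ away from $\pi/2 + k\pi$). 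Here I would want to be a little careful: on $\Sigma_{\ge 1}$ we are not exactly on the asymptotic grim reaper — for $s \in [1, 3\delta_s/\tau]$ the wing is the graph of $f_{\theta(T)}$ over it, and the Notations \ref{ssec:notations2} convention means all quantities are those of $\Sigma_{\ge 1}$ — but $f_{\theta(T)}$ and its derivatives up to order $5$ are bounded by $\eps e^{-s} \le \eps$ by Proposition \ref{prop:scherk}(v), so by the normal-perturbation formulas \eqref{eq:perturb-g}--\eqref{eq:perturb-nu} in Section \ref{ssec:perturbation} the $\Sigma$-quantities differ from the grim-reaper quantities by controlled amounts; alternatively one checks the bounds on the grim reaper and transports them. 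I will phrase the proof for the grim reaper and note that the graph correction is absorbed.

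For (i): $\dot\kappa = (\pd/\pd\varphi_i')|_0 \kappa[\tau, x_i, y_i, s_i]$, so I differentiate \eqref{eq:kappa} in $x_i, y_i, s_i$ and use the chain rule. By Lemma \ref{lem:4-1}(iii) and the estimates \eqref{eq:estim-dxdb}, \eqref{eq:estim-ds'db} from its proof (with the straightening in force: $\pd s_i'/\pd\beta_j' + \pd s_i'/\pd\varphi_j' = 0$, so the $O(\tau^{-1})$ pieces in $\dot x_i, \dot y_i, \dot s_i$ either cancel or don't occur), one gets $|\dot x_i|, |\dot y_i|, |\dot s_i| \le C$. Then differentiating \eqref{eq:kappa}: the term with $\dot s_i$ reads $\frac1\tau \cdot \tau \gamma_k'(\tau(s+s_i)) \dot s_i = \gamma_k'(\tau(s+s_i))\dot s_i$ (the $\tau$'s cancel) which is $O(1)$ since $\gamma_k'$ is bounded and $|\tau s_i|$ bounded, and the terms with $\dot x_i, \dot y_i$ are literally $\dot x_i, \dot y_i = O(1)$; higher $(s,z)$-derivatives only bring down further factors of $\tau$, so $\|\dot\kappa : C^k(g)\| \le C$. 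For (ii) and (iv): $A$ involves second derivatives of $\kappa$, each extra spatial derivative beyond the first produces a factor $\tau$ from $\gamma_k''(\tau(s+s_i)) \cdot \tau$, and since $g \sim$ flat and $\gamma_k, \gamma_k''$ bounded in the relevant range, $\|A : C^k(g)\| \le C\tau$ and $\|\nu : C^k(g)\| \le C$ (the unit normal to a grim reaper, with bounded slope). For (iii) and (v): apply $\pd/\pd\varphi_i'|_0$ to the formulas for $A$ and $\nu$; each differentiation in $\varphi_i'$ acts through $s_i'$ (and $x_i', y_i'$), contributing a bounded factor times what is already there, but the crucial gain is that $A$ already carries one factor of $\tau$, so $\dot A$ gains another from the fact that $\dot s_i \cdot (\text{curvature derivative})$ involves $\gamma_k'''(\tau(s+s_i)) \cdot \tau$ — i.e. $\dot A = O(\tau^2)$ — while $\dot\nu = O(\tau)$ because the variation of the (bounded-slope) normal is governed by $\dot s_i \cdot \gamma_k''(\tau s_i) \cdot \tau$-type terms.

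The main obstacle I anticipate is \emph{bookkeeping the cancellation of the $O(\tau^{-1})$ terms} coming from $\pd s_i'/\pd\beta_j'$ and $\pd s_i'/\pd\varphi_i'$, each individually of size $C/\tau$ by \eqref{eq:estim-ds'db}. It is exactly the straightening of Lemma \ref{lem:4-1} — the relation $\pd\beta_i'/\pd\varphi_j' = \delta_{ij} + O(\tau)$ together with $\sinh(\tau s_1) = \tan(\beta_1 + \varphi_1)$, $\sinh(\tau s_1') = \tan(\beta_1' + \varphi_1 - \varphi_1')$ — that makes $\dot s_i = (\pd s_i'/\pd\beta_j')\dot\beta_j' + \pd s_i'/\pd\varphi_i'$ collapse from a sum of $O(\tau^{-1})$ terms to something $O(1)$; I'd isolate this computation as the single delicate lemma inside the proof and then feed it into the chain-rule expansions for $\dot\kappa, \dot A, \dot\nu$. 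Everything else is a routine differentiation of the explicit formulas \eqref{eq:kappa}, \eqref{eq:def-gamma}, \eqref{eq:tangent}, counting factors of $\tau$: one factor per spatial derivative beyond the first, and the $\varphi_i'$-derivative costs nothing extra except through the (now controlled) $\dot s_i$.
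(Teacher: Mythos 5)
Your proposal is correct and follows essentially the same route as the paper's proof, which is very terse: the paper simply notes that $\kappa$ is an isometry of $H^+$, writes the chain rule $\dot f = \sum_j \frac{\pd f}{\pd\beta_j'}\frac{\pd\beta_j'}{\pd\varphi_i'} + \frac{\pd f}{\pd\varphi_i'}$, and appeals to \eqref{eq:estim-dxdb} and Lemma~\ref{lem:4-1}(iii). You fill in exactly the $\tau$-bookkeeping the paper leaves implicit, and in particular you correctly isolate the crucial cancellation $\dot s_i' = \frac{\pd s_i'}{\pd\beta_i'}\bigl(\frac{\pd\beta_i'}{\pd\varphi_i'} - 1\bigr) = O(\tau^{-1})\cdot O(\tau) = O(1)$ (using $\pd s_i'/\pd\varphi_i' = -\pd s_i'/\pd\beta_i'$), which is the one nontrivial step. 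One small misreading: you say the Notations~\ref{ssec:notations2} convention makes $A$, $\nu$ the quantities of $\Sigma_{\ge 1}$, but in fact unsubscripted symbols denote the quantities on the asymptotic grim reaper (subscripted $\Sigma$ denotes the $\Sigma_{\ge 1}$ quantities), so the digression about correcting by the graph of $f_{\theta(T)}$ is unnecessary — your own conclusion to ``phrase the proof for the grim reaper'' is already exactly what the lemma asserts.
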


\begin{proof} 
From \eqref{eq:kappa}, $\kappa$ is an isometry of $H^+$ to an asymptotic grim reaper.
Keeping in mind that $\dot f = \sum_{j=1}^4\frac{\pd f }{\pd \beta_j'}\frac{\pd \beta_j'}{\pd \varphi_i '}+\frac{\pd f}{\pd \varphi_i '} |_{\uvarphi'=0}$ for any function $f(T', \uvarphi')$, the lemma follows immediately from the explicit formula for the position $\kappa$ and the  estimates \eqref{eq:estim-dxdb} and  (iii) of Lemma \ref{lem:4-1}. \end{proof}

\begin{cor}
\label{cor:4-4}
The following estimates are valid on $\Sigma_{\geq 1}[T,\uvarphi,\tau]$, where $l=5 \delta_s/\tau$:
	\begin{gather*}
	\| g_{\Sigma} - g :C^3(\Sigma_{\geq 1}, g, e^{-s})  \| \leq C\eps,  \\
	\| |A_{\Sigma}|^2 - |A|^2 : C^3(\Sigma_{\geq 1}, g, e^{-s})\| \leq C\eps,\\
	\| |A_{\Sigma}|^2: C^3(\Sigma_{\geq 1}, g, e^{-s} +l^{-2})\| \leq C \eps + C \delta^2_s.
	\end{gather*}
In particular, $g$ and $g_{\Sigma}$ are uniformly equivalent on  $\Sigma_{\geq 1}[T,\uvarphi,\tau]$ by assuming without loss of generality that  $\eps$ is small enough. 
\end{cor}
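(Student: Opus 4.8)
The plan is to reduce everything to the normal perturbation formulas of Section~\ref{ssec:perturbation} applied to $\sigma = f_{\theta(T)}$ (cut off by $\psi_s$) viewed as a graph over the asymptotic grim reaper, and then to invoke the estimates already collected in Lemma~\ref{lem:4-3} together with the bounds on $f_{\theta}$ from Proposition~\ref{prop:scherk}(v) and on $b_\theta$ from \ref{prop:scherk}(vi). First I would recall that on $\Sigma_{\geq 1}[T,\uvarphi,\tau]$ the surface is, by construction, the graph of the function $\sigma := \psi_s(s)\,f_{\theta(T)}(s,z)$ over the asymptotic grim reaper $A_i[T,\varphi_i,\tau](H^+)$, which by \eqref{eq:kappa} is an \emph{isometric} image of $H^+$; hence on that piece the ambient metric $g$ is flat and $A$ is the second fundamental form of a grim reaper, satisfying $\|A:C^k(g)\|\leq C\tau$ by Lemma~\ref{lem:4-3}(ii). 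Then $\sigma$ obeys $\|\sigma:C^5(H^+,e^{-s})\|\leq C\eps$ by Proposition~\ref{prop:scherk}(v) and the boundedness of the cutoff $\psi_s$ and its derivatives (which are $O(\tau/\delta_s)$, hence bounded once we absorb constants), so $\Phi$ (which is either $\sigma A$ or $\nabla\sigma$) satisfies $\|\Phi:C^4(g,e^{-s})\|\leq C\eps$.

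Next I would plug this into \eqref{eq:perturb-g}: $g_{\Sigma ij}=g_{ij}-2\sigma A_{ij}+G_{ij}$, where $G_{ij}$ is a sum of contractions of at least two $\Phi$'s. The term $2\sigma A$ has weighted $C^3$ norm $\leq C\eps$ (since $|\sigma|\leq C\eps e^{-s}$ and $|A|\leq C\tau\leq C$), and the $G$-terms are quadratic in $\Phi$, hence bounded by $C\eps^2 e^{-2s}\leq C\eps^2 e^{-s}\leq C\eps$. This gives the first inequality $\|g_\Sigma-g:C^3(\Sigma_{\geq1},g,e^{-s})\|\leq C\eps$. For the second inequality I would expand $|A_\Sigma|^2$ using the perturbation formula for $A_\sigma$ (from Appendix~B of \cite{kapouleas;embedded-minimal-surfaces}, the companion to \eqref{eq:perturb-H}): $A_{\sigma ij}=A_{ij}+\nabla^2_{ij}\sigma+(\text{terms in }\hat G)$, together with the expansion of $g_\Sigma^{-1}$ obtained by inverting the first inequality. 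The difference $|A_\Sigma|^2-|A|^2$ is then a sum of terms each containing at least one factor of $\sigma$, $\nabla\sigma$, or $\nabla^2\sigma$, hence each $\leq C\eps e^{-s}$ in the weighted $C^3$ norm, using again $\|A:C^k(g)\|\leq C\tau\leq C$.

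For the third inequality, the point is to get a bound with the better weight $e^{-s}+l^{-2}$ where $l=5\delta_s/\tau$. Here I split $|A_\Sigma|^2=(|A_\Sigma|^2-|A|^2)+|A|^2$: the first piece is $\leq C\eps\,e^{-s}\leq C\eps(e^{-s}+l^{-2})$ by the second inequality, while for the second piece I use $\|A:C^k(g)\|\leq C\tau$, so $|A|^2\leq C\tau^2=C(5\delta_s)^2 l^{-2}\leq C\delta_s^2(e^{-s}+l^{-2})$; combining gives $\|\,|A_\Sigma|^2:C^3(\Sigma_{\geq1},g,e^{-s}+l^{-2})\|\leq C\eps+C\delta_s^2$. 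Finally, the uniform equivalence of $g$ and $g_\Sigma$ follows from the first inequality: $|g_\Sigma-g|\leq C\eps$ pointwise (the weight $e^{-s}\leq 1$), so for $\eps$ small $g_\Sigma=g+O(\eps)$ is a small perturbation of the flat metric $g$, hence uniformly equivalent. The main obstacle I anticipate is purely bookkeeping: correctly tracking that every term in the $G$'s and $\hat G$'s contributing to $g_\Sigma-g$ and $|A_\Sigma|^2-|A|^2$ carries at least one factor that decays like $e^{-s}$, and making sure the cutoff $\psi_s$ and its derivatives—though formally of size $\tau/\delta_s$—do not spoil the weighted estimates; since they are supported in $s\in[3\delta_s/\tau,4\delta_s/\tau]$ this is where one must check the interplay of the weight $e^{-s}$ with the $O(\tau/\delta_s)$ derivatives of $\psi_s$, but these are all dominated once $\eps$ and $\delta_s$ are taken small, so no genuine difficulty arises beyond careful accounting.
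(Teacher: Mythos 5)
Your proposal is correct and follows essentially the same route as the paper: apply the normal-perturbation formula \eqref{eq:perturb-g} with $\sigma=\psi_s f_{\theta(T)}$, control $\sigma$ via Proposition~\ref{prop:scherk}(v) and $A$ via Lemma~\ref{lem:4-3}(ii), and for the third estimate split $|A_\Sigma|^2=(|A_\Sigma|^2-|A|^2)+|A|^2$ and use $\tau=5\delta_s/l$ to convert $|A|^2\lesssim\tau^2$ into a $\delta_s^2$ bound against the weight $l^{-2}$. Your accounting of the relation $l^2\tau^2=25\delta_s^2$ is in fact the correct reading (the factor written as $l^{-2}$ in the paper's displayed inequality should be $l^2$ for the stated conclusion to follow), and the rest of your argument fills in the same details the paper leaves implicit.
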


	\begin{proof} The variation of a metric under a normal perturbation $ \sigma= \psi_s f_{\theta(T)}$ is given by \eqref{eq:perturb-g}. The first estimate follows from (ii) in Lemma \ref{lem:4-3} and Proposition \ref{prop:scherk}.

Similarly, we prove the bound on $\||A_{\Sigma}|^2 - |A|^2\|$ using the fact that the perturbation of $|A|^2$ is at least linear in $\sigma$, and the second fundamental form is controlled by the previous lemma. 

For the last estimate, we write
	\[
	\| |A_{\Sigma}|^2:C^3(g, e^{-s}+l^{-2})\| \leq \| |A_{\Sigma}^2 - |A|^2 :C^3(g, e^{-s}) \| + l^{-2} \| |A|^2:C^3(g) \|.
	\]
The first term on the right hand side is controlled by $C \eps$, while the second term is bounded by $C \delta_s^2$ using Lemma \ref{lem:4-3}. 
	\end{proof}

\begin{lemma}
\label{lem:4-5}
Given  $\gamma \in (0, 1)$, we have 
	\[
	\| H_{\Sigma} - \tau \vec e_y \cdot \nu_{\Sigma}: C^2(\Sigma_{\geq 1}[T, \uvarphi, \tau], g, e^{-\gamma s}) \| \leq C\tau,
	\]
where $H_{\Sigma}$ is the mean curvature of $\Sigma[T,\uvarphi, \tau]$.
\end{lemma}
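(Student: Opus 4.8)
The idea is to estimate $H_\Sigma - \tau\,\vec e_y\cdot\nu_\Sigma$ on the wing region $\Sigma_{\geq 1}[T,\uvarphi,\tau]$ by comparing the wing with the asymptotic grim reaper on which it is built, using the normal-perturbation formula \eqref{eq:perturb-st} from Section \ref{ssec:perturbation}. On $\Sigma_{\geq 1}$ the wing is, by construction, the graph of $\sigma := \psi_s f_{\theta(T)}$ over the asymptotic grim reaper $A_i[T,\varphi_i,\tau](H^+)$, pushed forward by $\kappa$; the grim reaper itself is a self-translating curve crossed with $\R$, so its mean curvature satisfies the self-translating relation \emph{in the original scale} and hence, after rescaling by $1/\tau$, one has $H - \tau\,\vec e_y\cdot\nu = 0$ identically on the asymptotic grim reaper. (This uses that $\kappa$ in \eqref{eq:kappa} is, up to the shift, the $1/\tau$-rescaling of the arclength grim reaper \eqref{eq:arclength}; I would verify $H = \tau\,\vec e_y\cdot\nu$ by the direct computation with $\gamma_1,\gamma_2$ from \eqref{eq:def-gamma}, which is routine.) So \eqref{eq:perturb-st} collapses to
\[
H_\Sigma - \tau\,\vec e_y\cdot\nu_\Sigma = \Delta_g\sigma + |A|^2\sigma + \tau\,\vec e_y\cdot\nabla\sigma + Q_\sigma + \tau\,\vec e_y\cdot\mathbf Q^\nu_\sigma,
\]
where all geometric quantities without subscript refer to the asymptotic grim reaper (Notations \ref{ssec:notations2}).

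\textbf{Estimating the linear terms.} Next I would bound each term in the weighted norm $C^2(\Sigma_{\geq 1}, g, e^{-\gamma s})$. The key input is Proposition \ref{prop:scherk}(v), which gives $\|f_{\theta(T)}:C^5(H^+, e^{-s})\| \leq \eps$, together with the fact (Lemma \ref{lem:4-3}(ii)) that $\|A:C^k(g)\| \leq C\tau$, so $\||A|^2:C^k(g)\| \leq C\tau^2$. The cutoff $\psi_s(s) = \psi[4\delta_s/\tau, 3\delta_s/\tau](s)$ has derivatives of size $O(\tau/\delta_s)$, which is harmless here since we lose only a fixed power of the weight. The term $|A|^2\sigma$ is then bounded by $C\tau^2\eps$ in the $e^{-s}$-weighted (hence a fortiori $e^{-\gamma s}$-weighted) norm; $\tau\,\vec e_y\cdot\nabla\sigma$ is bounded by $C\tau\eps$; and the principal term $\Delta_g\sigma$ requires a bit more care because a naive bound gives only $O(\eps)$, not $O(\tau)$. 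The resolution is that $f_{\theta(T)}$ itself, being the graph function of a \emph{minimal} surface piece, nearly solves $\Delta f + (\text{curvature})f = 0$ on the grim reaper; more precisely, $\Delta_{g_\Sigma}\sigma + |A_\Sigma|^2\sigma$ equals $H_\Sigma$ minus the quadratic remainder, and $H_\Sigma$ on the Scherk wing is zero before bending. So the cleaner route is to run the perturbation formula \eqref{eq:perturb-st} \emph{the other way}: write $H_\Sigma - \tau\,\vec e_y\cdot\nu_\Sigma$ as the pullback of $0$ (the minimal-surface/Scherk relation) plus the error introduced by replacing the Scherk asymptotic plane geometry with the grim-reaper geometry, i.e. by the bending, and this error is controlled by the discrepancy between the grim reaper and its asymptotic plane, which is $O(\tau)$ in the relevant region $s \leq 5\delta_s/\tau$ (since $\tau s \leq 5\delta_s$ is bounded, the grim reaper deviates from its tangent plane by curvature $O(\tau)$). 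Either way, the terms assemble to $O(\tau)$ in the $C^2(g, e^{-\gamma s})$ norm; I would present whichever bookkeeping is shortest, likely the second.

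\textbf{The quadratic terms and the weight.} The terms $Q_\sigma$ and $\tau\,\vec e_y\cdot\mathbf Q^\nu_\sigma$ are quadratic (or higher) in $\Phi \in \{\sigma A, \nabla\sigma\}$, per the structure listed in Section \ref{ssec:perturbation}; each $\Phi$ is $O(\eps)$ or $O(\tau\eps)$ by Proposition \ref{prop:scherk}(v) and Lemma \ref{lem:4-3}, and the extra factor $\tau$ on $\mathbf Q^\nu_\sigma$ makes that piece even smaller, so these are $O(\eps^2) + O(\tau)$ and in fact $O(\tau)$ once we absorb $\eps$ (which is a fixed small constant, but the dominant $\tau$-scale comes from the curvature of the ambient grim reaper, not from $\eps$). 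Finally the choice of weight: the functions decay like $e^{-s}$ from Proposition \ref{prop:scherk}(v), but I keep the weaker weight $e^{-\gamma s}$ with $\gamma \in (0,1)$ because, after cutting off with $\psi_s$ near $s = 3\delta_s/\tau$ and near $s=0$, and after passing through $\Delta_g$, one picks up terms that are $O(\tau)$ but only in the weaker weight — and $e^{-\gamma s}$ with any $\gamma<1$ suffices for the later global gluing argument.

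\textbf{Main obstacle.} The genuine difficulty is the principal term $\Delta_g\sigma$: a crude estimate yields $O(\eps)$, which is not good enough, so one must exploit that $f_{\theta(T)}$ is (up to the bending) a minimal-surface graph function and therefore $\Delta_g\sigma + |A|^2\sigma$ is, modulo quadratic remainder, equal to the mean curvature of a surface that would be exactly minimal if the underlying grim reaper were replaced by a plane. Converting that heuristic into a clean $O(\tau)$ bound — i.e. carefully tracking how the $H = \tau\,\vec e_y\cdot\nu$ relation on the grim reaper interacts with the minimality of the Scherk wing under the interpolation defining $F_i[T,\varphi_i,\tau]$ — is the crux. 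Once that is in hand, everything else is a matter of collecting the weighted-norm bounds established above.
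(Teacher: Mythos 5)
Your overall strategy matches the paper's: use the perturbation formula \eqref{eq:perturb-st} with $\sigma=\psi_s f_{\theta(T)}$ over the asymptotic grim reaper, exploit $H-\tau\vec e_y\cdot\nu\equiv 0$ there, and recognize that the principal term $\Delta_g\sigma$ is the obstacle because the crude bound gives only $O(\eps)$, not $O(\tau)$. But your proposed resolution of that obstacle — the ``cleaner route'' you advocate — rests on a false heuristic, and the precise mechanism the paper uses is different in a way that matters.

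You claim the error from ``replacing the Scherk asymptotic plane geometry with the grim-reaper geometry'' is $O(\tau)$ because ``the grim reaper deviates from its tangent plane by curvature $O(\tau)$'' in the region $s\le 5\delta_s/\tau$. This is not true as a position estimate: the curvature is $O(\tau)$, but over a region of arclength $\sim\delta_s/\tau$ the grim reaper's deviation from any fixed tangent plane grows to $O(\tau\cdot(\delta_s/\tau)^2)=O(\delta_s^2/\tau)$, which is large. So ``pullback of $0$ plus an $O(\tau)$ correction from bending'' is not a valid reading. The mechanism the paper actually uses is \emph{isometry}, not approximation: the grim reaper cylinder, in the arclength coordinates $(s,z)$, carries the flat metric exactly, so $\Delta_g = \Delta_{g_{\R^2}}$ identically. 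Minimality of the Scherk wing gives $0 = \Delta_{g_{\R^2}}\sigma + Q'$ (with $Q'$ computed on the plane), and substituting $\Delta_g\sigma = -Q'$ into \eqref{eq:perturb-st} leaves
\begin{equation*}
H_\Sigma - \tau\vec e_y\cdot\nu_\Sigma = -Q' + |A|^2\sigma + Q + \tau\vec e_y\cdot\nabla\sigma + \tau\vec e_y\cdot\mathbf Q^\nu.
\end{equation*}
The terms $|A|^2\sigma$, $\tau\vec e_y\cdot\nabla\sigma$, $\tau\vec e_y\cdot\mathbf Q^\nu$ are $O(\tau)$ by Lemma \ref{lem:4-3} and Proposition \ref{prop:scherk}(v). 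The subtle remaining piece is $Q-Q'$: because the plane and the grim reaper are isometric, $Q'$ is obtained from $Q$ by setting $A=0$ in the explicit expressions of Section \ref{ssec:perturbation}, so every term of $Q-Q'$ contains at least one factor of $A$ and is therefore $O(\tau)$. This structural cancellation is the crux, and it is genuinely different from (and cannot be replaced by) a ``the surfaces are geometrically close'' argument. Your first route gestures at the right idea but conflates the base-surface quantities $(g,A)$ with the graph quantities $(g_\Sigma, A_\Sigma)$ and never isolates the exact cancellation.

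A smaller gap: you dismiss the second transition region $s\in[3\delta_s/\tau,4\delta_s/\tau]$ as harmless, but the derivatives of $\psi_s$ there produce terms that decay only like $e^{-s}$, not $\tau e^{-s}$; the paper treats this region separately and recovers the factor of $\tau$ precisely from the weight-weakening $e^{-s}\le\tau e^{-\gamma s}$ valid for $s\ge 3\delta_s/\tau$ when $\tau$ is small depending on $\gamma$. This is the real reason for working with $e^{-\gamma s}$ rather than $e^{-s}$, and it needs to be stated, not merely alluded to.
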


The proof is the same as the proof of Lemma 10 in \cite{mine;tridents}. We reproduce it here for the reader's convenience. 

\begin{proof}  First, note that the estimate is true for $s \geq 4 \delta_s/ \tau$. Let us now work in the region $s \in [ 3 \delta_s/ \tau, 4 \delta_s/\tau]$. We have $H - \tau \vec e_y \cdot \nu \equiv 0$ on the grim reaper cylinder, so by the variation formulas in Section \ref{ssec:perturbation} , $H_{\Sigma} - \tau \vec e_y \cdot \nu_{\Sigma}$ has terms at least linear involving ${\sigma} A_{ij}$, $\nabla {\sigma}$ and $\nabla^2_{ij}{\sigma}$ (with ${\sigma}= \psi_s f_{\theta(T)}$). We are on the support of the derivative of the cut-off function $\psi_s$,  so these terms, their first and second derivatives behave like $(\frac{\tau}{\delta_s})^k  e^{-s}$, $0 \leq k \leq 4$.  For $s \geq 3 \delta_s/\tau$, we can arrange $(\frac{\tau}{\delta_s})^k  e^{-s} \leq e^{-s} \leq \tau e^{-\gamma s}$ to be true by taking $\tau$ small enough in terms of $\gamma$. 

In the region $s \leq 3 \delta_s /\tau$, we have $\psi_s \equiv 1$ and $\sigma=f_{\theta(T)}$. The plane and the original Scherk surface are minimal surfaces, so
	\[
	0 =  \Delta_{g_{\R^2}} \sigma +  Q',
	\]
where $ Q'$ is the $Q_{\sigma}$ defined by equation \eqref{eq:perturb-H}, and  every geometric quantity and tensor in $Q'$ is taken with respect to the flat metric on the asymptotic plane (to the original Scherk surface). Since $H - \tau \vec e_y \cdot \nu$ vanishes on the grim reaper cylinder, 
	\[
	H_{\Sigma} - \tau \vec e_y \cdot \nu_{\Sigma}= \Delta_g \sigma + |A|^2 \sigma  + Q  + \tau \vec e_y \cdot \nabla \sigma + \tau\vec  e_y \cdot \mathbf{Q}^{\nu},
	\]
where $Q$ is $Q_{\sigma}$ from equation \eqref{eq:perturb-H} and where  every geometric quantity and tensor in $Q$ is taken with respect to the metric on the asymptotic grim reaper. The term $\mathbf{Q}^{\nu}$ is  an expression at least quadratic in $\nabla \sigma$ and $\sigma A$ given by \eqref{eq:perturb-nu}. Since the asymptotic grim reaper is isometric to a plane with flat metric,
	\begin{align*}
	H_{\Sigma}  - \tau \vec e_y \cdot \nu_{\Sigma}&= - Q' + |A|^2 \sigma + Q + \tau \vec e_y \cdot \nabla \sigma + \tau \vec e_y \cdot \mathbf{Q}^{\nu}.
	\end{align*}
By Lemma \ref{lem:4-3},  the fact that $\| \sigma: C^5(\Sigma_{\geq 1}, g, e^{-s}) \| \leq \eps$ and  equation \eqref{eq:perturb-nu}, we have 
	\[
	\|  |A|^2 \sigma  + \tau \vec e_y \cdot \nabla \sigma + \tau \vec e_y \cdot \mathbf{Q}^{\nu}: C^2(\Sigma_{\geq 1}, g, e^{-\gamma s}) \| \leq C\tau.
	\]
We are left with
	\[
	Q-Q' = \frac{\hat G'_1}{\sqrt{1+G'_8}}-\frac{\hat G_1}{\sqrt{1+G_8}} + \frac{\hat G'_2}{1+G'_8 +\sqrt{1+G'_8}}- \frac{\hat G_2}{1+G_8 +\sqrt{1+G_8}}.
	\]
We can reduce the fractions to the same denominator and expand the numerators and the square roots in Taylor series. 
The expressions for  $G$'s, $G'$'s, $\hat G$'s  and $\hat G'$'s comprise terms involving $\sigma$, $\nabla \sigma$, $\nabla^2 \sigma $, $A$, $\sigma A$ and $\sigma \nabla^2 A$ and at least quadratic in $\sigma$. If a term in the expansion of the numerators  has either $A$, $\sigma A$ or $\sigma \nabla^2 A$, it can be bounded by $C \tau$  thanks to Lemma \ref{lem:4-3}. We now claim that there is no term involving only $\nabla \sigma$ and $\nabla^2 \sigma $. The grim reaper is isometric to a flat plane, so $\hat G_1'$ only differs from $\hat G_1$ by terms involving the second fundamental form $A$. In other words, setting $A=0$ in the expression for $\hat G_1$ gives us $\hat G_1'$. The same property is true for any $\hat G$ ($G$) and its corresponding $\hat G'$ ($G'$ respectively). Therefore, they contain exactly the same terms involving only $\nabla \sigma$ and $\nabla^2 \sigma$, and these terms can  be paired and cancelled. 
\end{proof}

\begin{lemma}
\label{lem:4-6}
	\[
	\left \|\left.\frac{\pd}{\pd \varphi_i'}\right|_{\uvarphi'=0} (H_{\Sigma}-\tau \vec e_y\cdot \nu_{\Sigma}) :C^1(\Sigma_{\geq 1}[T, \uvarphi,\tau], e^{-\gamma s} ) \right\| \leq C \tau.
	\]
\end{lemma}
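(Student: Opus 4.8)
The plan is to differentiate the expression for $H_{\Sigma}-\tau\vec e_y\cdot\nu_\Sigma$ obtained in the proof of Lemma \ref{lem:4-5} with respect to $\varphi_i'$ at $\uvarphi'=0$, and to check that each resulting term has the claimed bound $C\tau$ in the weighted $C^1(\Sigma_{\geq1},e^{-\gamma s})$ norm. Recall that on $\Sigma_{\geq1}$ we have $\sigma=\psi_s f_{\theta(T)}$ and (on the region $s\le 3\delta_s/\tau$ where $\psi_s\equiv1$) the identity
\[
H_\Sigma-\tau\vec e_y\cdot\nu_\Sigma = -Q' + |A|^2\sigma + Q + \tau\vec e_y\cdot\nabla\sigma + \tau\vec e_y\cdot\mathbf Q^\nu,
\]
together with the simple decay estimates $e^{-s}\le \tau e^{-\gamma s}$ and $(\tau/\delta_s)^k e^{-s}\le\tau e^{-\gamma s}$ valid once $\tau$ is small, which already dispose of the transition regions $s\ge 3\delta_s/\tau$. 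So the content is on $s\in[1,3\delta_s/\tau]$, and I would treat the cut-off regions exactly as in Lemma \ref{lem:4-5}.

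The key observation is that the differentiation $\dot{}\,=\pd/\pd\varphi_i'|_{\uvarphi'=0}$ acts only on the geometric data $A$, $\nu$, $g$, $\kappa$ and on $\theta(T')$ (since $\sigma=f_{\theta(T')}$ after straightening depends on $\varphi_i'$ through $T'$), and Lemma \ref{lem:4-3} gives us $|(\theta(T'))^{\cdot}|\le C$, $\|\dot\kappa:C^k(g)\|\le C$, $\|\dot A:C^k(g)\|\le C\tau^2$, $\|\dot\nu:C^k(g)\|\le C\tau$ (and of course $\|A:C^k(g)\|\le C\tau$, $\|\nu:C^k(g)\|\le C$ for the undifferentiated factors), together with $\|\sigma:C^5(\Sigma_{\geq1},g,e^{-s})\|\le\eps$, hence $\|\dot\sigma:C^5(\Sigma_{\geq1},g,e^{-s})\|\le C\eps$ from part (v) of Proposition \ref{prop:scherk}. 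Using these one checks term-by-term: the product rule applied to $|A|^2\sigma$ produces $2A\ast\dot A\,\sigma + |A|^2\dot\sigma$, which is $O(\tau\cdot\tau^2\cdot\eps e^{-s}) + O(\tau^2\cdot\eps e^{-s}) = O(\tau^2 e^{-\gamma s})$ after absorbing one $e^{-s}$ into $\tau e^{-\gamma s}$ — comfortably within $C\tau$; the term $\tau\vec e_y\cdot\nabla\sigma$ differentiates to $\tau\vec e_y\cdot\nabla\dot\sigma$ plus a Christoffel-symbol correction from $\dot g$, both $O(\tau\eps e^{-s})=O(\tau^2 e^{-\gamma s})$; and $\tau\vec e_y\cdot\mathbf Q^\nu$, being at least quadratic in $\nabla\sigma$ and $\sigma A$ with coefficients depending on $A$, $\nu$, $g$, differentiates to something still at least linear in $\nabla\sigma$ or $\sigma$, times bounded data, times $\tau$, hence again $O(\tau^2 e^{-\gamma s})$.

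The genuinely delicate part is $\frac{\pd}{\pd\varphi_i'}(Q-Q')$, mirroring the corresponding step of Lemma \ref{lem:4-5}. Here I would reuse the structural fact proved there: setting $A=0$ in the expression for any $\hat G$ (resp. $G$) yields the corresponding $\hat G'$ (resp. $G'$), because the asymptotic grim reaper is isometric to a flat plane. Consequently, after reducing $Q-Q'$ to a common denominator and expanding numerators and square roots in Taylor series, every term that survives the subtraction $Q-Q'$ carries at least one factor of $A$, $\sigma A$, or $\sigma\nabla^2 A$ — the pure-$(\nabla\sigma,\nabla^2\sigma)$ terms cancel in pairs. Differentiating in $\varphi_i'$ preserves this: $\dot{}$ applied to such a term either hits the distinguished $A$-type factor, giving $\dot A$ or $\sigma\nabla^2\dot A$ of size $O(\tau^2)$ and $O(\eps\tau^2 e^{-s})$ respectively by Lemma \ref{lem:4-3}, or hits one of the other factors ($\sigma$, $\nabla\sigma$, $g$, $\nu$, or a Taylor coefficient), leaving the $A$-type factor of size $O(\tau)$ intact and introducing at worst a bounded or $O(\eps)$ derivative. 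Either way each term is $O(\tau e^{-\gamma s})$ in $C^1$, and summing the finitely many terms (with universal coefficients) gives the bound $C\tau$. The transition regions are handled as in Lemma \ref{lem:4-5}, with the extra $\varphi_i'$-derivative producing only bounded extra factors by Lemma \ref{lem:4-3}. I expect the main obstacle to be purely bookkeeping: keeping track of which factor in each product the derivative $\dot{}$ lands on and confirming that in every case at least one power of $\tau$ is retained — the $A$-carries-a-$\tau$ mechanism is robust, but one must be careful that differentiating the metric contractions (the $\ast$'s) and the Taylor-expanded denominators does not secretly remove that power, which it does not since $\dot g$, $\dot\nu$ are themselves $O(\eps)$ or $O(\tau)$.
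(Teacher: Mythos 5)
Your proposal is correct and follows essentially the same route the paper takes: differentiate the decomposition from the proof of Lemma \ref{lem:4-5} term by term, invoke Lemma \ref{lem:4-3} and Proposition \ref{prop:scherk} for the factors $\dot A$, $\dot\nu$, $(\theta(T'))^{\cdot}$, $\pd f_\theta/\pd\theta$, and note that the $Q-Q'$ cancellation (every surviving term carries an $A$-type factor of order $\tau$) persists after differentiation. The paper's proof is much terser, simply citing Lemma \ref{lem:4-3}, Proposition \ref{prop:scherk} and the definitions of $Q'$, $\mathbf Q^\nu$, but the underlying bookkeeping is exactly what you spell out.
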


\begin{proof}
From  equation \eqref{eq:perturb-st}, we have
	\[
	H_{\Sigma} - \tau \vec e_y\cdot \nu_{\Sigma}= \Delta_g \sigma + |A|^2 \sigma  + Q +\tau \vec e_y \cdot \nabla \sigma + \tau \vec e_y \cdot \mathbf Q^{\nu},
	\]
where $\sigma = \psi_s f_{\theta}$. Therefore
	\[
	(H_{\Sigma} - \tau \vec e_y\cdot \nu_{\Sigma})^{\cdot} = (|A|^2)^{\cdot} {\sigma}+ |A|^2 \psi_s \frac{\pd f_{\theta}}{\pd \theta} (\theta(T'))^{\cdot} + \dot Q + \tau \psi_s \vec e_y \cdot \frac{\pd \nabla f_{\theta}}{\pd \theta} (\theta(T'))^{\cdot} + \tau \vec e_y \cdot \dot{\mathbf Q}^{\nu}
	\]
and we have the result from the estimates in Proposition \ref{prop:scherk}, Lemma \ref{lem:4-3} and the definitions of $Q'$ and $\mathbf Q^{\nu}$ given in Section \ref{ssec:perturbation}.
\end{proof}


\subsection{The functions $\bar u_i$ and $\bar w_i$}


\begin{definition} Let $Y$ be the variation vector field on $\Sigma_{\geq 1}[T, \uvarphi, \tau]$ due to changing $\varphi_i'$ that is on the component contained in the $j$th wing,
	\[
	Y=(F_j[T', \uvarphi-\uvarphi',\tau])^{\cdot},
	\]

and let $Y_{\parallel} := Y-(Y\cdot \nu_{\Sigma}) \nu_{\Sigma}$ be the tangential component of $Y$. 
\end{definition}

\begin{lemma}
\label{lem:4-8}
  $\| Y_{\parallel}:C^1({\Sigma_{\geq 1}[T,\uvarphi,\tau]})\|\leq C$ and $\| Y\cdot \nu_{\Sigma}:C^1({\Sigma_{\geq 1}[T,\uvarphi,\tau])}\|\leq C$.
 \end{lemma}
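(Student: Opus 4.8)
The plan is to estimate the variation field $Y = (F_j[T',\uvarphi-\uvarphi',\tau])^{\cdot}$ directly from the explicit formula for $F_j$ in terms of its constituent pieces: the cut-off function $\psi[1,0](s)$, the map $Z[T']\circ\mathcal R_j\circ F_{\theta(T')}$ on the core side, the asymptotic grim reaper immersion $A_j[T',\varphi_j-\varphi_j',\tau] = \kappa[\tau,x_j,y_j,s_j]$, the cut-off $\psi_s$, the function $f_{\theta(T')}$, and the normal field $\nu_j[T',\varphi_j-\varphi_j',\tau]$. Since $\partial/\partial\varphi_i'|_{\uvarphi'=0}$ acts through the chain rule $\dot{f} = \sum_k \frac{\partial f}{\partial\beta_k'}\frac{\partial\beta_k'}{\partial\varphi_i'} + \frac{\partial f}{\partial\varphi_i'}$, and Lemma \ref{lem:4-1}(iii) gives $|\partial\beta_k'/\partial\varphi_i' - \delta_{ki}| \leq C\tau$, every appearance of a $\beta'$-derivative or a $\varphi'$-derivative in $\dot{F_j}$ is controlled. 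The cut-off functions $\psi[1,0]$ and $\psi_s$ do not depend on $\uvarphi'$, so they contribute only as bounded multipliers (and their $s$-derivatives, which are needed for the $C^1$ bound, are $O(1)$ and $O(\tau/\delta_s)$ respectively — both bounded). Thus $\dot{F_j}$ is a finite sum of terms each of which is a product of uniformly bounded quantities: $\dot\kappa$ (bounded in $C^k(g)$ by Lemma \ref{lem:4-3}(i)), $\dot{Z[T']}$ (bounded since $Z[T']$ depends smoothly on the tetrad through bounded rotation angles and the $Z_1,Z_2$ diffeomorphisms), $\dot{F_{\theta(T')}}$ (bounded via $|(\theta(T'))^{\cdot}| \leq C$ from Lemma \ref{lem:4-3} together with Proposition \ref{prop:scherk}(v)), $\dot f_{\theta(T)}$ (bounded the same way), and $\dot\nu_j$ (bounded by $C\tau$ by Lemma \ref{lem:4-3}(v), actually better than needed). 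Collecting these gives $\|Y:C^1(\Sigma_{\geq 1})\| \leq C$, from which the bound on $Y_\parallel = Y - (Y\cdot\nu_\Sigma)\nu_\Sigma$ follows because $\nu_\Sigma$ is bounded in $C^1$ (Lemma \ref{lem:4-3}(iv) plus Corollary \ref{cor:4-4}), and the bound on $Y\cdot\nu_\Sigma$ is then immediate.

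Concretely, I would first split $\Sigma_{\geq 1}$ into the three $s$-regions where $F_j$ has a clean form: $s\in[1,3\delta_s/\tau]$ where $\psi[1,0]\equiv 1$ and $\psi_s\equiv 1$, so $F_j = A_j + f_{\theta(T)}\nu_j$; $s\in[3\delta_s/\tau,4\delta_s/\tau]$ where $\psi_s$ transitions; and $s\in[4\delta_s/\tau,5\delta_s/\tau]$ where $\psi_s\equiv 0$ so $F_j = A_j = \kappa$. (The region $s\in[0,1]$ is in the core, not in $\Sigma_{\geq 1}$, so it is not needed here.) In each region, differentiate in $\varphi_i'$ at $\uvarphi'=0$: the dominant structural term is $\dot\kappa$, which is $O(1)$ in $C^k(g)$; the correction $\dot f_{\theta(T)}\,\nu_j + f_{\theta(T)}\,\dot\nu_j$ contributes $O(\eps) + O(\tau)$, also bounded; and in the transition region the $\psi_s$-derivative term is multiplied by $f_{\theta(T)}$, which decays like $e^{-s}$ and is certainly bounded. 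I would also record that $\dot Z[T']$ is bounded: $Z[T'] = \mathcal R\circ Z_1(\theta_1(T'))\circ Z_2(\theta_2(T'))$ with $\theta_r(T'),\theta_1(T'),\theta_2(T')$ all smooth functions of the $\beta_k'$, and each $\beta_k'$ smooth in $\varphi_i'$ with bounded derivative, so $\dot Z[T']$ is a bounded vector field; its restriction to the wing region only matters through $\dot{F_{\theta(T')}}$ on the $\psi[1,0]\equiv 1$ part, where in fact the $Z[T']$-term has coefficient zero anyway for $s\geq 1$.

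The one point requiring a little care — and the closest thing to an obstacle — is the bookkeeping for $\dot F_j$ in the first transition region $s\in[0,1]$ of the full surface: there $F_j$ is a genuine convex combination of the core expression $Z[T']\circ\mathcal R_j\circ F_{\theta(T')}$ and the wing expression, so one must check that differentiating the core piece in $\varphi_i'$ stays bounded. But this region lies in the core $\Sigma_{\leq 0}$ (or at least $s\leq 1$), and we only need the estimate on $\Sigma_{\geq 1}$, where $\psi[1,0](s)\equiv 1$ kills the core term entirely; so this subtlety does not actually arise for the stated lemma. Hence the proof is essentially a collation of the already-established estimates — Lemma \ref{lem:4-3} for $\dot\kappa$, $A$, $\nu$, $\dot\nu$, Lemma \ref{lem:4-1}(iii) for the $\beta'$-$\varphi'$ transition, Proposition \ref{prop:scherk}(v)–(vi) for $f_\theta$ and its $\theta$-derivative, and Corollary \ref{cor:4-4} for the equivalence of $g$ and $g_\Sigma$ needed to pass the $C^1$ norm between the two metrics. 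I expect the whole argument to be three or four lines once the relevant formulas are cited.
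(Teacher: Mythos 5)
Your argument matches the paper's proof: on $\Sigma_{\geq 1}$ the core term drops out and $Y = (\kappa + \psi_s f_{\theta}\nu)^{\cdot}$, which you differentiate by the chain/product rule and bound by citing Lemma \ref{lem:4-3} (for $\dot\kappa$, $\dot\nu$, and $|(\theta(T'))^{\cdot}|\leq C$) together with Proposition \ref{prop:scherk}(v)--(vi) (for $f_{\theta}$ and $\pd f_{\theta}/\pd\theta$), and the bounds on $Y_{\parallel}$ and $Y\cdot\nu_{\Sigma}$ follow since $\nu_{\Sigma}$ is controlled. One small slip to correct: with the paper's convention $\psi[1,0](s)=\psi(1-s)$, we have $\psi[1,0]\equiv 0$ on $s\geq 1$, so it is the coefficient $1-\psi[1,0]\equiv 1$ that isolates the wing expression; your formula $F_j=A_j+f_{\theta}\nu_j$ for $s\geq 1$ is correct, only the stated cut-off value is reversed.
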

 \begin{proof}
 We have 
 	\[
	Y=(\kappa + \psi_s f_{\theta} \nu)^{\cdot} = \dot \kappa + \psi_s \frac{\pd f_{\theta}}{\pd \theta} (\theta(T'))^{\cdot}+ \psi_s \sigma_{\theta} \dot \nu
	\]
and the result follows from Proposition \ref{prop:scherk} and Lemma \ref{lem:4-3}. \end{proof}
 
\begin{definition}
We define  the functions $\bar u_i'$ for $ i=1,\ldots,4$ on $\Sigma[T,\uvarphi,\tau]$ by 
	\[
	\bar u_i':= \left.\frac{\pd}{\pd \varphi_i'} \right|_{\uvarphi'=0} f_{\uvarphi'},
	\]
where $f_{\uvarphi'}$ is as in Lemma \ref{lem:4-1}.
\end{definition}

\begin{definition}
\label{def:linop}
Given any smooth surface $S$ in $E^3$, we define the linear differential operators
	\begin{gather*}
	L_S:=\Delta_S+ |A_S|^2,\\
	\lin_S:=\Delta_S+ |A_S|^2 + \tau \vec e_y\cdot \nabla,
	\end{gather*}
 where $g_S$, $|A_S|^2$, and $\Delta_S$  denote the first fundamental form of $S$, the square norm of the second fundamental form of $S$, and the Laplacian with respect to $g_{S}$ on $S$. 
\end{definition}

\begin{lemma}
\label{lem:4-11}
On $\Sigma_{\geq 1}[T,\uvarphi,\tau]$,   $\bar u'=Y\cdot \nu_{\Sigma}$ and 	
	\[
	(H_{\Sigma}-\tau\vec e_y\cdot \nu_{\Sigma})^{\cdot}=Y_{\parallel} (H_{\Sigma}-\tau \vec e_y\cdot \nu_{\Sigma})+\lin_{\Sigma} \bar u_i'.
	\]
\end{lemma}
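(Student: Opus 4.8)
The plan is to identify $(H_\Sigma-\tau\vec e_y\cdot\nu_\Sigma)^\cdot$ as the derivative of mean curvature (minus $\tau\vec e_y\cdot\nu$) along the variation $\varphi_i'\mapsto F_j[T',\uvarphi-\uvarphi',\tau]$, and then split that variation into its tangential and normal parts. The normal part is, by definition, a graph of the function $\bar u_i'$ over $\Sigma[T,\uvarphi,\tau]$, and the first-variation formula \eqref{eq:perturb-st} applied to a normal perturbation $\sigma$ gives, to first order in $\sigma$, precisely $\lin_\Sigma\sigma$ as the linearization of $H-\tau\vec e_y\cdot\nu$ (the quadratic remainder $Q_\sigma+\tau\vec e_y\cdot\mathbf Q^\nu_\sigma$ drops out upon differentiating at $\sigma=0$). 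The tangential part contributes, via the diffeomorphism invariance of the quantity $H-\tau\vec e_y\cdot\nu$ under reparametrizations, a directional-derivative term $Y_\parallel(H_\Sigma-\tau\vec e_y\cdot\nu_\Sigma)$ — this is the standard fact that if $Y=Y_\parallel+(Y\cdot\nu)\nu$ is a variation vector field, then the variation of any scalar geometric quantity $Q$ equals the linearization at the normal part $\mathcal L_\Sigma(Y\cdot\nu)$ plus $\nabla_{Y_\parallel}Q$.

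Concretely, the steps I would carry out are as follows. First, establish the identity $\bar u_i'=Y\cdot\nu_\Sigma$: by Lemma \ref{lem:4-1}(iv), the graph of $f_{\uvarphi'}$ over $\Sigma[T,\uvarphi,\tau]$ lies in the image of $Z[T',\uvarphi-\uvarphi',\tau]$, so on $\Sigma_{\ge 1}$ the surface $F_j[T',\uvarphi-\uvarphi',\tau](H^+)$ is the graph of $f_{\uvarphi'}$ over $\Sigma[T,\uvarphi,\tau]$; differentiating $X+f_{\uvarphi'}\nu_\Sigma$ in $\varphi_i'$ at $\uvarphi'=0$ and taking the normal component (the $\nu_\Sigma$ direction) yields $\bar u_i'=(\partial/\partial\varphi_i')|_{\uvarphi'=0} f_{\uvarphi'}=Y\cdot\nu_\Sigma$, since differentiating $\nu_\Sigma$ itself contributes only tangentially. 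Second, decompose $Y=Y_\parallel+\bar u_i'\,\nu_\Sigma$. Third, apply the general first-variation formula for $H-\tau\vec e_y\cdot\nu$ under an arbitrary variation vector field: the tangential piece $Y_\parallel$ acts by $\nabla_{Y_\parallel}$ on the scalar $H_\Sigma-\tau\vec e_y\cdot\nu_\Sigma$ (written $Y_\parallel(H_\Sigma-\tau\vec e_y\cdot\nu_\Sigma)$ in the statement), while the normal piece $\bar u_i'\nu_\Sigma$ acts by the linearization, which by \eqref{eq:perturb-st} is exactly $\Delta_{g_\Sigma}\bar u_i'+|A_\Sigma|^2\bar u_i'+\tau\vec e_y\cdot\nabla\bar u_i'=\lin_\Sigma\bar u_i'$. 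Summing these gives the claimed formula.

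The main obstacle is making rigorous the decomposition of the variation of $H-\tau\vec e_y\cdot\nu$ into "tangential $\Rightarrow$ directional derivative" plus "normal $\Rightarrow$ linearized operator", because the natural parametrization coming from $F_j[T',\uvarphi-\uvarphi',\tau]$ is neither purely normal nor matched to a fixed parametrization of $\Sigma[T,\uvarphi,\tau]$. The clean way around this is to reparametrize: compose with the flow of $-Y_\parallel$ (or, equivalently, use that $H-\tau\vec e_y\cdot\nu$ depends only on the image surface, so its variation is computed by projecting $Y$ to the normal direction and then subtracting the effect of the induced diffeomorphism, which contributes $\nabla_{Y_\parallel}$). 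Since $Y_\parallel$ and $\bar u_i'=Y\cdot\nu_\Sigma$ have bounded $C^1$ norms by Lemma \ref{lem:4-8}, and $g$ and $g_\Sigma$ are uniformly equivalent by Corollary \ref{cor:4-4}, all the manipulations are legitimate on $\Sigma_{\ge 1}[T,\uvarphi,\tau]$, and the quadratic remainder terms $Q_\sigma$, $\mathbf Q^\nu_\sigma$ vanish identically upon differentiation at $\sigma=0$ since they are $O(\sigma^2)$ together with their derivatives in $\sigma$. This is a standard computation (cf. Appendix B of \cite{kapouleas;embedded-minimal-surfaces}), and I would present it as such, citing the variation formulas of Section \ref{ssec:perturbation}.
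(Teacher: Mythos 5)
Your proposal is correct and follows essentially the same path as the paper: decompose the variation vector $Y$ of the family $\varphi_i'\mapsto F_j[T',\uvarphi-\uvarphi',\tau]$ into tangential and normal parts, identify the normal part with $\bar u_i'$ via the graph description from Lemma \ref{lem:4-1}(iv), and then read off the tangential contribution as a directional derivative and the normal contribution as $\lin_{\Sigma}\bar u_i'$ from the variation formula \eqref{eq:perturb-st}, the quadratic remainders vanishing at $\sigma=0$. The paper's proof is exactly this (stated very tersely), so there is no material difference — your expanded version is a faithful filling-in of the same argument.
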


\begin{proof}
As $\uvarphi'$ changes, one can see the change in the surface as a reparamatrization of the surface followed by $X+f_{\uvarphi'} \nu_{\Sigma}$. The derivative $^{\cdot}$ of the variation of the surface is $Y=Y_{\parallel} + \bar u_i' \nu_{\Sigma}$. Using the fact that differentiation is linear, we get
	\[
	(H_{\Sigma}-\tau \vec e_y\cdot \nu_{\Sigma})^{\cdot} = Y_{\parallel} (H_{\Sigma}- \tau \vec e_y\cdot \nu_{\Sigma}) + Y_{\perp}(H_{\Sigma}-\tau \vec e_y\cdot \nu_{\Sigma}),
	\]
and the derivative of the normal variation is $\lin_{\Sigma} \bar u'$, from Section \ref{ssec:perturbation}.\end{proof}

\begin{cor}
\label{cor:4-12} 
$ \| \lin_{\Sigma} \bar u_i' : C^1(\Sigma_{\geq 1} [T,\uvarphi, \tau], e^{-\gamma s} )\| \leq C\tau. $
\end{cor}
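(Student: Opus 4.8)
The plan is to combine Lemma \ref{lem:4-11}, which identifies $\lin_\Sigma \bar u_i'$ with $(H_\Sigma - \tau \vec e_y \cdot \nu_\Sigma)^{\cdot} - Y_\parallel(H_\Sigma - \tau \vec e_y \cdot \nu_\Sigma)$, with the two estimates already established on $\Sigma_{\geq 1}[T,\uvarphi,\tau]$: Lemma \ref{lem:4-5}, which bounds $H_\Sigma - \tau \vec e_y \cdot \nu_\Sigma$ itself in the weighted $C^2$ norm, Lemma \ref{lem:4-6}, which bounds its $\varphi_i'$-derivative in the weighted $C^1$ norm, and Lemma \ref{lem:4-8}, which bounds $Y_\parallel$ in $C^1$.

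First I would write, from Lemma \ref{lem:4-11},
	\[
	\lin_\Sigma \bar u_i' = (H_\Sigma - \tau \vec e_y\cdot \nu_\Sigma)^{\cdot} - Y_\parallel(H_\Sigma - \tau \vec e_y \cdot \nu_\Sigma),
	\]
so that it suffices to bound each of the two terms on the right in $C^1(\Sigma_{\geq 1}[T,\uvarphi,\tau], e^{-\gamma s})$ by $C\tau$. The first term is exactly the content of Lemma \ref{lem:4-6}. For the second term, $Y_\parallel(H_\Sigma - \tau \vec e_y \cdot \nu_\Sigma)$ denotes the directional derivative of $H_\Sigma - \tau \vec e_y \cdot \nu_\Sigma$ along the tangential vector field $Y_\parallel$; in local coordinates this is a sum of products of components of $Y_\parallel$ with first derivatives of $H_\Sigma - \tau \vec e_y \cdot \nu_\Sigma$. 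Since $\| Y_\parallel : C^1(\Sigma_{\geq 1})\| \leq C$ by Lemma \ref{lem:4-8} and $\| H_\Sigma - \tau \vec e_y \cdot \nu_\Sigma : C^2(\Sigma_{\geq 1}, g, e^{-\gamma s})\| \leq C\tau$ by Lemma \ref{lem:4-5}, the product rule for weighted Hölder norms — using that the weight $e^{-\gamma s}$ is multiplicative and that $g$ and $g_\Sigma$ are uniformly equivalent by Corollary \ref{cor:4-4} — gives $\| Y_\parallel(H_\Sigma - \tau \vec e_y \cdot \nu_\Sigma) : C^1(\Sigma_{\geq 1}, e^{-\gamma s})\| \leq C \cdot C\tau = C\tau$.

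Adding the two bounds yields the claimed estimate. The only genuinely delicate point is bookkeeping with the weighted norms: one must check that $C^1$ of a product of a $C^1$ function (unweighted) with a $C^2$ function (weighted by $e^{-\gamma s}$) is controlled by the product of the norms, which is the standard multiplicative property of the norms defined in \eqref{eq:def-weighted-holder} together with the fact that one derivative is lost in passing from $C^2$ to $C^1$. Since the vector field $Y$ is defined via $F_j[T',\uvarphi-\uvarphi',\tau]$ and Lemma \ref{lem:4-8} provides the estimate on $Y_\parallel$ directly in terms of the ambient metric, no further geometry is needed. I do not expect any real obstacle here; this corollary is essentially a mechanical consequence of Lemmas \ref{lem:4-5}, \ref{lem:4-6}, \ref{lem:4-8}, and \ref{lem:4-11}, and the proof is a few lines.
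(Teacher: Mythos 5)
Your proof is correct and follows the same route as the paper: rearrange the identity in Lemma \ref{lem:4-11} to isolate $\lin_\Sigma \bar u_i'$, then bound the $\varphi_i'$-derivative term via Lemma \ref{lem:4-6} and the tangential directional-derivative term via Lemmas \ref{lem:4-5} and \ref{lem:4-8}. You are in fact slightly more careful than the paper, whose one-line proof cites only Lemmas \ref{lem:4-5} and \ref{lem:4-8}; the bound on $(H_\Sigma - \tau \vec e_y \cdot \nu_\Sigma)^{\cdot}$ that you correctly attribute to Lemma \ref{lem:4-6} is genuinely needed, so your citation is the more complete one.
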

	
\begin{proof} By Lemma \ref{lem:4-11},  $\lin_{\Sigma} \bar u_i'  = Y_{\perp}(H_{\Sigma}-\tau\vec  e_y\cdot \nu_{\Sigma})$, so the estimate follows from Lemmas \ref{lem:4-5} and \ref{lem:4-8}.
\end{proof}

We correct $\bar u_i'$ to $\bar u_i$ so that $\lin_{\Sigma} \bar u_i =0$ on $\Sigma_{\geq 2}$ by solving a differential equation on each asymptotic grim reaper $\Gamma^{j}=A_j[T,\varphi_j,\tau](H^+)$, $j=1, \ldots, 4$.  The region $\Gamma^j_{\geq 1}$ is isometric to a cylinder $\Omega = [1, 5 \delta_s /\tau]/G'$, where $G'$ is the group generated by $(s,z) \to (s,z+2\pi)$. We consider the linear operator $\lin_{\Sigma}$ on the asymptotic grim reaper, where $g_{\Sigma}$ and $A_{\Sigma}$ are the metric and the second fundamental form on $\Sigma$ pulled back to the asymptotic grim reaper, following the Notations \ref{ssec:notations2}. By Corollary \ref{cor:4-4}, we can choose $\eps$ and $\delta_s$ small enough so that Proposition \ref{prop:A3} applies to $\lin_{\Sigma}$. Hence, we  obtain a solution $v_i$  to the linear equation $\lin_{\Sigma} v_i = - \lin_{\Sigma} \bar u'_i$ that vanishes on $s=5 \delta_s/\tau$, is given up to a constant on $s=1$, and has exponential decay on each asymptotic grim reaper.

We fix now once and for all $\alpha \in (0,1)$, and take  $v_i$ to be the solution described above pushed forward to $\Sigma$. More precisely, $v_i := \ubar{\rcal} ( 0, -\lin_{\Sigma} \bar u_i')$, with $\ubar{\rcal}$ as in Proposition \ref{prop:A3}, on each component of $\Sigma_{\geq 1}[T, \uvarphi, \tau]$. We define 
	\[
	\bar u_i= \bar u_i' + (\psi[1,2] \circ s) v_i, \quad \bar w_i = \lin_{\Sigma} \bar u_i.
	\]
\begin{lemma}
\label{lem:4-17}
The functions $\bar u_i$ and $\bar w_i$ satisfy the following properties:
	\begin{enumerate}
	\item They depend continuously on $(T, \uvarphi, \tau)$.
	\item $\bar w_i$ is supported on $\Sigma_{\leq 2}$, $\| \bar w_i: C^{0,\alpha} (\Sigma)\| \leq C$, and $\| \bar w_i:C^{0,\alpha}(\Sigma_{\geq 1})\| \leq C\tau$.
	\item $\| \bar u_i: C^{2,\alpha}(\Sigma) \| \leq C$.
	\item $\lin_{\Sigma} \bar u_i =0$ on $\Sigma_{\geq 2}[T,\uvarphi, \tau]$, $\bar u_i =0$ on $\pd \Sigma$ and $|\bar u_i -(a+2)\delta_{ij}|\leq C a \eps$ on  the component of $\pd \Sigma_{\leq 2}$ contained in the $i$th wing.
	\end{enumerate}
\end{lemma}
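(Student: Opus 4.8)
The plan is to verify the four properties by tracing through the definitions $\bar u_i = \bar u_i' + (\psi[1,2]\circ s)v_i$ and $\bar w_i = \lin_\Sigma \bar u_i$, collecting the estimates already established for each ingredient. For (i), continuous dependence on $(T,\uvarphi,\tau)$: the function $\bar u_i'$ depends smoothly on the parameters since $f_{\uvarphi'}$ does by Lemma \ref{lem:4-1}(iv) and differentiation in $\varphi_i'$ preserves continuous dependence; the cut-off $\psi[1,2]\circ s$ depends continuously on $\tau$ through the coordinate $s$; and $v_i = \ubar\rcal(0,-\lin_\Sigma\bar u_i')$ depends continuously on the parameters because $\lin_\Sigma$ and its data do, using the corresponding continuity statement in Proposition \ref{prop:A3}. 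Hence $\bar u_i$ and then $\bar w_i = \lin_\Sigma\bar u_i$ inherit continuous dependence.

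For (iv), the structural properties: on $\Sigma_{\geq 2}$ we have $\psi[1,2]\circ s \equiv 1$, so $\bar u_i = \bar u_i' + v_i$ there, and by construction $v_i$ solves $\lin_\Sigma v_i = -\lin_\Sigma\bar u_i'$, giving $\lin_\Sigma\bar u_i = 0$ on $\Sigma_{\geq 2}$. On $\pd\Sigma$ (which lies at $s = 5\delta_s/\tau$), $\bar u_i' \equiv 0$ by Lemma \ref{lem:4-1}(iv) and $v_i$ vanishes at $s = 5\delta_s/\tau$ by its defining boundary condition, so $\bar u_i = 0$ on $\pd\Sigma$. On the component of $\pd\Sigma_{\leq 2}$ in the $j$th wing ($s=2$), again $\psi[1,2]\circ s = 1$, so $\bar u_i = \bar u_i' + v_i$; we must show $|\bar u_i - (a+2)\delta_{ij}| \leq Ca\eps$. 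The value of $\bar u_i' = \dot f_{\uvarphi'}$ at $s=2$ on the $j$th wing is computed from the straightening geometry: differentiating the displacement that carries $\pd\Sigma[T,\uvarphi,\tau]$ into the image of $Z[T',\uvarphi-\uvarphi',\tau]$, evaluated at the pivot, yields a leading term $(a+2)\delta_{ij}$ (the pivot sits at distance $a$ from the $z$-axis, shifted by $s=2$ along the wing) with corrections of size $Ca\eps$ coming from the terms $b_{\theta}$ and $f_\theta$ controlled by Proposition \ref{prop:scherk}(v)--(vi); and $v_i$ at $s=2$ is determined up to a constant — one absorbs that freedom so that the constant is $O(a\eps)$, which is possible since the defining equation for $v_i$ only pins it down modulo constants at $s=1$ and the exponential decay bounds its size.

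For (ii) and (iii), the quantitative estimates: by Corollary \ref{cor:4-12}, $\|\lin_\Sigma\bar u_i':C^1(\Sigma_{\geq1},e^{-\gamma s})\| \leq C\tau$, so by Proposition \ref{prop:A3} the solution $v_i = \ubar\rcal(0,-\lin_\Sigma\bar u_i')$ satisfies a corresponding weighted Schauder bound with exponential decay, in particular $\|v_i:C^{2,\alpha}(\Sigma_{\geq1})\| \leq C\tau$ and $v_i$ is small. Combined with $\|\bar u_i':C^{2,\alpha}(\Sigma)\| \leq C$ (from Lemma \ref{lem:4-1}(iv) and the smooth dependence of $f_{\uvarphi'}$), this gives (iii): $\|\bar u_i:C^{2,\alpha}(\Sigma)\| \leq C$. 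For (ii), write $\bar w_i = \lin_\Sigma\bar u_i = \lin_\Sigma\bar u_i' + \lin_\Sigma((\psi[1,2]\circ s)v_i)$. On $\Sigma_{\geq 2}$ this vanishes as shown; on $\Sigma_{\leq 1}$, $\psi[1,2]\circ s = 0$ and $v_i$ doesn't enter, but we need to see $\lin_\Sigma\bar u_i'$ restricted there — actually the support claim is that $\bar w_i$ is supported on $\Sigma_{\leq 2}$, which follows since off $\Sigma_{\leq 2}$ it is $0$; the bound $\|\bar w_i:C^{0,\alpha}(\Sigma)\| \leq C$ follows from $\|\bar u_i:C^{2,\alpha}(\Sigma)\|\leq C$ and the fact that $\lin_\Sigma$ is a second-order operator with bounded coefficients (using Corollary \ref{cor:4-4}); and on $\Sigma_{\geq1}$ the sharper bound $\|\bar w_i:C^{0,\alpha}(\Sigma_{\geq1})\|\leq C\tau$ comes from Corollary \ref{cor:4-12} for the $\lin_\Sigma\bar u_i'$ part together with the $C\tau$ bounds on $v_i$ and the cut-off contribution (derivatives of $\psi[1,2]$ in the region $s\in[1,2]$ where the coefficients are $O(1)$, paired with the small $v_i$).

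The main obstacle I expect is the precise boundary-value computation at $s=2$ in property (iv): identifying the leading constant $(a+2)\delta_{ij}$ requires unwinding the straightening construction of Lemma \ref{lem:4-1} — in particular understanding $\dot f_{\uvarphi'}$ as an explicit geometric displacement along the $i$th pivot — and carefully tracking that the only $O(1)$ contribution comes from the distance $(a+2)$ of the relevant point from the $z$-axis, with everything else being $O(a\eps)$ or smaller. Choosing the free constant in $v_i$ correctly so as not to spoil this estimate is a related subtlety. The other estimates are routine applications of the weighted Schauder theory of Proposition \ref{prop:A3} to the bounds already assembled in Lemmas \ref{lem:4-3}--\ref{lem:4-6} and Corollaries \ref{cor:4-4}, \ref{cor:4-12}.
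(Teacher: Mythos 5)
Your proposal is correct and follows essentially the same route as the paper: the support and vanishing statements come directly from the definitions, the quantitative bounds from Corollary~\ref{cor:4-12} and Proposition~\ref{prop:A3}, and the $s=2$ boundary value from differentiating the straightening displacement of Lemma~\ref{lem:4-1}, with the paper making explicit that $(x_j',y_j') = ((a+2)\cos\beta_j',(a+2)\sin\beta_j')+b_{\theta(T')}(-\sin\beta_j',\cos\beta_j')$ and then differentiating in $\varphi_i'$. One small imprecision: $v_i=\ubar{\rcal}(0,-\lin_\Sigma\bar u_i')$ is \emph{uniquely} determined (Proposition~\ref{prop:A3} fixes the constant $B$ at $\pd_0$), so there is no free constant to ``absorb''; the smallness of $v_i$ at $s=2$ comes directly from the weighted bound $\|v_i:C^{2,\alpha}(\Sigma_{\geq 1},g,e^{-\gamma s})\|\leq C\tau$, which is $\leq Ca\eps$ for $\tau$ small, exactly as the paper observes.
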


\begin{proof}
By definition of $\bar u_i$ and $\bar w_i$, it is immediate that $\bar w_i$ is supported on $\Sigma_{\leq 2}$, and (i) follows from the construction. From Corollary \ref{cor:4-12} and Proposition \ref{prop:A3}, we have 	
	$
	\| v_i : C^{2,\alpha} (\Sigma_{\geq 1} [T,\uvarphi, \tau], g,e^{-\gamma s}) \| \leq C\tau. 
	$
Since the metrics $g$ and $g_{\Sigma}$ are equivalent, 
	\begin{equation}
	\label{eq:psi-v}
	\| (\psi[1,2] \circ s) v_i : C^{2,\alpha} (\Sigma_{\geq 1} [T,\uvarphi, \tau], g_{\Sigma},e^{-\gamma s} )\| \leq C\tau. 
	\end{equation}
Corollary \ref{cor:4-12} and  $\bar w_i = \lin_{\Sigma} \bar u'_i + \lin_{\Sigma} ( (\psi[1,2] \circ s) v_i)$ imply $\| \bar w_i:C^{0,\alpha}(\Sigma_{\geq 1})\| \leq C\tau$. 

The function $\bar u'_i=Y\cdot \nu_{\Sigma}$ is bounded in $C^1$ norm from Lemma \ref{lem:4-8}. The rest of part  (ii) and part (iii) follow from \eqref{eq:psi-v}.  

The function $\bar u_i$ vanishes on the boundary $\pd \Sigma$ because both $\bar u_i'$ and $v_i$ are zero on the boundary by Lemma \ref{lem:4-1} and by the definition of $v_i$ respectively, so the first assertion of (iv) is proved. 

Finally, we estimate $|\bar u_i - (a+2) \delta_{ij}|$ at $s=2$. The function $(\psi[1,2] \circ s) v_i$ brings at most a contribution of order $\tau$, which can be chosen to be smaller than $\eps$, so we only need to study  $|\bar u_i' - (a+2) \delta_{ij}|$ at $s=2$. The curvature of the grim reaper is of order $\tau$, therefore, it suffices to approximate the behavior of $\bar u_i'$ up to first order, which means we can consider planes instead of grim reapers. Given $(T', \uvarphi', \tau)$, the position of the point on the asymptotic plane corresponding to $s=2$ is given by 
	$
	(x_j',y_j') = ((a+2) \cos \beta_j', (a+2) \sin \beta_j') + b_{\theta(T')} (-\sin \beta_j', \cos \beta_j').
	$
 Differentiating with respect to $\varphi_i'$, using (iii) in Lemma \ref{lem:4-1} and (vi) from Proposition \ref{prop:scherk}, we get
	 \[
	 \left | \frac{\pd }{\pd \varphi_i'} (x_j',y_j') - (a+2) (-\sin \beta_j', \cos \beta_j') \delta_{ij}\right| \leq C\tau + C \eps a.
	 \]
We can approximate the position $F_j(T',\uvarphi- \uvarphi' , \tau)\mid_{s=2}$ by $(x_j', y_j')$ and the normal $\nu_{\Sigma}$ by $(-\sin \beta_j', \cos \beta_j')$ committing an error of order $\eps$  so we have the desired result for $\tau$ small enough. 
\end{proof}

\subsection{The functions $w_1$ and $w_2$.}

In this section, we describe how the unbalancing generates functions $w_1$ and $w_2$ close to being in the approximate kernel of  $\lin_{\Sigma}$.
\begin{definition}
Let $H_{\phi}$ be the mean curvature on the surface $Z_1[\phi] (\Sigma(\theta(T)))$ and let $w_1: \Sigma(\theta(T)) \to \mathbf R$ be defined by 
	\[
	w_1 := \left. \frac{d}{d\phi} \right|_{\phi=0} H_{\phi} \circ Z_1[\phi].
	\]
\end{definition}

We also denote by $w_1$ the pushforward  to $\Sigma[T, \uvarphi, \tau]$  by $Z[T,\uvarphi, \tau]$ of the  function above. Similarly, we define $w_2$ to be   $\left. \frac{d}{d\phi} \right|_{\phi=0} H_{\phi} \circ Z_2[\phi]$ and its pushforward by $Z[T,\uvarphi, \tau]$ to $\Sigma[T, \uvarphi, \tau]$. 

\begin{lemma}
\label{lem:w}
The functions $w_1$ and $w_2$ depend continuously on $(T, \uvarphi, \tau)$. They are supported on $\Sigma_{\leq 0}$ and $\| w_j: C^0 (\Sigma) \| \leq C$, for $j=1,2$. 

Let $V$ be the span of  the pushforwards by $Z[T,\uvarphi, \tau]$ of the functions $\vec e_x \cdot \nu$ and $\vec e_y \cdot \nu $ on $\Sigma(\theta(T))$ and let us define $P: L^2(\Sigma[T,\uvarphi, \tau], |A_{\Sigma}|^2 g_{\Sigma} /2) \to V$ to be the orthogonal projection of functions  onto $V$. Every function in $V$ is the projection of a linear combination of $w_1/|A_{\Sigma}|^2$ and $w_2/|A_{\Sigma}|^2$. More precisely, for every $(\mu_1,\mu_2) \in \R^2$, there is a pair $(\eta_1,\eta_2) \in \R^2$ such that 
	\[
	P((\eta_1 w_1 + \eta_2 w_2)/|A_{\Sigma}|^2 ) = \mu_1 \vec e_x \cdot \nu+ \mu_2\vec e_y \cdot \nu,
	\]
with $| (\eta_1,\eta_2)| \leq C( |(\mu_1,\mu_2)|)$.

\end{lemma}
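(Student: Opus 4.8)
The plan is to prove the three assertions about $w_1,w_2$ in turn: continuous dependence and the support/size bounds first, then the surjectivity statement via the balancing formula. For the support and size, observe that by the definition of $Z_1[\phi]$, this diffeomorphism is the identity in the unit ball and on the second and fourth quadrants; hence $Z_1[\phi](\Sigma(\theta(T)))$ coincides with $\Sigma(\theta(T))$ wherever $s\le 0$ (the core of each wing lies within bounded distance of the $z$-axis by Proposition \ref{prop:scherk}(iii), and the first and third wings are moved only by a rigid rotation, so the mean curvature there is unchanged — actually $H_\phi\equiv 0$ on every wing since a rotation preserves minimality). Consequently $w_1=\frac{d}{d\phi}|_{\phi=0}(H_\phi\circ Z_1[\phi])$ is supported on the region where the family of surfaces is genuinely deformed, which is contained in $\Sigma_{\le 0}$ (more precisely the transition zone where $Z_1$ interpolates between identity and rotation). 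The $C^0$ bound $\|w_j:C^0(\Sigma)\|\le C$ follows by differentiating the mean-curvature operator: $w_1=L_{\Sigma}\sigma_1 + (\text{transport terms})$, where $\sigma_1$ is the normal component of the variation field of $\phi\mapsto Z_1[\phi]$; since $Z_1[\phi]$ depends smoothly on $\phi$ with all derivatives controlled by $\delta_\theta$-dependent constants, and the deformation is supported in a fixed bounded region, one gets a universal bound. Continuous dependence on $(T,\uvarphi,\tau)$ is immediate from the explicit smooth dependence of $Z[T,\uvarphi,\tau]$ and $Z_1[\phi]$ on their parameters.

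Next I would reduce the surjectivity claim to a $2\times 2$ linear-algebra statement. By definition of the orthogonal projection $P$ onto $V=\mathrm{span}\{\vec e_x\cdot\nu,\vec e_y\cdot\nu\}$ in $L^2(\Sigma,|A_\Sigma|^2 g_\Sigma/2)$, it suffices to show that the $2\times 2$ matrix
\[
M_{jk}=\frac{1}{2}\int_{\Sigma} w_j\, (\vec e_{x_k}\cdot\nu)\, dg_\Sigma,\qquad k=1,2,\ x_1=x,\ x_2=y,
\]
is invertible with inverse of norm $\le C$: given $(\mu_1,\mu_2)$, the desired $(\eta_1,\eta_2)$ solves $M^{\top}(\eta_1,\eta_2)^{\top} = (\tilde\mu_1,\tilde\mu_2)^{\top}$, where $\tilde\mu$ is $\mu$ expressed in the (fixed, $\theta(T)$-dependent) Gram matrix of $\{\vec e_x\cdot\nu,\vec e_y\cdot\nu\}$ — that Gram matrix is uniformly invertible because $\vec e_x\cdot\nu$ and $\vec e_y\cdot\nu$ are linearly independent with bounds depending only on $\delta_\theta$ via Proposition \ref{prop:scherk}. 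So the crux is computing $M_{jk}$.

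To compute $M_{jk}$ I would invoke the balancing formula (Lemma \ref{lem:balancing}, quoted from \cite{korevaar-kusner-solomon}) applied to one period $S$ of the varying surface $Z_1[\phi](\Sigma)$. Since $H_\phi\equiv 0$ on each asymptotic wing, $\int_{S} H_\phi\,(\vec e_{x_k}\cdot\nu_\phi)\,dg = 2\pi\sum_{i=1}^4 v_i(\phi)\cdot\vec e_{x_k}$, where $v_i(\phi)$ are the asymptotic directions of the four wings of $Z_1[\phi](\Sigma)$. Differentiating at $\phi=0$: the left side gives $\int_S w_1\,(\vec e_{x_k}\cdot\nu)\,dg$ plus a term $\int_S H\,\partial_\phi(\vec e_{x_k}\cdot\nu_\phi)$ which vanishes because $H\equiv 0$ on $\Sigma(\theta(T))$; the right side is $2\pi\sum_i \dot v_i\cdot\vec e_{x_k}$. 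By construction of $Z_1$, wings $1$ and $3$ rotate by $\phi$ toward the fourth quadrant while wings $2$ and $4$ are fixed, so $\dot v_1,\dot v_3$ are the (nonzero) infinitesimal rotations of $v_1,v_3$ and $\dot v_2=\dot v_4=0$; a direct computation (using that $v_1,v_3$ point into the second and fourth quadrants respectively, at angle $\theta(T)$ from the axes, away from $\pi/2$ by $\ge 10\delta_\theta$) shows $\sum_i\dot v_i = \dot v_1+\dot v_3$ has both components bounded below in absolute value by a constant depending only on $\delta_\theta$. Doing the same with $Z_2$ (which moves $v_2,v_4$) gives a second row, and because $Z_1$ moves the ``$\{1,3\}$'' pair and $Z_2$ the ``$\{2,4\}$'' pair in geometrically independent ways — $v_1+v_3$ and $v_2+v_4$ are not parallel for tetrads satisfying \eqref{eq:require-T} — the resulting $2\times 2$ matrix $M$ is invertible with a lower bound on $|\det M|$ depending only on $\delta_\theta$, hence $\|M^{-1}\|\le C$. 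This yields $(\eta_1,\eta_2)$ with $|(\eta_1,\eta_2)|\le C|(\mu_1,\mu_2)|$, completing the proof; note the factor $1/|A_\Sigma|^2$ in the statement is exactly what converts the $L^2(|A_\Sigma|^2 g_\Sigma/2)$-pairing of $w_j/|A_\Sigma|^2$ against $V$ into the unweighted integral $M_{jk}$.

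The main obstacle I anticipate is the bookkeeping in the last step: getting an honest, $\tau$- and $\eps$-independent lower bound on $|\det M|$ requires knowing that the asymptotic directions $v_i$ and their $\phi$-derivatives are controlled purely by $\theta(T)$ and $\delta_\theta$, which in turn relies on the precise way $Z_1,Z_2$ act outside a fixed radius (they are exact rotations there), together with the fact that the support of $w_j$ is compactly contained near the axis so the boundary terms in the balancing formula are genuinely the asymptotic-direction sums with no error. One must also check that differentiating the balancing identity in $\phi$ is legitimate — i.e. that $Z_1[\phi](\Sigma)$ remains a complete embedded surface with the four flat ends depending smoothly on $\phi$, which follows from Proposition \ref{prop:scherk} and the construction of $Z_1$. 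Everything else is routine.
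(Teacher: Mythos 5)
Your approach is the paper's: the support and $C^0$ bounds follow directly from the construction of $Z_1, Z_2$, and surjectivity is obtained by differentiating the Korevaar--Kusner--Solomon balancing identity in $\phi$. Your explicit reduction to invertibility of the $2\times 2$ matrix $M_{jk} = \tfrac12\int w_j(\vec e_{x_k}\cdot\nu)\,dg_\Sigma$, with the Gram matrix of $V$ absorbing the change of coordinates for $(\mu_1,\mu_2)$, is a sound way to package what the paper phrases as boundedness of $\|P^{-1}\|$. One slip in the last step: the parenthetical justification that ``$v_1+v_3$ and $v_2+v_4$ are not parallel'' is false --- for a balanced tetrad both sums vanish --- and in any case is not what controls $\det M$. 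What must be linearly independent are the $\phi$-derivatives at $\phi=0$ of $\sum_i v_i$ under the $Z_1$-family and under the $Z_2$-family; the paper computes these explicitly as $(2\sin\theta(T), -2\cos\theta(T))$ and $(2\sin\theta(T), 2\cos\theta(T))$, whose determinant $4\sin(2\theta(T))$ is bounded away from zero precisely because $\theta(T)\in[20\delta_\theta, \pi/2-20\delta_\theta]$, giving $\|M^{-1}\|\le C(\delta_\theta)$. With that correction the argument closes exactly as in the paper.
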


\begin{proof}
The assertions about the continuous dependence and the supports of $w_1, w_2$ are easily derived from the definitions of $Z_1$, $Z_2$, $w_1$ and $w_2$. 

To prove the second part, we recall the following balancing formula from \cite{korevaar-kusner-solomon} and sketch its proof. 

\begin{lemma} 
\label{lem:balancing}
If $S$ is a surface with four Scherk ends, then $\int_S  H \nu d\mu = \int_{\partial S} \vec n$, where $\vec n$ is the unit outward conormal vector at the boundary  $\pd S$. 
\end{lemma}

If $S$ is a surface with boundary embedded in $E^3$ and $Y$ is a vector field that perturbs $S$, the change in area of $S$ is given by 
	\begin{align*}
	\delta (\textrm{area} (S)) &= \int_S \textrm{div} Y = \int_S \textrm{div} (Y^{\parallel}) + \textrm{div} (Y^{\perp}),\\
				&= \int_{\partial S} \vec n \cdot Y - \int_S H \nu \cdot Y
	\end{align*}
where $Y^{\perp} = Y \cdot \nu$ and $Y^{\parallel} = Y- Y^{\perp}$ are the orthogonal and parallel components of $Y$, and $\vec n$ is the unit outward conormal at $\pd S$. If $Y=\vec e_x, \vec e_y$, or $\vec e_z$, the area of $S$ does not change, so $\int_S H  \nu = \int_{\partial S} \vec n $. 

If the surface $S$ is one period of  $\Sigma_{\leq 0}$, we have $\int_{\pd S} \vec n = 2 \pi \sum_{i=1}^4 v_i$, where the $v_i$'s are the vectors of the tetrad $T$. Differentiating the balancing formula, we obtain  
	\[
	\frac{\pd}{\pd \phi} \int_{\Sigma_{\leq 0}/G'}  H_{\phi} \circ Z_1[\phi] \nu d\mu = 2 \pi \frac{\pd}{\pd \phi}\sum_{i=1}^4 v_i.
	\]
Under $Z_1(\phi)$, the first and third wings rotate by an angle $\phi$ towards the fourth quadrant,  so $\frac{\pd}{\pd \phi}\sum_{i=1}^4 v_i=(2 \sin \theta(T), -2 \cos \theta(T))$. Similarly, the rate of change of the sum of the conormals under $Z_2[\phi]$ is $(2 \sin \theta(T), 2 \cos \theta(T))$. The two vectors $(2 \sin \theta(T), \pm 2 \cos \theta(T))$ are linearly independent since  $\theta(T) \in (20\delta_{\theta}, \pi/2 -20 \delta_{\theta})$ in \eqref{eq:require-T}, hence  the norm of $P^{-1}$ is bounded by a constant $C$ depending on $\delta_{\theta}$. 
\end{proof}


\subsection{The decomposition of $H_{\Sigma}-\tau \vec e_y \cdot \nu_{\Sigma}$}


\begin{prop}
\label{prop:4-20}
For $T, \uvarphi, \tau$ as in Lemma \ref{lem:4-1},
	\[
	\| H_{\Sigma}- \tau \vec e_y \cdot \nu_{\Sigma} - \sum_{j=1}^2 \theta_{j,\Sigma} w_j - \sum_{i=1}^4 \varphi_i \bar w_i: C^{0,\alpha} (\Sigma,g_{\Sigma}, e^{-\gamma s}) \| \leq C(\tau+|\ubar{\theta}_{\Sigma}|^2 + |\uvarphi|^2),
	\]
where $\ubar{\theta}_{\Sigma}  = (\theta_1(T) + \frac{\varphi_3}{2} -\frac{\varphi_1}{2}, \theta_2(T) + \frac{\varphi_4}{2} -\frac{\varphi_2}{2})$. 	
\end{prop}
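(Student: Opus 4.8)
The plan is to decompose $H_\Sigma - \tau\vec e_y\cdot\nu_\Sigma$ region by region on $\Sigma[T,\uvarphi,\tau]$, using the splitting of the surface into the core $\Sigma_{\le 0}$, the transition region $\Sigma_{[0,1]}$, and the wing region $\Sigma_{\ge 1}$, and to track the dependence on the small parameters $\theta_1(T),\theta_2(T),\uvarphi$ separately via a Taylor expansion in these variables around the balanced, unbent configuration. The key point is that the functions $w_j$ and $\bar w_i$ were manufactured precisely as the first-order variations of $H_\Sigma-\tau\vec e_y\cdot\nu_\Sigma$ under dislocation (parameters $\theta_{j,\Sigma}$) and straightening/bending (parameters $\varphi_i$), respectively, so the claimed inequality is essentially a first-order Taylor estimate with quadratic remainder, plus an $O(\tau)$ error coming from the fact that our building blocks are grim reapers rather than exact minimal pieces.

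First I would handle the wing region $\Sigma_{\ge 1}$. There $w_j$ vanishes (by Lemma \ref{lem:w}, $w_j$ is supported on $\Sigma_{\le 0}$), so we must show $\|H_\Sigma-\tau\vec e_y\cdot\nu_\Sigma - \sum_i\varphi_i\bar w_i\|\le C(\tau+|\uvarphi|^2)$ in the weighted norm $C^{0,\alpha}(\Sigma_{\ge1},g_\Sigma,e^{-\gamma s})$. Lemma \ref{lem:4-5} gives $\|H_\Sigma-\tau\vec e_y\cdot\nu_\Sigma\|\le C\tau$ in the $e^{-\gamma s}$-weighted norm, and Lemma \ref{lem:4-17}(ii) gives $\|\bar w_i:C^{0,\alpha}(\Sigma_{\ge1})\|\le C\tau$, so on the wings the whole expression is already $O(\tau)$ and there is nothing quadratic to extract; here the inequality is immediate. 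The content is therefore concentrated on $\Sigma_{\le 1}$, a fixed (i.e.\ $\tau$-independent, up to the periodic identification) compact piece of surface where the metric $g_\Sigma$ and the weight $e^{-\gamma s}$ are both comparable to constants.

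On $\Sigma_{\le 1}$ I would argue as follows. Define $\Psi(\theta_1,\theta_2,\uvarphi):=H_\Sigma-\tau\vec e_y\cdot\nu_\Sigma$ as a function of the parameters, with values in $C^{0,\alpha}$ of this fixed compact region, noting that by Proposition \ref{prop:smooth-dependence}(i) and Lemma \ref{lem:4-1} the surface, hence $\Psi$, depends smoothly on the parameters with derivatives bounded uniformly in $\tau$ (on the relevant parameter range). At the balanced, unbent base point $\theta_1=\theta_2=0$, $\uvarphi=0$ one has an unperturbed Scherk core glued to straight grim-reaper wings, so $H=0$ there except for the $\tau\vec e_y\cdot\nu$ term and the mismatch between a grim reaper and its asymptotic plane — both $O(\tau)$ — giving $\|\Psi(0,0,0)\|\le C\tau$. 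The first-order term in $\theta_{j}$ (more precisely in the combination $\theta_{j,\Sigma}$, which is the natural dislocation coordinate once the bending of the wings is folded in via the straightening Lemma \ref{lem:4-1}) is by definition of $w_j$ equal to $\theta_{j,\Sigma}w_j$ up to $O(\tau)$; the first-order term in $\varphi_i$ is $\varphi_i\bar w_i$ up to $O(\tau)$ by Lemma \ref{lem:4-11} and Corollary \ref{cor:4-12} together with Lemma \ref{lem:4-6} (which controls the $\varphi_i'$-derivative of $H_\Sigma-\tau\vec e_y\cdot\nu_\Sigma$ by $C\tau$, so that replacing $\bar u_i'$ by the corrected $\bar u_i$ costs only $O(\tau)$). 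Taylor's theorem with remainder in the parameters then gives $\|\Psi - \Psi(0,0,0) - \sum_j\theta_{j,\Sigma}w_j - \sum_i\varphi_i\bar w_i\| \le C(|\ubar\theta_\Sigma|^2+|\uvarphi|^2) + C\tau(|\ubar\theta_\Sigma| + |\uvarphi|) + C\tau$, and absorbing the cross term $\tau|\ubar\theta_\Sigma|\le \tfrac12(\tau^2+|\ubar\theta_\Sigma|^2)$ (similarly for $\uvarphi$) yields the stated bound $C(\tau + |\ubar\theta_\Sigma|^2 + |\uvarphi|^2)$. Combining with the wing estimate from the previous paragraph completes the proof.

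\textbf{Main obstacle.} The delicate point is bookkeeping the interaction between dislocation and bending: the raw dislocation angles $\theta_j(T)$ and the raw bending angles $\varphi_i$ are not independent coordinates because bending a wing also rotates its asymptotic plane, which is exactly why the "straightening" Lemma \ref{lem:4-1} was proved and why the natural first-order dislocation parameter is the shifted combination $\ubar\theta_\Sigma = (\theta_1(T)+\tfrac{\varphi_3}{2}-\tfrac{\varphi_1}{2},\ \theta_2(T)+\tfrac{\varphi_4}{2}-\tfrac{\varphi_2}{2})$ rather than $(\theta_1,\theta_2)$. I would need to verify carefully that $\partial\Psi/\partial\theta_{j,\Sigma}|_0 = w_j + O(\tau)$ in the $C^{0,\alpha}$ norm on the core, i.e.\ that the derivative of $H$ under the combined dislocation-plus-compensating-bend motion reproduces exactly the function $w_j$ defined via $Z_1,Z_2$ in Lemma \ref{lem:w}, using the balancing formula (Lemma \ref{lem:balancing}) to identify the relevant directional derivatives of $\sum_i v_i$. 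Everything else is a uniform-in-$\tau$ smooth-dependence statement plus the elementary absorption of cross terms; the genuine work is making the change of variables $(\theta_1,\theta_2,\uvarphi)\mapsto(\ubar\theta_\Sigma,\uvarphi)$ and the identification of the linear terms precise, with all remainders controlled in the weighted Hölder norm.
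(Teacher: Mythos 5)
Your proposal is correct and takes essentially the same approach as the paper: handle the wings via Lemmas \ref{lem:4-5} and \ref{lem:4-17}, then Taylor-expand on $\Sigma_{\le1}$ with $w_j$ and $\bar w_i$ as the linear terms. The identification of first-order terms that you flag as the main obstacle is made concrete in the paper by decomposing $H_\Sigma-\tau\vec e_y\cdot\nu_\Sigma$ through two intermediate surfaces: $\Sigma[T',\ubar 0,\tau]$ (isolating $\sum_i\varphi_i\bar w_i$ via the straightening Lemma \ref{lem:4-1}, with $O(|\uvarphi|^2)$ remainder) and $\Sigma[\hat T,\ubar 0,\tau]$ with $\hat T=\{\vec e[\beta_i+\varphi_i]\}_{i=1}^4$ and $\theta_j(\hat T)=\theta_{j,\Sigma}$ (isolating $\sum_j\theta_{j,\Sigma}w_j$ with $O(|\ubar{\theta}_\Sigma|^2)$ remainder), the discrepancy $T'\to\hat T$ contributing only $O(\tau)$ by Lemma \ref{lem:4-1}(iii) --- which is precisely the explicit, path-by-path version of your abstract Taylor expansion in $(\ubar\theta_\Sigma,\uvarphi)$.
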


\begin{proof} Without loss of generality, we can assume that $\theta_r(T)=0$. On $\Sigma_{\geq 1}$, $w_1, w_2=0$ and the result follows from Lemmas \ref{lem:4-5} and \ref{lem:4-17}.

Let  $T'$ be the tetrad given by Lemma \ref{lem:4-1} when $\uvarphi ' = \uvarphi$,  $\hat T:=\{\vec e(\beta_i + \varphi_i)\}_{i=1}^4$, and let $T_0$ be the balanced tetrad of the  directing vectors of the Scherk surface $\Sigma(\theta(T))$. We have
	\begin{align*}
	H_{\Sigma[T,\uvarphi, \tau]}-\tau \vec e_y \cdot \nu_{\Sigma[T,\uvarphi, \tau]}= I+I\!I+I\!I\!I,
	\end{align*}
with 
	\begin{align*}
		I:= & H_{\Sigma[T,\uvarphi, \tau]}-\tau \vec e_y \cdot \nu_{\Sigma[T,\uvarphi, \tau]} - (H_{\Sigma[ T', \ubar 0, \tau]}-\tau \vec e_y \cdot \nu_{\Sigma[T', \ubar 0,\tau]}), \\
		I\!I:= & H_{\Sigma[ T', \ubar 0, \tau]}-\tau \vec e_y \cdot \nu_{\Sigma[T', \ubar 0,\tau]}- (H_{\Sigma[\hat T, \ubar 0, \tau]}-\tau\vec e_y \cdot \nu_{\Sigma[\hat T, \ubar 0,\tau]} ),\\
		I\!I\!I:= & H_{\Sigma[\hat T, \ubar 0, \tau]}-\tau \vec e_y \cdot \nu_{\Sigma[\hat T, \ubar 0,\tau]}.
	\end{align*}
On $\Sigma_{\leq 1}$, $\bar w_i = \lin_{\Sigma} \bar u_i'$, so $I-\sum_{i=1}^4 \varphi_i \bar w_i$ is of order $|\uvarphi|^2$ by definition of $\bar u'_i$. For $I\!I$, Lemma \ref{lem:4-1} implies $\beta'_i \sim \beta_i + \varphi_i$, with an error of order $\tau$. The smooth dependence of the construction on $T$ gives $\|I\!I : C^{0,\alpha}(\Sigma_{\leq 1}) \| \leq C\tau$. 
For $I\!I\!I$, note that $\theta_{j,\Sigma} = \theta_j(\hat T)$, $j=1,2$ and, by the definition of $w_1, w_2$, 
	\[
	\| H_{\Sigma[\hat T, \ubar 0, \tau]} - H_{[T_0,\ubar 0,\tau]} -\sum_{j=1}^2 \theta_{j,\Sigma} w_j:C^{0,\alpha} (\Sigma_{\leq 1})\| \leq C |\ubar{\theta}_{\Sigma}|^2.
	\]
Using the facts that $H_{[T_0,\ubar 0,\tau]} \equiv 0$ and $\|\tau \vec e_y \cdot \nu_{\Sigma[\hat T, \ubar 0,\tau]}:C^{0,\alpha} (\Sigma_{\leq 1})\| \leq C\tau$, we finish the proof.
\end{proof}


\section{Initial Surfaces}
\label{sec:initial-surfaces}


We construct smooth initial surfaces by fitting the desingularizing surfaces at the vertices of the flexible graph $\overline G$ from Section \ref{sec:initial-conf}. The ends of the grim reapers have to be slightly adjusted, and the wings of the desingularizing surfaces bent to attach them smoothly; we keep track of these variations with the variables $(\tilde{\ubar b}, \tilde{\ubar c})$ and $\tilde{\ubar \varphi}$ respectively.

Let $\{ \wilde \Gamma_n\}_{n=1}^{N_{\Gamma}}$ be a finite family of grim reapers in general position as in Section  \ref{ssec:grimreapers} and recall that $N_I$ is the number of intersection points. To each intersection point $p_k$, $k=1, \ldots, N_I$, we assign a positive integer $m_k$ and define
	\begin{equation}
	\label{eq:tauk}
	 \tau_k:= \frac{\bar \tau}{m_k}, \quad \ubar m:= \{m_k\}_{k=1}^{N_I}.
	\end{equation}
The parameter $\bar \tau$ controls the overall scaling for the surface and the $m_k$'s allow for different scalings at different intersection points:  the desingularizing surface for the $k$th intersection will have a period of $2 \pi/m_k$ in the $z$-direction. We choose $\bar \tau  \in (0, \delta_{\tau})$ small enough so that Proposition \ref{prop:smooth-dependence} applies and the desingularizing surfaces are embedded. Unless otherwise stated, our constants $C$ will depend on $\delta_{\Gamma}$, $N_{\Gamma}$, $\delta$ from Lemma \ref{lem:cor-cond}. 

To each intersection point  corresponds some unbalancing and bending of the wings measured by $\theta_{k,1}, \theta_{k,2}$ and $\{ \varphi_{k,i}\}_{i=1}^4$ respectively. We collect all of the angles for each intersection, along with the perturbations of the left rays for an initial configuration in the next definition.

\begin{definition}
\label{def:vcal}
  Let $\vcal_{\varphi}:= (\R^4)^{N_I}$, $\vcal_{\theta}:= (\R^2)^{N_I}$, $\vcal_{b,c} := (\R^2)^{N_{\Gamma}}$, and $\vcal := \vcal_{\theta} \times \vcal_{b,c} \times \vcal_{\varphi}$. We fix $\zeta>0$, which will be determined later in the proof of Theorem \ref{thm:8-2} and define 
	\[
	\Xi_{\vcal}:=\{ \xi \in \vcal: |\xi| \leq \zeta \bar \tau\}.
	\]
\end{definition}

We fix now $\xi = (\ubar \theta,\bar  \tau{\ubar b}', \bar \tau{ \ubar c}', \uvarphi) \in \Xi_{\vcal}$, where 
	\begin{itemize}
	\item $\ubar \theta = \{ \theta_{k,1}, \theta_{k,2}\}_{k=1}^{N_I}$ and $(\theta_{k,1},\theta_{k,2})$ determines the dislocation of the tetrad at the $k$th intersection.
	\item ${\ubar  b}' := \{  b_n'\}_{n=1}^{N_{\Gamma}}$ and ${\ubar c}' := \{ c_n'\}_{n=1}^{N_{\Gamma}}$. The point $(b_n + b'_n, c_n +  c'_n)$  is the center of the grim reaper on which the $n$th  left ray of $\overline G$ lies (see Proposition \ref{prop:init-conf}).
	\item $\ubar \varphi :=\{ \{ \varphi_{k,i}\}_{i=1}^4 \}_{k=1}^{N_I}$ relates to the bending of the wings at each intersection.
	\end{itemize}

Given $\bar \tau$ small and extracting $\ubar \theta$ and $(\ubar b', \ubar c')$ from $\xi$, we construct the initial configuration $\overline G(\ubar b', \ubar c', \ubar \theta,\bar \tau)$ as in Section \ref{sec:initial-conf}. 

Let us recall that $\{ \bar v_{ki} \}_{i=1}^4$ are the directing vectors of the edges or rays emanating from the $k$th vertex of $\overline G$. We define the angles $\beta_{ki}$ by the relation $\bar v_{ki} = \vec e[\beta_{ki}]$ and the tetrad $T_k(\xi)$ by 
	\begin{equation}
	\label{eq:Tk}
	T_k(\xi) = \{\vec e[\beta_{ki} + \varphi_{k,i}]\}_{i=1}^4.
	\end{equation}
We take the mapping $\hcal_k$ to be the scaling by $1/m_k$ followed by the translation that sends the origin to $\bar p_k$, the $k$th vertex of $\overline G$.   The surface $\scal_k(\bar \tau, \xi)$, which desingularizes the $k$th intersection,  is defined by  
	\[
	\scal_k(\btau, \xi): = \hcal_k(\Sigma[T_k(\xi), \tilde \uvarphi_k, \tau_k])
	\]
with $\tilde \uvarphi_{k}$ to be determined later and $\tau_k$ as in \eqref{eq:tauk}. The pivots of $\scal_k$ are defined to be the image under $\hcal_k$ of the pivots given in Definition \ref{def:pivot}. Note that they do not depend on $\tilde \uvarphi_k$.

Let us now construct the edges and the rays of the initial surface. For an edge in $\overline G$ connecting the two vertices $\bar p_k$ and $\bar p_k'$, we consider the pivots of $\scal_k$ and $\scal_{k'}$ on the appropriate wings, say the $i$th wing of $\scal_k$ and the $i'$th wing of $\scal_{k'}$. There is a unique  grim reaper $\Gamma_{kk'}$ passing through the two pivots. We choose $\tilde \varphi_{k,i}$ so that the grim reaper asymptotic to the $i$th wing of $\scal_k$ matches  $\Gamma_{kk'}$ and define $\tilde \varphi_{k',i'}$ similarly; in other words, $\tilde \varphi_{k,i}$ is the angle formed by  $\Gamma_{kk'}$ with the grim reaper asymptotic to the $i$th wing of $\hcal_k(\Sigma[T_k(\xi), \underline{0}, \tau_k])$ at the pivot. Now that the relevant $\tilde \varphi$'s are fixed, let us denote the piece of $\Gamma_{kk'}$ between the two pivots by $\ecal'$ and the piece between the boundaries $\pd \scal_k$ and $\pd \scal_{k'}$ by $\ecal''$. We do not assign any index to the notation $\ecal'$ or $\ecal''$ to differentiate the various edges because it is not important for what follows.

If we have a ray emanating from $\bar p_k$ and corresponding to the $i$th wing of $\scal_k$, we just translate the ray so that its boundary matches the corresponding pivot. $\tilde \varphi_{k,i}$ is taken so that the asymptotic grim reaper to the $i$th wing matches the translated ray. We denote the translated ray by $\ncal'$ and the piece of the ray starting at $\pd \scal_k$ by $\ncal''$. 

\begin{definition} 
\label{def:xi}
We define the initial surface $M_{\bar \tau}$, the union of the asymptotic surfaces $M'_{\bar \tau}$, and the union of the desingularizing pieces $\scal_{\bar \tau}$:
	\begin{gather*}
	M=M_{\btau} = M(\bar \tau, \xi) 
	:= \bigcup_{k=1}^{N_I} \scal_k(\btau, \xi) \cup \bigcup \ecal'' \cup \bigcup \ncal'', \\
	M'=M'_{\btau} = M'(\bar \tau, \xi)
	:=  \bigcup \ecal'\cup \bigcup \ncal',\\
	 \scal=\scal_{\btau} = \scal(\bar \tau, \xi) :=\bigcup_{k=1}^{N_I} \scal_k(\btau,\xi).
	\end{gather*}
\end{definition}

The metric $g_M$ on $M_{\btau}$ is the metric induced by the immersion of $M_{\btau}$ in $E^3$.

\begin{definition}
\label{def:s}
We push forward the  function $s$ from Section \ref{ssec:desing-surf} by $\hcal_k$ to each $\scal_k$  and extend it to $M$ in the following way:
	\begin{itemize}
	\item   $s =\max_k ( 5 \delta_s /\tau_k)$ on $\ecal''$. 
	\item The coordinate $s$ of a point $p \in \ncal''$ is $d(p,\pd \ncal'')+5 \delta_s/\tau_k$, where $d$ denotes the distance measured with respect to arclength on $\ncal''$, and  $k$ is such that $\scal_k\cap \ncal'' \neq \emptyset$.
	\end{itemize}
	\end{definition}
The function $s$ is continuous at $\scal_{\btau} \cap \ncal''$ but may be discontinuous at $\scal_{\btau} \cap \ecal''$.

To keep track of the  unbalancing of $M$, the displacement of the asymptotic grim reapers, and the bending of $M$, we define 
	\begin{itemize}
	\item $\ubar{\tilde\theta} = \{ \tilde \theta_{k,1}, \tilde \theta_{k,2}\}_{k=1}^{N_I} \in \vcal_{\theta}$ where $\tilde \theta_{k,1} = \theta_1(T_k(\xi))$ and $\tilde \theta_{k,2} = \theta_2(T_k(\xi))$,
	\item $(\tilde {\ubar b}, \tilde {\ubar c}) \in \vcal_{b,c} $ where $(b_n + b_n' +\tilde b_n, c_n + c_n'+\tilde c_n)$ is the position of the center of the grim reaper supporting the $n$th left ray. 
	  \item  $\tilde {\ubar \varphi} \in \vcal_{\varphi} $ as described earlier. 
	\end{itemize}
The proposition below follows from the construction and Proposition \ref{prop:init-conf}. 
		\begin{prop}
		\label{prop:init-surf}
	The surface $M_{\btau}$ constructed above is well defined and has the following properties:
		\begin{enumerate}
		\item $M_{\btau}$ is a complete smooth surface which depends smoothly on $\xi$.
		\item $M_{\btau}$ is periodic in $z$; more precisely, it is invariant under the group generated by the translation $z \to z+2 \pi$.
		\item $M_{\btau}$ is invariant under reflections across the $xy$-plane.
		\item There is a large ball $B\subset \R^2$ such that $M_{\btau}\setminus (B\times \R)$ is the union of grim reaper ends that are in one-to-one correspondence with the ends of our  rescaled initial family $\{ \Gamma_n\}_{n=1}^{N_{\Gamma}}$ and such that  corresponding centers of the left rays differ by $(b'_n + \tilde b_n, c_n' + \tilde c_n)$. 
		\item We have $\tilde \theta_{k,j} =\theta_{k,j}+(\varphi_{k,j+2} - \varphi_{k,j})/2$ for $k=1, \ldots, N_I$ and $j=1,2$. 
		\item $|(\tilde {\ubar b}, \tilde {\ubar c})| \leq C$ and $|\tilde \uvarphi + \uvarphi| \leq C \bar \tau$.
		\item $\hcal_k^{-1}(M_{\btau})$ converges uniformly in $C^j$ norm, for any $j <\infty$, on any compact subset of $E^3$ to a Scherk surface as $\bar \tau \to 0$, for  $k=1, \ldots, N_I$.
		\end{enumerate}
	\end{prop}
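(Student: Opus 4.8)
The plan is to verify the seven listed properties of $M_{\btau}$ by unwinding the construction given just above, invoking Proposition \ref{prop:smooth-dependence} for the smooth dependence of each desingularizing piece, Proposition \ref{prop:init-conf} for the control on the flexible graph $\overline G$, and Lemma \ref{lem:4-1} for the straightening that fixes the $\tilde\varphi$'s. Properties (i)--(iii) are essentially formal: smoothness of $M_{\btau}$ and of its dependence on $\xi$ follows because each $\scal_k = \hcal_k(\Sigma[T_k(\xi),\tilde\uvarphi_k,\tau_k])$ is a smooth embedding depending smoothly on its parameters (Proposition \ref{prop:smooth-dependence}), the tetrads $T_k(\xi)$ depend smoothly on $\xi$ by \eqref{eq:Tk} and Proposition \ref{prop:init-conf}, the straightening angles $\tilde\uvarphi_k$ depend smoothly on $\xi$ by Lemma \ref{lem:4-1}, and the edges $\ecal''$ and rays $\ncal''$ are pieces of grim reapers attached smoothly at the pivots precisely because the $\tilde\varphi$'s were chosen so that the asymptotic grim reaper of each wing matches the adjacent edge/ray. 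For the smoothness of the gluing one notes that on the overlap region $s\in[3\delta_s/\tau_k,5\delta_s/\tau_k]$ the wing of $\Sigma$ is literally a piece of the asymptotic grim reaper (by the definition of $\Sigma[T,\uvarphi,\tau]$ in Section \ref{ssec:desing-surf}), so it glues $C^\infty$ to the edge/ray continuing that same grim reaper. Periodicity in $z$ (ii) and reflection symmetry across the $xy$-plane (iii) are inherited from the corresponding invariances of $\Sigma[T,\uvarphi,\tau]$ in Proposition \ref{prop:smooth-dependence}(ii) and of the grim reaper cylinders, composed with the isometries $\hcal_k$, which commute with $z\mapsto z+2\pi$ and with reflection across the $xy$-plane.

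Property (iv) is a consequence of the finiteness of the family: outside a large ball $B\subset\R^2$ containing all vertices $\bar p_k$ and all desingularizing pieces (whose diameter in the $xy$-plane is $O(\delta_s/m_k)$, hence bounded), $M_{\btau}$ consists only of the rays $\ncal''$, which are by construction the ends of the rescaled grim reapers $\Gamma_n$, translated so that their centers sit at $(b_n+b_n'+\tilde b_n,c_n+c_n'+\tilde c_n)$. Property (v), $\tilde\theta_{k,j} = \theta_{k,j} + (\varphi_{k,j+2}-\varphi_{k,j})/2$, follows by direct computation: $\tilde\theta_{k,j} = \theta_j(T_k(\xi))$ and $T_k(\xi) = \{\vec e[\beta_{ki}+\varphi_{k,i}]\}$ while the unperturbed tetrad $\{\vec e[\beta_{ki}]\}$ has $\theta_j$-value $\theta_{k,j}$ by the construction of $\overline G$; plugging into the formula for $\theta_1,\theta_2$ in Definition \ref{def:tetrad} gives the stated shift. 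Property (vi): $|(\tilde{\ubar b},\tilde{\ubar c})|\le C$ because the pivots of $\scal_k$ lie within an $O(1)$-distance of $\bar p_k$ and the grim reaper through two prescribed pivots has center at bounded distance (this is exactly the kind of estimate established in the proof of Proposition \ref{prop:init-conf} via the inverse function theorem on $F$), so the required translation of each ray is $O(1)$; and $|\tilde\uvarphi+\uvarphi|\le C\bar\tau$ because $\tilde\varphi_{k,i}$ is the angle between $\Gamma_{kk'}$ (or the translated ray) and the asymptotic grim reaper of the $i$th wing of $\hcal_k(\Sigma[T_k(\xi),\underline 0,\tau_k])$ at the pivot, and this grim reaper has curvature $O(\tau_k)=O(\bar\tau/m_k)$, so over the bounded distance between adjacent pivots the tangent direction turns by $O(\bar\tau)$; matching it to a nearby grim reaper (itself turning by $O(\bar\tau)$ over the same length) forces $\tilde\varphi_{k,i}=O(\bar\tau)$, and then $|\tilde\uvarphi+\uvarphi|$ combines this with the straightening correction of Lemma \ref{lem:4-1}(iii), which is also $O(\tau)$.

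Property (vii) is the convergence statement: fix a compact $K\subset E^3$ and $j<\infty$. Since $\hcal_k$ rescales by $m_k$ and translates, $\hcal_k^{-1}(M_{\btau})$ contains $\Sigma[T_k(\xi),\tilde\uvarphi_k,\tau_k]$ on the relevant region, and as $\bar\tau\to0$ we have $\tau_k=\bar\tau/m_k\to0$, $T_k(\xi)\to T_k^0$ (a fixed tetrad, since $|\xi|\le\zeta\bar\tau\to0$ so $\ubar\theta,\uvarphi\to0$ and the graph $\overline G\to G$), and $\tilde\uvarphi_k\to 0$ by (vi). On $K$, for $\bar\tau$ small the part of $\hcal_k^{-1}(M_{\btau})$ meeting $K$ lies in the core and first transition region $s\le 1$ where, by the definition of $Z[T,\uvarphi,\tau]$ in Section \ref{ssec:desing-surf}, the surface is exactly $Z[T_k(\xi)](\Sigma_{\le 1}(\theta(T_k(\xi))))$, independent of $\tau$; hence it equals a fixed region of the Scherk surface $Z[T_k^0](\Sigma(\theta(T_k^0)))$ up to a perturbation controlled by $|T_k(\xi)-T_k^0| = O(\bar\tau)$, which goes to zero in $C^j(K)$ by the smooth dependence in Proposition \ref{prop:smooth-dependence}(i) and Proposition \ref{prop:scherk}. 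This gives $C^j$ convergence to a Scherk surface on $K$. I expect the main obstacle to be bookkeeping rather than a genuine difficulty: the delicate point is (vi), specifically showing $|\tilde\uvarphi+\uvarphi|\le C\bar\tau$, because one must carefully separate the $O(\bar\tau)$ turning of the bounded-length asymptotic grim reapers from the $O(\tau)$ straightening correction of Lemma \ref{lem:4-1}, and keep uniform (in $\xi\in\Xi_\vcal$) control as the pivots move; the inverse function theorem estimates from the proof of Proposition \ref{prop:init-conf} and the bound in Lemma \ref{lem:4-1}(iii) are exactly what make this work, so the proof mostly amounts to citing the right earlier results in the right order.
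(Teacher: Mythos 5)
Your overall plan — verifying each property by unwinding the construction and invoking Propositions \ref{prop:smooth-dependence} and \ref{prop:init-conf} — is precisely what the paper does, which dispatches this proposition in a single line (``follows from the construction and Proposition \ref{prop:init-conf}''). Most of your verifications are sound. Two small slips that do not affect your argument: the wing coincides with its asymptotic grim reaper only on $s\in[4\delta_s/\tau_k,5\delta_s/\tau_k]$, not $[3\delta_s/\tau_k,5\delta_s/\tau_k]$ (the interval $[3\delta_s/\tau_k,4\delta_s/\tau_k]$ is where $\psi_s$ cuts off $f_{\theta}$); and in (vii) the surface for $s\le 1$ is not literally $Z[T_k(\xi)](\Sigma_{\le 1}(\theta))$ — there is already a transition for $s\in[1/3,2/3]$ — though the convergence argument you intend goes through by the smooth dependence in Proposition \ref{prop:smooth-dependence}.

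There is a genuine gap in (vi). Your argument gives $\tilde\varphi_{k,i}=O(\btau)$ by comparing turning rates of $\Gamma_{kk'}$ and the wing's asymptotic grim reaper, then appeals to Lemma \ref{lem:4-1}(iii). But this does not yield $|\tilde{\ubar\varphi}+\ubar\varphi|\le C\btau$ with $C$ independent of $\zeta$: since $|\varphi_{k,i}|$ can be as large as $\zeta\btau$, your route gives at best $|\tilde{\ubar\varphi}+\ubar\varphi|\le C(1+\zeta)\btau$. The correct mechanism is a cancellation built into \eqref{eq:Tk}. The tetrad $T_k(\xi)=\{\vec e[\beta_{ki}+\varphi_{k,i}]\}_i$ already carries the twist $\varphi_{k,i}$, so the asymptotic grim reaper of the $i$th wing of $\hcal_k(\Sigma[T_k(\xi),\ubar 0,\tau_k])$ has direction $\vec e[\beta_{ki}+\varphi_{k,i}]$ at the pivot. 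The edge $\Gamma_{kk'}$ is determined by two pivots offset by $O(1)$ from the vertices of $\overline G$, across an edge of length $\ge 2\delta/\btau$; together with the $O(\btau)$ curvature over the $O(1)$ arc from $\bar p_k$ to the pivot, its direction at the pivot is $\vec e[\beta_{ki}]$ up to $O(\btau)$, with constant depending only on $a$, $|b_\theta|$, $\delta$, $\delta_\Gamma$ and $\max_k m_k$. Hence $\tilde\varphi_{k,i}=-\varphi_{k,i}+O(\btau)$ \emph{exactly}, and it is the cancellation $\tilde\varphi_{k,i}+\varphi_{k,i}=O(\btau)$ that gives (vi) with a $\zeta$-independent constant. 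That independence is what the later argument needs: Corollary \ref{cor:sigma-parameters}(iii) records $|\tilde{\uvarphi}_k|\le(\zeta+C)\btau$ with $\zeta$ and $C$ appearing separately, and the Schauder fixed-point argument in Theorem \ref{thm:8-2} picks $\zeta$ large before sending $\btau\to0$; a $\zeta$-dependent constant here would make that choice circular. Also, Lemma \ref{lem:4-1}(iii) is not the right tool: it governs the straightening $T\mapsto T'$ applied to a fixed $\Sigma$ inside the analysis of the linear operator, not the matching angles $\tilde{\ubar\varphi}$ chosen when assembling $M_{\btau}$.
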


	\begin{cor}
	\label{cor:sigma-parameters}
	For $\btau$ small enough so that $\zeta \btau<\delta_{\theta}$, the parameters of $\Sigma[T_k,\tilde{\uvarphi}_k, \tau_k] = \hcal_k^{-1}(\scal_k)$ satisfy the following estimates,
	\begin{enumerate}
	\item $ 25 \delta_{\theta} \leq \theta(T_k) \leq \frac{\pi}{2} - 25 \delta_{\theta}$.
	\item $|\tilde \theta_{k,j}|= |\theta_j(T_k)| \leq 3 \zeta \btau$, $j=1,2$.
	\item $|\tilde{\uvarphi}_k| \leq (\zeta + C)\btau$.
	\end{enumerate}
	\end{cor}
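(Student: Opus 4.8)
The plan is to derive Corollary~\ref{cor:sigma-parameters} directly from Proposition~\ref{prop:init-surf} together with the constraints built into the construction of $\overline G$ and the tetrads $T_k(\xi)$. Recall $T_k(\xi) = \{\vec e[\beta_{ki} + \varphi_{k,i}]\}_{i=1}^4$, where the $\beta_{ki}$ are the angles of the directing vectors $\bar v_{ki}$ of $\overline G(\ubar b', \ubar c', \ubar\theta, \btau)$, and $|\xi| \le \zeta\btau$, so in particular $|\uvarphi| \le \zeta\btau$, $|\ubar\theta| \le \zeta\btau$ and $\btau|(\ubar b', \ubar c')| \le \zeta\btau$, i.e. $|(\ubar b', \ubar c')| \le \zeta$.

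\medskip
\noindent\textbf{Proof of Corollary~\ref{cor:sigma-parameters}.}
First I would establish (i). By Proposition~\ref{prop:init-conf}, for $\max(\btau|(\ubar b', \ubar c')|, |\ubar\theta|) \le \delta'_{\theta}$ — which holds once $\zeta\btau < \delta'_{\theta}$ — the angles $\angle(v_{ki}, \bar v_{ki})$ between the directing vectors of the original balanced graph $G$ and those of $\overline G$ are bounded by $C\btau|(\ubar b', \ubar c')| + C|\ubar\theta| \le 2C\zeta\btau$. Since $G$ is a rescaling of the family $\{\wilde\Gamma_n\}$ in general position, Lemma~\ref{lem:cor-cond}(i) gives that the $\theta$-value of the tetrad of the $v_{ki}$'s lies in $[30\delta_{\Gamma}, \pi/2 - 30\delta_{\Gamma}]$, hence (recalling $\delta_{\theta} \le \delta_{\Gamma}$ from Definition~\ref{def:delta-theta}) in $[30\delta_{\theta}, \pi/2 - 30\delta_{\theta}]$. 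Now $\theta(T_k)$ is the $\theta$-value associated to the angles $\beta_{ki} + \varphi_{k,i}$; since $\theta(T)$ is a Lipschitz (indeed linear) function of the four angles $\beta_i$, and the $\beta_{ki}$ differ from the angles of the $v_{ki}$ by at most $2C\zeta\btau$ while $|\varphi_{k,i}| \le \zeta\btau$, we get $|\theta(T_k) - \theta_{G,k}| \le C'\zeta\btau$ for some constant $C'$ depending only on $\delta_{\Gamma}, N_{\Gamma}, \delta$. Choosing $\btau$ small enough that $C'\zeta\btau \le 5\delta_{\theta}$ yields $25\delta_{\theta} \le \theta(T_k) \le \pi/2 - 25\delta_{\theta}$, which is (i).

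\medskip
Next, (ii) is essentially a restatement of Proposition~\ref{prop:init-surf}(v) combined with the size of $\xi$. By definition $\tilde\theta_{k,j} = \theta_j(T_k(\xi))$, and Proposition~\ref{prop:init-surf}(v) states $\tilde\theta_{k,j} = \theta_{k,j} + (\varphi_{k,j+2} - \varphi_{k,j})/2$ for $j = 1,2$. Since $\xi \in \Xi_{\vcal}$ we have $|\theta_{k,j}| \le \zeta\btau$ and $|\varphi_{k,i}| \le \zeta\btau$ for all relevant indices, so
\[
|\tilde\theta_{k,j}| \le |\theta_{k,j}| + \tfrac12|\varphi_{k,j+2}| + \tfrac12|\varphi_{k,j}| \le \zeta\btau + \tfrac12\zeta\btau + \tfrac12\zeta\btau = 2\zeta\btau \le 3\zeta\btau,
\]
giving (ii). (The extra slack between $2\zeta\btau$ and $3\zeta\btau$ is harmless and keeps the bound robust.)

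\medskip
Finally, (iii) follows from Proposition~\ref{prop:init-surf}(vi), which gives $|\tilde\uvarphi + \uvarphi| \le C\btau$, together with $|\uvarphi| \le \zeta\btau$ (from $\xi \in \Xi_{\vcal}$). By the triangle inequality,
\[
|\tilde\uvarphi_k| \le |\tilde\uvarphi_k + \uvarphi_k| + |\uvarphi_k| \le C\btau + \zeta\btau = (\zeta + C)\btau,
\]
which is (iii). Since all of (i)--(iii) require only that $\zeta\btau$ be smaller than fixed constants depending on $\delta_{\theta}, \delta_{\Gamma}, N_{\Gamma}, \delta$, the hypothesis $\zeta\btau < \delta_{\theta}$ together with shrinking $\btau$ further if necessary (as permitted by the convention on ``$\btau$ small enough'') suffices. \qed

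\medskip
\noindent\emph{Remark on the main obstacle.} The only non-bookkeeping step is (i): one must confront the fact that $\theta(T_k)$ is defined from the perturbed graph $\overline G$ rather than from the original balanced configuration, so the argument genuinely needs Proposition~\ref{prop:init-conf} to control the drift of the directing vectors across all $N_I$ vertices, and then needs the linearity of $\theta(\cdot)$ in the $\beta_i$'s to transfer that control to $\theta(T_k)$. Parts (ii) and (iii) are immediate consequences of the already-established identities in Proposition~\ref{prop:init-surf} and the definition of $\Xi_{\vcal}$.
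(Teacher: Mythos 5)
Your proposal follows the same approach as the paper's (which is itself just a citation to the construction, Definitions~\ref{def:delta-theta} and~\ref{def:vcal}, and the proof of Proposition~\ref{prop:init-conf}), and parts (ii) and (iii) are clean: (ii) is a direct consequence of Proposition~\ref{prop:init-surf}(v) together with $|\ubar\theta|, |\uvarphi| \le \zeta\btau$, and (iii) is the triangle inequality applied to Proposition~\ref{prop:init-surf}(vi).

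For part (i), the structure of the argument is right (Proposition~\ref{prop:init-conf} to bound the drift of the directing vectors, linearity of $\theta(\cdot)$ in the $\beta_i$'s to transfer this to $\theta(T_k)$), but the intermediate claim ``Lemma~\ref{lem:cor-cond}(i) gives that the $\theta$-value of the tetrad of the $v_{ki}$'s lies in $[30\delta_\Gamma, \pi/2 - 30\delta_\Gamma]$'' deserves scrutiny. Writing $\mu_1,\dots,\mu_4$ for the consecutive angular gaps of the tetrad ($\sum\mu_i = 2\pi$), one has from Definition~\ref{def:tetrad} that $\theta(T) = (\mu_2+\mu_4)/4$, which for the balanced tetrad at a grim-reaper intersection equals $\tfrac12\mu_2 = \tfrac12\mu_4$, i.e.\ \emph{half} one of the two angles at the intersection. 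Lemma~\ref{lem:cor-cond}(i) bounds the smallest of those angles by $[30\delta_\Gamma, \pi/2 - 30\delta_\Gamma]$; feeding that into the formula gives $\theta(T_{G,k}) \in [15\delta_\Gamma, \pi/2 - 15\delta_\Gamma]$, not $[30\delta_\Gamma, \pi/2 - 30\delta_\Gamma]$. With only $\delta_\theta \le \delta_\Gamma$ from Definition~\ref{def:delta-theta}, this leaves you with $\theta(T_{G,k}) \ge 15\delta_\theta$, which after a perturbation of size $C'\zeta\btau$ does not reach the $25\delta_\theta$ threshold of the corollary. To be fair, the paper's own one-line citation does not make the factor explicit either, so this is a shared bookkeeping concern; but if you want your argument to close without relying on the reader to re-scale $\delta_\Gamma$, you should either establish the stronger bound $\theta(T_{G,k}) \ge 30\delta_\theta$ from first principles (e.g.\ via Lemma~\ref{lem:cor-cond}(ii) and the explicit geometry of the tangent vectors), or build the factor of $2$ into the choice of $\delta_\theta$.
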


\begin{proof}
This corollary follows from the construction,  Definition \ref{def:delta-theta},   Definition \ref{def:vcal}, and the proof of Proposition \ref{prop:init-conf}.
\end{proof}

As mentioned in the introduction, we divide the study of the linear operator to various pieces of the initial surface, then use cut-off functions and an iteration to solve the equation $\lin v =f$ on the whole surface $M_{\btau}$. In the definition of $M_{\btau}$, the $\scal_k $'s, $\ecal''$'s, and $\ncal''$'s intersect only on the boundary of the $\scal_k$'s. In order to use cut-off functions, we define neighborhoods $\ecal$ and $\ncal$ of $\ecal''$ and $\ncal''$ to have some overlap with $\scal_k$.

\begin{definition}
\label{def:ubara}
Let $\ubar a: = 8 |\log \bar \tau|$. We consider
	\begin{itemize}
	\item $\ncal$ to be  the connected component of $M(\btau, \xi)_{\geq \ubar a}$ that contains $\ncal''$. 
	\item $\ecal$ to be the connected component of $M(\btau, \xi)_{\geq \ubar a}$ that contains $\ecal''$.
	\end{itemize}
\end{definition}

There are $2 N_{\Gamma}$ grim reaper ends, and we denote each of them by $\ncal''$, without any index to differentiate them because they are treated in the same manner in the rest of the article. Therefore, $\ncal''$ is a generic end and $\ncal$ is a neighborhood of $\ncal''$. The same remark applies to an edge $\ecal''$ and its neighborhood $\ecal$ as well.


\section{Linear Operator}
\label{sec:lin-op}


We study the linear operator $\lin = \lin_M:= \Delta_{g_M} + |A_M|^2 + \btau \vec e_y \cdot \nabla$ associated to normal perturbations of $H- \bar \tau \vec e_y \cdot \nu$. The strategy is to first solve the Dirichlet problem associated to $\lin$ with vanishing boundary conditions on the various pieces $\scal_k$, $\ecal$ and $\ncal$. Since $\ecal$ and $\ncal$ are close to being flat, standard theory for the Laplace operator  gives us insight into the Dirichlet problems on $\ecal$ and $\ncal$. On a desingularizing surface $\Sigma$,  Proposition \ref{prop:7-1} shows the existence of an exponentially decaying solution to $\lin v =E$ with vanishing boundary conditions if $E$ is exponentially decaying. We use the functions $w_1$ and $w_2$ to handle the approximate kernel in Lemma \ref{lem:7-4}, then the functions $\{\bar w_i\}_{i=1}^4$ for the asymptotic decay. 
 
We are then ready to solve the equation $\lin v =E$ on the whole surface $M$. We first restrict the support of $E$ to the desingularizing surfaces using a cut-off function $\psi$ and find a solution $u$ to $\lin u = \psi E$ on each $\scal_k$. Extending  $u$ by zero to the rest of $M$ may not produce a smooth function, therefore we need to cut off $u$ also. We then solve the Dirichlet problem $\lin u'' = E - \lin (\psi u)$ on $\ncal$ and $\ecal$ with appropriate boundary conditions. Adding $\psi u$ to a cut-off version of $u''$ gives us a first approximate solution. The error created by the cut-off functions is small  compared to the inhomogeneous term therefore we can iterate to obtain a  sequence of functions converging to an exact solution. The exponential decay of the solutions on the desingularizing surfaces plays an essential role in controlling the error.


\subsection{Linear operator on $\ecal$ and $\ncal$.}
\label{ssec:ecal-ncal}


The domains $\ecal$ and $\ncal$ are graphs of small functions over grim reapers. The grim reapers are isometric to  planes, so $\ecal$ and $\ncal$ are close to being isometric to  planes with an error controlled by Corollary \ref{cor:4-4}. Moreover, $\ecal/G'$ is bounded, where $G'$ is the group generated by $(s,z) \to (s, z + 2\pi)$, so the existence of a unique solution to the  Dirichlet problem with vanishing boundary conditions on $\pd \ecal/G'$ is a standard result from elliptic theory. 
 
The operator $\lin$ on $\ncal$ is a perturbation of the Laplace operator on flat cylinders, which has been well studied. Any result for $\lin$ on $\ncal$ follows directly from a similar one for the Laplace operator on cylinders.

\begin{definition}
\label{def:cylinder}
 We define $(\Omega,g_0)$ to be the cylinder $\Omega=H^{+}_{\leq l}/G'$ equipped with the standard metric $g_0=ds^2 +dz^2$, where $G'$ is the group generated by $(s,z) \to (s,z+2\pi)$, and $l \in (10, \infty)$ is called the length of the cylinder. We have $\pd \Omega=\pd_0 \cup \pd _l$ where $\pd_0$ and $\pd_l$ are the boundary circles $\{ s=0\}$ and $\{s=l\}$ respectively. 
\end{definition}

Let $\lin$ denote an operator on $\Omega$ of the form
	\[
	\lin v = \Delta_{\chi} v + \mathbf{V} \cdot \nabla v+ d v,
	\]
where $\chi$ is a $C^2$ Riemannian metric, $\mathbf{V}$ a $C^1$ vector field, and $d$ a $C^1$ function on $\Omega$. For $\ubar c>0$, we define 
	\begin{equation*}
	N(\lin) := 
	\| \chi -g_0:C^2(\Omega,g_0, e^{-\ubar c s}) \|   + \| \mathbf{V} :C^1(\Omega,g_0, l^{-1})\|  + \| d :C^1(\Omega,g_0,e^{-\ubar c s} + l^{-2})\|.
	\end{equation*}
If the inhomogeneous term $E$ is exponentially decaying,  there is an exponentially decaying solution to the Dirichlet problem $\lin v = E$ in $\Omega$, $v = 0$ on $\pd_l$, and given up to a constant on $\pd_0$.  The following result is a generalization of Proposition A.3 in \cite{kapouleas;embedded-minimal-surfaces}, with an  added  term  $\mathbf V \cdot \nabla$. Its proof is postponed to the Appendix \ref{sec:linop-cylinders}.

\begin{prop}
\label{prop:A3}
Given $\gamma \in (0,1)$ and $\eps>0$, if $N(\lin)$ is small enough in terms of $\ubar c$, $\alpha$, $\gamma$ and $\eps$ (but independently of $l$), 
there is a bounded linear map 
	\[
	\ubar{\mathcal R}: C^{2,\alpha}(\pd_0,g_0) \times C^{0, \alpha}(\Omega, g_{0}, e^{-\gamma s}) \to C^{2, \alpha}(\Omega, g_{0}, e^{-\gamma s})
	\]
such that for $(f,E)$ in $\ubar{\mathcal R}$'s domain and $v = \ubar{\mathcal R}(f,E)$, the following properties are true, where the constants $C$ depend only on $\alpha$ and $\gamma$:
	\begin{enumerate}
	\item   $\lin v = E$ on $\Omega$.
	\item  $v= f -\textrm{\em avg}_{\pd_0} f +B(f,E)$ on $\pd_0$, where $B(f,E)$ is a constant on $\pd_0$ and $\textrm{\em avg}_{\pd_0} f$ denotes the average of $f$ over $\pd_0$. 
	\item  $v\equiv 0$ on $\pd_l$.
	\item $
	\| v :C^{2, \alpha}(\Omega, g_{0}, e^{-\gamma s}) \| $\\
	$\qquad \qquad \leq C \| f-\textrm{\em avg}_{\pd_0} f:C^{2, \alpha}(\pd_0, g_{0})\|+ C \| E: C^{0, \alpha}(\Omega, g_{0}, e^{-\gamma s})\|.
	$
	\item $|B(f,E)|\leq \eps\| f-\text{\em avg}_{\pd_0} f: C^{2, \alpha}(\pd_0, g_{0})\|+ C \| E: C^{0, \alpha}(\Omega, g_{0}, e^{-\gamma s})\|.
	$
	\item If $v' \in C^2(\Omega)$ satisfies $\lin v'=E$ on $\Omega$, and $v=v'$ on $\pd \Omega$, then $v'=v$ on $\Omega$. Moreover, if  $E$ vanishes, then 
	\[
	\| v : C^0(\Omega) \| \leq 2 \| v:C^0(\pd_0)\|.
	\]
	\end{enumerate}
\end{prop}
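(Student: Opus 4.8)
The plan is to reduce the statement to the model operator $\Delta_{g_0}=\pd_s^2+\pd_z^2$ on the cylinder and then treat $\lin$ as a small perturbation of it; the case $\mathbf V=0$ is essentially Proposition A.3 of \cite{kapouleas;embedded-minimal-surfaces}, and the new first–order term is absorbed along the way.

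First I would build the solution operator $\ubar{\mathcal R}_0$ for the model $\Delta_{g_0}$ by Fourier decomposition in $z$: writing $v=\sum_{n\in\mathbf Z}v_n(s)e^{inz}$, the equation $\Delta_{g_0}v=E$ with $v|_{\pd_l}=0$ splits into the ODEs $v_n''-n^2v_n=E_n$, $v_n(l)=0$. For $n\neq0$ the homogeneous solutions are $e^{\pm|n|s}$, which decay strictly faster than the target weight $e^{-\gamma s}$ because $|n|\ge1>\gamma$; here one prescribes the second boundary value $v_n(0)=f_n$ and represents $v_n$ through the explicit one–dimensional Green's function, with bounds independent of $l$. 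For $n=0$ the homogeneous solutions are the affine functions $a+bs$, which do \emph{not} decay, so one is forced to take the particular solution $v_0(s)=\int_s^l\!\int_t^l E_0(r)\,dr\,dt$; it vanishes at $s=l$, decays like $e^{-\gamma s}$ with constant depending only on $\gamma$, but its value at $s=0$ is in general not $\mathrm{avg}_{\pd_0}f$. This is precisely why the boundary value can be matched only modulo a constant and why the correction term $B$ appears: on $\pd_0$ one obtains $v=f-\mathrm{avg}_{\pd_0}f+B_0(E)$ with $B_0(E)=v_0(0)$ depending only on $E$ and $|B_0(E)|\le C\|E:C^{0,\alpha}(\Omega,g_0,e^{-\gamma s})\|$. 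Summing the modes gives the weighted $C^0$ estimate; applying interior Schauder estimates on the unit balls $B(x)$ — on each of which the weight $e^{-\gamma s}$ is comparable to a constant — upgrades it to the weighted $C^{2,\alpha}$ bound. Uniqueness of the model Dirichlet problem on the compact cylinder $\Omega$ is the classical maximum principle for $\Delta_{g_0}$. This yields $\ubar{\mathcal R}_0$ with all of (i)--(vi) in the model case (the $\eps\|f\|$ term in (v) being absent).

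Next I would write $\lin=\Delta_{g_0}+\mathcal E$, $\mathcal E:=(\Delta_{\chi}-\Delta_{g_0})+\mathbf V\cdot\nabla+d$. The weights in the definition of $N(\lin)$ are calibrated exactly so that $\mathcal E:C^{2,\alpha}(\Omega,g_0,e^{-\gamma s})\to C^{0,\alpha}(\Omega,g_0,e^{-\gamma s})$ is bounded with operator norm $\le C\,N(\lin)$, \emph{uniformly in} $l$: the $e^{-\ubar cs}$ weight on $\chi-g_0$ and on part of $d$ gives factors $\le1$ (indeed it improves the decay), the $l^{-1}$ weight on $\mathbf V$ gives a factor $\le1$, and the $l^{-2}$ weight on the remaining part of $d$ gives a factor $\le1$. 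Then $\lin v=E$ with $v|_{\pd_l}=0$ and $v|_{\pd_0}=f-\mathrm{avg}_{\pd_0}f+B$ is equivalent to the fixed–point equation $v=\ubar{\mathcal R}_0(f,E)-\ubar{\mathcal R}_0(0,\mathcal E v)$; since $v\mapsto\ubar{\mathcal R}_0(0,\mathcal E v)$ has operator norm $\le C\,N(\lin)<1$ for $N(\lin)$ small, a Neumann series produces $\ubar{\mathcal R}(f,E):=(I+\ubar{\mathcal R}_0(0,\mathcal E\,\cdot\,))^{-1}\ubar{\mathcal R}_0(f,E)$, a bounded linear map. Properties (i)--(iii) are immediate from the construction; (iv) follows from the model estimate and the contraction bound; for (v) one has $B(f,E)=B_0(E)-B_0(\mathcal E v)$ with $v=\ubar{\mathcal R}(f,E)$, and $|B_0(\mathcal E v)|\le C\,N(\lin)\|v\|\le C\,N(\lin)\bigl(\|f-\mathrm{avg}_{\pd_0}f\|+\|E\|\bigr)$, which is $\le\eps\|f-\mathrm{avg}_{\pd_0}f\|+C\|E\|$ once $C\,N(\lin)\le\eps$ — this is the source of the first term in (v).

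Finally, for property (vi) the naive maximum principle is unavailable because the zero–order coefficient $d=|A_{\Sigma}|^2$ is nonnegative; what saves the argument is that the weight $e^{-\ubar cs}+l^{-2}$ makes $\int_\Omega|d|\,dg_0\le C\,N(\lin)$ small \emph{uniformly in} $l$ (since $\int_0^l(e^{-\ubar cs}+l^{-2})\,ds=O(1)$), and similarly for $\mathbf V$. For the modes $n\neq0$ the barrier $e^{-|n|s}$ dominates as before; for the zero–th mode the homogeneous $1$D operator $v_0\mapsto a\,v_0''+b\,v_0'+c\,v_0$ with $a\approx1$, $\|b\|_{L^1}+\|c\|_{L^1}$ small is non–oscillatory on $[0,l]$, so its Dirichlet problem has only the trivial solution. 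Combining the modes gives uniqueness, whence $v'=v$ in (vi). The quantitative bound when $E=0$ is then obtained by comparison with the explicit supersolution $\ubar\phi:=M\,(2-e^{-\ubar cs/2}-(s/l)^2)$, $M:=\|v:C^0(\pd_0)\|$: one checks $\lin\ubar\phi<0$ on $\Omega$ with a strictly negative margin (using the smallness of $d$, $\mathbf V$ and $\chi-g_0$), $\ubar\phi\ge|v|$ on $\pd_0$, $\ubar\phi\ge0=v$ on $\pd_l$, and $\sup_\Omega\ubar\phi\le2M$, so $|v|\le\ubar\phi\le2M$.

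The main obstacle throughout is obtaining every estimate \emph{independently of the length} $l$, together with the genuinely different behaviour of the zero–th Fourier mode, whose homogeneous solutions do not decay: this is what forces the weakened boundary matching and the term $B(f,E)$, and it is also why the usual maximum principle must be replaced in (vi) by an argument exploiting the calibrated smallness of the lower–order coefficients.
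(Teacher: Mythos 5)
Your reduction to the flat model via Fourier series and the Neumann--series absorption of $\mathcal E=\lin-\Delta_{g_0}$ are sound and match the paper's strategy for (i)--(v); the accounting of how the weights $e^{-\ubar c s}$, $l^{-1}$, $l^{-2}$ in $N(\lin)$ produce an operator norm $\leq CN(\lin)$ for $\mathcal E:C^{2,\alpha}(e^{-\gamma s})\to C^{0,\alpha}(e^{-\gamma s})$ is correct, and the derivation of the $\eps$ in (v) by taking $N(\lin)\lesssim\eps$ is exactly what is needed.

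The weak point is your uniqueness argument in (vi). You treat the full operator $\lin$ mode by mode in $z$, with a separate claim for $n\neq0$ (barriers $e^{-|n|s}$) and $n=0$ (a non-oscillatory 1D operator). But the coefficients $\chi$, $\mathbf V$, $d$ depend on $z$ in general, so the perturbed equation does \emph{not} decouple into Fourier modes: the zero-mode of $v$ satisfies an ODE with source terms involving the nonzero modes, and vice versa, so ``combining the modes'' is not a valid deduction. The paper instead proves uniqueness by a global energy estimate: one shows $-\int_\Omega(\lin\phi)\phi\geq c(l)\int_\Omega\phi^2$ for $\phi\in C^2_0$, using the Poincar\'e inequality on $\Omega$ (in both the cross-section and the $s$-direction) together with the calibrated smallness of $\|\chi-g_0\|$, $\|\mathbf V\|$ and $\|d\|$ that $N(\lin)$ enforces. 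This avoids any mode decoupling and is uniform in $l$.

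Your supersolution $\ubar\phi=M(2-e^{-\ubar cs/2}-(s/l)^2)$ for the quantitative bound is, on the other hand, a genuinely different and perfectly reasonable route from the paper's (which instead quotes Kapouleas, reducing to $f\equiv1$, solving $\Delta v=0$ by $(l-s)/l$, and scaling). Two remarks on it. First, since $d\geq0$, you cannot invoke the classical comparison principle directly; you need the generalized maximum principle obtained from the strict supersolution by passing to $w=v/\ubar\phi$, which turns the zero-order coefficient into $(\lin\ubar\phi)/\ubar\phi<0$. Spelling this out would close the gap. Second, once you have a strictly positive $\ubar\phi$ with $\lin\ubar\phi<0$ (take $M=1$), the same generalized maximum principle already yields uniqueness of the Dirichlet problem, so the flawed mode-by-mode step is in fact redundant: your supersolution argument, done carefully, proves both assertions of (vi) and can replace the paper's energy-estimate proof entirely.
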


A similar result is true for infinite cylinders, once  the terms involving $l^{-1}$ and $l^{-2}$ are removed from the definition of $N(\lin)$. 
\begin{prop}
\label{prop:A4}
Given $\gamma \in (0,1)$ and $\eps>0$, if $N(\lin)$ is small enough in terms of $\ubar c$, $\alpha$, $\gamma$ and $\eps$ (but independently of $l$), there is a bounded linear map 
	\[
	\ubar{\mathcal R}: C^{0, \alpha}(\Omega, g_{0}, e^{-\gamma s}) \to C^{2, \alpha}(\Omega, g_{0}, e^{-\gamma s})
	\]
such that for $E$ in $\ubar{\mathcal R}$'s domain and $v = \ubar{\mathcal R}(E)$, the following properties are true, where the constants $C$ depend only on $\alpha$ and $\gamma$:
	\begin{enumerate}
	\item   $\lin v = E$ on $\Omega$.
	\item  $v=B(E)$ on $\pd \Omega$, where $B(E)$ is a constant.
	\item $
	\| v :C^{2, \alpha}(\Omega, g_{0}, e^{-\gamma s}) \| \leq C \| E: C^{0, \alpha}(\Omega, g_{0}, e^{-\gamma s})\|.
	$
	\end{enumerate}
\end{prop}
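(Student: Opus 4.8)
The plan is to specialize the argument behind Proposition \ref{prop:A3} to $l=\infty$: on the infinite cylinder $\Omega=H^+/G'$ the outer boundary circle $\pd_l$ and the conditions imposed on it disappear, the weights $l^{-1}$ and $l^{-2}$ drop out of $N(\lin)$, and the condition ``$v\equiv 0$ on $\pd_l$'' is replaced by the requirement that $v$ decay like $e^{-\gamma s}$. The cleanest route to a \emph{linear} solution operator with all the stated properties is a Fourier expansion for the flat model followed by a Neumann series.

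Concretely, I would first build the solution operator $\ubar{\mathcal R}_0$ for the flat Laplacian $\Delta_{g_0}$ on $\Omega$. Expanding in the periodic variable $z$, the zero mode is solved by $\hat v_0(s)=\int_s^\infty (t-s)\hat E_0(t)\,dt$, which inherits the $e^{-\gamma s}$ decay precisely because $\gamma<1$; each nonzero mode is solved by its unique decaying solution vanishing on $\pd\Omega$. Summing and invoking interior Schauder estimates on unit balls carrying the weight $e^{-\gamma s}$ produces a bounded linear map $\ubar{\mathcal R}_0:C^{0,\alpha}(\Omega,g_0,e^{-\gamma s})\to C^{2,\alpha}(\Omega,g_0,e^{-\gamma s})$ whose output is constant on $\pd\Omega$ and solves $\Delta_{g_0}\ubar{\mathcal R}_0(E)=E$. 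Writing $\lin=\Delta_{g_0}+\mathcal E$, the coefficient bounds packaged into $N(\lin)$ --- the weights $e^{-\ubar c s}$ on $\chi-g_0$ and on $d$, and the plain $C^1$ control of $\mathbf V$ --- give
\[
\|\mathcal E\,u: C^{0,\alpha}(\Omega, g_0, e^{-\gamma s})\| \le C\,N(\lin)\,\|u: C^{2,\alpha}(\Omega, g_0, e^{-\gamma s})\|
\]
using $e^{-\ubar c s}\le 1$. Hence $u\mapsto\ubar{\mathcal R}_0(E-\mathcal E u)$ is a contraction of $C^{2,\alpha}(\Omega, g_0, e^{-\gamma s})$ as soon as $N(\lin)$ is small in terms of $\ubar c,\alpha,\gamma$, and its fixed point, given explicitly by the Neumann series $\ubar{\mathcal R}(E)=\sum_{n\ge 0}(-\ubar{\mathcal R}_0\mathcal E)^n\ubar{\mathcal R}_0 E$, is manifestly linear, obeys the bound in (iii), satisfies $\lin\ubar{\mathcal R}(E)=E$ (property (i)), and is constant on $\pd\Omega$ because every term of the series is (property (ii)), with $B(E)$ that common value. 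Uniqueness within $C^{2,\alpha}(\Omega,g_0,e^{-\gamma s})$ follows from the same contraction: any solution of the homogeneous problem satisfies $v'=\ubar{\mathcal R}_0(-\mathcal E v')$, hence $v'\equiv 0$.

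As a cross-check, the same statement can be obtained by passing to the limit $l\to\infty$ in Proposition \ref{prop:A3}: taking $f=0$, the functions $v_l:=\ubar{\mathcal R}_l(0,E|_{\Omega_l})$ satisfy a bound from part (iv) of Proposition \ref{prop:A3} that is uniform in $l$, so Arzel\`a--Ascoli and a diagonal argument extract a locally $C^{2,\alpha'}$-convergent subsequence whose limit retains all the required properties (the weighted $C^{2,\alpha}$ bound survives by lower semicontinuity of the weighted supremum under locally uniform convergence), and this limit agrees with $\ubar{\mathcal R}(E)$ by uniqueness.

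The one genuinely technical point is the flat model estimate on the non-compact cylinder --- reconciling the prescribed weight $e^{-\gamma s}$ with the intrinsic $e^{-s}$ decay rate of the high Fourier modes and with the H\"older seminorm, which does not split tidily over modes. I would handle this by first extracting a pointwise $e^{-\gamma s}$ bound from the explicit modal (equivalently, Green's-function) representation and only then upgrading to the weighted $C^{2,\alpha}$ estimate via interior Schauder estimates, which is exactly the place where $\gamma<1$ is used. The rest is a routine transcription of the proof of Proposition \ref{prop:A3}.
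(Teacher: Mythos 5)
Your proposal is correct and takes essentially the same route as the paper: the paper's entire proof is the single sentence ``Proposition \ref{prop:A4} is proved by treating $\lin$ as a perturbation of the Laplace operator $\Delta$,'' and your Neumann-series argument around the flat model operator $\ubar{\mathcal R}_0$ is exactly the standard way of making that sentence precise. The only thing worth noting is that the paper leaves the flat-model estimate implicit as ``standard'' (it is proved by the same separation-of-variables technique invoked in the proof of Proposition \ref{prop:A3}), whereas you work it out, correctly identifying that the zero mode supplies the boundary constant $B(E)$, that $\gamma<1$ keeps the zero-mode integral and the nonzero modes within the weight $e^{-\gamma s}$, and that the weighted Hölder estimate is obtained by an interior Schauder upgrade of the pointwise bound rather than mode-by-mode.
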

	\begin{cor}
	\label{cor:lin-op-ncal}
	Given $E \in C^{0,\alpha}(\ncal, g_{M}, e^{-\gamma s})$, there is a unique function $v \in C^{2,\alpha}(\ncal, g_M,e^{-\gamma s})$ such that $\lin v =E$, $v$ is a constant on $\pd \ncal$ and 
		\[
		\| v: C^{2,\alpha}(\ncal, g_M,e^{-\gamma s}) \| \leq C\| E: C^{0,\alpha}(\ncal,g_M, e^{-\gamma s})\|.
		\]
	\end{cor}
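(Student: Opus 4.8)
The plan is to deduce Corollary \ref{cor:lin-op-ncal} directly from Proposition \ref{prop:A4} by verifying that, under the identification of $\ncal$ with an infinite flat cylinder, the operator $\lin = \lin_M$ restricted to $\ncal$ satisfies the smallness hypothesis $N(\lin)$ small. First I would recall that $\ncal$ is, by Definition \ref{def:ubara}, the connected component of $M(\btau,\xi)_{\geq \ubar a}$ containing a grim reaper end $\ncal''$; as noted at the start of Section \ref{ssec:ecal-ncal}, this domain is a graph of a small function over (a subset of) a grim reaper, and the grim reaper cylinder is isometric to a flat cylinder $\Omega = H^+/G'$ via the arclength parametrization \eqref{eq:arclength}. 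Pulling everything back by this isometry (and by the normal-graph projection), $\lin_M$ becomes an operator of the form $\Delta_\chi v + \mathbf V \cdot \nabla v + d v$ on $\Omega$, where $\chi = g_M$, $\mathbf V$ comes from the $\btau\, \vec e_y \cdot \nabla$ term, and $d = |A_M|^2$.

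The key step is the estimate on $N(\lin)$. For the metric term, $\|\chi - g_0 : C^2(\Omega, g_0, e^{-\ubar c s})\|$ is controlled by Corollary \ref{cor:4-4}, which gives $\|g_\Sigma - g : C^3(\Sigma_{\geq 1}, g, e^{-s})\| \leq C\eps$ on the wings; on the purely grim-reaper portion (where $s \geq 4\delta_s/\tau$, hence on most of $\ncal''$) the metric is exactly flat. For the zeroth-order term, $\|d : C^1(\Omega, g_0, e^{-\ubar c s} + l^{-2})\| = \||A_M|^2 : C^1\|$ is bounded by $C\eps + C\delta_s^2$ again by Corollary \ref{cor:4-4} (and by $C\tau^2$ on the grim-reaper part, since the grim reaper in the rescaled picture has curvature of order $\tau$, cf.\ Lemma \ref{lem:4-3}(ii)). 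For the first-order term $\mathbf V = \btau\, \vec e_y^{\,\parallel}$ we simply have $\|\mathbf V : C^1(\Omega, g_0, l^{-1})\| \leq C\btau\, l$; since $l$ is the length of $\ncal''$ — finite but possibly large — one either takes $\ncal''$ to be genuinely semi-infinite (an honest grim reaper end, so Proposition \ref{prop:A4} applies with no $l$-dependence and $\|\mathbf V\|$ is just $C\btau$ in the weight-free $C^1$ norm adapted to infinite cylinders), or absorbs the $l$ into the smallness by choosing $\btau$ small depending on $\ubar a = 8|\log\btau|$ and the geometry. In either case, for $\btau$ small enough (and $\eps, \delta_s$ small, already fixed earlier), $N(\lin)$ is as small as required by Proposition \ref{prop:A4}.

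With smallness of $N(\lin)$ established, Proposition \ref{prop:A4} furnishes the bounded right inverse $\ubar{\rcal}: C^{0,\alpha}(\Omega, g_0, e^{-\gamma s}) \to C^{2,\alpha}(\Omega, g_0, e^{-\gamma s})$ with $\lin v = E$, $v$ constant on $\pd\Omega$, and the estimate $\|v : C^{2,\alpha}(\Omega, g_0, e^{-\gamma s})\| \leq C\|E : C^{0,\alpha}(\Omega, g_0, e^{-\gamma s})\|$. Transporting this back to $\ncal$ via the isometry and using that $g_M$ and $g_0$ are uniformly equivalent (Corollary \ref{cor:4-4}) to replace $g_0$ by $g_M$ in the norms, we obtain the desired $v \in C^{2,\alpha}(\ncal, g_M, e^{-\gamma s})$ with $\lin v = E$, $v$ constant on $\pd\ncal$, and the stated bound. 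Uniqueness follows from the maximum-principle part, namely property (vi) of Proposition \ref{prop:A3} (whose infinite-cylinder analogue holds for $\lin$ with $N(\lin)$ small): if $v_1, v_2$ are two such solutions then $v_1 - v_2$ solves the homogeneous equation with boundary data a constant, forcing $v_1 - v_2$ to equal that constant; but a nonzero constant is not in $C^{2,\alpha}(\ncal, g_M, e^{-\gamma s})$ since it does not decay, so $v_1 = v_2$.

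I expect the main obstacle to be the bookkeeping around the first-order term $\btau\, \vec e_y \cdot \nabla$: its weighted norm on $\ncal$ naively carries a factor of the cylinder length, so one must be careful to either work with the genuinely semi-infinite end and the $l$-free version of the hypotheses in Proposition \ref{prop:A4}, or to track that the relevant length scale $\ubar a = 8|\log\btau|$ times $\btau$ still tends to $0$, keeping $N(\lin)$ below the threshold uniformly. Everything else is a routine translation of Proposition \ref{prop:A4} through the near-isometry between $\ncal$ and a flat cylinder, using the estimates already collected in Corollary \ref{cor:4-4} and Lemma \ref{lem:4-3}.
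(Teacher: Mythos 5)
Your proof follows the paper's approach precisely: identify $\ncal$ with a semi-infinite flat cylinder, verify $N(\lin)$ is small via Corollary~\ref{cor:4-4}, and invoke Proposition~\ref{prop:A4}. The extra bookkeeping you provide for the first-order term $\btau\, \vec e_y\cdot\nabla$ and for the passage between $g_M$ and $g_0$ is correct and more explicit than the paper's one-line proof, which just cites Corollary~\ref{cor:4-4} and Proposition~\ref{prop:A4}.

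One small imprecision in your uniqueness step: from $\lin u=0$, $u\equiv c$ on $\pd\ncal$, and $u$ exponentially decaying, it does \emph{not} follow that $u$ is identically the constant $c$, and the maximum-principle statement (vi) of Proposition~\ref{prop:A3} only gives the bound $\|u\|_{C^0}\leq 2\|u\|_{C^0(\pd_0)}$, not constancy. The correct chain (and what the estimate in Proposition~\ref{prop:A4}(iii) encodes, which is the paper's citation) is that the exponential decay forces the zero-frequency mode, which is affine in $s$ for the model Laplacian and a small perturbation thereof here, to vanish identically; this forces the boundary constant to be zero, and then all nonzero modes vanish by decay plus zero boundary data. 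Your final conclusion $v_1=v_2$ is right, but the intermediate sentence ``forcing $v_1-v_2$ to equal that constant'' is not a valid inference. This is a minor expositional slip, not a gap in the strategy.
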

\begin{proof}
By Corollary \ref{cor:4-4}, we can apply Proposition \ref{prop:A4} to $\lin=\lin_M$ on $\ncal$ if $\eps$ and $\delta_s$ are small enough. The uniqueness of the solution $v \in C^{2,\alpha}(\ncal, g_M, e^{-\gamma s})$ follows from (iii) in Proposition \ref{prop:A4}.
\end{proof}


\subsection{Linear operator on $\scal_k$.}


We now prove that we can solve a Dirichlet problem with vanishing boundary data on $\scal_k$, modulo  linear combinations of $w$'s and $\bar w$'s. Let us fix $\scal_k$, $\tau  =\tau_k$, and consider the surface $\Sigma= \Sigma[T_k,\tilde{\uvarphi}_k, \tau] = \hcal_k^{-1} (\scal_k)$, the parameters of which are controlled by Corollary \ref{cor:sigma-parameters}. In this section,  the linear operator is $\lin_{\Sigma} := \Delta_{g_\Sigma}+|A_{\Sigma}|^2 + \tau_k \vec e_y \cdot \nabla$, where $\tau_k$ replaces $\btau$.

\begin{prop}
\label{prop:7-1}
Given $E \in C^{0,\alpha}(\Sigma)$, there are $\ubar{\theta}_E := \{ \theta_{E,j}\}_{j=1}^2 \in \R^2$, $ \uvarphi_E : = \{\varphi_{E,i} \}_{i=1}^4 \in \R^4$ and $v_E \in C^{2,\alpha}(\Sigma)$ such that:
	\begin{enumerate}
	\item $\ubar{\theta}_E, \uvarphi_E$ and $v_E$ are uniquely determined by the construction below. 
	\item $\lin_{\Sigma} v_E = E + \sum_{j=1}^2 \theta_{E,j} w_j + \sum_{i=1}^4 \varphi_{E,i} \bar w_i$ on $\Sigma$ and $v_E=0$ on $\pd \Sigma$.
	\item $|\ubar \theta_E| \leq C \| E\|$, where $\|E :C^{0,\alpha}(\Sigma, g_{\Sigma}, e^{-\gamma s/m_k} ) \| $. 
	\item $| \uvarphi_E| \leq C \| E\|$.
	\item $ \| v_E: C^{2,\alpha} (\Sigma, g_{\Sigma}, e^{-\gamma s/m_k})\| \leq C \| E\|$.
	\end{enumerate}
\end{prop}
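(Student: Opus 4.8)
The plan is to build $v_E$, $\ubar\theta_E$ and $\uvarphi_E$ in three successive corrections, each designed to remove an obstruction to solving $\lin_\Sigma v = E$ with exponentially decaying $v$ and vanishing boundary data. First I would solve the Dirichlet problem $\lin_\Sigma v_0 = E$ on the core $\Sigma_{\leq 0}$ with vanishing boundary data; since the core is compact (one period of it), this is standard elliptic theory, \emph{provided} $E$ is perpendicular to the approximate kernel of $\lin_\Sigma$. Because we have no such control on $E$, we instead use Lemma \ref{lem:w}: there exists $(\eta_1,\eta_2)$ with $|(\eta_1,\eta_2)|\leq C\|E\|$ such that $P\big(E + \eta_1 w_1+\eta_2 w_2)/|A_\Sigma|^2\big)=0$, i.e. $E+\eta_1 w_1+\eta_2 w_2$ is $L^2(\Sigma,|A_\Sigma|^2 g_\Sigma/2)$-orthogonal to $V=\mathrm{span}\{\vec e_x\cdot\nu,\vec e_y\cdot\nu\}$. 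Since $\lin_\Sigma$ is a $\tau_k$-perturbation of $L_\Sigma=\Delta_\Sigma+|A_\Sigma|^2$, and $V$ spans the relevant near-kernel directions surviving the $xy$-reflection symmetry, one obtains (by the standard Fredholm argument for operators with small eigenvalues, as in Kapouleas) a solution $\hat v$ on the core with a bound $\|\hat v\|\leq C\|E\|$ in the core. Setting $\theta_{E,j}=\eta_j$ handles the $w_j$ contributions.

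Next I would propagate the solution out along each of the four wings. On a wing, $\Sigma_{\geq 1}$ pulled back is isometric (up to a controlled perturbation) to a cylinder $\Omega=[1,5\delta_s/\tau_k]/G'$, and by Corollary \ref{cor:4-4} the operator $\lin_\Sigma$ on that cylinder has $N(\lin_\Sigma)$ small, so Proposition \ref{prop:A3} applies. Feeding in the boundary data inherited from the core solution at $s=1$ together with zero inhomogeneous term (or the restriction of $E$ to the wing), Proposition \ref{prop:A3} produces an exponentially decaying solution $v$ on the wing which vanishes at $s=5\delta_s/\tau_k=\partial\Sigma$, agrees with the core data up to an additive constant $B$ on $s=1$, and satisfies the weighted estimate $\|v:C^{2,\alpha}(\Omega,g_0,e^{-\gamma s})\|\leq C(\|\text{bdry data}\|+\|E\|)$. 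The one leftover defect is exactly the mismatch constant $B$ appearing on $\partial_0$ — this is the ``linear term'' $v'(s_0)(s-s_0)$ phenomenon described in the sketch — and I would cancel it by absorbing it into the bending: the functions $\bar w_i=\lin_\Sigma\bar u_i$ from Lemma \ref{lem:4-17} have the property that $\bar u_i$ is harmonic-like on $\Sigma_{\geq 2}$ and equals $(a+2)\delta_{ij}+O(a\eps)$ on the boundary component of $\Sigma_{\leq 2}$ in the $i$th wing, so adding $\sum_i\varphi_{E,i}\bar w_i$ to the right-hand side lets one adjust, on each wing independently, the matching of the solution to the core so that no constant defect remains. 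Solving the resulting $4\times 4$ linear system for $\uvarphi_E$ — which is diagonally dominant with diagonal entries $\sim(a+2)$ and off-diagonal entries $O(a\eps)$ by Lemma \ref{lem:4-17}(iv) — gives $|\uvarphi_E|\leq C\|E\|$, and then re-solving the wing problems with this choice yields a $v_E$ that is globally $C^{2,\alpha}$, vanishes on $\partial\Sigma$, and satisfies $\lin_\Sigma v_E = E+\sum_j\theta_{E,j}w_j+\sum_i\varphi_{E,i}\bar w_i$.

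Finally I would assemble the weighted estimate (v) by combining the interior (core) bound with the exponential-decay bounds of Proposition \ref{prop:A3} on each wing; the weight $e^{-\gamma s/m_k}$ appears because $s$ on $\scal_k$ is $m_k$ times the cylinder coordinate (recall $\tau_k=\btau/m_k$), so the decay rate on the original scale is $\gamma/m_k$. Uniqueness of $(\ubar\theta_E,\uvarphi_E,v_E)$, claim (i), follows because each step — the projection in Lemma \ref{lem:w}, the core solve with prescribed orthogonality, the wing solves via Proposition \ref{prop:A3}, and the solution of the diagonally-dominant $4\times4$ system for $\uvarphi_E$ — determines its output uniquely from its input.

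The main obstacle I expect is the interaction at $s\in[0,2]$ between the three separate mechanisms: $w_1,w_2$ live on $\Sigma_{\leq 0}$, the $\bar w_i$ are supported on $\Sigma_{\leq 2}$, and the cylinder estimates of Proposition \ref{prop:A3} are only valid on $\Sigma_{\geq 1}$, so one must carefully glue the core solution to the wing solutions across the overlapping transition region without the cut-offs reintroducing an obstruction or spoiling the estimates — in other words, checking that the linear map $E\mapsto(\ubar\theta_E,\uvarphi_E,v_E)$ is genuinely well-defined and bounded requires tracking how the constant defects $B$ on the four $\partial_0$'s depend linearly and boundedly on $E$, and verifying that the $4\times4$ bending system remains invertible uniformly in $\btau$. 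All the quantitative inputs for this (smallness of $N(\lin_\Sigma)$, the near-orthogonality of $w_j$ to $V$, the boundary values of $\bar u_i$, and the equivalence of $g$ and $g_\Sigma$) are already in hand from Corollary \ref{cor:4-4}, Lemma \ref{lem:4-17}, Lemma \ref{lem:w}, and Propositions \ref{prop:A3}--\ref{prop:A4}.
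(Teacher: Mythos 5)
Your broad strategy is recognizably the same as the paper's -- eliminate the approximate-kernel obstruction with the $w_j$'s, propagate along the wings using Proposition~\ref{prop:A3}, and kill the constant defects with the $\bar u_i$'s / $\bar w_i$'s via a $4\times 4$ system that Lemma~\ref{lem:4-17}(iv) makes near-diagonal. The final paragraph about the $\bar w_i$ correction is essentially the paper's argument.

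The gap is in the middle. You invoke Lemma~\ref{lem:w} and then assert that ``$V$ spans the relevant near-kernel directions'' and that a ``standard Fredholm argument for operators with small eigenvalues'' closes the core solve. But Lemma~\ref{lem:w} only controls the orthogonal projection onto $V = \mathrm{span}\{\vec e_x\cdot\nu,\vec e_y\cdot\nu\}$, and $V$ is not the approximate kernel of $\lin_\Sigma$ (or of $L_\Sigma$) on the dislocated, bent, truncated surface $\Sigma[T,\uvarphi,\tau]$ -- it is the exact kernel of $L$ on the unperturbed Scherk surface $\Sigma(\theta(T))$. The actual claim that needs to be proved is that after subtracting an appropriate $\sum_j\theta_{E,j}w_j$, the inhomogeneity is orthogonal to the genuine low-eigenvalue span of the operator \emph{on} $\Sigma$, with a bound on $|\ubar\theta_E|$; that is exactly Lemma~\ref{lem:7-4}, whose proof requires introducing the conformal metric $h=(\tfrac12|A_\Sigma|^2+\eps_h)g_\Sigma$, comparing spectra of $(\Sigma/G',h)$ and $(\S^2,h_0)$ through the Gauss map via the Rayleigh-quotient machinery, and only then reducing to Lemma~\ref{lem:w}. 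Treating this step as standard is precisely where the proof would fail as written.

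A secondary issue is the decomposition. You propose: solve a Dirichlet problem on the core $\Sigma_{\leq 0}$, then propagate outward. The paper instead first reduces to $E$ supported on $\Sigma_{\leq 2}$ (Lemma~\ref{lem:7-2}, using Proposition~\ref{prop:A3} on each wing and a cutoff), then solves \emph{globally} on $\Sigma$ via Lax--Milgram for $L_h$, then passes from $L_\Sigma$ to $\lin_\Sigma$ by iteration, and only afterwards corrects decay with the $\bar u_i$'s. Solving on the core alone is problematic: the approximate kernel is a property of the operator on all of $\Sigma/G'$, not of a Dirichlet problem on $\Sigma_{\leq 0}/G'$, whose spectrum is different and whose low eigenfunctions need not be the restrictions of $\vec e_x\cdot\nu,\vec e_y\cdot\nu$. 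The paper's wings-first reduction is what makes the $L^2(\Sigma/G',h)$ argument clean, because $(\Sigma/G',h)$ has finite area (the Gauss map nearly sends it to $\S^2$), so Lax--Milgram applies globally.
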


We first reduce to the case when $E$ is supported on $\Sigma_{\leq 2}$. 

\begin{lemma}
\label{lem:7-2}
If Proposition \ref{prop:7-1} is valid when $E$ is supported on $\Sigma_{\leq 2}$, it is valid in general. 
\end{lemma}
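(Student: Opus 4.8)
The plan is to reduce the general case to the supported case by splitting off the part of $E$ that lives on the wings, solving there with the exponentially decaying theory already in hand (Proposition~\ref{prop:A3}), and then feeding the residual error back into the assumed special case. Concretely, write $E = E_1 + E_2$ where $E_2 := (\psi[2,1]\circ s)\, E$ is supported on $\Sigma_{\leq 2}$ and $E_1 := (1-\psi[2,1]\circ s)\,E$ is supported on $\Sigma_{\geq 1}$. The function $E_1$ decomposes as a sum of four pieces, one on each wing $\Sigma_{\geq 1}$-component, each isometric (after pushing forward by $A_j \circ F_j^{-1}$ and using the Notations of \ref{ssec:notations2}) to a piece of a cylinder of length $l = 5\delta_s/\tau_k$. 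By Corollary~\ref{cor:4-4} the coefficients of $\lin_\Sigma$ on each such wing satisfy the smallness hypothesis $N(\lin_\Sigma)$ small once $\eps$ and $\delta_s$ are chosen small, so Proposition~\ref{prop:A3} applies with weight $e^{-\gamma s/m_k}$ (rescaling the decay rate appropriately, since $s$ here is the unrescaled coordinate and the period is $2\pi/m_k$).

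First I would apply $\ubar{\rcal}$ from Proposition~\ref{prop:A3} with zero Dirichlet data on the outer boundary $\pd\Sigma$ and free data on the inner boundary $s=1$ of each wing, producing a function $v_1$ on $\Sigma_{\geq 1}$ with $\lin_\Sigma v_1 = E_1$, $v_1 = 0$ on $\pd\Sigma$, $v_1$ equal to some constant on $\{s=1\}$ of each wing, and
\[
\|v_1 : C^{2,\alpha}(\Sigma_{\geq 1}, g_\Sigma, e^{-\gamma s/m_k})\| \leq C\|E_1: C^{0,\alpha}(\Sigma_{\geq 1}, g_\Sigma, e^{-\gamma s/m_k})\| \leq C\|E\|.
\]
Then I would extend $v_1$ to all of $\Sigma$ by multiplying by a cutoff: set $\hat v_1 := (\psi[1,2]\circ s)\, v_1$, which is defined and smooth on all of $\Sigma$, vanishes near $\Sigma_{\leq 1}$, and agrees with $v_1$ on $\Sigma_{\geq 2}$. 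Now
\[
\lin_\Sigma \hat v_1 = (\psi[1,2]\circ s)\, E_1 + [\lin_\Sigma, \psi[1,2]\circ s]\, v_1 =: E_1 - E_1' + E_1'',
\]
where the commutator term $E_1''$ is supported in the transition region $s \in [1,2] \subset \Sigma_{\leq 2}$, involves only first and second derivatives of $v_1$ times bounded factors, and satisfies $\|E_1'': C^{0,\alpha}(\Sigma)\| \leq C\|E\|$ by the estimate above; the term $E_1'$ (the part of $E_1$ killed by the cutoff, supported on $s\in[1,2]$) is likewise supported on $\Sigma_{\leq 2}$ with $\|E_1': C^{0,\alpha}(\Sigma)\| \leq C\|E_1\| \leq C\|E\|$. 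Hence $E - \lin_\Sigma \hat v_1 = E_2 + E_1' - E_1''$ is a function supported on $\Sigma_{\leq 2}$ with $C^{0,\alpha}(\Sigma)$-norm bounded by $C\|E\|$.

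By the hypothesis of the lemma, Proposition~\ref{prop:7-1} applies to this supported error, yielding $\ubar\theta_E, \uvarphi_E, v_2$ with $\lin_\Sigma v_2 = (E - \lin_\Sigma \hat v_1) + \sum_j \theta_{E,j} w_j + \sum_i \varphi_{E,i}\bar w_i$, $v_2 = 0$ on $\pd\Sigma$, and $|\ubar\theta_E| + |\uvarphi_E| + \|v_2: C^{2,\alpha}(\Sigma, g_\Sigma, e^{-\gamma s/m_k})\| \leq C\|E - \lin_\Sigma \hat v_1\| \leq C\|E\|$. Setting $v_E := \hat v_1 + v_2$ gives $\lin_\Sigma v_E = E + \sum_j \theta_{E,j}w_j + \sum_i \varphi_{E,i}\bar w_i$, $v_E = 0$ on $\pd\Sigma$ (both pieces vanish there), and the required estimates follow from the triangle inequality and the two sets of bounds; uniqueness of the whole package follows from the uniqueness in the special case once the construction above is fixed. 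The only point requiring genuine care is making sure the decay weight is handled consistently: the weight $e^{-\gamma s/m_k}$ in the statement corresponds, after the identification of a wing with the standard cylinder $\Omega$ of Definition~\ref{def:cylinder} (whose period is $2\pi$), to a weight $e^{-\gamma' s}$ with $\gamma' \in (0,1)$ a rescaled rate, so Proposition~\ref{prop:A3} applies verbatim; and one should double-check that $\ubar a = 8|\log\btau| > 2$ so the cutoff region genuinely sits in $\Sigma_{\geq 1}$, which holds for $\btau$ small. This bookkeeping, rather than any substantive analytic difficulty, is the main thing to get right.
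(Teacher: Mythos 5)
Your proof is correct and follows essentially the same route as the paper: solve on the wings via Proposition \ref{prop:A3} with zero data at $\pd\Sigma$, cut off the solution with $\psi[1,2]\circ s$, and feed the residual (now supported on $\Sigma_{\leq 2}$) into the assumed special case. The only cosmetic difference is that you pre-split $E$ into $E_1+E_2$ before invoking \ref{prop:A3}, whereas the paper applies $\ubar{\rcal}(0,E)$ directly on each wing; and your parenthetical remark attributing the $m_k$ in the weight to a period of $2\pi/m_k$ is slightly off (the unrescaled $\Sigma$ has period $2\pi$; the $m_k$ enters from the norm comparison \eqref{eq:norm-equivalent}, and one simply applies \ref{prop:A3} with $\gamma'=\gamma/m_k\in(0,1)$), but this does not affect the argument.
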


\begin{proof}

We start on the first  wing. 
Let $\Omega = [1, 5 \delta_s /\tau] \times \R/G'$, where $G'$ is the group generated by $(s, z) \to (s, z+2 \pi)$. By equation \eqref{eq:kappa}, the  asymptotic grim reaper $\kappa(H^+_{\geq 1})$ is isometric to $\Omega$.  We consider the operator $\lin_{\Sigma}  = \Delta_{g_{\Sigma}} + |A_{\Sigma}|^2 + \tau \vec e_y \cdot \nabla$ on $\kappa(H^+_{\geq 1})$, where $g_{\Sigma}$ and $A_{\Sigma}$ are as in Notations \ref{ssec:notations2}.

By Corollary \ref{cor:4-4}, we can apply Proposition \ref{prop:A3} to $ \lin_{\Sigma}$ to obtain $v_1 = \ubar {\mathcal R} (0,  E)$ and extend $v_1$, possibly discontinuously, by $0$ on the rest of $\Sigma$. The functions $v_2, v_3$ and $v_4$ are defined similarly for the three other wings. With the usual abuse of notation, we denote their pushforwards on $\Sigma$ by $v_i$ as well. Using the estimates of Proposition \ref{prop:A3} and the fact that $g$ and $g_{\Sigma}$ are equivalent metrics, we get
	\begin{equation}
	\label{eq:v71}
	\| v_i : C^{2,\alpha} (\Sigma, g_{\Sigma}, e^{-\gamma s/m_k} ) \| \leq C \| E\|.
	\end{equation}
On $\Sigma$, we define
	\[
	E' = E - \lin_{\Sigma} (\psi [1,2] \circ s (v_1 + v_2 + v_3 + v_4)).
	\]
Clearly, $E'$ is supported on $\Sigma_{\leq 2}$. We can apply Proposition \ref{prop:7-1} with  $E'$ to obtain $v_{E'}, \ubar \theta_{E'}$ and $\uvarphi_{E'}$, then define $\ubar \theta_E = \ubar \theta_{E'}$, $\uvarphi_E = \uvarphi_{E'}$, and $v_E=v_{E'}+(\psi [1,2] \circ s) (v_1 + v_2 + v_3 + v_4)$. The required estimates are valid thanks to \eqref{eq:v71}.
\end{proof}

In the definition below, the sole purpose of the small constant $\eps_h$ is to ensure that the metric $h$ is non degenerate.

\begin{definition} Let us define a metric $h :=(\frac{1}{2}|A_{\Sigma}|^2 + \eps_h) g_{\Sigma}$, where $\eps_h >0$ is small depending on $\tau$ and $\delta_{\theta}$, 
 and will be determined in the proof of Lemma \ref{lem:7-4}. We also define the $c$-approximate kernel of $L_h:= \Delta_h + 2 |A_{\Sigma}|^2 /(|A_{\Sigma}|^2 + 2\eps_h)$ to be the span of the eigenfunctions of $L_h$ with corresponding eigenvalues in $[-c,c]$. 

\end{definition}

\begin{lemma}
\label{lem:7-4}
There are positive constants $C$ and $c$ such that given $E \in L^2(\Sigma/G', h)$, there is $\ubar \theta_E = (\theta_{E,1}, \theta_{E,2})$ such that $(E- \sum_{j=1}^2 \theta_{E,j} w_j)/(|A_{\Sigma}|^2 + 2\eps_h)$ is $L^2(\Sigma/G', h)$-orthogonal to the $c$-approximate kernel and 
	\[
	|\ubar \theta_E | \leq C \| E/(|A_{\Sigma}|^2 + 2\eps_h) : L^2 (\Sigma/G',h)\|.
	\]
\end{lemma}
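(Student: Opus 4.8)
The plan is to reduce the statement to a finite-dimensional linear-algebra fact, exactly as in the corresponding step of Kapouleas \cite{kapouleas;embedded-minimal-surfaces}. First I would observe that the operator $L_h = \Delta_h + 2|A_{\Sigma}|^2/(|A_{\Sigma}|^2 + 2\eps_h)$ is self-adjoint on $L^2(\Sigma/G', h)$: indeed, $\Delta_h$ is the Laplace--Beltrami operator of the metric $h = (\tfrac12 |A_{\Sigma}|^2 + \eps_h) g_{\Sigma}$, and multiplication by a bounded function is obviously self-adjoint. Since one period $\Sigma/G'$ is compact (it is obtained from a compact piece of a Scherk surface, truncated at $s = 5\delta_s/\tau$), $L_h$ has discrete spectrum accumulating only at $+\infty$, so the $c$-approximate kernel $\mathcal K_c := \bigoplus_{|\lambda|\le c}\ker(L_h - \lambda)$ is finite-dimensional, spanned by finitely many smooth eigenfunctions. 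The key point, which I would extract from the analysis leading up to the introduction of $w_1$ and $w_2$, is that the relation between $L_h$ and the geometric operator $L_{\Sigma} = \Delta_{g_{\Sigma}} + |A_{\Sigma}|^2$ is the conformal-type identity $L_h\, u = \tfrac{2}{|A_{\Sigma}|^2 + 2\eps_h}\, L_{\Sigma} u$ when $\eps_h = 0$, and is a bounded perturbation of this for $\eps_h$ small; this is precisely why the pairing against the approximate kernel is taken with the weight $1/(|A_{\Sigma}|^2 + 2\eps_h)$ and against the Radon measure $|A_{\Sigma}|^2 g_{\Sigma}/2$ — the two descriptions agree.

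Next I would set up the finite-dimensional problem. Let $\{\varphi_a\}_{a=1}^{N}$ be an $L^2(\Sigma/G', h)$-orthonormal basis of $\mathcal K_c$. Given $E$, I must find $\ubar\theta_E = (\theta_{E,1}, \theta_{E,2})$ so that
\[
\Big\langle \frac{E - \theta_{E,1} w_1 - \theta_{E,2} w_2}{|A_{\Sigma}|^2 + 2\eps_h},\, \varphi_a \Big\rangle_{L^2(\Sigma/G', h)} = 0, \qquad a = 1,\dots, N.
\]
By imposing the reflection symmetries of Proposition \ref{prop:scherk}(i) — in particular invariance under $\rcal_4$, reflection across the $xz$-plane, together with the $z \mapsto z + \pi$ reflections — one cuts the relevant part of $\mathcal K_c$ down to exactly the two-dimensional piece that is \emph{not} orthogonal to $w_1$ and $w_2$; this is the content of Lemma \ref{lem:w}, whose proof via the balancing formula of \cite{korevaar-kusner-solomon} shows that the $2\times 2$ matrix with entries $\langle w_j/(|A_{\Sigma}|^2 + 2\eps_h), \varphi_a\rangle$ (for the two symmetric approximate-kernel eigenfunctions $\varphi_a$) has nonvanishing determinant, bounded below in terms of $\delta_{\theta}$ since the vectors $(2\sin\theta(T), \pm 2\cos\theta(T))$ are linearly independent for $\theta(T) \in (20\delta_{\theta}, \pi/2 - 20\delta_{\theta})$ as in \eqref{eq:require-T}. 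The orthogonality conditions against the remaining (antisymmetric) eigenfunctions are then automatic, since both $E$ (by the symmetry we impose on inhomogeneous terms) and $w_1, w_2$ (supported on the core and symmetric, by Lemma \ref{lem:w}) are orthogonal to them. Inverting this invertible $2\times2$ system gives $\ubar\theta_E$, and the bound $|\ubar\theta_E| \le C\|E/(|A_{\Sigma}|^2 + 2\eps_h):L^2(\Sigma/G',h)\|$ follows from the Cauchy--Schwarz estimate on the right-hand side together with the uniform lower bound on the determinant; continuity/uniqueness is clear since the construction is the solution of a nondegenerate linear system depending continuously on $(T,\uvarphi,\tau)$.

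The main obstacle I anticipate is not the linear algebra itself but verifying that the constants $c$ and $C$ can be chosen \emph{uniformly in $\tau$} (and in the remaining parameters $T, \uvarphi$), since $|A_{\Sigma}|^2$ scales like $\tau^2$ and the metric $h$ degenerates as $\tau \to 0$ — this is why the innocuous-looking regularizer $\eps_h > 0$ (small, depending on $\tau$ and $\delta_{\theta}$) is inserted, and the delicate point is to choose $\eps_h$ small enough that $L_h$ is a controlled perturbation of $\tfrac{2}{|A_{\Sigma}|^2}L_{\Sigma}$ yet large enough that $h$ stays non-degenerate, with the gap parameter $c$ surviving. One must check that, under the rescaling that makes $\Sigma$ a Scherk surface with curvature comparable to $1$ (Corollary \ref{cor:4-4} gives the uniform equivalence of $g$ and $g_{\Sigma}$ needed here), the eigenvalue count in $[-c,c]$ and the lower bound on the determinant are stable; this is exactly the place where the smooth dependence on parameters from Proposition \ref{prop:smooth-dependence} and Corollary \ref{cor:sigma-parameters} is used. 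Once this uniformity is in hand, the proof is complete.
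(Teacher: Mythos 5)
Your reduction to a finite-dimensional linear-algebra statement is a reasonable framing, and you correctly identify the uniformity-in-$\tau$ issue and the role of $\eps_h$. However, there is a genuine gap where the analytic heart of the lemma should be: you assert, but do not prove, that (after imposing symmetry) the $c$-approximate kernel of $L_h$ on the truncated surface $\Sigma/G'$ is exactly two-dimensional and spanned by functions close to the pushforwards of $\vec e_x\cdot\nu$ and $\vec e_y\cdot\nu$. This is precisely what the lemma is about, and it does not follow from symmetry considerations alone. In fact the symmetry argument you invoke is unfounded: the desingularizing surface $\Sigma[T,\uvarphi,\tau]$ is obtained by the dislocation $Z[T] = \mathcal R\circ Z_1(\theta_1)\circ Z_2(\theta_2)$, which is \emph{not} equivariant under $\rcal_4$ (reflection across the $xz$-plane), since $Z_1$ rotates the first and third wings while fixing the second and fourth, whereas $\rcal_4$ interchanges them. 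The only symmetry the construction actually imposes globally is reflection across the $xy$-plane, which rules out $\vec e_z\cdot\nu$ but says nothing about the dimension or location of the rest of the approximate kernel. Without more, you cannot rule out that $\mathcal K_c$ contains eigenfunctions orthogonal to both $w_1$ and $w_2$ — which would make the $2\times 2$ system you want to invert the wrong size, and the lemma false.

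The paper closes this gap by a concretely different mechanism. First it proves a uniform $C^0$ bound on all eigenfunctions of $L_h$ with eigenvalue below $10$ (using the controlled geometry of the core together with Proposition \ref{prop:A3}(vi) on the wings), ensuring these eigenfunctions do not concentrate on the wings where $h$ degenerates. Second, it exploits that the Gauss map is a conformal map from $\Sigma(\theta(T))/G'$ onto the sphere minus four points, with pullback metric $h_0 = \tfrac12 |A|^2 g_{\Sigma(\theta(T))}$, so that the model operator $\Delta_{h_0}+2$ is the (shifted) spherical Laplacian whose low eigenfunctions are exactly the coordinate functions. Third, it constructs explicit transfer maps $\fcal_1,\fcal_2$ with a logarithmic cutoff $\psi_{\mathbf S^2}$ near the four punctures and invokes a Rayleigh-quotient eigenvalue-stability theorem (Appendix B of \cite{kapouleas;surfaces-euclidean}) to conclude that the low spectrum and low eigenfunctions of $L_h$ on $\Sigma/G'$ are close to those of $\Delta_{h_0}+2$ on the sphere. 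Only then does Lemma \ref{lem:w} apply to show that $w_1,w_2$ hit the resulting two-dimensional space with bounded inverse. Your plan skips the $C^0$ bound and the Gauss-map/Rayleigh-quotient comparison entirely, which are the steps that make the finite-dimensional reduction legitimate and uniform in $\tau$.
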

The reader familiar with the reference can see that this is Lemma 7.4 from \cite{kapouleas;embedded-minimal-surfaces}. We  sketch the main ideas, paraphrasing the proof from \cite{kapouleas;embedded-minimal-surfaces} in Appendix \ref{sec:proof} for the sake of completeness.

\begin{proof}[Proof of Proposition \ref{prop:7-1}]
We can assume that $E$ has support in $\Sigma_{\leq 2}$ and, by the smooth dependence on the parameters in Proposition \ref{prop:smooth-dependence}, we have uniform control over the geometry of $\Sigma_{\leq 2}$, hence 
	\[
	\| E/(|A_{\Sigma}|^2 + 2 \eps_h) : L^2 (\Sigma/G',h)\| \leq C \| E \|.
	\]
We apply Lemma \ref{lem:7-4} to obtain $c$ and $(\theta'_1, \theta'_2)=(\theta_{E,1}, \theta_{E,2})$. The Dirichlet problem  
	\begin{equation}
	\label{eq:L-h}
	L_h v_E'' = (E - \sum_{j=1}^2 {\theta'_j} w_j)/(|A_{\Sigma}|^2/2+\eps_h) \textrm{ in } \Sigma,\quad 
	 v_E'' = 0 \textrm{ on } \pd \Sigma,
	\end{equation}
 can be solved by using the Lax-Milgram Theorem \cite{gilbarg-trudinger;elliptic-equations}. 
Equation \eqref{eq:L-h} is equivalent to 
	\[
	L_{\Sigma} v_E'' = \Delta_{\Sigma} v_E'' + |A_{\Sigma}|^2 v_E''= E - \sum_{j=1}^2 {\theta'_j} w_j. 
	\]
The non-homogeneous term is $C^{0,\alpha}$, so elliptic regularity, Lemma \ref{lem:7-4} and the control of the geometry of $\Sigma_{\leq 2}$  imply that the solution $v''_E$ is unique and satisfies 
	\[
	\| v_E'': C^{2,\alpha} (\Sigma_{\leq 2}, g_{\Sigma})\| \leq C \| E \|.
	\]

The operator $\lin _{\Sigma}$ is a perturbation of $L_{\Sigma}$ so we can use an iteration process similar to the one in Section 4.1 \cite{mine;tridents} to obtain  $v'_E$ and $\ubar \theta_E = \{ \ubar \theta_{E,j} \}_{j=1}^2$ satisfying
	\begin{gather*}
	\lin_{\Sigma} v'_E = E- \sum_{j=1}^2 {\theta_{E,j}} w_j,\\
	\| v_E': C^{2,\alpha} (\Sigma_{\leq 2}, g_{\Sigma})\| \leq C \| E \|, \quad |\ubar \theta_E| \leq C \| E \|.		\end{gather*}
The solutions $v'_E$ and $\ubar \theta_E$ are unique by construction. 

We now arrange the exponential decay of $v'_E$. Define 
	\[
	v_E = v_E' + \sum_{i=1}^4 \varphi_{E,i} \bar u_i,
	\]
where the $\bar u_i$'s are as in Section \ref{ssec:estimates} and the constants $\varphi_{E,i}$'s will be chosen below. We set up exactly as in Lemma \ref{lem:7-2} and consider the restrictions of  $v_E'$ and $\bar u_i$'s to the component of  $\Sigma_{\geq 2}$ on the $j$th wing pulled back to the asymptotic grim reaper. Let $\Omega$ be a cylinder of length $(5 \delta_s /\tau -2)$. For $i,j=1,\ldots, 4$, we define 
	\[
	a_j = \text{avg}_{\pd_0} v_E' - B(v_E',0),\quad a_{i,j} = \text{avg}_{\pd_0} \bar u_i - B(\bar u_i,0),
	\]
where $\pd_0$, $B$, and $\textrm{avg}_{\pd_0}$ are as in Proposition \ref{prop:A3}. From the uniqueness of the solution in (vi) of Proposition  \ref{prop:A3}, we have $v_E = \ubar{\mathcal R}(v_E,0)$ if and only if 
	\[
	a_j + \sum_{i=1}^4 \varphi_{E,i} a_{i,j}  =0, \quad j=1,\ldots, 4.
	\]
By (v) in Proposition \ref{prop:A3} and (iv) in Lemma \ref{lem:4-17}, we can solve the above system and find unique solutions $\varphi_{E,i}$'s. With such a choice of $\varphi_{E,i}$'s, the function $v_E$ has exponential decay and 
	\begin{gather*}
	|\uvarphi_E | \leq C \|v'_E:C^{0,\alpha} (\Sigma_{\leq 2}, g_{\Sigma})\|  \leq C \|E\|, \\
	\| v_E: C^{2, \alpha}(\Sigma, g_{\Sigma}, e^{-\gamma s/m_k}) \| \leq C \|E \|. \qedhere
	\end{gather*}
\end{proof}

%
%


\subsection{Linear operator on $M$}


Let us recall that we defined $M=M_{\btau}$ in the larger scale, where the mean curvature of the asymptotic grim  reapers is of order $\btau$. 
\begin{definition}
Given $v \in C^{r,\alpha}(M)$, $r =0,2$,  we define the norms
	\[
	\| v \|_r : = \| v:C^{r,\alpha}(M,g_M, e^{-\gamma s} )\|,
	\]
where $s$ is the coordinate given in Definition \ref{def:s}.
\end{definition}

As in Section \ref{sec:initial-surfaces}, we denote the scaling used to fit the desingularizing surface in place of the $k$th intersection by 
	\begin{equation}
	\label{eq:norm-equivalent}
	\hcal_k: \Sigma[T_k, \tilde \uvarphi_{k}, \tau_k] \to \scal_k.
	\end{equation}
Note that $\hcal_k^{\ast} g_M = \frac{1}{m_k^2} g_{\Sigma}$. Moreover, for any  function $v\in C^{r,\alpha}(M)$ supported on $\scal_k$, we have
	\[
	 \frac{1}{C} \| v\|_r \leq  \| \hcal_k^{\ast} v : C^{r,\alpha}(\Sigma[T_k, \tilde \uvarphi_{k}, \tau_k], g_{\Sigma}, e^{-\gamma  s/m_k}) \|  \leq 
C \|  v \|_r ,
	 \]
where the constant $C$ depends only on $r$, $\alpha$ and $m_k$. Since $\{m_k\}_{k=1}^{N_I}$ is a finite set, we will stop mentioning the explicit dependence in $m_k$ and incorporate it in the constants $C$ in the rest of the article.

Let us fix $k \in \{ 1, \ldots, N_I\}$. Given a function  $E$ on $M$ with support in $\scal_k$, we define $E' = \frac{1}{m_k} E \circ \hcal_k$.  By Proposition \ref{prop:7-1}, there exist a function $v'$ and constants $\{\theta_{E',j}\}_{j=1}^2$, $\{ \varphi_{E',i}\}_{i=1}^4$ for which $ \Delta_{g_{\Sigma}} v' + |A_{\Sigma}|^2 v'+ \tau_k \vec e_y \cdot \nabla_{M} v' = E' + \sum_{j=1}^2 \theta_{E',j} w_j + \sum_{i=1}^4  \varphi_{E',i} \bar w_i$. The function $ v = \frac{1}{m_k} v' \circ \hcal_k^{-1} = \frac{1}{m_k} \hcal_{k\ast} v'$ therefore satisfies
	\[
	\lin_{M} v = \Delta_{g_M} v + |A_{M}|^2 v+\bar \tau \vec e_y \cdot \nabla_{g_M} v =E + m_k \hcal_{k\ast} \left( \sum_{j=1}^2 \theta_{E',j} w_j + \sum_{i=1}^4  \varphi_{E',i} \bar  w_i\right).
	\]
To simplify notations, we define the linear map $\Theta: \vcal_{\theta} \times \vcal_{\varphi} \to C^{\infty} (M)$ by 
	\[
	\Theta(\ubar \theta' , \uvarphi') = \sum_{k=1}^{N_I} m_k \hcal_{k \ast} \left(\sum_{j=1}^2 \theta'_{k,j} w_j+ \sum_{i=1}^4 \varphi_{k,i}'\bar w_i \right),
	\]
where $\ubar \theta' = \{  \{ \theta'_{k,j} \}_{j=1}^2 \}_{k=1}^{N_I} \in \vcal_{\theta}$ and $\uvarphi' = \{ \{ \varphi'_{k,i}\}_{i=1}^4\}_{k=1}^{N_I}\in \vcal_{\varphi}$.

\begin{theorem}
\label{thm:linear-invertible}
Given $E \in C^{0,\alpha}(M)$ with finite norm $\| E \|_0$, there exist $v \in C^{2,\alpha}(M)$, $\ubar \theta_E \in \vcal_{\theta}$, and $\uvarphi_E \in \vcal_{\varphi}$ uniquely determined by the construction below, such that
	\[
	\lin_{M} v = E + \Theta(\ubar \theta_E, \uvarphi_E),
	\]
and 
	\[
	\| v \|_2 \leq C \| E\|_0, \quad |\ubar \theta_E |\leq C \| E\|_0, \quad |\uvarphi_E |\leq C \|E\|_0.
	\]
\end{theorem}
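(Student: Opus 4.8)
The plan is to construct the global solution $v$ by an iteration that uses the local solvers on $\scal_k$, $\ecal$, and $\ncal$ already established, gluing the pieces together with the cut-off function $\psi$ and controlling the resulting errors by the exponential decay in the weighted norms $\| \cdot \|_r$. First I would fix notation for the partition of unity: there is a cut-off $\psi_{\scal}$ supported on $\scal = \bigcup_k \scal_k$ and equal to $1$ outside the neighborhoods $\ecal$, $\ncal$, built from $\psi[\ubar a, \ubar a + 1]\circ s$ where $\ubar a = 8|\log\btau|$ as in Definition \ref{def:ubara}; its derivatives are bounded by $C$ and, crucially, they are supported where $s \approx \ubar a$, so any term of the form $(\nabla\psi_{\scal})\ast(\nabla v)$ picks up a factor $e^{-\gamma \ubar a} = \btau^{8\gamma}$, which is much smaller than $1$. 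This is the mechanism that makes the iteration contract.

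The core step is the definition of one step of the iteration. Given $E$ with $\|E\|_0 < \infty$, split $E = \psi_{\scal} E + (1-\psi_{\scal})E$. For the first part, which is supported on $\bigcup_k \scal_k$, restrict to each $\scal_k$, pull back by $\hcal_k$ to $\Sigma[T_k,\tilde\uvarphi_k,\tau_k]$, apply Proposition \ref{prop:7-1} to get $v_k'$, $\ubar\theta_{E,k}$, $\uvarphi_{E,k}$, and push forward to get a function $u$ on $M$ (extended by $0$) with $\lin_M u = \psi_{\scal} E + \Theta(\ubar\theta_E,\uvarphi_E) + (\text{error supported near } s=\ubar a)$, where I collect $\ubar\theta_E = \{\ubar\theta_{E,k}\}$, $\uvarphi_E = \{\uvarphi_{E,k}\}$. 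Because the $v_k'$ decay like $e^{-\gamma s/m_k}$, the function $u$ itself is $O(\|E\|_0)$ in $\|\cdot\|_2$, and the commutator error $[\lin_M, \psi[\ubar a,\ubar a+1]\circ s]u$ is bounded by $C\btau^{8\gamma}\|E\|_0$ (using $|A_M|$ and $\btau\vec e_y\cdot\nabla$ are lower order, and the cut-off region is where $u$ is exponentially small). For the second part, $(1-\psi_{\scal})E$ together with that commutator error is supported on $\ecal \cup \ncal$ (each connected component of $M_{\geq \ubar a}$); on $\ncal$ apply Corollary \ref{cor:lin-op-ncal} and on $\ecal$ apply standard elliptic theory for the Dirichlet problem with vanishing boundary data, obtaining $u''$ with $\lin_M u'' = (1-\psi_{\scal})E + (\text{commutator})$ on $\ecal\cup\ncal$, $u''=0$ on the outer boundaries and equal to a constant on $\pd\ncal$. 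Then $\psi_{\scal} u + (1-\psi_{\scal,\text{inner}})u''$ — with a second cut-off localizing $u''$ away from $\scal_k$ — solves $\lin_M(\cdot) = E + \Theta(\ubar\theta_E,\uvarphi_E) + E_1$, where the new error $E_1$ again comes only from commutators in the transition regions and satisfies $\|E_1\|_0 \leq C\btau^{8\gamma}\|E\|_0 =: \lambda \|E\|_0$ with $\lambda < 1/2$ for $\btau$ small. Iterating, $v = \sum_{n\geq 0} v^{(n)}$ with $\|v^{(n)}\|_2 \leq C\lambda^n\|E\|_0$ converges, and summing the $\ubar\theta_E^{(n)}$, $\uvarphi_E^{(n)}$ gives the total $\ubar\theta_E$, $\uvarphi_E$ with the stated bounds; uniqueness holds since each ingredient ($\hcal_k$, Proposition \ref{prop:7-1}, Corollary \ref{cor:lin-op-ncal}, the Dirichlet solver on $\ecal$, the cut-offs) is fixed by the construction.

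The main obstacle I expect is not any single estimate but the careful bookkeeping of \emph{which} cut-off is applied \emph{where} so that (a) the pieces actually glue to a smooth function on $M$ — recall $s$ may be discontinuous at $\scal_{\btau}\cap\ecal''$, so the cut-off regions for $\ecal$ must be chosen inside $\scal_k$ where $s$ is the pulled-back coordinate from Definition \ref{def:s}, and one must check the overlaps $\ecal\cap\scal_k$, $\ncal\cap\scal_k$ are genuinely in the exponential-decay range $s \geq \ubar a \gg 2$; (b) every commutator error lands in a region where the previously constructed function is exponentially small, so that the contraction factor $\btau^{8\gamma}$ really appears and is not spoiled by the $|A_M|^2$ term (which is only $O(\btau^2)$ on the ends but $O(\tau_k^2)$ on the cores — harmless) or by the first-order term $\btau\vec e_y\cdot\nabla$. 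A secondary point requiring care is that the constants $w_j$ and $\bar w_i$ are supported on $\Sigma_{\leq 2}$ respectively $\Sigma_{\leq 0}$ (Lemmas \ref{lem:w}, \ref{lem:4-17}), hence $\Theta(\ubar\theta_E,\uvarphi_E)$ is supported in the cores of the $\scal_k$ and is untouched by the outer cut-offs, so it passes cleanly through the iteration; one must verify this so that $\Theta$ of the \emph{summed} coefficients is exactly the correction term, with no cross terms. Once this combinatorial setup is pinned down, the estimates themselves follow mechanically from Proposition \ref{prop:7-1}, Corollary \ref{cor:lin-op-ncal}, Corollary \ref{cor:4-4}, and the definition of the weighted H\"older norms.
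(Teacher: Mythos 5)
Your overall architecture — localize $E$ to the $\scal_k$, solve there with Proposition \ref{prop:7-1}, handle the remainder on $\ecal$ and $\ncal$, glue with cut-offs, and iterate — is exactly the paper's. But there is a genuine gap in the contraction estimate: you work with a \emph{single} cut-off scale at $s=\ubar a = 8|\log\btau|$, and claim the commutator errors pick up a factor $e^{-\gamma\ubar a}=\btau^{8\gamma}$. This does not yield a contraction in the norm $\|\cdot\|_0$, because that norm carries the weight $e^{-\gamma s}$ and precisely cancels the gain. Concretely, if $\|u\|_2\leq C\|E\|_0$ (the most Proposition \ref{prop:7-1} gives you), then near $s=\ubar a$ the commutator $[\lin_M,\psi']u$ has pointwise size $\lesssim\|E\|_0 e^{-\gamma\ubar a}$, but its weighted norm is
\[
\|[\lin_M,\psi']u\|_0 \;\lesssim\; e^{\gamma(\ubar a+1)}\cdot \|E\|_0 e^{-\gamma\ubar a} \;\sim\; \|E\|_0,
\]
with no smallness. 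Your claim ``$\|E_1\|_0\leq C\btau^{8\gamma}\|E\|_0$'' is therefore not justified; the factor $e^{-\gamma\ubar a}$ is consumed by the weight, and the iteration as you set it up does not converge.

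The paper avoids this by using \emph{two well-separated} cut-off scales, and the gap between them is the source of the contraction. The first cut-off $\psi=\psi[5\delta_s/\tau,5\delta_s/\tau-1]\circ s$ transitions at the \emph{outer} boundary of the desingularizing pieces, where $s\approx 5\delta_s/\tau$. One feeds $\psi E_{n-1}$ into Proposition \ref{prop:7-1}, multiplies the resulting $u$ by $\psi$, and thus produces a source term $(1-\psi^2)E_{n-1}-[\lin,\psi]u$ for the Dirichlet problems on $\ecal,\ncal$ that is supported on $s\geq 5\delta_s/\tau -1$. Because $E_{n-1}$ and $u$ decay like $e^{-\gamma s}$, this source is genuinely tiny — of size $\sim e^{-\gamma\,5\delta_s/\btau}\|E_{n-1}\|_0$ in unweighted $C^{0,\alpha}$ on $\ecal$ and in the $e^{-\gamma s/2}$-weighted norm on $\ncal$ (the weaker weight $e^{-\gamma s/2}$ is needed precisely to make the restricted support pay off on the unbounded $\ncal$). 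Consequently $u',u''$ are that small everywhere, not merely $e^{-\gamma s}$-small. Only \emph{then} does the second cut-off $\psi'=\psi[\ubar a,\ubar a+1]\circ s$ enter, and the commutator $E_n=-[\lin,\psi'](u'+u'')$ at $s\approx\ubar a$ has
\[
\|E_n\|_0\;\lesssim\;e^{\gamma(\ubar a+1)}\cdot e^{-2\delta_s/\btau}\|E_{n-1}\|_0\;\leq\; e^{-\delta_s/\btau}\|E_{n-1}\|_0,
\]
which contracts because $5\delta_s/\btau\gg\ubar a$ for small $\btau$. Without this separation of scales — cut at $5\delta_s/\tau$ to transfer the source, cut at $\ubar a$ to glue back — there is no gain, and your ``(b) every commutator error lands where the function is exponentially small'' is not actually true at a single cut-off scale. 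Your observations in (a) about the coordinate $s$ and the overlaps $\ecal\cap\scal_k$, and about $\Theta$ passing cleanly through the iteration because $w_j,\bar w_i$ are supported in the cores, are correct and match the paper.
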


\begin{proof} The proof uses an iteration: at the $n$th step, we define the functions $v_n$ and $E_n$;   $u,u'$, and $u''$ are just intermediate functions  and are reset after every step. 

We define two cut-off functions on $M$ by $\psi = \psi[5 \delta_s /\tau, 5 \delta_s /\tau -1]\circ s$ and by $\psi'= \psi[\ubar a ,\ubar a+1 ]\circ s$, where $\ubar a=8 |\log \btau|$ as in Definition \ref{def:ubara}.

We take $E_0:=E$ and proceed by induction. Given $E_{n-1}$, we define $E_n$, $v_n$, $\ubar \theta_n$ and $\uvarphi_n$ in the following way.

For each $k=1, \ldots, N_I$, we consider the function $E'_k = \frac{1}{m_k} (\psi E_{n-1}) \circ \hcal_k$ on $\Sigma_k = \hcal_k^{-1}(\scal_k)$ and apply Proposition \ref{prop:7-1} to  get $v_{E'_k}$, $\ubar \theta_{E'_k}$ and $\ubar \varphi_{E'_k}$. We take $\ubar \theta_{n} = \{\{ \theta_{E'_k, j}\}_{j=1}^2\}_{k=1}^{N_I}$, and $\uvarphi_{n} = \{\{ \varphi_{E'_k,i}\}_{i=1}^4\}_{k=1}^{N_I}$. From the construction, $u = \sum_{k=1}^{N_I} \frac{1}{m_k} \hcal_{k\ast} (u_k)$ satisfies on $\scal$
	\begin{gather*}
	\lin_M u = \psi E_{n-1} + \Theta(\ubar \theta_n, \uvarphi_n),\\
	\| u: C^{2,\alpha} (\scal, g_M, e^{-\gamma  s}) \| \leq C \| E_{n-1} \|_0.
	\end{gather*}

Note that $
	\lin_M (\psi u) =  \psi^2 E_{n-1} +   [\lin, \psi] u + \Theta(\ubar \theta_n, \uvarphi_n), 
	$
where we used the notation $[\lin_M, \psi] u:= \lin_M (\psi) u - \psi(\lin_M u)$. 

From the discussion in Section \ref{ssec:ecal-ncal},  on every edge $\ecal$, there exists a solution $u'$ to  the Dirichlet problem $\lin_M u'= E_{n-1}  - \psi^2 E_{n-1} -   [\lin, \psi] u$ with zero boundary conditions. The function $E_{n-1}  - \psi^2 E_{n-1} -   [\lin, \psi] u$ is supported on $s \geq \frac{5 \delta_s}{\tau} -1$,  therefore
	\begin{align}
	\notag \| u': C^{2,\alpha} (\ecal) \| &\leq C \| E_{n-1}  - \psi^2 E_{n-1} -   [\lin, \psi] u : C^{0,\alpha} (\ecal)\|,\\
	\label{eq:est-u'}					&\leq C e^{- \gamma  (5\delta_s \max_k(m_k)/\btau -1)} \| E_{n-1} \|_0 \leq C e^{-2 \delta_s /\btau} \| E_{n-1} \|_0.
	\end{align}
We will also denote by $u'$ the sum of the solutions $u'$'s on all the edges extended by zero to the rest of $M$.

On every end $\ncal$, there is a solution $u''$  to $\lin_M u''=E_{n-1}  - \psi^2 E_{n-1} -   [\lin, \psi] u$  with exponential decay and given up to a constant on $\pd \ncal$ by Corollary \ref{cor:lin-op-ncal}. As before, we also denote by $u''$ the sum of the solutions on the ends, extended by zero to the rest of $M$.  Since $E \in C^{0,\alpha}(\ncal, g, e^{-\gamma s/2})$, $u''$ satisfies 
	\begin{align}
	 \notag \| u'': C^{2,\alpha} (\ncal, g, e^{-\gamma s/2}) \| &\leq C \| E_{n-1}  - \psi^2 E_{n-1} -   [\lin, \psi] u : C^{0,\alpha} (\ncal, g, e^{-\gamma s/2})\|,\\
	\label{eq:est-u''} 
						&\leq C e^{- \gamma (5\delta_s/ \btau -1)/2} \| E_{n-1}  \|_0 \leq C e^{-2 \delta_s /\btau} \| E_{n-1} \|_0.
	\end{align}

	We choose to focus on the decay of order $e^{-\gamma s/2}$, as opposed to $e^{-\gamma s}$, for the iteration process. The function $u''$ does have a decay of order $e^{-\gamma s}$ and the estimate  $\| u'': C^{2,\alpha} (\ncal, g, e^{-\gamma s}) \| \leq \| E_{n-1}  - \psi^2 E_{n-1} -   [\lin, \psi]u \|_0\leq C \|E_{n-1}\|_0$ is true, although it does not help us with the iteration.

	We define  $v_n = \psi u +\psi'(u'+u'')$. 
	Since the supports of $\psi'$ and $1-\psi^2$ are disjoint, as well as the supports of $\psi'$ and $[\lin, \psi]$, we have
	\begin{align*}
	\lin v_n =  E_{n-1}  +   [\lin, \psi'] (u'+u'')+ \Theta(\ubar \theta_n, \uvarphi_n).
	\end{align*} 

Define   $E_n = - [\lin, \psi'] (u'+u'')$. By \eqref{eq:est-u'}, \eqref{eq:est-u''}, and the fact that $[\lin,\psi']$ is supported on $[\ubar a, \ubar a+1]$, we have, for $\btau$ small enough,
 	\begin{align}
	\notag\| E_n\|_0 &\leq C e^{\gamma (a+1)} \|[\lin, \psi'] (u'+ u'') : C^{0,\alpha} (M, g_M) \| \\
	\notag	&\leq C e^{\gamma (a+1)} ( \| u': C^{2,\alpha} (\ecal) \| + \| u'' : C^{2,\alpha} (\ncal_{ \leq (a+1)}) \|) \\
	\notag	&\leq C e^{\gamma (a+1)} (1+ e^{\gamma (a+1)} )e^{-2 \delta_s /\btau} \| E_{n-1} \|_0 \\
	\label{eq:contraction}	& \leq e^{-\delta_s/\btau} \| E_{n-1}\|_0.
	\end{align}

We define $v_E := \sum_{n=1}^{\infty} v_n$, $\ubar \theta_{E} := \sum_{n=1}^{\infty} \ubar \theta_n$, and $\uvarphi_E := \sum_{n=1}^{\infty} \uvarphi_n$. The three series converge and we have the desired estimates from \eqref{eq:contraction} and Proposition \ref{prop:7-1}. The function $v_E$ is uniquely determined from the construction and satisfies $\lin_M v_E  =E + \Theta(\ubar \theta_E, \uvarphi_E)$. 
\end{proof}


\begin{cor}
\label{cor:7-11}
There are $v_H \in C^{2,\alpha}(M)$ and $\ubar \theta_H,\uvarphi_H$ such that 
	\begin{gather*}
	\lin_M v_H = H_{M} -\bar \tau  \vec e_y \cdot \nu_{M} + \Theta(\utheta_H, \uvarphi_H),\\
	|\utheta_H - \utheta| \leq C \bar \tau, |\uvarphi_H -\uvarphi| \leq C \bar \tau, \| v_H\|_2 \leq C \bar \tau,
	\end{gather*}
where $M = M(\btau, \xi)=M(\btau, \ubar \theta, \btau \ubar b', \btau \ubar c', \ubar \varphi)$. 
\end{cor}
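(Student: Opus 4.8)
The plan is to apply Theorem~\ref{thm:linear-invertible} to the inhomogeneous term $E := H_{M} - \btau\,\vec e_y\cdot\nu_{M}$, and to set $v_H := v$, $\utheta_H := \ubar\theta_E$, $\uvarphi_H := \uvarphi_E$. Then the identity $\lin_M v_H = E + \Theta(\utheta_H,\uvarphi_H)$, the uniqueness of $v_H,\utheta_H,\uvarphi_H$, and the estimates $\|v_H\|_2 \le C\|E\|_0$, $|\utheta_H|\le C\|E\|_0$, $|\uvarphi_H|\le C\|E\|_0$ are immediate from that theorem, so everything reduces to the single bound $\|E\|_0 \le C\btau$. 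The estimates for $\utheta_H-\utheta$ and $\uvarphi_H-\uvarphi$ will then follow by the triangle inequality, since $\xi\in\Xi_{\vcal}$ forces $|\utheta|,|\uvarphi|\le\zeta\btau$.

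To bound $\|E\|_0$ I would proceed piece by piece on $M$. On the grim reaper ends $\ncal''$ and the edges $\ecal''$, the surface $M$ is an exact piece of a grim reaper, which solves $H-\btau\,\vec e_y\cdot\nu=0$ in the present scale, so $E\equiv 0$ there. On a desingularizing piece $\scal_k=\hcal_k(\Sigma_k)$ with $\Sigma_k=\Sigma[T_k(\xi),\tilde{\uvarphi}_k,\tau_k]$, the scaling relation $\frac{1}{m_k}(H_M-\btau\,\vec e_y\cdot\nu_M)\circ\hcal_k = H_{\Sigma_k}-\tau_k\,\vec e_y\cdot\nu_{\Sigma_k}$ together with the norm equivalence recorded before Theorem~\ref{thm:linear-invertible} and the fact that $m_k\ge 1$ reduces the contribution of $\scal_k$ to $\|E\|_0$ to the weighted estimate $\|H_{\Sigma_k}-\tau_k\,\vec e_y\cdot\nu_{\Sigma_k}:C^{0,\alpha}(\Sigma_k,g_\Sigma,e^{-\gamma s})\|\le C\btau$. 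For the latter I would invoke Proposition~\ref{prop:4-20}, which expresses $H_{\Sigma_k}-\tau_k\,\vec e_y\cdot\nu_{\Sigma_k}$ as $\sum_{j=1}^2\theta_{k,j,\Sigma}w_j+\sum_{i=1}^4\tilde\varphi_{k,i}\bar w_i$ up to a term of size $C(\tau_k+|\ubar\theta_{k,\Sigma}|^2+|\tilde{\uvarphi}_k|^2)$ in the weighted $C^{0,\alpha}$ norm, where $\ubar\theta_{k,\Sigma}=\bigl(\theta_1(T_k(\xi))+\tilde\varphi_{k,3}/2-\tilde\varphi_{k,1}/2,\ \theta_2(T_k(\xi))+\tilde\varphi_{k,4}/2-\tilde\varphi_{k,2}/2\bigr)$. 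The crucial input is that all coefficients here are $O(\btau)$: Corollary~\ref{cor:sigma-parameters} gives $|\theta_j(T_k(\xi))|\le 3\zeta\btau$ and $|\tilde{\uvarphi}_k|\le(\zeta+C)\btau$, so $|\theta_{k,j,\Sigma}|\le C\btau$ and $\tau_k+|\ubar\theta_{k,\Sigma}|^2+|\tilde{\uvarphi}_k|^2\le C\btau$; together with the uniform bounds $\|w_j:C^{0,\alpha}(\Sigma_k)\|\le C$ and $\|\bar w_i:C^{0,\alpha}(\Sigma_k)\|\le C$ from Lemmas~\ref{lem:w} and \ref{lem:4-17}, this yields $\|H_{\Sigma_k}-\tau_k\,\vec e_y\cdot\nu_{\Sigma_k}\|\le C\btau$. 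Since $\{m_k\}$ is a finite set and $E$ is a smooth function on $M$, assembling these local estimates gives $\|E\|_0\le C\btau$.

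Feeding $\|E\|_0\le C\btau$ into Theorem~\ref{thm:linear-invertible} gives $\|v_H\|_2\le C\btau$, $|\utheta_H|\le C\btau$ and $|\uvarphi_H|\le C\btau$, and hence $|\utheta_H-\utheta|\le C\btau$, $|\uvarphi_H-\uvarphi|\le C\btau$. I do not expect a conceptual obstacle here; the delicate part is bookkeeping — keeping the two scales $\tau_k=\btau/m_k$ and the two weight exponents $e^{-\gamma s}$, $e^{-\gamma s/m_k}$ straight (using $m_k\ge 1$ to compare them), and checking that the ``built-in'' coefficients $\theta_{k,j,\Sigma}$, $\tilde\varphi_{k,i}$ in the decomposition of $H_{\Sigma_k}-\tau_k\,\vec e_y\cdot\nu_{\Sigma_k}$ are genuinely of order $\btau$ rather than merely bounded, which is precisely where the restriction $\xi\in\Xi_{\vcal}$ and the smooth-dependence estimates of Section~\ref{sec:initial-surfaces} enter.
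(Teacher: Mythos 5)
Your strategy is to feed $E := H_M - \btau\,\vec e_y\cdot\nu_M$ directly into Theorem~\ref{thm:linear-invertible}, and you correctly identify the numerical ingredients (Proposition~\ref{prop:4-20}, Corollary~\ref{cor:sigma-parameters}, the weight comparison $m_k\ge 1$). The problem is quantitative, and it is exactly at the spot you flag at the end: the ``built-in'' coefficients are \emph{not} of order $\btau$ with a $\zeta$-independent constant. Corollary~\ref{cor:sigma-parameters} only gives $|\theta_j(T_k)|\le 3\zeta\btau$ and $|\tilde{\uvarphi}_k|\le(\zeta+C)\btau$, so the coefficients $\theta_{k,j,\Sigma}$ and $\tilde\varphi_{k,i}$ multiplying $w_j$ and $\bar w_i$ in Proposition~\ref{prop:4-20}'s decomposition are $O(\zeta\btau)$, not $O(\btau)$. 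Your bound is therefore $\|E\|_0\le C\zeta\btau$, and Theorem~\ref{thm:linear-invertible} then yields only $|\utheta_H|,|\uvarphi_H|,\|v_H\|_2\le C\zeta\btau$; the triangle inequality against $|\utheta|,|\uvarphi|\le\zeta\btau$ contributes a second factor of $\zeta$. So the constant you obtain in $|\utheta_H-\utheta|\le C\btau$, $\|v_H\|_2\le C\btau$ grows linearly in $\zeta$, which is fatal for the fixed point argument of Theorem~\ref{thm:8-2}: there one writes $|\utheta-\utheta_E-\utheta_H|\le C(\btau+\zeta^2\btau^2)$ with $C$ independent of $\zeta$, and then \emph{chooses} $\zeta>2C$ and $\btau<\zeta^{-2}$ to contract into $\Xi$. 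With a $\zeta$-dependent $C$ this choice becomes circular and the inclusion $\mathcal I(\Xi)\subset\Xi$ cannot be arranged.

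The paper's proof avoids exactly this by \emph{not} applying the solvability theorem to $H_M-\btau\,\vec e_y\cdot\nu_M$ itself. Instead it uses Proposition~\ref{prop:4-20} to peel off the linear combination $\Theta(\ubar\theta+\ubar\theta',\tilde{\uvarphi})$ whose coefficients carry the $O(\zeta\btau)$ size, and applies Theorem~\ref{thm:linear-invertible} only to the remainder $E'=H_M-\btau\,\vec e_y\cdot\nu_M\mp\Theta(\ubar\theta+\ubar\theta',\tilde{\uvarphi})$, whose norm is governed by the error in Proposition~\ref{prop:4-20}, namely $C(\tau_k+|\ubar\theta_\Sigma|^2+|\tilde{\uvarphi}_k|^2)\le C(\btau+\zeta^2\btau^2)\le 2C\btau$ once $\btau<\zeta^{-2}$. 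This is $\zeta$-independent. The peeled-off part is then absorbed into the definitions $\utheta_H=\ubar\theta+\ubar\theta'+\ubar\theta_{E'}$, $\uvarphi_H=\tilde{\uvarphi}+\uvarphi_{E'}$, so that $\utheta_H-\utheta=\ubar\theta'+\ubar\theta_{E'}$ and $\uvarphi_H-\uvarphi=(\tilde{\uvarphi}+\uvarphi)+\uvarphi_{E'}$, and it is precisely Proposition~\ref{prop:init-surf}(vi) ($|\tilde{\uvarphi}+\uvarphi|\le C\btau$, hence also $|\ubar\theta'|\le C\btau$) that turns both of these into $O(\btau)$ with constants that do not see $\zeta$. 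So the decomposition you elected to skip is not a cosmetic normalization --- it is the mechanism that removes the $\zeta$-dependence, and without it the corollary as proved would be too weak to drive Theorem~\ref{thm:8-2}.
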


\begin{proof}
From the smooth dependence of $M$ on its parameters, the uniform control of the geometry of $M_{\leq 2} = \scal_{\leq 2}$, and $|\xi|\leq \zeta \btau$, we have
	\[
	\|H_{M} - \btau \vec e_y \cdot \nu_{M}: C^{0,\alpha}(\scal_{\leq 2}, g_M) \| \leq C \btau.
	\]
On the regions  $\ecal''$ and $\ncal''$, $H_{M} -\bar \tau  \vec e_y \cdot \nu_{M} \equiv 0$. 
	We use Proposition \ref{prop:4-20} and Corollary \ref{cor:sigma-parameters} on $\scal_{\geq 1}$ to get
	\[
	\|H_{M} -\bar \tau  \vec e_y \cdot \nu_{M} + \Theta(\ubar \theta + \ubar \theta', \tilde{\uvarphi})\|_0 \leq C \btau,
	\]
where $ \tilde{\uvarphi}$ is as in Section \ref{sec:initial-surfaces} and $\ubar \theta'=\{ \{\theta'_{k,j}\}_{j=1}^2\}_{k=1}^{N_I}$, with $\theta'_{k,j} = (\tilde \varphi_{k,j+2} - \tilde \varphi_{k,j}+\varphi_{k,j+2} - \varphi_{k,j})/2$. We apply Theorem  \ref{thm:linear-invertible} with $E= H_{M} -\bar \tau  \vec e_y \cdot \nu_{M} + \Theta(\ubar \theta + \ubar \theta', \tilde{\uvarphi})$ to obtain $v_E$, $\ubar \theta_E$ and $\uvarphi_E$ and define $v_H= v_E$, $\theta_H = \ubar \theta+\ubar \theta' + \ubar \theta_E$, and $\ubar \varphi_H = \tilde{\uvarphi} + \uvarphi_E$. The estimates follow from  Theorem \ref{thm:linear-invertible}, Propositon \ref{prop:init-surf} (vi) and Corollary \ref{cor:sigma-parameters}. 
\end{proof}

\section{Quadratic Term}
\label{sec:quadratic}

\begin{prop}
\label{prop:8-1}
Given $v \in C^{2,\alpha}(M)$ with $\| v \|_2$ smaller than a suitable constant, the graph $M_v$ of $v$ over $M$ is a smooth immersion, moreover
	\[
	\| H_v - \btau \vec e_y \cdot \nu_v - (H- \btau \vec e_y\cdot \nu) - \lin_M v\|_0 \leq C \| v\|_2^2,
	\]
where $H$ and $H_v$ are the mean curvature of $M$ and $M_v$ pulled back to $M$ respectively, and similarly, $\nu$ and $\nu_v$ are the oriented unit normal of $M$ and $M_v$ pulled back to $M$.
\end{prop}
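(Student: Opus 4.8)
The plan is to reduce the statement to the local perturbation formula \eqref{eq:perturb-st} and then estimate the quadratic remainder term by term, working piecewise on $M$ according to the decomposition into the desingularizing pieces $\scal_k$ and the nearly flat regions $\ecal''$, $\ncal''$. First I would recall that, by \eqref{eq:perturb-st} applied with $\sigma=v$ on $M$,
\[
H_v - \btau \vec e_y \cdot \nu_v - (H-\btau \vec e_y\cdot \nu) - \lin_M v = Q_v + \btau \vec e_y \cdot \mathbf Q^{\nu}_v,
\]
where $Q_v$ and $\mathbf Q^{\nu}_v$ are the expressions given in Section \ref{ssec:perturbation}, built out of the terms $\Phi \in \{vA,\nabla v\}$, the $G$'s (contractions of at least two $\Phi$'s) and the $\hat G$'s (contractions of the four listed types, each at least quadratic in $\Phi$ once one remembers that $A\ast\Phi\ast\Phi$, $v\nabla A\ast\Phi$, etc., all carry a factor that is at least one power of $\Phi$, or else a curvature factor $A$ or $\nabla A$ paired with $\Phi$). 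The first thing to check is that $\|v\|_2$ small guarantees $|vA|<1$ so that $X_v$ is an immersion; this is where the uniform bound $\|A_M\|$ on each region enters, via Corollary \ref{cor:4-4} on $\scal_{\geq 1}$, the uniform control of $\scal_{\leq 2}$, and the fact that $A$ vanishes on $\ecal''$, $\ncal''$ since grim reaper cylinders are flat there — wait, they are not flat but $|A|\le C\btau$ there by the analogue of Lemma \ref{lem:4-3}, which is still $O(\btau)$ and hence small.

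Next I would estimate $\|Q_v + \btau\vec e_y\cdot\mathbf Q^{\nu}_v:C^{0,\alpha}(M,g_M,e^{-\gamma s})\|$. The key structural observation is that every term is a contraction with respect to $g_M$ of factors each of which is controlled pointwise: $\|A_M:C^{1,\alpha}\|\le C$ (bounded on the core, $O(\tau_k)$ on the wings), $\|\nabla A_M\|\le C$, while $v$, $\nabla v$, $\nabla^2 v$ each carry a weight $e^{-\gamma s}$ with coefficient $\le\|v\|_2$. Since a $G$-term has at least two $\Phi$ factors, it is bounded by $C\|v\|_2^2 e^{-2\gamma s}\le C\|v\|_2^2 e^{-\gamma s}$; the denominators $\sqrt{1+G_j}$, $1+G_j+\sqrt{1+G_j}$ are bounded below by a constant once $\|v\|_2$ is small, so they only contribute further bounded factors. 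A $\hat G$-term is a contraction of some $\Phi$'s with one of $A\ast\Phi\ast\Phi$, $v\nabla A\ast\Phi$, $vA\ast\nabla^2 v$, or $\nabla^2 v\ast\Phi\ast\Phi$; each of these four is either quadratic in $\Phi$ times a bounded curvature factor, or $\nabla^2 v$ times $\Phi$ (hence again at least two $v$-derivative factors) — in all cases the product is $\le C\|v\|_2^2 e^{-2\gamma s}\le C\|v\|_2^2 e^{-\gamma s}$. The $\btau\vec e_y\cdot\mathbf Q^{\nu}_v$ term is handled identically: $\mathbf Q^{\nu}_v$ is a sum of $G$-terms (each quadratic in $\Phi$) with bounded vector coefficients, so $\|\btau\vec e_y\cdot\mathbf Q^{\nu}_v\|_0\le C\btau\|v\|_2^2\le C\|v\|_2^2$. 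The Hölder seminorm is controlled the same way: differentiating once in the Hölder sense brings in at most $\nabla^3 v$ (absorbed into the $C^{2,\alpha}$ norm via the $\alpha$-seminorm of $\nabla^2 v$) and derivatives of the bounded geometric quantities, which are controlled by $C^{3}$ bounds on $A_M$ that follow from the smooth dependence in Proposition \ref{prop:smooth-dependence}, Lemma \ref{lem:4-3} and Corollary \ref{cor:4-4}, together with the definition \eqref{eq:def-weighted-holder} of the weighted Hölder norm (the unit geodesic balls over which the local norms are taken have uniformly bounded geometry, and $e^{-\gamma s}$ varies by a bounded factor over each such ball).

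The main obstacle I expect is bookkeeping rather than a genuine difficulty: one must verify that across the several transition regions of $M$ — the seams $s\in[0,1]$ and $s\in[3\delta_s/\tau_k,4\delta_s/\tau_k]$ of each $\scal_k$, and the overlaps where $\scal_k$ meets $\ecal''$ or $\ncal''$ — all the geometric quantities $A_M$, $\nabla A_M$, $\nabla^2 A_M$ and the metric $g_M$ stay uniformly bounded (in the appropriate weighted sense, $e^{-s}+l^{-2}$ for $|A_M|^2$ as in Corollary \ref{cor:4-4}) so that the constant $C$ is genuinely uniform in $\btau$ and $\xi\in\Xi_{\vcal}$. This is exactly the content of the estimates already established in Sections \ref{sec:desingularizing-surfaces}–\ref{sec:initial-surfaces}, so the proof amounts to invoking them region by region and combining the pieces with a partition of unity subordinate to the (finitely many, uniformly overlapping) pieces of $M$, noting that $\lin_M$ is a local operator so the decomposition of the left-hand side over the pieces is exact up to the already-controlled commutator-free overlaps. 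I would close by remarking that the discontinuity of $s$ at $\scal_{\btau}\cap\ecal''$ is harmless because $e^{-\gamma s}$ only jumps by a bounded factor there (both values of $s$ being comparable to $5\delta_s/\tau_k$ up to $O(1)$), so the weighted norm on $M$ is well-behaved.
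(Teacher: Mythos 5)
Your argument is correct and follows exactly the approach taken in the paper, whose proof is the single sentence ``The result follows immediately from equation \eqref{eq:perturb-st} since we have uniform control of the second fundamental form $A_M$.'' You have simply unpacked this into full detail — the reduction to $Q_v + \btau\vec e_y\cdot\mathbf Q^{\nu}_v$, the quadratic structure of the $G$'s and $\hat G$'s giving $e^{-2\gamma s}\leq e^{-\gamma s}$, the uniform bounds on $A_M$ and its derivatives from Lemma \ref{lem:4-3} and Corollary \ref{cor:4-4}, and the bounded jump of $e^{-\gamma s}$ at the seams — and the bookkeeping is accurate.
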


\begin{proof}
The result follows immediately from equation \eqref{eq:perturb-st}  since we have uniform control of the second fundamental form $A_M$. 
\end{proof}

\section{Fixed Point Argument}
\label{sec:fpt}

We are now ready to prove the main result of this paper. Theorem \ref{thm:main} in the introduction is a rescaled version of the theorem below, where $\wilde{\mcal} = \btau M_v$. The larger scale avoids singularities when $\btau \to 0$.

\begin{theorem} 
\label{thm:8-2}
Given a finite family of grim reapers $\{\wilde{\Gamma}_{n}\}_{n=1}^{N_{\Gamma}}$ in general position as in Section \ref{sec:initial-conf}, let us denote the rescaled family by $\{ \Gamma_n\}_{n=1}^{N_{\Gamma}}$, where $\Gamma_n = \btau^{-1} \wilde{\Gamma}_n$.  There is a $\delta_{\btau}$ depending only on $\max_{k}\{m_k\}$,  $N_{\Gamma}$, $\delta$, and $\delta_{\Gamma}$  from Lemma \ref{lem:cor-cond} such that for every $\btau \in (0,\delta_{\btau})$, there is a $\xi_{\btau} \in \vcal$ and a smooth function $v$ on the smooth initial surface $M=M(\btau, \xi_{\btau})$ with the following properties:
	\begin{enumerate}
	\item The graph $M_{v}$ of $v$ over $M$ is a complete embedded surface in $\R^3$ that is self-translating under mean curvature flow.
	\item $M_{v}$ is invariant under reflection with respect to the $xy$-plane.
	\item  $M_{v}$ is singly periodic with period $2 \pi$ in the $z$-direction.
	\item There is a large ball $B\subset \R^2$ such that $M_v \setminus(B\times \R)$ is the union of ends  in one-to-one correspondence with the ends of  $\{{\Gamma}_{n}\}_{n=1}^{N_{\Gamma}}$. The left ends of $M_{v}$ are exponentially decaying to grim reaper ends in $\{{\Gamma}_{n}\}_{n=1}^{N_{\Gamma}}$ (without any change in the position of the grim reapers). 
For the right ends of $M_v$, the difference between the center of a grim reaper asymptotic to $M_{v}$ and the center of its corresponding end in $\{{\Gamma}_{n}\}_{n=1}^{N_{\Gamma}}$  is at most a constant $C(N_{\Gamma},\delta,  \delta_{\Gamma}, \max_k(m_k))$. 
	\item  If $T_k^{\btau}$ is the translation that moves the $k$th intersection line of $\{\Gamma_{n}\}_{n=1}^{N_{\Gamma}}$ to the $z$-axis, then $T_k^{\btau}(M_v)$ converges uniformly in $C^{j}$ norm, for any $j<\infty$, on any compact set of $E^3$ to a Scherk surface of period $2 \pi/m_k$ as $\btau \to 0$. 

\end{enumerate}
\end{theorem}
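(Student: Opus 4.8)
The plan is to set up a fixed point argument that simultaneously solves the nonlinear equation $H_v - \btau\,\vec e_y\cdot\nu_v = 0$ for the graph function $v$ and adjusts the geometric parameters $\xi = (\ubar\theta, \btau\ubar b', \btau\ubar c', \uvarphi) \in \Xi_{\vcal}$ so that the unavoidable error term $\Theta(\ubar\theta_E, \uvarphi_E)$ coming from the approximate kernel and the asymptotic decay is forced to vanish. Concretely, combining Proposition \ref{prop:8-1} with Corollary \ref{cor:7-11}, for each $\xi \in \Xi_{\vcal}$ and each small $v$ we have
\[
H_v - \btau\,\vec e_y\cdot\nu_v = (H - \btau\,\vec e_y\cdot\nu) + \lin_M v + Q_v = \lin_M(v - v_H) - \Theta(\utheta_H, \uvarphi_H) + Q_v,
\]
so, writing $v = v_H + u$ and applying the inverse operator from Theorem \ref{thm:linear-invertible} to $-Q_{v_H+u}$, we are led to a fixed point equation $u = \mathcal J(u,\xi)$ together with the \emph{balancing conditions} that the total coefficients of the $w_j$'s and $\bar w_i$'s produced (namely $\utheta_H + \ubar\theta_E$ and $\uvarphi_H + \uvarphi_E$) be zero. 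The first step is therefore to make this map precise on a ball $\| u \|_2 \le C\btau$ inside $C^{2,\alpha}(M)$, and to record that $\| Q_{v_H+u} \|_0 \le C(\|v_H\|_2 + \|u\|_2)^2 \le C\btau^2$ by Proposition \ref{prop:8-1}, so that $\mathcal J$ maps the ball of radius $C\btau$ into itself and is a contraction in $u$ for $\btau$ small; this yields, for each fixed $\xi$, a unique small solution $u(\xi)$ of the nonlinear PDE modulo the error terms, depending continuously on $\xi$.

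The second step is to arrange the vanishing of the error terms by choosing $\xi$ correctly. The key structural point, going back to Section \ref{ssec:construction-G-bar} and Proposition \ref{prop:4-20}, is that the coefficient data split: the parameters $\ubar\theta$ (dislocations at the vertices), together with the induced $\tilde{\ubar\theta}$, control the $w_j$-components via the unbalancing, while $\uvarphi$ (the bendings) control the $\bar w_i$-components. Proposition \ref{prop:4-20} shows that, to leading order, $H_\Sigma - \tau\vec e_y\cdot\nu_\Sigma$ decomposes as $\sum_j \theta_{j,\Sigma} w_j + \sum_i \varphi_i \bar w_i$ with quadratic error, and Proposition \ref{prop:init-surf}(v) gives the explicit relation $\tilde\theta_{k,j} = \theta_{k,j} + (\varphi_{k,j+2}-\varphi_{k,j})/2$; combined with the straightening Lemma \ref{lem:4-1}(iii) (which says $\partial\beta_i'/\partial\varphi_j' = \delta_{ij} + O(\tau)$) the map from $\xi$ to the total coefficient vector $(\utheta_H + \ubar\theta_E, \uvarphi_H + \uvarphi_E) \in \vcal_\theta\times\vcal_\varphi$ is, after subtracting the identity part, a contraction-type perturbation of size $O(\btau)$ on $\Xi_{\vcal}$. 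The position parameters $(\ubar b', \ubar c')$ supply the remaining degrees of freedom needed to keep the graph $\overline G$ well-defined and to pin down the asymptotic grim reaper centers. One then applies the Schauder Fixed Point Theorem (as announced in Section \ref{sec:fpt}) to the continuous self-map of the compact convex set $\Xi_{\vcal}$ that sends $\xi$ to the parameter values one would need to kill the errors given the solution $u(\xi)$; a fixed point $\xi_\btau$ of this map produces $\utheta_H + \ubar\theta_E = 0$ and $\uvarphi_H + \uvarphi_E = 0$, hence $\Theta \equiv 0$ and $H_v - \btau\vec e_y\cdot\nu_v = 0$ exactly, with $v = v_H + u(\xi_\btau)$.

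Having produced $v$, the remaining items (i)–(v) follow from the construction. Property (i): $M_v$ solves \eqref{eq:self-translating} (equivalently \eqref{eq:self-trans-wilde} after rescaling by $\btau$), it is complete because $M$ is complete and $v$ decays, and it is embedded because $M$ is embedded (Proposition \ref{prop:smooth-dependence}, Proposition \ref{prop:init-surf}) and $\|v\|_2 \le C\btau$ is small enough to preserve embeddedness — here one uses that the desingularizing surfaces are truncated and their wings separate as $\btau \to 0$. Properties (ii) and (iii) are inherited from the reflection symmetry across $\{z=n\pi\}$ and $z$-periodicity of $M$ (Proposition \ref{prop:init-surf}(2),(3)) provided the fixed point is chosen within the symmetric class, which is automatic since the whole construction — the operator $\lin_M$, the functions $w_j, \bar w_i$, the inverse $\ubar{\mathcal R}$ — respects these symmetries; rescaling by $\btau$ turns period $2\pi$ into period $2\pi\btau$ in Theorem \ref{thm:main}(iii), but in the larger scale the period is $2\pi$ as stated. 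Property (iv) on the ends follows from Proposition \ref{prop:init-surf}(4) together with the exponential decay of $v$ on $\ncal$ (encoded in the $e^{-\gamma s}$ weight): the left rays were left unmoved in the final configuration while the right rays absorb a displacement bounded by the constant from Proposition \ref{prop:init-conf} and Proposition \ref{prop:init-surf}(6). Property (v) is the $C^j$-convergence statement: since $\hcal_k^{-1}(M) \to \Sigma(\theta_k)$ in $C^j_{\mathrm{loc}}$ as $\btau\to 0$ by Proposition \ref{prop:init-surf}(7), and $\hcal_k^{-1}(v)$ decays and tends to zero on compacts, $T_k^\btau(M_v) = \hcal_k^{-1}(M_v)$ converges to a Scherk surface; the number of periods between $z=0$ and $z=2\pi$ is $m_k$ because the desingularizing surface at the $k$th intersection was built with period $2\pi/m_k$ (equation \eqref{eq:tauk}).

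The main obstacle is the simultaneous resolution in the second step: one must verify that the map $\xi \mapsto (\text{required parameters})$ genuinely sends $\Xi_{\vcal}$ into itself — i.e., that the constant $\zeta$ in Definition \ref{def:vcal} can be chosen so that $|\utheta_H + \ubar\theta_E|$ and $|\uvarphi_H + \uvarphi_E|$ as functions of $\xi$ stay below $\zeta\btau$ — and that the dependence is continuous (not necessarily Lipschitz, which is why Schauder rather than Banach is used). This hinges on the precise $O(\btau)$ and $O(|\xi|^2) = O(\btau^2)$ error estimates in Proposition \ref{prop:4-20}, Corollary \ref{cor:7-11}, and Proposition \ref{prop:7-1}, and on the near-triangularity of the coefficient-to-parameter correspondence coming from Lemma \ref{lem:4-1}(iii) and Proposition \ref{prop:init-surf}(5): the $w$-coefficients are controlled mainly by $\ubar\theta$ and the $\bar w$-coefficients mainly by $\uvarphi$, with cross terms of size $O(\btau)$, so the relevant linear map is invertible with inverse of norm $O(1)$, and the nonlinear and PDE-error contributions are genuinely lower order. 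Once these bounds are assembled, compactness of $\Xi_{\vcal}$ and continuity of the whole assignment $\xi \mapsto u(\xi) \mapsto (\ubar\theta_E(\xi), \uvarphi_E(\xi))$ close the argument.
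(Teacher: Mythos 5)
Your plan is essentially the same in spirit as the paper's proof---set up a fixed point over the joint data (geometric parameters $\xi$ and graph function $v$), apply Proposition~\ref{prop:8-1} and Corollary~\ref{cor:7-11} to reduce to annihilating $\Theta(\utheta_E+\utheta_H,\,\uvarphi_E+\uvarphi_H)$, and verify $\mathcal I(\Xi)\subset\Xi$ by choosing $\zeta$ and then $\btau<\zeta^{-2}$---and you correctly identify the role of the left-ray parameters $(\ubar b',\ubar c')$ and the mechanism behind items (iv)--(v).

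Where you differ is in the organization of the fixed point. You propose a two-level scheme: first, for fixed $\xi$, use a Banach contraction in $u$ to get a unique $u(\xi)$ (solving the PDE modulo the $\Theta$-error), and only then apply Schauder (which in your setting is really Brouwer, since $\Xi_{\vcal}$ is a bounded subset of a finite-dimensional space) to the map $\xi\mapsto(\text{parameters required to kill the error})$. The paper instead runs a single Schauder argument on the product set $\Xi=\{(\xi,u):|\xi|\le\zeta\btau,\ \|u\|_2\le\zeta\btau\}\subset\vcal\times C^{2,\alpha'}(M(\btau,\ubar 0))$, with $\alpha'<\alpha$ chosen so that $\Xi$ is compact. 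The trade-off is concrete: your route would require a Lipschitz estimate on the quadratic term, something of the form $\|Q_{v_1}-Q_{v_2}\|_0\le C(\|v_1\|_2+\|v_2\|_2)\|v_1-v_2\|_2$, together with a continuity argument for $\xi\mapsto u(\xi)$ uniform in the contraction constants. Proposition~\ref{prop:8-1} only provides $\|Q_v\|_0\le C\|v\|_2^2$; the Lipschitz version is plausible from the structure in Section~\ref{ssec:perturbation} but is not proved in the paper, and the paper deliberately sidesteps needing it by trading the contraction for compactness (the $\alpha'<\alpha$ device plus the built-in exponential decay). If you want to pursue your route, you would need to supply that Lipschitz lemma; otherwise the paper's joint-Schauder argument is the more economical choice given the estimates already available.
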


\begin{proof}
Let us fix $\alpha' \in (0 ,\alpha)$, and define the Banach space
	\[
	\chi=C^{2, \alpha'} (M(\btau, \ubar 0)).
	\]
Denote by $D_{\btau, \xi}: M(\btau, \ubar 0) \to M(\btau, \xi)$ a family of smooth  diffeomorphisms which depend smoothly on $\xi$ and satisfy the following conditions: for every $f \in \mathcal C^{2, \alpha}(M(\btau, \ubar 0))$ and $f' \in C^{2,\alpha}(M(\btau, \xi))$, we have
	\begin{align*}
	\| f \circ D_{\btau, \xi} ^{-1} \|_2 \leq C\| f \|_2, \quad \| f' \circ D_{\btau, \xi}\|_2 \leq C \|f'\|_2.
	\end{align*}
The diffeomorphisms $D_{\btau, \xi}$ are used to pull back  functions and norms from $M(\btau, \xi)$ to  $M(\btau, \ubar 0)$. 

We fix $\btau$ for now and omit the dependence in $\btau$ in our notations of maps and surfaces from now on. Let
	\[
	\Xi = \{(\xi, u) \in \vcal \times \chi: |\xi| \leq \zeta  \btau, \| u \|_2 \leq \zeta \btau\},
	\]
where $\zeta$ is a large constant to be determined below. 
The map $\ical: \Xi \to \vcal \times \chi$ is defined as follows. Given $(\xi, u) \in \Xi$, let $v=u \circ D_{\xi}^{-1}$, $M = M(\xi)$ and let  $M_v$ be the graph of $v$ over $M$. We define the function $\F:\vcal \times C^{2,\alpha}(M, g_M, e^{-\gamma s}) \to \R$ by 
	\[
	\F(\xi, v) = H_v -\btau \vec e_y \cdot \nu_v,
	\]
where $H_v$ and $\nu_v$ are the mean curvature and the oriented unit normal of $M_v$ respectively pulled back to $M$. Proposition \ref{prop:8-1} asserts that
	\[
	\|\F(\xi, v) - \F (\xi, 0) -  \lin_M v \|_0 \leq C \zeta^2  \btau^2.
	\]
Applying Theorem \ref{thm:linear-invertible} with $E = \F(\xi, v) - \F (\xi, 0) -  \lin_M v $, we obtain $v_E$, $\utheta_E$, and $\uvarphi_E$ such that 
	\begin{gather*}
	\lin_M v_E = E  + \Theta(\utheta_E, \uvarphi_E),\\
	\|v_E \|_2 \leq C \zeta^2  \btau^2, \quad |\utheta_E | \leq C \zeta^2  \btau^2, \quad |\uvarphi_E| \leq C \zeta^2 \btau^2.
	\end{gather*}
Hence,
	\[
	\F(\xi, v) = \F(\xi, 0) + \lin_M v + \lin_M v_E  - \Theta(\utheta_E, \uvarphi_E).
	\]
Corollary \ref{cor:7-11} gives us $v_H$, $\utheta_H$ and $\uvarphi_H$ satisfying 
	$
	\lin_M v_H = \F(\xi, 0) + \Theta(\utheta_H, \uvarphi_H), 
	$
 so
 	\[
	\F(\xi, v) =   \lin_M v + \lin_M v_H+ \lin_M v_E -\Theta(\utheta_E + \utheta_H, \uvarphi_E + \uvarphi_H).
	\]
We define the map $\mathcal I: \Xi \to \vcal \times \chi$ by 
	\[
	\mathcal I(\xi, u) = ((\utheta-\utheta_E - \utheta_H, -\btau \tilde{\ubar b}, -\btau \tilde{\ubar c}, \uvarphi-\uvarphi_E-\uvarphi_H),(-v_E-v_H)\circ D_{\xi} ).
	\] 
Note that we arrange for $\mathcal I (\Xi )  \subset \Xi$ since
	\begin{align*}
	\| - v_E -v_H \|_2 &\leq C( \btau +  \zeta ^2  \btau^2),\\
	|\utheta-\utheta_E - \utheta_H| &\leq  C( \btau +  \zeta ^2  \btau^2),\\
	|\uvarphi-\uvarphi_E-\uvarphi_H| &\leq  C( \btau +  \zeta ^2  \btau^2),
	\end{align*}
and choosing $\zeta > 2C$ and $ \btau < \zeta^{-2}$, we get $C( \btau +  \zeta ^2  \btau^2) < \zeta  \btau$.

The set $\Xi$ is clearly convex. It is a compact set of $\vcal \times \mathcal X$ from the choice of the H\"older exponent $\alpha'<\alpha$ and the imposed exponential decay. The map $\mathcal I$ is continuous by construction, therefore we can apply the Schauder Fixed Point Theorem (p. 279 in \cite{gilbarg-trudinger;elliptic-equations}) to obtain a fixed point $(\xi_{ \btau}, u_{ \btau})$ of $\mathcal I$ for every $ \btau \in (0, \delta_{\btau})$ with $\delta_{\btau}$ small enough. The graph of $v=u_{ \btau} \circ D_{\btau, \xi}^{-1}$ over the surface $M(\btau,\xi_{\btau})$ is  then a self-translating surface. It is a smooth surface by the regularity theory for elliptic equations. The properties (ii) and (iii) follow from the construction. 
\end{proof}

Part (iv) of the theorem quantifies how much the asymptotic grim reapers are perturbed. The construction is slightly more general than advertised since we can impose the position of the left asymptotic grim reapers to be any slight perturbation of the initial family, although we have to surrender precise control of the ones on the right (see also Kapouleas \cite{kapouleas;embedded-minimal-surfaces}).


\appendix
\section{Linear Operator on Long Cylinders}
\label{sec:linop-cylinders}


We give here the proof of Proposition \ref{prop:A3} from Section \ref{sec:lin-op}. Kapouleas has a similar result, without the gradient term $\mathbf V \cdot \nabla $ in Appendix A  of \cite{kapouleas;embedded-minimal-surfaces}. His reasoning applies readily here as well, except  we have to work a little more for the uniqueness in (vi), as shown below. 

Let us recall  that $\lin$ denotes an operator on the cylinder $(\Omega, g_0)$ of the form
	\[
	\lin v = \Delta_{\chi} v + \mathbf{V} \cdot \nabla v+ d v,
	\]
where $\chi$ is a $C^2$ Riemannian metric, $\mathbf{V}$ a $C^1$ vector field, and $d$ a $C^1$ function on $\Omega$. For $\ubar c >0$, we define 
	\begin{equation*}
	N(\lin) := 
	\| \chi -g_0:C^2(\Omega,g_0, e^{-\ubar c s}) \|   + \| \mathbf{V} :C^1(\Omega,g_0, l^{-1})\|  + \| d :C^1(\Omega,g_0,e^{-\ubar c s} + l^{-2})\|,
	\end{equation*}
where $l$ is the length of the cylinder (see Definition \ref{def:cylinder}).

\begin{varprop}
Given $\gamma \in (0,1)$ and $\eps>0$, if $N(\lin)$ is small enough in terms of $\ubar c$, $\alpha$, $\gamma$ and $\eps$ (but independently of $l$), 
there is a bounded linear map 
	\[
	\ubar{\mathcal R}: C^{2,\alpha}(\pd_0,g_0) \times C^{0, \alpha}(\Omega, g_{0}, e^{-\gamma s}) \to C^{2, \alpha}(\Omega, g_{0}, e^{-\gamma s})
	\]
such that for $(f,E)$ in $\ubar{\mathcal R}$'s domain and $v = \ubar{\mathcal R}(f,E)$, the following properties are true, where the constants $C$ depend only on $\alpha$ and $\gamma$:
	\begin{enumerate}
	\item   $\lin v = E$ on $\Omega$.
	\item  $v= f -\textrm{\em avg}_{\pd_0} f +B(f,E)$ on $\pd_0$, where $B(f,E)$ is a constant on $\pd_0$ and $\textrm{\em avg}_{\pd_0} f$ denotes the average of $f$ over $\pd_0$. 
	\item  $v\equiv 0$ on $\pd_l$.
	\item $
	\| v :C^{2, \alpha}(\Omega, g_{0}, e^{-\gamma s}) \| $\\
	$\leq C \| f-\textrm{\em avg}_{\pd_0} f:C^{2, \alpha}(\pd_0, g_{0})\|+ C \| E: C^{0, \alpha}(\Omega, g_{0}, e^{-\gamma s})\|.
	$
	\item $|B(f,E)|\leq \eps\| f-\text{\em avg}_{\pd_0} f: C^{2, \alpha}(\pd_0, g_{0})\|+ C \| E: C^{0, \alpha}(\Omega, g_{0}, e^{-\gamma s})\|.
	$
	\item If $v' \in C^2(\Omega)$ satisfies $\lin v'=E$ on $\Omega$, and $v=v'$ on $\pd \Omega$, then $v'=v$ on $\Omega$. Moreover, if  $E$ vanishes, then 
	\[
	\| v : C^0(\Omega) \| \leq 2 \| v:C^0(\pd_0)\|.
	\]
	\end{enumerate}
\end{varprop}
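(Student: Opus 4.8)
The statement is the adaptation of Kapouleas' Proposition A.3 to our operator $\lin v = \Delta_\chi v + \mathbf V\cdot\nabla v + dv$, and the plan is to follow his strategy while tracking the effect of the extra first-order term $\mathbf V\cdot\nabla v$. The core idea is to reduce everything to the model operator $\pd_s^2 + \pd_z^2$ on the flat cylinder by a Fourier decomposition in the $z$-variable, treating the difference $\lin - (\pd_s^2 + \pd_z^2)$ as a small perturbation whose smallness is measured by $N(\lin)$. First I would construct a linear map $\ubar{\mcal R}_0$ solving the \emph{model} problem $(\pd_s^2+\pd_z^2)v = E$ on $\Omega$ with $v\equiv 0$ on $\pd_l$ and $v = f - \mathrm{avg}_{\pd_0}f$ on $\pd_0$, with the exponential-weight estimate (iv). For the zeroth Fourier mode one solves an ODE in $s$ explicitly; the linear-in-$s$ ambiguity is exactly what produces the constant $B(f,E)$ on $\pd_0$, and one checks that $|B|$ is controlled by $C\|E\|$ (the contribution of $f$ to $B$ vanishes at this model stage, which is why in (v) the $f$-term comes with a coefficient $\eps$ rather than $C$). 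For the nonzero Fourier modes the homogeneous solutions $e^{\pm ks}$ decay or grow, and one picks the combination matching the boundary data; since $k\geq 1 > \gamma$, the weighted estimates go through with constants depending only on $\gamma$ and $\alpha$ (Schauder estimates on unit balls give the $C^{2,\alpha}$ control from the $C^0$ control).

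Next I would upgrade from the model operator to $\lin$ by a fixed-point / Neumann-series argument. Write $\lin = \Delta_{g_0} + \pd_z^2 \dots$ — more precisely $\lin = L_0 + P$ where $L_0 = \pd_s^2+\pd_z^2$ and $Pv = (\Delta_\chi - \Delta_{g_0})v + \mathbf V\cdot\nabla v + dv$. Given $(f,E)$, seek $v$ as a fixed point of $v \mapsto \ubar{\mcal R}_0(f, E - Pv)$ in the Banach space $C^{2,\alpha}(\Omega,g_0,e^{-\gamma s})$. The key estimate is that $P$ maps this space into $C^{0,\alpha}(\Omega,g_0,e^{-\gamma s})$ with operator norm $\leq C\,N(\lin)$: the metric-difference and $d$ terms carry the weight $e^{-\ubar c s} + l^{-2}$, the $l^{-2}$ part being absorbed because differentiating twice the weight $e^{-\gamma s}$ only costs $\gamma^2 \leq 1$ while $l^{-2}$ is tiny, and the $e^{-\ubar c s}$ part actually improves the decay; the vector-field term carries weight $l^{-1}$, again harmless. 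Choosing $N(\lin)$ small enough (depending on $\ubar c,\alpha,\gamma,\eps$ but \emph{not} on $l$ — this uniformity is the whole point) makes the map a contraction, yielding $v = \ubar{\mcal R}(f,E)$ and the estimates (i)--(v), with $B(f,E)$ defined as the boundary constant coming out of $\ubar{\mcal R}_0$ at each iterate (its $f$-coefficient stays $\leq \eps$ because the perturbation feeds only $E$-type data into the model solver, up to the small factor $N(\lin)$).

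The part that genuinely needs more care — and where the excerpt flags a departure from Kapouleas — is the uniqueness statement (vi) and the maximum-principle bound $\|v:C^0(\Omega)\| \leq 2\|v:C^0(\pd_0)\|$ when $E\equiv 0$. Here I would argue by a maximum principle for $\lin$. The issue is that $d$ need not be $\leq 0$, so the bare operator is not of the form to which the weak maximum principle applies directly; but $\|d\|$ is small (order $N(\lin)$, plus the benign $l^{-2}$), so one uses a barrier: look for a positive supersolution of the form $\phi = \cosh(\mu(s - l/2))/\cosh(\mu l /2)$ or a suitable affine/exponential comparison function with $\mu$ small, for which $\lin \phi < 0$ on $\Omega$ thanks to $\pd_s^2\phi = \mu^2\phi$ dominating $|\mathbf V\cdot\nabla\phi| + |d|\phi$ (this uses $|\mathbf V| \leq C/l$, $|d|\leq \eps' + C/l^2$, and $\mu$ chosen $\sim 1/l$ or a fixed small constant). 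Then $v \pm \|v:C^0(\pd_0)\|\phi$ cannot attain an interior maximum, forcing $|v|\leq \|v:C^0(\pd_0)\|\phi \leq 2\|v:C^0(\pd_0)\|$ once $\phi \leq 2$; and applying the same barrier to the difference of two solutions with the same boundary values gives uniqueness. The main obstacle is precisely the bookkeeping in this barrier construction: making the barrier work uniformly in $l$ while $d$ carries the $l^{-2}$ term requires choosing the barrier's rate to scale correctly with $l$, and one must double-check that the resulting comparison function still satisfies $\phi \leq 2$ on all of $\Omega$. Everything else is a routine, if lengthy, adaptation of the model-operator estimates.
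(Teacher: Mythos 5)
Your plan for statements (i)--(v) matches the paper's: solve the model problem for $\pd_s^2 + \pd_z^2$ by Fourier series in $z$, handle nonzero boundary data by first solving the Laplace problem with those boundary values, and then absorb $\lin - \Delta$ by a perturbation / Neumann-series argument using the exponential weight built into $N(\lin)$. That part is fine.

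The gap is in (vi), and it is more than bookkeeping. You propose the barrier $\phi = \cosh(\mu(s-l/2))/\cosh(\mu l/2)$ and assert $\lin\phi < 0$ ``thanks to $\pd_s^2\phi = \mu^2\phi$ dominating'' the remaining terms, but this has the wrong sign: if $\mu^2\phi$ dominates $|\mathbf V\cdot\nabla\phi| + |d|\phi + |(\Delta_\chi-\Delta_{g_0})\phi|$, then $\lin\phi = \mu^2\phi + (\text{small}) > 0$, which makes $\phi$ a \emph{sub}solution, the opposite of what a comparison argument for $v \leq M\phi$ requires. Flipping to a concave barrier restores the sign but runs into exactly the obstruction you flag at the end: near $s=0$ the bound on $d$ is $N(\lin)(e^{-\ubar c s} + l^{-2}) \approx N(\lin)$, of \emph{unit} size rather than size $l^{-2}$, so a barrier with curvature scale $\mu \sim 1/l$ cannot dominate $|d|\phi$ there while remaining positive and bounded on all of $[0,l]$. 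The paper does not attempt a maximum principle here. Uniqueness is instead proved by an energy estimate: for $\phi$ vanishing on $\pd\Omega$, Poincar\'e's inequality in $z$ together with the bound
\[
|\bar\phi(s)|^2 \leq s\int_\Omega |\nabla\phi|^2
\]
on the $z$-average $\bar\phi$ yield $\int_\Omega e^{-\ubar c s}\phi^2 \leq C(\ubar c)\int_\Omega|\nabla\phi|^2$, which is precisely what lets the $e^{-\ubar c s}$-weighted terms in $N(\lin)$ be absorbed uniformly in $l$; one then gets
\[
-\int_\Omega (\lin\phi)\phi \geq \frac{1}{2(4\pi^2 + \pi l^2)}\int_\Omega \phi^2,
\]
hence uniqueness. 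The factor-$2$ bound $\|v:C^0(\Omega)\| \leq 2\|v:C^0(\pd_0)\|$ is then obtained, following Kapouleas, by reducing to $f\equiv 1$, using the explicit solution $(l-s)/l$ for $\Delta$, and correcting by rescaling the cylinder's length to unit while leaving the meridian fixed --- a scaling argument, not a barrier. If you want to keep the maximum-principle route, you would at minimum need a barrier whose concavity scale matches $e^{-\ubar c s}$ near $s=0$ and $l^{-2}$ in the interior simultaneously; the paper's energy method sidesteps this entirely.
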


\begin{proof} 
The proposition is valid for the standard Laplacian with respect to the flat metric, with a vanishing $\eps$ in (v). One can prove this fact by separating variables, and using Fourier series with coefficients depending on $s \in (0,l)$. 
In the case of vanishing boundary conditions, the operator $\lin: C^{2,\alpha}(\Omega,g_0,e^{-\gamma s}) \to C^{0,\alpha}(\Omega,g_0,e^{-\gamma s})$ is a  perturbation of $\Delta$, therefore the statements (i)-(v) follow. If the boundary conditions are not zero, we can find the solution $u$ to the Laplace equation with the given boundary conditions, then solve $\lin v =E - \lin u$ with vanishing boundary data. 

We now prove uniqueness of the solution by showing that the smallest eigenvalue of $\lin$ is bounded  away from $0$. 
Let $\phi$ be a $C^{2,\alpha}$ function that vanishes on $\pd \Omega$. We denote the average of $\phi$ by $\bar \phi (s)=(2 \pi)^{-1} \int_0^{2\pi} \phi(s,\theta) d\theta$.  From the Poincar\'e inequality
	$
	\int_0^{2 \pi} |\phi(s, \theta)-\bar \phi(s)|^2 d\theta \leq 4\pi^2 \int_0^{2 \pi} |\pd_{\theta} \phi(s,\theta)|^2 d\theta,
	$
 we get
	\begin{align}
	|\bar \phi(s)|^2 
		\notag &= \left(\int_0^s \bar \phi'(t) dt \right)^2 
		\leq s \int_0^s |\bar \phi'(t)|^2 dt
		 \leq s \int_0^{s}\!\!\! \int_{0}^{2\pi} |\nabla \phi|^2 d\theta dt\\
		 \label{eq:bar-phi-2} &\leq s \int_0^{l}\!\!\! \int_{0}^{2\pi} |\nabla \phi|^2 d\theta dt.
	\end{align}
Since 
	$
	\int_0^{2 \pi} (\phi-\bar \phi)^2 d\theta = \int_0^{2 \pi} \phi^2 d\theta -2 \pi \bar \phi^2,
	$
we have 
	\begin{align*}
	\int_0^{l}\!\!\!  \int_{0}^{2\pi} |\phi|^2 d\theta ds
		 & = \int_0^{l}\!\!\!  \int_{0}^{2\pi} (\phi-\bar\phi)^2 d\theta ds + 2 \pi\int_0^{l} |\bar \phi|^2 ds   \\
		 &\leq (4 \pi^2 + \pi l^2)\int_0^{l}\!\!\! \int_{0}^{2\pi} |\nabla \phi|^2 d\theta ds.
	\end{align*}	
The inequality above proves uniqueness for $\Delta$. For $\lin$, we write
	\[
	\int_0^l \!\!\!\int_0^{2\pi} e^{-\ubar cs}\phi^2(s,\theta) d\theta ds = \int_0^l \!\!\!\int_0^{2\pi} e^{-\ubar cs}(\phi-\bar \phi)^2 d\theta ds + 2\pi \int_0^l e^{-\ubar cs} \bar \phi^2 ds =:I+I\!I,
	\]
where $\ubar c$ is as in the definition of $N(\lin)$. 
$I$ can be estimated using Poincar\'e's inequality,
	\begin{align*}
	\int_0^l\!\!\! \int_0^{2\pi} e^{-\ubar cs} (\phi-\bar \phi)^2 d\theta ds &\leq C \int_0^l \left(\int_0^{2\pi} |\nabla \phi|^2 d\theta \right) e^{-\ubar cs} ds \\
		& \leq C \| e^{-\ubar cs}\|_{L^{\infty}} \int_0^l\!\!\!\int_0^{2\pi} |\nabla \phi|^2 d\theta ds = C \int_0^l \!\!\!\int_0^{2\pi} |\nabla \phi|^2 d\theta ds.
	\end{align*}
$I\!I$ is estimated using \eqref{eq:bar-phi-2},
	\begin{align*}
	I\!I &\leq 2 \pi \left(\int_0^l e^{-\ubar cs} s ds\right) \int_0^{l}\!\!\! \int_{0}^{2\pi} |\nabla \phi|^2 d\theta ds
	\leq \frac{2\pi}{\ubar c^2} \int_0^{l}\!\!\! \int_{0}^{2\pi} |\nabla \phi|^2 d\theta ds.
		\end{align*}
Hence, we have
	$
	\int_{\Omega} e^{-\ubar cs} \phi^2 \leq C(\ubar c) \int_{\Omega} |\nabla \phi|^2,
	$
and
	\begin{align*}
	-\int_{\Omega} ( \lin \phi) \phi =& -\int_{\Omega} (\Delta_{g_0} \phi) \phi - \int_{\Omega}d \phi^2 -\int_{\Omega}\phi \mathbf V\cdot \nabla_{g_0} \phi \\
					&  + \int_{\Omega} (\Delta_{g_0} \phi-\Delta_{\chi} \phi) \phi +\int_{\Omega}\phi \mathbf V\cdot (\nabla_{g_0} \phi-\nabla_{\chi} \phi )  \\
					\geq &\int_{\Omega}|\nabla \phi|^2 - C N(\lin) \left(\int_{\Omega} (C(\ubar c)+1)  |\nabla \phi|^2 +\frac{1}{l^2} \int_{\Omega}\phi^2\right)\\
					\geq & \frac{1}{2 (4 \pi^2 + \pi l^2) }\int_{\Omega} \phi^2,
	\end{align*}
if $N(\lin)$ is small enough depending on $C(\ubar c)$, but independently of $l$. Therefore, if $ \lin \phi=0$ and $\phi$ vanishes on the boundary of $\Omega$, $\phi \equiv 0$. 

For the estimate in (iv), we quote Kapouleas (Appendix A of \cite{kapouleas;embedded-minimal-surfaces}): The desired estimate reduces to the case where $v=f\equiv 1$ on $\pd_0$, because otherwise we can produce a subdomain of $\Omega$ with a vanishing eigenvalue. If $\lin =\Delta$, the solution is $v = (l-s)/l$. This can be corrected to the solution to $\lin$ of the form $\Delta +\mathbf V \cdot \nabla+d$ with $\| \mathbf V:C^1(\Omega, l^{-1})\|$ and $\| d:C^1(\Omega, l^{-2})\|$ appropriately small, by scaling the length of the cylinder to unit, while leaving the meridian unchanged. We thus can have the estimate for $v$ with a constant $3/2$ for example instead of $2$. By using now the earlier proven parts of the proposition we can correct this $v$ to a $v$ for a general $\lin$ while establishing the estimate at the same time. 
\end{proof}

Proposition \ref{prop:A4} is proved by treating $\lin$ as a perturbation of the Laplace operator $\Delta$, for which the results are standard.


\section{Proof of Lemma \ref{lem:7-4}}
\label{sec:proof}


Let us recall that $h :=(\frac{1}{2}|A_{\Sigma}|^2 + \eps_h) g_{\Sigma}$, where $\eps_h >0$ is small depending on $\tau$ and $\delta_{\theta}$. The $c$-approximate kernel of $L_h:= \Delta_h + 2 |A_{\Sigma}|^2 /(|A_{\Sigma}|^2 + 2\eps_h)$ is the span of the eigenfunctions of $L_h$ with corresponding eigenvalues in $[-c,c]$, and  $G'$ is the group generated by $(s,z) \to (s, z + 2 \pi)$.

\begin{varlemma}
There are positive constants $C$ and $c$ such that given $E \in L^2(\Sigma/G', h)$ there is $\ubar \theta_E = (\theta_{E,1}, \theta_{E,2})$ such that $(E- \sum_{j=1}^2 \theta_{E,j} w_j)/(|A_{\Sigma}|^2 + 2\eps_h)$ is $L^2(\Sigma/G', h)$-orthogonal to the $c$-approximate kernel and 
	\[
	|\ubar \theta_E | \leq C \| E/(|A_{\Sigma}|^2 + 2\eps_h) : L^2 (\Sigma/G',h)\|.
	\]
\end{varlemma}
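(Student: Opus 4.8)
The statement is a finite-dimensional orthogonality adjustment: we want to choose $\ubar\theta_E=(\theta_{E,1},\theta_{E,2})$ so that the $L^2(\Sigma/G',h)$-projection of $(E-\sum_j\theta_{E,j}w_j)/(|A_\Sigma|^2+2\eps_h)$ onto the $c$-approximate kernel $K_c$ of $L_h$ vanishes, with a linear estimate on $\ubar\theta_E$. The natural approach is to phrase this as a linear system. First I would recall from the construction that $L_h$ is a self-adjoint operator on $L^2(\Sigma/G',h)$ with discrete spectrum (the quotient $\Sigma/G'$ has compact core and the wings contribute only through a decaying potential, so standard spectral theory applies once $\eps_h$ is fixed to keep $h$ nondegenerate). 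Let $P_c\colon L^2(\Sigma/G',h)\to K_c$ be the orthogonal projection onto the $c$-approximate kernel. The condition to be arranged is
\[
P_c\!\left(\frac{E-\theta_{E,1}w_1-\theta_{E,2}w_2}{|A_\Sigma|^2+2\eps_h}\right)=0,
\]
i.e.\ $\sum_{j=1}^2\theta_{E,j}\,P_c\!\big(w_j/(|A_\Sigma|^2+2\eps_h)\big)=P_c\!\big(E/(|A_\Sigma|^2+2\eps_h)\big)$. This is solvable with the required bound precisely when the two vectors $P_c\big(w_j/(|A_\Sigma|^2+2\eps_h)\big)$, $j=1,2$, span the image and, more to the point, when the $2\times 2$ Gram-type matrix formed by pairing these against a suitable two-dimensional reference subspace of $K_c$ is invertible with inverse bounded independently of $\tau$.

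The key step, and the main obstacle, is to show $K_c$ is exactly two-dimensional (for $c$ small but fixed) and that the functions $w_1,w_2$ are ``transverse'' to it in the quantitative sense above. Here is where Lemma~\ref{lem:w} does the work: the pushforwards of $\vec e_x\cdot\nu$ and $\vec e_y\cdot\nu$ span a two-dimensional space $V$, these lie in the kernel of $L_\Sigma=\Delta_\Sigma+|A_\Sigma|^2$ up to the perturbation coming from the dislocation and the cut-offs (they are genuine Jacobi fields on the unperturbed Scherk surface), and $L_h$ is conjugate to $L_\Sigma$ via the relation between $h$ and $g_\Sigma$, so $V$ corresponds to approximate eigenfunctions of $L_h$ with eigenvalues near $0$. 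One then needs the spectral gap: below $V$ the Scherk surface has no further small eigenvalues on $\Sigma/G'$ — this is exactly the content cited from Kapouleas, and it is where smooth dependence on $\theta(T)\in[25\delta_\theta,\pi/2-25\delta_\theta]$ (Corollary~\ref{cor:sigma-parameters}) gives uniformity: the relevant constant $c$ and the lower bound on the gap depend only on $\delta_\theta$. Hence for $\tau$ small $K_c$ is within $o(1)$ of $V$ in the appropriate Grassmannian, and $P_c$ restricted to $V$ is close to the identity.

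The second ingredient is Lemma~\ref{lem:w}'s quantitative nondegeneracy: for every $(\mu_1,\mu_2)$ there exist $(\eta_1,\eta_2)$ with $|(\eta_1,\eta_2)|\le C|(\mu_1,\mu_2)|$ and $P\big((\eta_1w_1+\eta_2w_2)/|A_\Sigma|^2\big)=\mu_1\vec e_x\cdot\nu+\mu_2\vec e_y\cdot\nu$, where $P$ is the projection onto $V$. Since $|A_\Sigma|^2$ and $|A_\Sigma|^2+2\eps_h$ are uniformly comparable (for $\eps_h$ small relative to $\tau$, and both bounded above and below on the support of $w_1,w_2$, which lies in $\Sigma_{\le 0}$), replacing $|A_\Sigma|^2$ by $|A_\Sigma|^2+2\eps_h$ and $P$ by $P_c$ changes things only by a controlled amount. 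So I would: (1) fix $c$ from the spectral gap, depending on $\delta_\theta$ only; (2) show $\dim K_c=2$ and $\|P_c-P_V\|=o(1)$ as $\tau\to 0$ using smooth dependence and Proposition~\ref{prop:smooth-dependence}; (3) combine with Lemma~\ref{lem:w} to see that $w_1,w_2$ project onto a set whose image under $P_c$ is all of $K_c$, with a uniformly bounded right inverse; (4) solve the $2\times 2$ system for $\ubar\theta_E$, reading off $|\ubar\theta_E|\le C\|E/(|A_\Sigma|^2+2\eps_h):L^2(\Sigma/G',h)\|$ from the bounded inverse and the fact that $\|P_c(E/(|A_\Sigma|^2+2\eps_h))\|\le\|E/(|A_\Sigma|^2+2\eps_h):L^2(\Sigma/G',h)\|$. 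The only genuinely delicate point is the uniform-in-$\tau$ spectral gap for the cut-off, dislocated Scherk surface on the torus-with-cylindrical-wings $\Sigma/G'$; everything else is linear algebra and the already-established smooth dependence of the construction on its parameters.
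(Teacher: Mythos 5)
Your high-level strategy matches the paper's: identify the $c$-approximate kernel $K_c$ with (approximately) $V=\operatorname{span}\{\vec e_x\cdot\nu,\vec e_y\cdot\nu\}$, invoke Lemma~\ref{lem:w}'s nondegeneracy of $w_1,w_2$ against $V$, and read off $\ubar\theta_E$ from a bounded $2\times 2$ system. But you flag the ``uniform-in-$\tau$ spectral gap'' as the one delicate point and then propose to obtain it from smooth dependence on $\theta(T)$ and ``the content cited from Kapouleas.'' That is where there is a genuine gap: smooth dependence on a compact parameter range can only promote a gap you already have for a model operator into a uniform one; it cannot by itself tell you that the small spectrum of $L_h$ on $\Sigma/G'$ is exactly two-dimensional and spanned (approximately) by $V$, nor rule out additional small eigenvalues created by the noncompact wings or the conformal weight $\frac12|A_\Sigma|^2+\eps_h$ degenerating as $s\to\infty$.

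The paper supplies the missing mechanism in three specific steps you omit. First, a $C^0$--$L^2$ estimate for any eigenfunction of $L_h$ with eigenvalue $\le 10$: on $\Sigma_{\le 2}$ by uniform control of the geometry, and on the wings by rewriting the eigenvalue equation as $\Delta_\Sigma f+|A_\Sigma|^2 f+\tfrac{\lambda}{2}(|A_\Sigma|^2+2\eps_h)f=0$ and applying the maximum-principle part (vi) of Proposition~\ref{prop:A3}; this is what makes the later Rayleigh-quotient comparison work despite the noncompact wings. Second, the identification of the model operator: the Gauss map is conformal and sends $(\Sigma(\theta(T))/G',h_0)$, $h_0=\tfrac12|A|^2 g$, to the round $\S^2$ minus four points, so $\Delta_{h_0}+2$ has the explicitly known spectrum of $\Delta_{\S^2}+2$; after imposing the $xy$-symmetry, the eigenfunctions with eigenvalues less than $1$ are exactly $\vec e_x\cdot\nu$ and $\vec e_y\cdot\nu$. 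Third, a quantitative transfer: a logarithmic cutoff $\psi_{\S^2}$ near the four punctures, the estimate that the region where $\psi_{\S^2}\circ\nu\ne 1$ has small $h$-area on both $\Sigma(\theta(T))/G'$ and $\Sigma/G'$ (using $\eps_h\ll\sqrt\tau$ and Lemma~\ref{lem:4-3}, Corollary~\ref{cor:4-4}), and Kapouleas' Rayleigh-quotient comparison for two manifolds nearly isometric off a set of small area. Only after these three steps does one know $K_c$ is captured by cutoffs of $V$, which is what justifies solving your $2\times 2$ system. Without specifying this mechanism, the proposal assumes the conclusion at its hardest point.
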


	\begin{proof}
First, we prove that all eigenfunctions $f$ of $L_h$ on $\Sigma$ corresponding to low eigenvalues, say less than $10$, satisfy
	\begin{equation}
	\label{eq:last}
	\| f : C^0 (\Sigma) \| \leq C \| f: L^2 (\Sigma /G', h) \|.
	\end{equation}
Indeed, we control the geometry of  $\Sigma_{\leq 2}$ uniformly so we can  use standard elliptic theory to bound $\| f:C^0(\Sigma_{\leq 2})\|$ in terms of $ \| f: L^2 (\Sigma_{\leq 2} /G', h) \|$. In the region $s \geq 1$, we consider $f$ as a solution to the equation $ \Delta_{\Sigma} f +  |A_{\Sigma}|^2 f  + \frac{\lambda}{2} (|A_{\Sigma}|^2 + 2 \eps_h) f =0$. Using Corollary \ref{cor:4-4} and the fact that $\eps_h$ is small, we can apply the result (vi) of Proposition \ref{prop:A3} to obtain $\| f : C^0(\Sigma_{\geq 1}) \| \leq 2 \| f: C^0 (\pd \Sigma_{\leq 1})\|$, which establishes \eqref{eq:last}.

Since $\Sigma(\theta(T))$ is minimal without umbilical points, the Gauss map is conformal. It maps $\Sigma(\theta(T))/G'$  to a sphere minus the four points $(\pm \sin \theta(T), \pm \cos \theta(T), 0)$. By the Weierstrass-Ennerper representation of Scherk surfaces from \cite{karcher;minimal-surfaces}, the standard metric on the sphere pulled back to $\Sigma(\theta(T))$ is $h_0 = \frac{1}{2}|A|^2 g_{\Sigma(\theta(T))}$.

Let $\rho: \mathbf S^2 \to \R$ denote the distance from  $\{ (\pm \sin \theta(T), \pm \cos \theta(T), 0)\}$. We define a logarithmic function $\psi_{\mathbf S^2}: \mathbf S^2 \to [0,1]$ by 
	\[
	\psi_{\mathbf S^2}(p) = \psi [2,1] (\log \rho(p) / \log \delta_h),
	\]
where $\delta_h$ is a small positive constant to be determined in the course of the proof. Notice that $\psi_{\mathbf S^2}$ vanishes at distance $\leq \delta_h^2$ from these points, and $\psi_{\mathbf S^2} \equiv 1$ at distance $\geq \delta_h$.

For a function $f$ on $\Sigma(\theta(T))/G'$, we define $\fcal_1(f)$ to be the pushforward to $\Sigma/G'$ by $Z=Z[T,\uvarphi, \tau]$ of the function $f \psi_{\mathbf S^2}\circ \nu$.  Similarly, for a function $f'$ on $\Sigma/G'$, $\fcal_2 (f')$ is a function on $\Sigma(\theta(T))/G'$ defined by $\fcal_2(f')= (f' \circ Z) (\psi_{\mathbf S^2} \circ \nu)$. 
The region on $\Sigma(\theta(T))/G'$ where $\psi_{\mathbf S}^2 \circ \nu \neq 1$ is contained in four disks of radius $\delta_h$, therefore its $h$-area is small for $\delta_h$ small. On $\Sigma/G'$, the region where $\psi_{\mathbf S}^2 \circ \nu \neq 1$ has small area also. Indeed, the points at distance $\delta_h$ of $(\pm \sin \theta(T), \pm \cos \theta(T), 0)$ all have to same $s$-coordinate on $\Sigma(\theta(T))$ (see \cite{mine;part1} Section 4.3). Let use call this value $s_0$. We  use Lemma \ref{lem:4-3} and Corollary \ref{cor:4-4} to bound the area by
	\begin{align*}
	\int_{(\Sigma_{\geq s_0}) /G'}  dV_h &= \frac{1}{2} \int_{(\Sigma_{\geq s_0}) /G'}  (|A_{\Sigma}|^2 + 2 \eps_h) dV_{g_{\Sigma}}\\
	 	&\leq C \int_{s_0}^l (\eps e^{-s} + \tau^2 +  \eps_h) ds\leq C (e^{-s_0} + (\tau^2+ \eps_h) (l-s_0) ),
	\end{align*}
where $dV_g$ is the volume form associated to the metric $g$ and $l = 5 \delta_s/\tau$. Taking $\eps_h$ small with respect to $\sqrt \tau$ so that $A_{\Sigma}$ is not singular, and taking $\delta_h$ small enough so that $s_0$ is large, we obtain a small $h$-area for the region where $\psi_{\mathbf S}^2 \circ \nu \neq 1$ on $\Sigma$.

Using Rayleigh quotients, one can prove that if two manifolds $(M_1, h_1)$ and $(M_2, h_2)$ are close to being isometric, except on a set of small area, and if there exist $\eps_1>0$ and maps $\fcal_1: C_0^{\infty} (M_1) \to C_0^{\infty}(M_2)$  and $\fcal_2: C_0^{\infty} (M_2) \to C_0^{\infty}(M_1)$ such that for all $f, g \in C_0^{\infty}(M_i)$, $i,j=1,2$, $j\neq i$,
	\begin{gather*}
	\| \fcal_i f \|_{\infty} \leq 2 \| f \|_{\infty},\\
	|\langle f,g\rangle - \langle \fcal_i(f), \fcal_i(g) \rangle | \leq \eps_1 \| f\| _{\infty} \| g \|_{\infty},\\
	\| \nabla (\fcal_i (f)) \|_{L^2} \leq (1 +\eps_1) \| \nabla f\|_{L^2} + \eps_1 \| f \|_{\infty},\\
	\| f - \fcal_j \circ \fcal_i (f) \|_{L^2} \leq \eps_1\| f \|_{\infty}, 
	\end{gather*}
where $\langle \cdot, \cdot \rangle$ stands for the $L^2$ inner product and $\| \cdot \|_{\infty}$ is the $L^{\infty}$ norm, then the eigenvalues of the operator $L_{h_1}$ on $M_1$ and $L_{h_2}$ on $M_2$ are close. Moreover, if $f$ is an eigenfunction of $L_{h_i}$ on $M_i$ with corresponding eigenvalue $\lambda$, $\fcal_i(f)$ is close to a linear combination of eigenfunctions of corresponding eigenvalues close to $\lambda$ on $M_j$. All the  ``closeness" depends on $\eps_1$ and can be estimated. The  reader can find more detail in Appendix B of \cite{kapouleas;surfaces-euclidean} and note that the first condition in B.1.6 was corrected by Kapouleas  (see \cite{kapouleas-yang} page 281) and should be replaced by the one given here. 

One can generate functions close the approximate kernel of $L_h$ on $\Sigma$ by cutting off functions in the approximate kernel of $\Delta_{h_0} +2$ on $\Sigma(\theta(T))$. The eigenfunctions corresponding to  eigenvalues less than $1$ of the operator $\Delta_{h_0} +2 $ on $\Sigma(\theta(T))$ are exactly $ \vec e_x \cdot \nu$ and $ \vec e_y \cdot \nu$. In Lemma \ref{lem:w}, we proved that we can generate any combination of $ \vec e_x \cdot \nu$ and $ \vec e_y \cdot \nu$ with a linear combination of $w_1$ and $w_2$. The bound on $|\ubar\theta_E|$ follows from the fact that $P^{-1}$ in Lemma \ref{lem:w} is bounded.
\end{proof}

\bibliographystyle{siam}
\bibliography{../../mybiblio}

\end{document}